\newtheorem{theorem}{Theorem}[section]
\newtheorem{lemma}[theorem]{Lemma}
\newtheorem{prop}[theorem]{Proposition}
\newtheorem{corollary}[theorem]{Corollary}
\theoremstyle{definition}
\newtheorem{definition}[theorem]{Definition}
\newtheorem{rem}[theorem]{Remark}
\newtheorem{example}[theorem]{Example}
\newtheorem{convention}[theorem]{Convention}
\newcommand\pf{\begin{proof}}
\newcommand\epf{\end{proof}}
\newcommand{\cmdblackltimes}{\mathop{\raisebox{0.2ex}{\makebox[0.92em][l]{${\scriptstyle\blacktriangleright\mathrel{\mkern-4mu}<}$}}}}
\newcommand{\mr}{\mathrm}
\newcommand{\ms}{\mathsf}
\newcommand{\op}{\operatorname}
\newcommand{\cA}{\mathcal{A}}
\newcommand{\cB}{\mathcal{B}}
\newcommand{\cC}{\mathcal{C}}
\newcommand{\cO}{\mathcal{O}}
\newcommand{\cP}{\mathcal{P}}
\newcommand{\cF}{\mathcal{F}}
\newcommand{\cR}{\mathcal{R}}
\newcommand{\hO}{\hat{\mathcal{O}}}
\newcommand{\hP}{\hat{\mathcal{P}}}
\newcommand{\hF}{\hat{\mathcal{F}}}
\newcommand{\hotimes}{\hat{\otimes}}
\newcommand{\sV}{\mathsf{V}}
\newcommand{\sW}{\mathsf{W}}
\newcommand{\sZ}{\mathsf{Z}}
\newcommand{\sT}{\mathsf{T}}
\newcommand{\sP}{\mathsf{P}}
\newcommand{\sQ}{\mathsf{Q}}
\newcommand{\m}{\mathfrak{m}}
\newcommand{\n}{\mathfrak{n}}
\newcommand{\B}{\mathfrak{B}}
\newcommand{\U}{\mathfrak{U}}
\newcommand{\f}{\mathfrak{f}}
\newcommand{\g}{\mathfrak{g}}
\newcommand{\T}{\mathfrak{T}}
\newcommand{\hy}{\operatorname{hy}}
\newcommand{\Lie}{\operatorname{Lie}}
\newcommand{\Hom}{\operatorname{Hom}}
\newcommand{\red}{\operatorname{red}}
\newcommand{\sdim}{\operatorname{sdim}}
\numberwithin{equation}{section}
\title[Super Lie groups over a complete field]
{Hopf-algebraic techniques applied to super Lie groups over a complete field}
\author[M.~Hoshi]{Mitsukazu Hoshi}
\address{Mitsukazu Hoshi,
Institute of Mathematics, 
Graduate School of Pure and Applied Sciences, 
University of Tsukuba, 
Ibaraki 305-8571, Japan}
\email{m-hoshi@math.tsukuba.ac.jp}
\author[A.~Masuoka]{Akira Masuoka}
\address{Akira Masuoka,
Institute of Mathematics, 
University of Tsukuba, 
Ibaraki 305-8571, Japan}
\email{akira@math.tsukuba.ac.jp}
\author[Y.~Takahashi]{Yuta Takahashi}
\address{Yuta Takahashi,
Graduate School of Pure and Applied Sciences, 
University of Tsukuba, 
Ibaraki 305-8571, Japan}
\email{y-takahashi@math.tsukuba.ac.jp}
\begin{document}

\begin{abstract}
We show
basic results on super-manifolds and super Lie groups over a complete field of
characteristic $\ne 2$, extensively using Hopf-algebraic techniques. 
The main results are two theorems.
The first main theorem shows a category equivalence 
between super Lie groups and Harish-Chandra pairs,
which is applied especially to construct the Hopf super-algebra of all analytic
representative functions on a super Lie group. 
The second constructs 
homogeneous super-manifolds by a new Hopf-algebraic method, showing their remarkable property.  
\end{abstract}

\maketitle

\noindent
{\sc Key Words:}
super Lie group, Harish-Chandra pair, super-manifold,
Hopf super-algebra, hyper-super-algebra, affine super-group scheme, 
principal super bundle

\medskip
\noindent
{\sc Mathematics Subject Classification (2000):}
58A50, %Supermanifolds and graded manifolds
16T05,  %Hopf algebras and their applications
14M30, %Supervarieties
14L15. %Group schemes

\section{Introduction}\label{sec:introduction}

\subsection{Our aim and achievements}\label{subsec:I1}
Varadarajan \cite[Section 11]{Vara} raised four problems for future research on super-symmetry, 
one of which is ``to extend super-symmetry to $p$-adic Lie groups". 
This paper aims to answer the problem. Indeed, our answer is not total, but we work in the more general situation
that the base field is a complete field of characteristic $\ne 2$. A \emph{complete field} 
\cite[Part II, Chaper I]{S} is a (necessarily infinite) field which is complete with respect to
a non-trivial absolute value. The examples include the real- and the complex number fields $\mathbb{R}$ and 
$\mathbb{C}$, the $p$-adic fields $\mathbb{Q}_p$ and their finite extension fields and the 
formal power series field $F((T))$ in one variable over an arbitrary field $F$. 

Choose such a field $\Bbbk$ of characteristic $\ne 2$, as our base field. We will prove basic results 
on super-manifolds and super Lie groups over $\Bbbk$. Our main results are 
Theorem \ref{thm:equivalence} and Theorem \ref{thm:quotient}. 
The former proves a category equivalence between super Lie groups
and Harish-Chandra pairs, and produces some consequences including Corollaries 
\ref{cor:super_Lie_group_split} and \ref{cor:algebraic_to_analytic}, and
Theorem \ref{thm:G-super-mod_struc}. The latter Theorem \ref{thm:quotient} constructs homogeneous 
super-manifolds, showing their remarkable property which is likely new even when $\Bbbk=\mathbb{R}$ or $\mathbb{C}$. 
Preliminaries include discussion of super-manifolds
from the scratch 
since even such basics on those over a complete field cannot be found in the literature. 
Our method is Hopf-algebraic, featured by
(1)~frequent use of discrete and topological Hopf algebras together with duality between them, and 
(2)~applications of well-known results on (discrete) Hopf algebras, such as those on Hopf modules; the latter applications 
in such analytic situation of ours might have been unexpected so far. Apart from Hopf algebra another feature is
(3)~careful treatment of sheaves with close investigation of stalks or their completions. 
Especially on Hopf (super-)algebras we will give enough details,  
since they are highly expected to play an important role in further investigations of super Lie groups, as well.

\subsection{Basic results on super-manifolds}\label{subsec:I2}
These will be presented in Section \ref{sec:super-mfd}. 
A super-manifold (see Definition \ref{def:super-mfd}) is a kind of a super-ringed space $X=(|X|, \cO_X)$, that is, 
a topological space $|X|$ equipped with a structure sheaf $\cO=\cO_X$
of super-algebras; this $|X|$, equipped with the associated sheaf $\cO_{\red}$ of reduced 
algebras, turns into an analytic manifold, $X_{\red}=(|X|, \cO_{\red})$. 
(Super-algebras are supposed to be super-commutative unless otherwise stated.)
Theorem \ref{thm:IFT} is the Inverse Function Theorem proved in our generalized situation;
an idea of the proof is to focus on the stalks $\cO_x$, $x \in X$, 
or rather on the associated graded super-algebras $\ms{gr}\, \cO_x$, and 
it is used in some other places as well.  
Proposition \ref{prop:finite_products} shows that the category $\ms{SMFD}$ of super-manifolds 
has finite products,
whence group objects in $\ms{SMFD}$ are defined. Such a group object is called a \emph{super Lie group};
see Definition \ref{def:super_Lie_group}. 

\subsection{Super Lie groups and Harish-Chandra pairs}\label{subsec:I3}
These will be discussed in Sections \ref{sec:super_Lie_group}--\ref{sec:equivalence}.
The notion of Harish-Chandra pairs and their category equivalence with super Lie groups in the
$C^{\infty}$-situation are attributed to Kostant \cite{Kostant} and Koszul \cite{Koszul}; see also
\cite[Section 7.4]{CCF}. The analogous category equivalences are proved by Vishnyakova \cite{V}, and
Carmeli and Fioresi \cite{CF} in the complex analytic situation, and by
the second-named author \cite{M3} (see also \cite{M4,G,MS1,MS2}) in the algebraic situation; the group 
objects in the last algebraic situation are affine algebraic super-group schemes 
over an arbitrary field of characteristic $\ne 2$; see Section \ref{subsec:Hopf_group_scheme}.

Given a super Lie group $G$, the associated analytic manifold $G_{\red}$ naturally turns into a Lie group,
which acts on the Lie super-algebra $\Lie(G)$ of $G$ by the right adjoint action. Regard the odd part 
$\Lie(G)_1$ of $\Lie(G)$ as an analytic right $G_{\red}$-module by restriction. Then 
the pair $(G_{\red}, \Lie(G)_1)$, equipped with the bracket $[ \ , \ ] : \Lie(G)_1\otimes \Lie(G)_1 \to \Lie(G_{\red}) (=\Lie(G)_0)$
restricted from that of $\Lie(G)$, is a \emph{Harish-Chandra pair} as we define; in fact, our definition
of the notion (see Definition \ref{def:HCP}) is different from (though equivalent to) the standard one. 
The first main theorem of ours, Theorem \ref{thm:equivalence}, states that $\Phi : G \mapsto (G_{\red}, \Lie(G)_1)$ gives an
equivalence from the category $\ms{SLG}$ of super Lie groups to the category $\ms{HCP}$ of Harish-Chandra pairs. 
To prove this, essential is to construct a super Lie group from a given Harish-Chandra pair, 
which shall lead to a quasi-inverse of the functor $\Phi$. Although the super Lie group we construct
turns out to be essentially the same as the ones constructed by Carmeli et al. \cite{CCF, CF} in the $C^{\infty}$- 
and the complex 
analytic situations, we choose the (hopefully, rigorous) 
manner of constructing it from a certain Hopf super-algebra $\mathcal{B}$
associated with the given Harish-Chandra pair; see Section \ref{subsec:quasi-inverse}. 
The Hopf super-algebra $\mathcal{B}$ was already constructed in \cite{MS1,M3}.
But we will re-produce the construction because
one could then realize the construction (of $\mathcal{B}$, and hence of the super Lie group as well) 
to be quite natural, and because 
it is used to prove the subsequent Theorem \ref{thm:G-super-mod_struc}; this theorem shows that
given a super Lie group $G$, the associated $\mathcal{B}$ is the \emph{Hopf super-algebra 
of analytic representative functions} on $G$ in the sense that the category
$G\text{-}\ms{SMod}$ of analytic $G$-super-modules is identified with the 
category $\ms{SComod}\text{-}\mathcal{B}$ of $\mathcal{B}$-super-comodules. 
We would emphasize: once one defines the notion of Harish-Chandra pairs as we do, 
it is quite natural to formulate and to prove the theorem. 

Corollary \ref{cor:algebraic_to_analytic}, a corollary to Theorem \ref{thm:equivalence}, 
constructs a natural functor from 
the category of affine algebraic super-group schemes satisfying some additional condition 
to the category of super Lie groups. 
One sometimes encounters such presentations of real or complex super Lie groups
that look presenting, by use of matrices, affine algebraic super-group schemes. 
The last mentioned functor justifies the presentations (under some mild restriction in the real case),
since they can be understood to
indicate the super Lie groups which arise through the functor from 
the actually presented super-group schemes. 

\subsection{Quotients $G/H$}\label{subsec:I3a}
Their construction will be done in the final Section~\ref{sec:quotient}. 

The second main theorem, Theorem \ref{thm:quotient}, constructs the quotient super-manifold $G/H$,
where $G$ is a super Lie group and $H$ is a super Lie subgroup of $G$. The construction of ours is 
rather Hopf-algebraic, and should be new. Moreover, we will show a remarkable property of $G/H$, which is
likely new  even when $\Bbbk=\mathbb{R}$ or $\mathbb{C}$. 
To describe the property we remark that the Lie group $H_{\red}$ associated with $H$ is a Lie subgroup of the Lie group $G_{\red}$ 
associated with $G$, so that the quotient manifold $G_{\red}/H_{\red}$ is constructed. 
In addition, the underlying 
topological space $|G/H|$ of $G/H$ coincides with that space $|G_{\red}/H_{\red}|$ of $G_{\red}/H_{\red}$. It is known that
the classical quotient $\pi : G_{\red}\to G_{\red}/H_{\red}$ is a principal $H_{\red}$-bundle. This means every point of 
$|G_{\red}/H_{\red}|$ is contained in an open set $U$ such that 
there exists a right $H_{\red}$-equivariant isomorphism $U\times H_{\red}\overset{\simeq}{\longrightarrow} \pi^{-1}(U)$,
called a \emph{trivialization}, which is compatible with the natural epimorphisms onto $U$. The property we will show is that
every trivialization such as above can \emph{lift} to one in the super context. Or more precisely, 
there exists a right $H$-equivariant isomorphism 
$(U, \cO_{G/H}|_U)\times H \overset{\simeq}{\longrightarrow} (\pi^{-1}(U), \cO_G|_{\pi^{-1}(U)})$ of super-manifolds
which is compatible with the natural epimorphisms onto $(U, \cO_{G/H}|_U)$ and which is reduced to the original trivialization; 
note that $U$ is here regarded an open set of $|G/H|$. To prove this, Hopf modules as well as the construction of the quasi-inverse 
of the functor $\Phi$ above play a role. 
The proof is given by analogous argument of proving \cite[Theorem 4.12]{MT}, 
a very recent result by the second- and the third-named authors; it turns to have proved an analogous
result to our Theorem \ref{thm:quotient} in the algebraic situation mentioned above at the first paragraph of Section \ref{subsec:I2}. 
See Remark \ref{rem:algebra_analogue}. 

\subsection{Key duality}\label{subsec:I4}
The next Section \ref{sec:preliminaries} is devoted to preliminaries, which include
introducing topological super-(co)modules; see Section \ref{subsec:top_super-vect_space}. 
The following Section \ref{sec:complete_Hopf} discusses duality between complete local super-algebras
and connected super-coalgebras. Theorem \ref{thm:duality} proves an anti-equivalence between the
category of
Noetherian complete (topological) Hopf super-algebras and the category of connected
Hopf super-algebras of finite type. (The latter connected Hopf super-algebras may not be super-commutative,
and are called \emph{hyper-super-algebras}; see Definition \ref{def:super-hyper}.)
An important consequence of this and Proposition \ref{prop:topological_comodule} is that
a topological super-comodule over a Noetherian complete Hopf super-algebra is identified with 
a topological super-module over the corresponding connected Hopf super-algebra of finite type. 
Let $G$ be a super Lie group with structure sheaf $\cO$. Then the completion $\hO_e$ of the
stalk $\cO_e$ at the identity element $e$ is a Noetherian complete Hopf super-algebra. 
The corresponding finite-type connected Hopf super-algebra 
is the \emph{hyper-super-algebra} $\hy(G)$ \emph{of} $G$ (see Definition \ref{def:hy_Lie}); 
it coincides with the universal envelope of $\Lie(G)$ in characteristic zero. 
The identification mentioned above is used effectively, for example, when we prove
in Corollary \ref{cor:hy(G_red)-module_sheaf}
that $\cO$ is a sheaf of $\hy(G)$-super-module super-algebras.

\section{Preliminaries}\label{sec:preliminaries}

Suppose that $\Bbbk$ is a field of characteristic $\ne 2$. 
Throughout we work over it. Vector spaces and linear maps are thus
those over $\Bbbk$;\ $\otimes$ denotes the tensor product over $\Bbbk$;\
$\op{Hom}$ indicates the vector space of all $\Bbbk$-linear maps. 

\subsection{Super-vector spaces}\label{subsec:super-vec}
A \emph{super-vector space} is a vector space $\sV = \sV_0 \oplus \sV_1$ graded by the order $2$ group
$\mathbb{Z}_2=\{ 0,1 \}$ which consists of the even $0$ and the odd $1$. 
It is said to be \emph{purely even} (resp., \emph{purely odd})
if $\sV=\sV_0$ (resp., if $\sV = \sV_1$).
The super-vector spaces and the super- (or $\mathbb{Z}_2$-graded) 
linear maps form a symmetric tensor category, $\ms{SMod}_{\Bbbk}$,
with respect to the obvious tensor product $\sV \otimes \sW$ and the so-called super-symmetry
\[
\tau=\tau_{\sV,\sW}: \sV \otimes \sW \to \sW \otimes \sV,\ \tau(v \otimes w)=(-1)^{|v||w|}w\otimes v, \]
where $v \in \sV$ and $w \in \sW$ are homogeneous elements of degree $|v|$ and $|w|$, respectively. 
The unit object is the purely even $\Bbbk$. 
The ordinary objects, such as Lie algebra or Hopf algebra, defined in the tensor category of vector
spaces based on its trivial symmetry are generalized by the ``super" objects defined in 
$\ms{SMod}_{\Bbbk}$, which are called with the adjective ``super" attached, so as Lie super-algebra
or Hopf super-algebra. Ordinary objects are precisely purely even super-objects. 

Let $\g=\g_0\oplus \g_1$ be a Lie super-algebra, to give a remark on its definition. 
If $\op{char} \Bbbk =3$, we require the 
associated bracket $[ \ , \ ] : \g \otimes \g \to \g$ to satisfy (B3) in addition to (B1) and (B2), below.
\begin{itemize}
\item[(B1)] $[\ , \ ] \circ (\op{id}_{\g \otimes \g} + \tau_{\g, \g})= 0$,
\item[(B2)] $[[ \ , \ ], \ ]\circ (\op{id}_{\g \otimes \g \otimes \g}
+\tau_{\g, \g \otimes \g}+\tau_{\g \otimes \g, \g})=0$, 
\item[(B3)] $[[v,v],v]=0$,\ $v \in \g_1$.
\end{itemize}
See \cite[Section 3.1]{MS1}, for example. If $\op{char} \Bbbk \ne 3$, then (B2) implies (B3). 

\subsection{Hopf super-algebras and affine super-group schemes}\label{subsec:Hopf_group_scheme}
A \emph{super}-(\emph{co})\emph{algebra} is precisely a $\mathbb{Z}_2$-graded (co)algebra.

A super-algebra $A=A_0\oplus A_1$ is said to be \emph{super-commutative} 
if the product $A \otimes A \to A,\ a \otimes b\mapsto ab$
is unchanged, composed with $\tau_{A,A}$, or more explicitly, if $A_0$ is central in $A$
and if arbitrary two elements $a, b\in A_1$ are anti-commutative, or $ab=-ba$. We set the following.
\begin{convention}\label{conv:super-com}
Super-algebras
are supposed to be super-commutative unless otherwise stated.
\end{convention}

Accordingly, algebras are supposed to be commutative. 

For a super-coalgebra $C$, the coproduct and the counit are denoted by
\[
\Delta=\Delta_C : C\to C \otimes C,\ \Delta(c)=c_{(1)}\otimes c_{(2)}\ \text{and}\ 
\epsilon =\epsilon_C : C \to \Bbbk,
\]
respectively. We use the sigma notation \cite[Section 1.2]{Sw} as above, to present $\Delta(c)$. 
A super-coalgebra is said to be \emph{super-cocommutative} if $\Delta=\tau_{C,C}\circ \Delta$. 

The Hopf super-algebras we will consider are super-commutative \emph{or} super-cocommutative;
we emphasize that they may not be super-commutative.  
The antipode of a Hopf super-algebra, say $C$, is denoted by $S=S_C : C \to C$. 
We let $\mathbb{N}=\{ 0,1,2,\dots\}$ denote the monoid of non-negative integers. 
An $\mathbb{N}$-\emph{graded Hopf super-algebra} is an algebra and coalgebra graded by $\mathbb{N}$
which, regarded by the mod $2$ reduction as a super-algebra and super-coalgebra, is a Hopf super-algebra. 

An \emph{affine super-group scheme} is a representable group-valued functor $\ms{G}$ defined on the category
of super-algebras. It is uniquely represented by a super-commutative
Hopf super-algebra, which we denote by $O_{\ms{G}}$. Thus, $\ms{G}$ is the functor which associates
to each super-algebra $R$, the group $\mr{SAlg}(O_{\ms{G}}, R)$ of all super-algebra maps 
$O_{\ms{G}}\to R$ with respect to the convolution-product \cite[Definition 1.4.1]{Mo}.
This $\ms{G}$ is called an \emph{affine algebraic super-group scheme}
if $O_{\ms{G}}$ is finitely generated. 

Suppose that $C$ is a Hopf super-algebra. 
We let
\begin{equation}\label{eq:SMod_SComod}
C\text{-}\ms{SMod},\ \ms{SMod}\text{-}C \quad (\text{resp}.,\ C\text{-}\ms{SComod},\ \ms{SComod}\text{-}C) 
\end{equation}
denote the tensor categories of left $C$-super-modules and of right $C$-super-modules
(resp., of left $C$-super-comodules and of right $C$-super-comodules).
A (co)algebra object in
$C\text{-}\ms{SMod}$ is called a \emph{left $C$-super-module super-(co)algebra}; 
cf. \cite[p.203]{Mo}.
Note that the last definition involves the tensor product of 
super-(co)algebras, and hence the super-symmetry.  
The categories above are symmetric with respect to the super-symmetry 
if $C$ is super-cocommutative (resp., super-commutative); in fact, the symbols will be used only 
when that is the case, and in addition, $C$ is purely even. Note that if $C$ is purely even, 
then a $C$-super-(co)module is precisely a super-vector space whose homogeneous components are both
$C$-(co)modules.

Given a super-vector space $\sV =\sV_0\oplus \sV_1$, we let $\sV^*=\mr{Hom}(\sV, \Bbbk)$ denote
the dual vector space; it is a super-vector space with respect to the parity $(\sV^*)_i=(\sV_i)^*$, $i=0,1$. 
When we discuss duality concerning Hopf super-algebras 
there are two choices, which are, however, equivalent (more precisely, the resulting difference is made up
by category equivalences),
as was shown in \cite{MS2}. 
Let $\sW$ be another super-vector space, and suppose that you have to define 
a pairing $\langle \ , \ \rangle : \sV^*\otimes \sW^* \times \sV \otimes \sW \to \Bbbk$. 
The choices depend on whether you define
$\langle v^*\otimes w^*, v \otimes w\rangle$ to be 
\[ v^*(v)\, w^*(w)\quad \text{or}\quad (-1)^{|v||w|}v^*(v)\, w^*(w),\]
where $v \in \sV$, $w \in \sW$, $v^*\in \sV^*$ and $w^*\in \sW^*$. 
We choose the first, simpler one, as the article \cite{M3} does with explicit citation. 
Consequently, for example, the dual super-algebra $C^*$ of a super-coalgebra $C$ coincides
with the dual algebra \cite[Section 1.1]{Sw} constructed as usually, when one regards $C$ as an ordinary coalgebra. 

\subsection{(Graded) algebras associated with a super-algebra}\label{subsec:associated_super-algebras}\
Let $A=A_0\oplus A_1$ be a super-algebra, which is super-commutative 
by Convention \ref{conv:super-com}. 

Let $I_A=(A_1)$ denote the super- (or homogeneous) ideal generated by the odd component $A_1$. 
Define
\begin{equation}\label{eq:barA_grA}
\overline{A} := A/I_A, \quad 
\ms{gr}\, A := \bigoplus_{n \ge 0} I_A^n/I_A^{n+1} \, (=\overline{A} \oplus I_A/I_A^2\oplus \dots).  
\end{equation}
Then $\overline{A}$ is the largest purely even quotient super-algebra of $A$, and
$\ms{gr}\, A$ is an $\mathbb{N}$-graded algebra, which is super-commutative, regarded as a super-algebra
by the mod $2$ reduction. 
Moreover, $\ms{gr}$ gives rise to a functor from the category 
of (super-commutative) super-algebras to the category of super-commutative $\mathbb{N}$-graded 
algebras. 

Since every odd element of $A$ is square-zero, it follows that 
the set $\sqrt{0}$ of all nilpotent elements in $A$ turns into a super-ideal including $I_A$. Define
\begin{equation}\label{eq:Ared}
A_{\mr{red}} :=A/\sqrt{0}. 
\end{equation}
One sees that $A_{\mr{red}}$ is the largest reduced quotient algebra of $A$,
and that $A\mapsto A_{\mr{red}}$ gives rise to a functor. 
In the (mostly, geometric) circumstances where we will apply the construction, $\overline{A}$ is already reduced, 
whence $\overline{A}=A_{\mr{red}}$. But we then prefer to use the notation \eqref{eq:Ared}, following custom. 

\begin{rem}\label{rem:gr}
In fact, the construction of $\ms{gr}$ will be used in the special situation that
$A=\ms{gr}\, A$, and $I_A\, (=\bigoplus_{n>0}A(n))$ is nilpotent, whence $A(n)=0$ for $n \gg 0$. 
For another such super-algebra $B$, we will consider a super-algebra map $f : A \to B$. 
Note that $\ms{gr}\, f : A \to B$ may be different from $f$, but it is an isomorphism if and only if $f$ is. 
\end{rem}

\subsection{Topological super-vector spaces}\label{subsec:top_super-vect_space}
An important role will be played by an obvious super-analogue of
the concept of topological vector spaces \cite[p.507]{T2}, defined as follows. 
A \emph{topological super-vector space} is a super-vector space $\sV$ equipped with a topology
such that 
\begin{itemize}
\item[(i)] for every $v \in \sV$, the translation $w \mapsto v+w$ is continuous, and
\item[(ii)] there exists a base $(\sV_{\alpha})_{\alpha}$ for neighborhoods of $0$ which
consists of super-vector subspaces $\sV_{\alpha}$ of $\sV$; such a base will be called 
a \emph{topological base}.
\end{itemize}

Let $\sV$ and $(\sV_{\alpha})_{\alpha}$ be as above. The \emph{completion} of $\sV$ 
is defined by
\[ 
\hat{\sV}=\underset{\longleftarrow\alpha}{\lim}\, \sV/\sV_{\alpha}. 
\]
The definition does not depend on choice of topological bases. This $\hat{\sV}$ is a topological
super-vector space with the topological base consisting of all
\[
\hat{\sV}_{\alpha}= \underset{\longleftarrow\beta}{\lim}\, (\sV_{\alpha}+\sV_{\beta})/\sV_{\beta}.
\]
Alternatively, $\hat{\sV}$ is the projective limit of the discrete super-vector spaces $\sV/\sV_{\alpha}$. 
We have a canonical, continuous super-linear map $\sV\to \hat{\sV}$. If this is an isomorphism, then
$\sV$ is said to be \emph{complete}. For example, every discrete topological super-vector 
space, that has the single $\{ 0 \}$ as its topological base, is complete.

Given another topological super-vector space $\sW$ with a topological base $(\sW_{\beta})_{\beta}$, 
the tensor product $\sV \otimes \sW$ is a topological super-vector space with the topological
base $(\sV_{\alpha}\otimes \sW+ \sV\otimes \sW_{\beta})_{\alpha,\beta}$; see \cite[p.509]{T2}. 
The topology does not depend of choice of topological bases. 
The completion of $\sV \otimes \sW$ is denoted by $\sV~\hotimes~\sW$, and is called 
the \emph{complete tensor product} of $\sV$ and $\sW$. We thus have
\[
\sV~\hotimes~\sW=
\underset{\longleftarrow}{\lim}_{\alpha,\beta}\, \sV/\sV_{\alpha}\otimes \sW/\sW_{\beta}.
\]
Note that $\sV/\sV_{\alpha}= \hat{\sV}/\hat{\sV}_{\alpha}$, whence
$\sV~\hotimes~\sW=\hat{\sV}~\hotimes~\hat{\sW}$.
The \emph{complete tensor product} $f~\hotimes~g$ of 
continuous super-linear maps $f$ and $g$ is 
the completion of the continuous super-linear map $f \otimes g$. 

If $\sW$ is discrete and finite-dimensional, then we have
\begin{equation}\label{eq:tensor_product_with_discrete}
\sV~\hotimes~\sW= \hat{\sV}\otimes \sW. 
\end{equation}
When regarded as a (purely even) super-vector space, $\Bbbk$ is always supposed to be 
discrete. Hence, in particular, 
\begin{equation}\label{eq:tensor_product_with_k}
\sV~\hotimes~\Bbbk = \hat{\sV}=\Bbbk~\hotimes~\sV. 
\end{equation}
See \cite[(1.9), p.510]{T2}.

The notion of complete topological coalgebras defined in \cite[p.510]{T2} is directly generalized
to the super context as follows.
A complete topological super-vector space $C$ 
equipped with continuous super-linear maps $\Delta : C \to C~\hotimes~C$, 
$\epsilon : C \to \Bbbk$ 
is called a \emph{complete topological super-coalgebra}, if 
$\Delta$ and $\epsilon$ satisfy
the coalgebra axioms which are modified from the ordinary, discrete situation 
with $\otimes$ replaced by $\hotimes$. Note that the counit property then makes sense
since we have $C~\hotimes~\Bbbk = C = \Bbbk~\hotimes~C$ by \eqref{eq:tensor_product_with_k}. 

Let $R$ be a super-algebra which may not be super-commutative, 
and let $m : R \otimes R \to R$ and $u : \Bbbk \to R$ denote
the product and the unit, respectively. Let $(R_{\gamma})_{\gamma}$ denote the
system of all finite-dimensional super-subspaces. 
Then $R^*$ is a complete topological super-vector space with the topological base 
$\big((R/R_{\gamma})^*\big)_{\gamma}$, so that
we have 
\[
R^*~\hotimes~R^*=
\underset{\longleftarrow\gamma}{\lim}\, R_{\gamma}^*\otimes R_{\gamma}^*=(R\otimes R)^*.
\]
Moreover, $m^* : R^* \to (R\otimes R)^*=R^*~\hotimes~R^*$ and $u^* : R^* \to \Bbbk$ 
are continuous maps, with which $R^*$ is a complete topological 
super-coalgebra; see \cite[Example 1.15]{T2}.  

Let $\sV$ be a complete topological super-vector space with a topological base 
$(\sV_{\alpha})_{\alpha}$. A \emph{topological right} $R^*$-\emph{super-comodule structure} on $\sV$
is a continuous super-linear map $\rho : \sV \to \sV~\hotimes~R^*$ that satisfies the 
the right-comodule axioms modified as before; see \cite[p.512]{T2}.  Note that
\[
\sV~\hotimes~R^*=
\underset{\longleftarrow}{\lim}_{\alpha,\gamma} \Hom(R_{\gamma}, \sV/\sV_{\alpha})=\Hom(R,\sV).
\]
Therefore, given a topological right $R^*$-super-comodule structure
$\rho$ on $\sV$ as above, there arises a super-linear map $\rho' : R \otimes \sV \to \sV$
which is adjoint to the $\rho : \sV \to \Hom(R,\sV)$ above. 

\begin{prop}[\text{\cite[Theorem 1.19]{T2}}]\label{prop:topological_comodule}
This $\rho'$ is a left $R$-super-module structure on $\sV$ which is 
topological in the sense that every element of $R$ acts on $\sV$ 
as a continuous super-linear endomorphism.
Moreover, $\rho \mapsto \rho'$ gives a bijection from 
the set of all topological right $R^*$-super-comodule structures on $\sV$ to 
the set of all topological left $R$-super-module structures on $\sV$.
\end{prop}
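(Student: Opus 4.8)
The plan is to reduce the statement to the well-known, purely even ($\mathbb{Z}_2$-graded, but that makes no difference here) duality between linearly topologized modules and comodules, as carried out in \cite[\S1]{T2}, by checking that every relevant construction in the super setting is compatible with the super-symmetry in exactly the way our sign conventions (Section \ref{subsec:Hopf_group_scheme}, the ``first, simpler choice'') were arranged to guarantee. Concretely, I would proceed in three stages: first establish that $\rho'$ is a module structure; second establish that it is topological; third establish that $\rho\mapsto\rho'$ is a bijection by writing down the inverse.

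For the first stage, recall the identification $\Hom(R,\sV)=\sV\hotimes R^*=\underset{\longleftarrow}{\lim}_{\alpha,\gamma}\Hom(R_\gamma,\sV/\sV_\alpha)$ from the excerpt, under which a continuous super-linear $\rho:\sV\to\sV\hotimes R^*$ corresponds to the super-linear $\rho':R\otimes\sV\to\sV$. I would unwind the right-comodule coassociativity axiom $(\rho\hotimes\mr{id}_{R^*})\circ\rho=(\mr{id}_\sV\hotimes\Delta_{R^*})\circ\rho$ and the counit axiom, passing through the canonical isomorphisms $(\sV\hotimes R^*)\hotimes R^*=\Hom(R\otimes R,\sV)$ etc., and show these translate precisely into the associativity $\rho'\circ(m\otimes\mr{id}_\sV)=\rho'\circ(\mr{id}_R\otimes\rho')$ and unitality $\rho'\circ(u\otimes\mr{id}_\sV)=\mr{id}_\sV$ of a left $R$-action. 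The key point is that $R^*$ carries the complete topological super-coalgebra structure $m^*,u^*$ with no extra sign (this is guaranteed by the chosen pairing convention, so that $C^*$ is the naive dual algebra), so transposing $\Delta_{R^*}=m^*$ back through the pairing recovers $m$ on the nose; any signs that appear on one side appear identically on the other because the homogeneous-degree bookkeeping in $\sV\hotimes R^*\hotimes R^*$ matches that in $R\otimes R\otimes\sV$ under the adjunction. This is genuinely routine once the identifications are set up, and I would not belabor it.

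For the second stage, I would note that for fixed $r\in R$ the map $\sV\to\sV$, $v\mapsto\rho'(r\otimes v)$ is the composite of $\rho:\sV\to\sV\hotimes R^*$ (continuous by hypothesis) with evaluation at $r$, i.e. with the map $\sV\hotimes R^*=\Hom(R,\sV)\to\sV$ sending $\varphi\mapsto\varphi(r)$; the latter is continuous because it factors through some $\sV/\sV_\alpha$ after restricting to the finite-dimensional $R_\gamma$ containing $r$, by the very description of the topological base of $\sV\hotimes R^*$. Hence every element of $R$ acts continuously, which is the asserted topological property.

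For the third stage — which I expect to be the only place demanding genuine care — I would construct the inverse. Given a topological left $R$-super-module structure $\rho':R\otimes\sV\to\sV$, define $\rho:\sV\to\Hom(R,\sV)$ by $\rho(v)(r)=\rho'(r\otimes v)$; the real work is to check that $\rho$ lands in the subspace $\sV\hotimes R^*$ (equivalently that $\rho(v)$ is continuous $R\to\sV$ when $\sV$ has its given topology and $R$ is given the finest linear topology, which is automatic since $R$ is discrete — so in fact one only needs that $\rho(v)$ has ``small'' image, i.e. for each $\sV_\alpha$ there is a cofinite-dimensional subspace of $R$ killed into $\sV_\alpha$), and then that $\rho$ itself is continuous $\sV\to\sV\hotimes R^*$; continuity of $\rho$ is where the hypothesis that \emph{every} element of $R$ acts by a continuous endomorphism is used, together with a standard argument that the topological base of $\sV\hotimes R^*$ pulls back to a base of $\sV$. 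That this $\rho$ is a topological $R^*$-comodule structure and that the two assignments are mutually inverse then follows by re-running stage one in reverse. The hard part is thus not any single estimate but keeping the triple bookkeeping — the $\alpha$'s indexing $\sV$'s topology, the $\gamma$'s indexing $R$'s finite-dimensional pieces, and the Koszul signs — consistent across all the adjunctions; since this is exactly the content of \cite[Theorem 1.19]{T2} in the non-super case and our conventions were fixed to make the super case sign-transparent, I would present the super statement as obtained by ``the same argument, with $\hotimes$ and the super-symmetry inserted,'' spelling out only the one or two spots where a sign could a priori intervene and verifying it does not.
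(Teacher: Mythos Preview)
The paper gives no proof of this proposition at all: it is simply stated with the attribution \cite[Theorem 1.19]{T2} in the header, and the text moves on. Your outline is therefore strictly more than what the paper does, and it is essentially correct --- you are just spelling out why Takeuchi's non-super argument transports to the super setting under the ``first, simpler'' pairing convention fixed in Section~\ref{subsec:Hopf_group_scheme}, which is exactly the point.

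One small muddle in your stage three: the paper's displayed identification $\sV\,\hotimes\,R^*=\underset{\longleftarrow}{\lim}_{\alpha,\gamma}\Hom(R_\gamma,\sV/\sV_\alpha)=\Hom(R,\sV)$ is an equality with the \emph{full} space of linear maps $R\to\sV$, not a proper subspace of it. So there is nothing to check about $\rho(v)$ ``landing in'' $\sV\,\hotimes\,R^*$ --- any linear map $R\to\sV$ already lies there --- and your parenthetical about cofinite-dimensional kernels is unnecessary. The genuine content of stage three is exactly what you identify next: that $\rho:\sV\to\Hom(R,\sV)$ is continuous for the inverse-limit topology on the target, and this is where the hypothesis that each $r\in R$ acts by a continuous endomorphism of $\sV$ is used (for each $\alpha$ and each finite-dimensional $R_\gamma$, one needs some $\sV_\beta$ mapped by all of $R_\gamma$ into $\sV_\alpha$, which follows since finitely many continuous endomorphisms have a common small neighborhood of $0$ in their preimage of $\sV_\alpha$). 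With that correction your plan is sound.
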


\section{Complete Hopf super-algebras and hyper-super-algebras}\label{sec:complete_Hopf}

We continue to suppose that $\Bbbk$ is a field of characteristic $\ne 2$. 

\subsection{Local super-algebras}\label{subsec:local_super-algebra}
Let $A=A_0\oplus A_1$ be a super-algebra. 
Recall the following definitions; see \cite[Appendix]{MZ}, for example. 
$A$ is \emph{Noetherian}, if the super-ideals of $A$ satisfies the ACC, 
or equivalently, if the algebra $A_0$ is Noetherian and the $A_0$-algebra $A$ 
is generated by finitely many odd elements. 
$A$ is \emph{smooth}, if given a super-algebra surjection $B \to C$ with nilpotent kernel, 
every super-algebra map $A \to C$ factors through the given surjection.
$A$ is \emph{local}, if it is non-zero and has a unique maximal super-deal; 
we denote this maximal by $\m_A$. 
The condition is equivalent to that
the algebra $A_0$ is local, and as well, to that 
$\overline{A}$ (see \eqref{eq:barA_grA}) is local.
By convention a local super-algebra $A$ is regarded as a topological super-vector space with the topological
base $\{ \m_A^n\}_{n \ge 0}$, or namely, with the $\m_A$-adic topology.
It is, therefore, complete if and only if the canonical map
\[ 
A\to \hat{A} = \underset{\longleftarrow n}{\lim}\, A/\m_A^n 
\]
is isomorphic. In general, this canonical map is injective if $A$ is Noetherian. 
In addition, $\hat{A}$ is a complete local super-algebra 
with the maximal super-ideal 
\[
\hat{\m}_A=
\underset{\longleftarrow n}{\lim}\, \m_A/\m_A^n. 
\]

By a \emph{connected super-algebra} we mean a local super-algebra $A$ with
the residue field $A/\m_A = \Bbbk$.  

\begin{rem}\label{rem:local_map}
A super-algebra map $f : A \to B$ 
between local super-algebras is continuous 
if and only if $f(\m_A) \subset \m_B$. 
This is necessarily the case if $B$ is connected.
In fact, the composite $A \to B/\m_B=\Bbbk$ of $f$ with the projection $B \to B/\m_B$
then must be a surjection with kernel $\m_A$, whence
$A$ is connected, too.
\end{rem}

Given non-negative integers $m$ and $n$, the \emph{formal power series super-algebra}
\begin{equation}\label{eq:formal_power_series}
\Bbbk[[\sT_1,\dots, \sT_{m+n}]] 
\end{equation}
in $m$ even and $n$ odd variables is the tensor product 
\begin{equation*}
\Bbbk[[\sT_1,\dots, \sT_m]]\otimes \wedge(\sT_{m+1}, \dots,\sT_{m+n})
\end{equation*}
of the formal power series algebra $\Bbbk[[\sT_1,\dots, \sT_m]]$ in even variables
$\sT_1,\dots, \sT_m$ and
the exterior algebra $\wedge(\sT_{m+1}, \dots,\sT_{m+n})$
on the purely odd super-vector space with a basis $\sT_{m+1},\dots, \sT_{m+n}$. 

\begin{prop}\label{prop:smooth_super-algebra}
$\Bbbk[[\sT_1,\dots, \sT_{m+n}]]$
is a smooth Noetherian complete connected super-algebra. Conversely, a super-algebra $A$
with these properties is isomorphic to such a formal power series super-algebra
for some $m, n \ge 0$. 
\end{prop}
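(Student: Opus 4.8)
The plan is to handle the two directions separately, and in both to exploit the decomposition $A = A_0 \oplus (\text{odd part})$ together with the standard (non-super) formal-function theory for $A_0$. First, for the forward direction, I would check that $\Bbbk[[\sT_1,\dots,\sT_{m+n}]]$ has the four asserted properties. It is connected because $\wedge(\sT_{m+1},\dots,\sT_{m+n})$ is a finite-dimensional local algebra with residue field $\Bbbk$ and $\Bbbk[[\sT_1,\dots,\sT_m]]$ is connected, so the tensor product is local with residue field $\Bbbk$ and maximal super-ideal generated by all the $\sT_i$. It is Noetherian since $A_0 = \Bbbk[[\sT_1,\dots,\sT_m]]\otimes\wedge^{\mathrm{even}}$ is Noetherian (a finite module over the Noetherian ring $\Bbbk[[\sT_1,\dots,\sT_m]]$) and the whole super-algebra is generated over $A_0$ by the finitely many odd variables $\sT_{m+1},\dots,\sT_{m+n}$. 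Completeness with respect to the $\m_A$-adic topology follows because $\m_A^N$ is contained in the ideal generated by $N$-fold products of the $\sT_i$ and the exterior factor is nilpotent, so the $\m_A$-adic topology on $A$ restricts on $A_0$ to the $\m_{A_0}$-adic topology; then $A \cong A_0 \otimes \wedge(\sT_{m+1},\dots,\sT_{m+n})$ is complete because $A_0$ is and the second tensor factor is finite-dimensional (cf.\ \eqref{eq:tensor_product_with_discrete}). Smoothness: given a super-algebra surjection $B \twoheadrightarrow C$ with nilpotent kernel and a map $\varphi\colon A \to C$, one lifts the images of the even variables using the classical smoothness of $\Bbbk[[\sT_1,\dots,\sT_m]]$ (lift $\varphi(\sT_i)$ to any preimage in $B$, noting each lies in $\m_B$ since $\varphi(\sT_i)\in\m_C$ and kernels of such surjections land in the nilradical), and one lifts the odd variables to arbitrary odd preimages in $\m_B$, which are automatically square-zero by super-commutativity; the resulting assignment extends to a well-defined super-algebra map $A \to B$ by the universal property of the formal power series super-algebra.

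For the converse, let $A$ be a smooth Noetherian complete connected super-algebra with maximal super-ideal $\m = \m_A$. The key step is to produce, via smoothness, a surjection from a formal power series super-algebra and then show it is an isomorphism by a graded-module argument. First set $m = \dim_\Bbbk (\m/\m^2)_0$ and $n = \dim_\Bbbk (\m/\m^2)_1$, and choose homogeneous elements $t_1,\dots,t_{m+n} \in \m$ (the first $m$ even, the last $n$ odd) whose images form a basis of $\m/\m^2$. Applying smoothness iteratively to the surjections $A/\m^{N+1} \twoheadrightarrow A/\m^N$ (each with nilpotent kernel) and passing to the limit, one obtains a continuous super-algebra map $f\colon \Bbbk[[\sT_1,\dots,\sT_{m+n}]] \to A$ sending $\sT_i \mapsto t_i$. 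Here I should be a little careful: smoothness as stated lifts maps \emph{into} $C$ along $B \twoheadrightarrow C$, so to build $f$ one rather observes that the desired $f$ is a compatible system of maps $\Bbbk[\sT]/(\sT)^N \to A/\m^N$, each obtained by lifting the previous stage using smoothness of the \emph{polynomial/exterior} truncations — or, more cleanly, one uses smoothness of $A$ to split the canonical surjection $A \to A/\m^2$ appropriately and bootstraps. By construction $f$ induces an isomorphism on $\m/\m^2$.

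It remains to show such an $f$ is an isomorphism, and this is the main obstacle. I would argue on associated graded rings: $\ms{gr}\, f\colon \ms{gr}\,\Bbbk[[\sT_1,\dots,\sT_{m+n}]] \to \ms{gr}\, A$ (graded for the $\m$-adic filtrations) is a map of $\mathbb{N}$-graded super-algebras which is an isomorphism in degrees $0$ and $1$. The source $\ms{gr}$ is the free super-commutative super-algebra $\Bbbk[\sT_1,\dots,\sT_m]\otimes\wedge(\sT_{m+1},\dots,\sT_{m+n})$ on $\m/\m^2$, so by the universal property $\ms{gr}\, f$ is surjective. For injectivity, smoothness of $A$ forces $\ms{gr}\, A$ to be itself such a free super-symmetric algebra on $\m/\m^2$ — this is the super analogue of the statement that a smooth local ring has polynomial associated graded ring, and is where smoothness is genuinely used; concretely, one checks that a smooth connected super-algebra has no relations among a regular system of parameters by lifting any hypothetical relation modulo higher powers of $\m$ to a contradiction via the lifting property. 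Hence $\ms{gr}\, f$ is an isomorphism of $\mathbb{N}$-graded super-algebras. Finally, an $\m$-adically continuous map of complete super-algebras inducing an isomorphism on associated gradeds is an isomorphism: surjectivity and injectivity are both checked degree-by-degree on the filtration quotients and then one passes to the $\m$-adic limit, using completeness of both sides (for $A$ by hypothesis, for $\Bbbk[[\sT]]$ by the first part). Therefore $f$ is the desired isomorphism $A \cong \Bbbk[[\sT_1,\dots,\sT_{m+n}]]$. I expect the delicate point to be the clean deduction that smoothness implies $\ms{gr}\, A$ is free super-symmetric on $\m/\m^2$; everything else is the standard complete-filtered-ring machinery adapted to the $\mathbb{Z}_2$-grading.
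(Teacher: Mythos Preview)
Your approach is substantially more detailed than the paper's, which simply cites \cite{Schmitt} and \cite{MZ} without further argument, so in that sense you are doing something genuinely different and more explicit. The overall strategy---build $f:\Bbbk[[\sT_1,\dots,\sT_{m+n}]]\to A$ hitting a chosen basis of $\m/\m^2$, then show $f$ is an isomorphism via associated gradeds---is sound. Two points deserve tightening.

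First, your hesitation about constructing $f$ is unnecessary. No smoothness is needed here: since $A$ is complete connected and each $t_i\in\m_A$, the universal property of the formal power series super-algebra (every formal series $\sum a_I\sT^I$ maps to the convergent sum $\sum a_I t^I$ in $A$, and the odd $t_i$ automatically anti-commute) gives $f$ directly. Your two alternative sketches are both detours.

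Second, the step you flag as delicate---deducing that $\ms{gr}\,A$ is free super-commutative on $\m/\m^2$---is indeed the crux, and your sketch (``lift a hypothetical relation to a contradiction'') is not a proof. A cleaner route that uses smoothness of $A$ exactly once: inductively apply the lifting property to the tower $\Bbbk[[\sT]]/(\sT)^{N+1}\twoheadrightarrow\Bbbk[[\sT]]/(\sT)^N$ to build a compatible system $g_N:A\to\Bbbk[[\sT]]/(\sT)^N$, choosing $g_2$ so that it inverts $f$ on $\m/\m^2$. The limit $g:A\to\Bbbk[[\sT]]$ then satisfies $g\circ f\equiv\mathrm{id}\pmod{\m^2}$, so $\ms{gr}(g\circ f)$ is the identity in degree~$1$, hence surjective in all degrees, hence $g\circ f$ is surjective by completeness. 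A surjective endomorphism of a Noetherian ring is injective, so $g\circ f$ is an automorphism and $f$ is injective. Combined with the surjectivity you already have, this finishes the converse without ever needing to analyze $\ms{gr}\,A$ directly.
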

\begin{proof}
This follows easily from \cite[Proposition on Page 81]{Schmitt}; see also \cite[Appendix]{MZ}. 
\end{proof}

Given local super-algebras $A$ and $B$, their complete tensor
product is the super-algebra 
\[
A~\hotimes~B = \underset{\longleftarrow n}{\lim}\, A/\m_A^n \otimes B/\m_B^n.
\]
If $A$ and $B$ are connected, then $A~\hotimes~B$ is, too.

Given continuous super-algebra maps $f : A \to A'$ and $g : B \to B'$
between local super-algebras, the compete tensor product
$f~\hotimes~g : A~\hotimes~B \to A'~\hotimes~B'$
is a continuous super-algebra map.

\subsection{Complete Hopf super-algebras}\label{subsec:complete_Hopf}
Let $A$ be a complete connected super-algebra, and let
$\epsilon : A \to A/\m_A=\Bbbk$ denote the natural projection. 

\begin{definition}\label{def:complete_Hopf}
Given a (necessarily continuous) super-algebra map 
$\Delta : A \to A~\hotimes~A$, we say that
$A=(A,\Delta,\epsilon)$ is a \emph{complete \textup{(}topological\textup{)} Hopf super-algebra}, if $\Delta$ and $\epsilon$,
as coproduct and counit, respectively, 
satisfy the coalgebra axioms modified as before.

\end{definition}

\begin{rem}\label{rem:complete_Hopf}
(1)\
A complete Hopf super-algebra is a complete topological super-coalgebra, in particular.

(2)\
Every complete Hopf super-algebra has a unique super-algebra involution that acts 
as antipode; see \cite[Proposition 3.4]{M1}. A map of complete Hopf super-algebras
is a super-algebra map compatible with the coproduct, which is necessarily compatible
with the counit and the antipode. 

(3)\
For simplicity and most of the needs of the paper, the above definition of complete Hopf super-algebras, assuming that the
underlying complete super-algebras are connected, was made more restrictive than 
would be expected; 
indeed, it excludes discrete Hopf super-algebras, for example. 
The expected one would
be a complete super-vector space $A$ equipped with continuous super-linear maps 
\[
m : A\, \hotimes \, A \to A,\ u : \Bbbk \to A,\ \Delta : A \to A\, \hotimes\, A,\ \epsilon : A \to \Bbbk,\
S : A \to A 
\]
which satisfy the Hopf-algebra axioms. 
This version of the notion will be exceptionally used only in a few places, referred to as 
a \emph{complete Hopf super-algebra
in the wider sense}. A \emph{Hopf super-algebra map} between those objects is  
a continuous
super-algebra map compatible with the coproduct and the counit; it is necessarily compatible with 
the antipode. 
\end{rem}

We see easily the following. 

\begin{lemma}\label{lem:co-group_object}
Complete connected super-algebras and super-algebra maps
form a category, which has finite coproducts given by $\hotimes$ and the initial object $\Bbbk$. 
Co-group objects in the category
are precisely complete Hopf super-algebras. 
\end{lemma}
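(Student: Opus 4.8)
The statement has two assertions: first, that complete connected super-algebras with super-algebra maps form a category with finite coproducts computed by $\hotimes$ (and with $\Bbbk$ as initial object); second, that co-group objects in this category are exactly complete Hopf super-algebras. The plan is to verify the first assertion by a direct diagram chase using the universal property of the completed tensor product, and then to dualize the definition of ``group object'' to read off that a co-group structure is precisely the data $(\Delta,\epsilon)$ (plus antipode) of Definition \ref{def:complete_Hopf}.

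\textbf{Step 1: $\Bbbk$ is initial.} A complete connected super-algebra $A$ has residue field $\Bbbk$, and the structure map $\Bbbk \to A$ is the unique super-algebra map $\Bbbk \to A$ (any such map is determined by sending $1 \mapsto 1$, and it is automatically continuous since the $\m_A$-adic topology on $\Bbbk$ is discrete). So $\Bbbk$ is the initial object.

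\textbf{Step 2: $\hotimes$ is the coproduct.} Given complete connected $A, B$, I would show $A\,\hotimes\,B$ (which is again complete connected, as recalled just before \S\ref{subsec:complete_Hopf}) together with the canonical maps $A \to A\,\hotimes\,B \leftarrow B$ (induced by $a \mapsto a\otimes 1$ and $b \mapsto 1\otimes b$ at finite levels $A/\m_A^n \otimes B/\m_B^n$, then passing to the limit) satisfies the universal property of a coproduct. Concretely: given continuous super-algebra maps $f : A \to C$ and $g : B \to C$ into a complete connected $C$, the maps $A \to C/\m_C^n$ and $B \to C/\m_C^n$ factor through $A/\m_A^n$ and $B/\m_B^n$ for each $n$ (continuity), and since $C/\m_C^n$ is super-commutative these induce a well-defined super-algebra map $A/\m_A^n \otimes B/\m_B^n \to C/\m_C^n$ on the ordinary tensor product; taking the projective limit over $n$ gives the unique continuous super-algebra map $A\,\hotimes\,B \to C$ restricting to $f$ and $g$. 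Associativity of $\hotimes$ and compatibility with $\Bbbk$ as unit follow from \eqref{eq:tensor_product_with_k} and the corresponding statements for ordinary tensor products at each finite level. I expect this step to be where the only real care is needed --- namely checking that one may genuinely reduce everything to finite quotients and that super-commutativity of the target is exactly what licenses the map out of the (completed) tensor product --- but it is routine.

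\textbf{Step 3: co-group objects.} Dualizing: in any category with finite coproducts and initial object $I$, a co-group object is an object $A$ equipped with a comultiplication $A \to A \sqcup A$, a counit $A \to I$, and a coinverse $A \to A$ satisfying the duals of the group axioms. Here $\sqcup = \hotimes$ and $I = \Bbbk$, so the comultiplication is a continuous super-algebra map $\Delta : A \to A\,\hotimes\,A$, the counit is a continuous super-algebra map $A \to \Bbbk$, and by connectedness the counit is forced to be the canonical projection $\epsilon : A \to A/\m_A = \Bbbk$ (cf. Remark \ref{rem:local_map}). The co-associativity and co-unit axioms are then exactly the coalgebra axioms of Definition \ref{def:complete_Hopf} (with $\otimes$ replaced by $\hotimes$), and the existence of the coinverse is the existence of an antipode, which by Remark \ref{rem:complete_Hopf}(2) is automatic and is the unique super-algebra involution acting as antipode. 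Hence a co-group object is precisely a complete Hopf super-algebra in the sense of Definition \ref{def:complete_Hopf}, and conversely every such is a co-group object. This completes the proof.
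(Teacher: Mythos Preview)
Your proof is correct and follows the natural approach; the paper itself gives no proof at all (the lemma is stated with only ``We see easily the following''), so your write-up simply supplies the routine details the authors omit. One small remark: in Step~2 you speak of ``continuous super-algebra maps $f,g$'', but in the category under consideration the morphisms are bare super-algebra maps---continuity is automatic by Remark~\ref{rem:local_map} since the target is connected, and it would be cleaner to invoke that remark explicitly rather than assume continuity as a hypothesis.
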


\begin{rem}\label{rem:formal_group}
A smooth Noetherian complete Hopf super-algebra $A$ is defined on some formal power series super-algebra
$\Bbbk[[\sT_1,\dots,\sT_{m+n}]]$ in $m$ even and $n$ odd variables,
as in \eqref{eq:formal_power_series}.  
Note that $\epsilon(\sT_i)=0$, $1 \le i \le m+n$.
We write simply as $A=\Bbbk[[\sT]]$ with $\sT=(\sT_1,\dots, \sT_{m+n})$. Then we can write as 
$A~\hotimes~A=\Bbbk[[\sT,\sP]]$, where $\sT$ is as above, and
$\sP=(\sP_1,\dots,\sP_{m+n})$ is another sequence of $m$ even and $n$ odd variables. 
Defining $\Delta$ on $A$ is the same as giving, as values $(\Delta(\sT_1),\dots, \Delta(\sT_{m+n}))$, a sequence 
\[ F(\sT,\sP)=(F_1(\sT,\sP),\dots, F_{m+n}(\sT,\sP)) \]
of elements $F_i(\sT,\sP) \in \Bbbk[[\sT,\sP]]$ which are even for $i \le m$, and odd for $i>m$,  
such that
\begin{itemize}
\item[(i)]
$F(\sT,0)=\sT,\ F(0,\sP)=\sP$, 
\item[(ii)]
$F(F(\sT,\sP), \sQ)= F(\sT, F(\sP,\sQ))$~~in~~$\Bbbk[[\sT,\sP,\sQ]]$. 
\end{itemize}
In view of the definition of formal group laws \cite[Page 111]{S},
a sequence $F(\sT)$ as above may be called a \emph{formal super-group law}. 
\end{rem}

\subsection{Connected super-coalgebras and hyper-super-algebras}\label{subsec:connected_super-coalgebra}
Let $C$ be a super-cocommutative super-coalgebra. 
Regarded as an ordinary coalgebra, it has the coradical 
filtration \cite[p.185]{Sw}
\[ C_{(0)} \subset C_{(1)} \subset C_{(2)} \subset \dots \]
which starts with the coradical $C_{(0)} =\mr{Corad}\, C$ of $C$, and amounts to
$\bigcup_{n \ge 0}C_{(n)}=C$. 
By definition $C_{(0)}$ is the direct sum of
all simple subcoalgebras, each of which is the dual $L^*$ of some finite field extension $L/\Bbbk$;
it is, therefore, purely even. For $n >0$, $C_{(n)}$ is the kernel of the composite
\begin{equation}\label{eq:corad_filt} 
C \overset{\Delta_n}{\longrightarrow} C^{\otimes (n+1)} \longrightarrow (C/C_{(0)})^{\otimes (n+1)}
\end{equation}
of the $n$-times iterated coproduct $\Delta_n$ with the natural projection; it is a super-subcoalgebra
of $C$. 
Moreover, we have $\Delta(C_{(n)})\subset \sum_{i=0}^n C_{(i)}\otimes C_{(n-i)}$. 
Note that the composite above induces injections  
\begin{equation}\label{eq:induced_injection}
C_{(n)}/C_{(n-1)} \hookrightarrow (C_{(1)}/C_{(0)})^{\otimes n},\quad n > 0. 
\end{equation}

We say that $C$ is \emph{smooth}, if given an inclusion $D \subset E$ of super-cocommutative
super-coalgebras such that $D \supset \mr{Corad}\, E$, every super-coalgebra map $D \to C$ 
extends to such a map $E \to C$. 

We say that $C$ is \emph{connected}, 
if $\dim C_{(0)}=1$.  
Suppose that this is the case. Then $C_{(0)}$ is uniquely spanned by a group-like; we denote
this group-like by $1_C$. Every super-coalgebra map between connected super-coalgebras
preserves the group-like. Let 
\[
P(C) = \{ \, x \in C \mid \Delta(x) = 1_C \otimes x + x \otimes 1_C \, \}
\]
denote the super-vector subspace of $C$ consisting of all primitives. Then we have
\[
C_{(1)}= \Bbbk 1_C \oplus P(C). 
\]
See \cite[Proposition 10.0.1]{Sw}.

A connected super-coalgebra $C$ is said to be \emph{of finite type}, if $\dim P(C) < \infty$,
or equivalently, if $\dim C_{(n)}< \infty$ for all $n >0$; see \eqref{eq:induced_injection}. 

\begin{definition}\label{def:super-hyper}
A \emph{hyper-super-algebra} is a super-bialgebra which is connected
as a super-coalgebra. It may not be super-commutative as a super-algebra. 
\end{definition}

\begin{rem}[added in revision]
Historically, hyper-algebras, which were eariler called \emph{hyper-Lie-algebras},
were first intensively studied by M.~Takeuchi \cite{T1} as objects which play
a generalized role of Lie algebras in arbitrary characteristic. The super analogue defined as above has been already
studied by several authors (see \cite{M3}, for example), but in most cases unlike in the
present paper, the super objects associated with super Lie (or algebraic) groups 
(see Definition \ref{def:hy_Lie} below) are only studied under the alternative name 
\emph{\textup{(}Hopf\textup{)} super-algebras of distributions} supported
at the identity; see \cite{CCF}, for example. 
\end{rem}

Making a connected super-coalgebra $C$ into a hyper-super-algebra is the same
as giving a super-coalgebra map $C \otimes C \to C$ which together with the group-like $1_C$
make $C$ into an algebra as its product and identity element, respectively. 

Every hyper-super-algebra has an antipode, and is, therefore, a Hopf super-algebra.
We now see from \cite[Corollary 11.0.6]{Sw} the following.

\begin{lemma}\label{lem:group-object}
The connected super-coalgebras and the super-coalgebra maps form a category, which has
finite products given by $\otimes$ and the terminal object $\Bbbk$. 
Group objects in the category are precisely hyper-super-algebras.
\end{lemma}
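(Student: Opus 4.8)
The plan is to prove Lemma \ref{lem:group-object} by dualizing, in effect, Lemma \ref{lem:co-group_object}, or rather by invoking directly \cite[Corollary 11.0.6]{Sw} as the statement already promises. First I would verify that the connected super-coalgebras together with super-coalgebra maps form a category: this is immediate, as it is a full subcategory of the category of all super-coalgebras. Next I would check that $\Bbbk$, with its unique super-coalgebra structure (which is purely even, connected, with $1_\Bbbk=1$), is a terminal object; indeed for any connected super-coalgebra $C$ the counit $\epsilon_C\colon C\to\Bbbk$ is the unique super-coalgebra map $C\to\Bbbk$, since any such map must preserve the group-like and the coradical filtration, hence is determined on $C_{(0)}=\Bbbk 1_C$ and, by a filtration induction using $\Delta(C_{(n)})\subset\sum_{i}C_{(i)}\otimes C_{(n-i)}$, must agree with $\epsilon_C$ throughout. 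Then I would show that if $C$ and $D$ are connected super-coalgebras then so is $C\otimes D$, with group-like $1_C\otimes 1_D$: this is the key coalgebra fact, and it follows from $\mathrm{Corad}(C\otimes D)\subset\mathrm{Corad}\,C\otimes\mathrm{Corad}\,D$ (a standard fact over a field, valid here since the relevant coradicals $C_{(0)},D_{(0)}$ are the appropriate simple pieces), together with $\dim C_{(0)}=\dim D_{(0)}=1$; the projections $C\otimes D\to C$, $C\otimes D\to D$ obtained by tensoring with the counits exhibit $C\otimes D$ as the categorical product, the universal property being checked by the same filtration induction as for the terminal object.

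With the monoidal/product structure $(\otimes,\Bbbk)$ on the category of connected super-coalgebras established, the second assertion is essentially formal: a group object is an object $C$ equipped with multiplication $m\colon C\otimes C\to C$, unit $u\colon\Bbbk\to C$, and inversion $S\colon C\to C$, all being super-coalgebra maps, satisfying associativity, unitality, and the inverse axioms with respect to the product $\otimes$ and terminal object $\Bbbk$. Unravelling this: $m$ a super-coalgebra map making $(C,m,u)$ a monoid is precisely the datum that makes $C$ into a super-bialgebra which is connected as a super-coalgebra, i.e.\ a hyper-super-algebra by Definition \ref{def:super-hyper} — here one notes that $u\colon\Bbbk\to C$ necessarily sends $1\mapsto 1_C$, since $1_C$ spans $C_{(0)}$ and is the unique group-like. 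Conversely every hyper-super-algebra is such a monoid object. It then remains only to observe that the inversion $S$ comes for free: every hyper-super-algebra has an antipode (as remarked just before the lemma), and the antipode is automatically a super-coalgebra map and satisfies exactly the group-object inverse axioms $m\circ(S\otimes\mathrm{id})\circ\Delta = u\circ\epsilon = m\circ(\mathrm{id}\otimes S)\circ\Delta$. Hence the group objects are precisely the hyper-super-algebras, and the data $(S)$ being determined by $(m,u)$ shows the correspondence is a bijection, not merely a surjection.

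The only genuine obstacle is the coradical computation $\mathrm{Corad}(C\otimes D)=\mathrm{Corad}\,C\otimes\mathrm{Corad}\,D$ in the super setting and, relatedly, that the coradical filtration of $C\otimes D$ is the convolution of those of $C$ and $D$ (equivalently $\Delta$-compatibility of the filtration, which we already have available from \eqref{eq:corad_filt} and the displayed inclusion $\Delta(C_{(n)})\subset\sum_i C_{(i)}\otimes C_{(n-i)}$). Since the mod-$2$ reduction does not affect the underlying coalgebra and all the coradicals in play here are purely even, this reduces to the classical statement for ordinary cocommutative coalgebras over a field, which is exactly \cite[Corollary 11.0.6]{Sw} (or \cite[Chapter 9, 11]{Sw}); so in the write-up I would simply cite it, noting that the super-structure is carried along for the ride because all structure maps are homogeneous and the forgetful functor to coalgebras creates the relevant (co)limits. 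Everything else is a routine diagram chase of the kind indicated above, and I would present it telegraphically rather than in full.
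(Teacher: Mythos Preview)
Your proposal is correct and follows essentially the same approach as the paper: both rely on \cite[Corollary 11.0.6]{Sw} for the key coradical/tensor-product fact, with the remaining points (antipode for free, unraveling the group-object axioms) being exactly the observations the paper records in the paragraph preceding the lemma. Your write-up simply supplies more of the routine details; one small simplification is that uniqueness of the map to $\Bbbk$ follows immediately from counit-compatibility ($\epsilon_\Bbbk\circ f=\epsilon_C$ forces $f=\epsilon_C$), so the filtration induction there is unnecessary.
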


Let $C$ be a hyper-super-algebra. Set $\g= P(C)$. Then this super-vector subspace of $C$
is closed under the super-commutator
\[ 
[x,y] = xy - (-1)^{|x||y|}yx,
\]
with which $\g$ forms a Lie super-algebra. The universal envelope $\U(\g)$ of $\g$
is a hyper-super-algebra in which every element of $\g$ is primitive. The embedding
$\g \to C$ uniquely extends to a map 
\begin{equation}\label{eq:U(g)toH}
\U(\g) \to C
\end{equation}
of hyper-super-algebras, which is an isomorphism if $\mr{char}\, \Bbbk=0$;
this last fact will be re-stated in Proposition \ref{prop:smooth_super-hyper}. 

\begin{example}\label{ex:exterior_algebra}
The exterior algebra $\wedge(\ms{V})$ on a vector space $\ms{V}$ is a super-commutative
hyper-super-algebra in which every element of $\ms{V}$ is an odd primitive; it is in fact an 
$\mathbb{N}$-graded Hopf super-algebra with respect to the natural grading. 
If $\dim \ms{V} < \infty$, then the natural parings 
$\langle \ , \ \rangle : \wedge^n(\ms{V}^*) \times \wedge^n(\ms{V}) \to \Bbbk$, $n \ge 0$, given by
$\langle 1,1 \rangle =1$ and 
\[
\langle v^*_1\wedge\dots \wedge v^*_n,\ v_1\wedge\dots \wedge v_n\rangle=
\det(v^*_i(v_j)), 
\]
where $n>0$ and $v^*_i \in \ms{V}^*$,\ $v_i \in \ms{V}$, $1\le i \le n$, 
induce an isomorphism
\begin{equation}\label{eq:selfdual}
\wedge(\ms{V}^*) \simeq \wedge(\ms{V})^* 
\end{equation}
of $\mathbb{N}$-graded Hopf super-algebras, through which we will identify the two objects; 
see \cite[Eq.(5), Remark 1]{M3}, for example. 
\end{example}

\subsection{Smoothness criteria}\label{subsec:smoothness_criteria}
Let $\ms{U}=\ms{U}_0 \oplus \ms{U}_1$ be a super-vector space. A \emph{connected super-coalgebra on}
$\ms{U}$ is a pair $(C,\pi)$ of a connected super-coalgebra $C$ and a super-linear map
$\pi : C \to \ms{U}$ such that $\pi(1_C)=0$. 
The connected super-coalgebras on $\ms{U}$ form a category;
morphisms are \emph{super-coalgebra} maps on $\ms{U}$, by which we mean super-coalgebra maps 
compatible with the maps to $\ms{U}$.  
A terminal object is said to be \emph{cofree}; see \cite[Chapter XII]{Sw}. 

\begin{prop}\label{prop:smooth_criteion}
There exists uniquely, up to isomorphism, a cofree connected
super-coalgebra, $(\B(\ms{U}), \pi)$, on $\ms{U}$. 
\end{prop}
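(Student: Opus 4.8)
The plan is to construct $\B(\ms{U})$ explicitly, following the classical construction of cofree (pointed irreducible) coalgebras \cite[Chapter XII]{Sw}, but carried out internally in the symmetric tensor category $\ms{SMod}_{\Bbbk}$ so that the super-signs are built in from the start. Concretely, I would set
\[
\B(\ms{U}) = \bigoplus_{n \ge 0} \ms{U}^{\otimes n},
\]
the tensor super-coalgebra on $\ms{U}$, where $\ms{U}^{\otimes 0}=\Bbbk$ is spanned by the intended group-like $1$, and where the coproduct is the deconcatenation
\[
\Delta(u_1\otimes\cdots\otimes u_n)=\sum_{i=0}^{n}(u_1\otimes\cdots\otimes u_i)\otimes(u_{i+1}\otimes\cdots\otimes u_n),
\]
with counit $\epsilon$ the projection onto $\ms{U}^{\otimes 0}$ and $\pi$ the projection onto $\ms{U}^{\otimes 1}=\ms{U}$. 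Since deconcatenation involves no permutation of the tensor factors, there are in fact no sign corrections needed in $\Delta$ itself; the $\mathbb{Z}_2$-grading on $\B(\ms{U})$ is the total one induced from that of $\ms{U}$, and one checks directly that $\Delta$ and $\epsilon$ are $\mathbb{Z}_2$-graded. Coassociativity and counitality are immediate from the combinatorics of splitting a word into pieces, exactly as in the ungraded case.

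Next I would verify that $\B(\ms{U})$ is connected. Its coradical filtration is the obvious length filtration $\B(\ms{U})_{(n)}=\bigoplus_{i=0}^{n}\ms{U}^{\otimes i}$: one has $\B(\ms{U})_{(0)}=\Bbbk 1$ because $1$ is the unique group-like (any group-like $g=\sum g_n$ with $g_n\in\ms{U}^{\otimes n}$ forces, via $\Delta(g)=g\otimes g$, that $g=g_0=1$), and then an easy induction using the formula $\Delta(\B(\ms{U})_{(n)})\subset\sum_{i}\B(\ms{U})_{(i)}\otimes\B(\ms{U})_{(n-i)}$ together with the injectivity in \eqref{eq:induced_injection} identifies the coradical filtration with the length filtration. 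In particular $\dim\B(\ms{U})_{(0)}=1$, so $\B(\ms{U})$ is a connected super-coalgebra, and $\pi(1)=0$, so $(\B(\ms{U}),\pi)$ is a connected super-coalgebra on $\ms{U}$.

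For the universal property, let $(C,\pi_C)$ be any connected super-coalgebra on $\ms{U}$, with group-like $1_C$. I would define $f:C\to\B(\ms{U})$ componentwise by $f_0=\epsilon_C$ and, for $n\ge 1$,
\[
f_n = \pi_C^{\otimes n}\circ \bar\Delta_{n-1},
\]
where $\bar\Delta_{n-1}:C\to (C/\Bbbk 1_C)^{\otimes n}$ is the reduced $(n-1)$-fold coproduct (using $\pi_C(1_C)=0$, the map $\pi_C^{\otimes n}$ factors through $(C/\Bbbk 1_C)^{\otimes n}$, so this is well defined), and set $f=\sum_n f_n$. This sum is locally finite because $C_{(m)}$ is killed by $f_n$ for $n>m$, using $\bar\Delta_{n-1}(C_{(m)})=0$ for $n-1\ge m$ — this is precisely the content of \eqref{eq:corad_filt} — so $f$ is a well-defined super-linear map. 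One checks $\pi\circ f=f_1=\pi_C$ and $\epsilon\circ f=f_0=\epsilon_C$. That $f$ is a super-coalgebra map, i.e. $\Delta_{\B}\circ f=(f\otimes f)\circ\Delta_C$, reduces on the degree-$(i,j)$ component to the identity $f_{i+j}=(f_i\otimes f_j)\circ\Delta_C$, which follows from coassociativity of $\Delta_C$ and the compatibility $\bar\Delta_{i-1}\otimes\bar\Delta_{j-1}$ composed with $\Delta_C$ equals $\bar\Delta_{i+j-1}$ modulo $\Bbbk 1_C$; since $\pi_C$ is $\mathbb{Z}_2$-graded, so is each $f_n$, hence so is $f$. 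Finally, uniqueness: if $g:C\to\B(\ms{U})$ is another super-coalgebra map on $\ms{U}$, write $g=\sum g_n$ with $g_n$ the $\ms{U}^{\otimes n}$-component; comparing degree-$n$ parts of $\Delta_{\B}\circ g=(g\otimes g)\circ\Delta_C$ gives $g_n=(g_1^{\otimes n})\circ\bar\Delta_{n-1}$ by induction (the base cases being $g_0=\epsilon_C$ since $g$ preserves group-likes, and $g_1=\pi\circ g=\pi_C$), which forces $g=f$.

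The main obstacle is purely bookkeeping rather than conceptual: one must confirm at each stage that no hidden super-sign intervenes. The key point making this painless is that deconcatenation of tensor words never transposes factors, so $\Delta_{\B}$ is sign-free; the only signs in the argument are those implicit in the symmetry $\tau$ of $\ms{SMod}_{\Bbbk}$ when one writes $(f\otimes f)\circ\Delta_C$, and these are absorbed automatically because all the maps involved ($\pi_C$, the iterated coproducts, the projections) are morphisms in $\ms{SMod}_{\Bbbk}$. Thus the classical proof of \cite[Chapter XII]{Sw} transports verbatim, and the whole statement follows.
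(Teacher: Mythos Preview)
There is a genuine gap: your object $\bigoplus_{n\ge 0}\ms{U}^{\otimes n}$ with deconcatenation coproduct is not super-cocommutative, and therefore is not an object of the category in which the proposition asks for a terminal object. The paper's notion of ``connected super-coalgebra'' (Section~\ref{subsec:connected_super-coalgebra}) is introduced under the standing hypothesis that $C$ is super-cocommutative, and the category of ``connected super-coalgebras on $\ms{U}$'' in Section~\ref{subsec:smoothness_criteria} inherits this hypothesis. To see the failure concretely: for any $0\ne v\in\ms{U}_1$, the middle term of $\Delta(v\otimes v)$ in $\ms{U}^{\otimes 1}\otimes\ms{U}^{\otimes 1}$ is $v\otimes v$, while applying the super-symmetry $\tau$ gives $-v\otimes v$; since $\op{char}\Bbbk\ne 2$ these differ. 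Likewise, for linearly independent $x,y\in\ms{U}_0$ the middle term of $\Delta(x\otimes y)$ is $x\otimes y\ne y\otimes x$. So your $\B(\ms{U})$ is not even in the category, and cannot be its terminal object.

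What your argument does correctly establish is that the tensor super-coalgebra is cofree in the \emph{larger} category of connected, not necessarily super-cocommutative, super-coalgebras on $\ms{U}$. But the terminal object of the super-cocommutative subcategory is strictly smaller. The paper constructs it as $\B(\ms{U}_0)\otimes\wedge(\ms{U}_1)$, splitting the problem along parity: for $\ms{U}_0$ one invokes Sweedler's cofree \emph{cocommutative} pointed-irreducible coalgebra \cite[Theorem~12.2.5]{Sw} (the divided-power coalgebra, not the tensor coalgebra---these differ already in positive characteristic), after observing via an adjunction that purely-even targets see only the purely-even quotient of the source; for $\ms{U}_1$ the exterior coalgebra $\wedge(\ms{U}_1)$ does the job, and there your formula $f=\sum_n\pi_C^{\otimes n}\circ\Delta_{n-1}$ followed by the projection $\ms{U}_1^{\otimes n}\to\wedge^n(\ms{U}_1)$ is exactly what the paper uses.
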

\begin{proof}
The uniqueness is obvious. To prove the existence,
suppose $\ms{U}=\ms{U}'\oplus \ms{U}''$, and that $(\B(\ms{U}'),\pi')$ and $(\B(\ms{U}''),\pi'')$
are found. Then one sees that 
$(\B(\ms{U}') \otimes \B(\ms{U}''), \pi'\otimes \epsilon +\epsilon \otimes \pi'')$ 
is a cofree connected super-coalgebra on $\ms{U}$. 
Therefore, it suffices to prove the existence for $\ms{U}_0$ and $\ms{U}_1$. 

As for $\ms{U}_0$, note that
given a super-coalgebra $C$, $C_1$ is a super-coideal of $C$, and every super-coalgebra
map from $C$ to an ordinary coalgebra uniquely factors through the quotient coalgebra $C_0=C/C_1$.
Then we see that
$(C, \pi) \mapsto (C_0, \pi|_{C_0})$ gives a functor from the category
of connected super-coalgebras on $\ms{U}_0$ to the full subcategory consisting of all purely
even objects, which is left adjoint to the inclusion functor. 
Therefore, the terminal object $(\B(\ms{U}_0), \pi)$
of the full subcategory, which exists by \cite[Theorem 12.2.5]{Sw}, is the desired object.

As for $\ms{U}_1$, the exterior algebra $\wedge(\ms{U}_1)$, regarded as a hyper-super-algebra
as in Example \ref{ex:exterior_algebra},
and the natural projection $\pi : \wedge(\ms{U}_1)\to \wedge^1(\ms{U}_1)=\ms{U}_1$ give 
the desired object. Given a connected super-coalgebra $(C, \pi)$ on $\ms{U}_1$, 
the composites 
\[ 
C \overset{\Delta_{n-1}}{\longrightarrow} C^{\otimes n} \overset{\pi^{\otimes n}}{\longrightarrow}
\ms{U}_1^{\otimes n} \overset{\text{cano}}{\longrightarrow} \wedge^n(\ms{U}_1),\quad n > 0,
\]
and $\epsilon : C \to \Bbbk =\wedge^0(\ms{U}_1)$ 
amount to the unique super-coalgebra map $C \to \wedge(\ms{U}_1)$ on $\ms{U}_1$. 
This is shown by modifying the proof of the last cited theorem; one can suppose $\dim \ms{U}_1<\infty$,
and reduce to the fact that 
the dual connected super-algebra 
$\wedge(\ms{U}_1^*)\, (=\wedge(\ms{U}_1)^*)$ is universal among 
those connected super-algebras each of which is equipped with a super-linear
map from $\ms{U}_1^*$ to the maximal super-ideal; see also Proposition \ref{prop:duality} below. 
\end{proof}

As is just proved, we have
\[
\B(\ms{U})=\B(\ms{U}_0)\otimes \wedge(\ms{U}_1). 
\]
Recall from \cite[Section 12.3]{Sw} that $\B(\ms{U}_0)$ includes $\ms{U}_0$ so that $P(\B(\ms{U}_0))= \ms{U}_0$ and 
$\pi|_{\ms{U}_0}= \mr{id}_{\ms{U}_0}$. Moreover, $\B(\ms{U}_0)$ is a commutative hyper-algebra
which has as its product, the unique super-coalgebra map 
$\mu : \B(\ms{U}_0)\otimes \B(\ms{U}_0)\to \B(\ms{U}_0)$ that makes the diagram
\[
\begin{xy}
(0,0)   *++{\B(\ms{U}_0)\otimes \B(\ms{U}_0)}  ="1",
(33,0)  *++{\B(\ms{U}_0)}    ="2",
(0,-15) *++{\ms{U}_0\oplus \ms{U}_0}          ="3",
(33,-15)*++{\ms{U}_0}    ="4",
{"1" \SelectTips{cm}{} \ar @{->}^{\hspace{5mm}\mu} "2"},
{"1" \SelectTips{cm}{} \ar @{->}_{\pi \otimes \epsilon + \epsilon \otimes \pi} "3"},
{"3" \SelectTips{cm}{} \ar @{->}^{\text{sum}} "4"},
{"2" \SelectTips{cm}{} \ar @{->}^{\pi} "4"}
\end{xy}
\]
commutative.
Given $0\ne b \in \ms{U}_0$, $\B(\ms{U}_0)$ includes uniquely a hyper-subalgebra $\B_b$ which has a basis
$b_0=1, b_1=b, b_2, b_3,\dots$ of infinite length, such that $\pi(b_r)=\delta_{1,r}b$ and
\begin{equation}\label{eq:divided_power_sequence}
\Delta(b_r) = \sum_{k=0}^r b_k \otimes b_{r-k},\quad \epsilon(b_r)=\delta_{0,r},\quad 
b_rb_s=\binom{r+s}{r}\, b_{r+s},
\end{equation}
where $r, s \ge 0$.
Given a basis $(b^{(i)})_{i \in I}$ of $\ms{U}_0$, $\B(\ms{U}_0)$ is the tensor
product of $\B_{b^{(i)}}$, $i \in I$. 

With $\wedge(\ms{U}_1)$ as in Example \ref{ex:exterior_algebra}, 
$\B(\ms{U})=\B(\ms{U}_0)\otimes \wedge(\ms{U}_1)$ is a super-commutative hyper-super-algebra. 
The product $\B(\ms{U})\otimes \B(\ms{U})\to \B(\ms{U})$ on $\B(\ms{U})$ gives a commutative diagram analogous to
the last one. 
If $\dim \ms{U}_0 = m < \infty$ and $\dim \ms{U}_1=n<\infty$, then one obtains from 
the description $\B(\ms{U}_0)=\bigotimes_i\B_{b^{(i)}}$ above
and \eqref{eq:selfdual}, an isomorphism of super-algebras
\begin{equation}\label{eq:dual_of_B}
\B(\ms{U})^*\simeq \Bbbk[[\sT_1,\dots, \sT_{m+n}]],
\end{equation}
where the variables $\sT_1,\dots \sT_m$ are even, and $\sT_{m+1},\dots, \sT_{m+n}$ odd. 

Let $C$ be a connected super-coalgebra, and set $\ms{U} = P(C)$. Choose
a super-linear map $\pi : C \to \ms{U}$ such that $\pi(1_C)=0$ and 
$\pi|_{\ms{U}}=\mr{id}_{\ms{U}}$. We then have a unique super-coalgebra map
\begin{equation}\label{eq:phi_on_U}
f : C \to \B(\ms{U}) 
\end{equation}
on $\ms{U}$. This is injective by \cite[Lemma 11.0.1]{Sw}, since it is identical on 
$P(C)$. 

\begin{prop}\label{prop:smoothness_criterion}
$C$ is smooth if and only if $f$ is an isomorphism. 
\end{prop}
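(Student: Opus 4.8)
The plan is to prove Proposition~\ref{prop:smoothness_criterion} by establishing each implication separately, using the universal (cofree) property of $\B(\ms{U})$ together with the structure of the coradical filtration.

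\textbf{The ``if'' direction.} Suppose $f : C \to \B(\ms{U})$ is an isomorphism. To verify that $C$ is smooth, I would take an inclusion $D \subset E$ of super-cocommutative super-coalgebras with $D \supset \mr{Corad}\, E$ and a super-coalgebra map $g : D \to C$; the task is to extend $g$ to $E \to C$. Since $D \supset \mr{Corad}\, E$, the coalgebra $E$ is connected if $D$ is, and $g$ preserves the group-like. Composing with $\pi : C \to \ms{U}$ (more precisely, with the $\pi$ chosen so that $\pi|_{\ms U}=\mr{id}$) gives a super-linear map $\pi \circ g : D \to \ms{U}$ vanishing on $1_D$. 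The point is that any such super-linear map out of $D$ extends to one out of $E$: because $\ms U$ is just a super-vector space, one simply extends the linear map $D \to \ms U$ arbitrarily to $E \to \ms U$ (sending $1_E$ to $0$), using that $\Bbbk$ is a field. By the cofree property of $\B(\ms{U})$ (Proposition~\ref{prop:smooth_criteion}), this extended super-linear map $E \to \ms{U}$ lifts uniquely to a super-coalgebra map $E \to \B(\ms{U})$ on $\ms{U}$, and one must check it restricts to $f \circ g$ on $D$ — which follows from the uniqueness clause in the cofree property, since both $E \to \B(\ms U)$ restricted to $D$ and $f\circ g$ are super-coalgebra maps on $\ms U$ inducing the same map $D \to \ms U$. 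Composing with $f^{-1}$ gives the desired extension $E \to C$.

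\textbf{The ``only if'' direction.} Suppose $C$ is smooth; I want $f$ to be an isomorphism. It is already injective by \cite[Lemma 11.0.1]{Sw} as noted in the excerpt, so the issue is surjectivity. Here I would apply the smoothness hypothesis to a cleverly chosen inclusion. The natural candidate is $D := f(C) \subset E := \B(\ms{U})$: since $f$ is a super-coalgebra isomorphism onto its image and $\B(\ms U)$ is connected, $f(C)$ contains $\mr{Corad}\,\B(\ms U) = \Bbbk 1$. Smoothness of $C$ applied to the super-coalgebra map $f^{-1} : f(C) \to C$ yields an extension $h : \B(\ms{U}) \to C$. Now compose: $f \circ h : \B(\ms U) \to \B(\ms U)$ is a super-coalgebra endomorphism restricting to the identity on $f(C) \supset \Bbbk 1$, hence it preserves the group-like and is a super-coalgebra map on $\ms U$ (via $\pi$) inducing $\pi|_{\ms U} = \mr{id}_{\ms U}$, because $f(C) \supset \ms U$ once we arrange the identifications (recall $P(C)=\ms U$ embeds in $\B(\ms U)$ so that $P(\B(\ms U))=\ms U$, and $f$ is identical on primitives). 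By the uniqueness in the cofree property, $f \circ h = \mr{id}_{\B(\ms U)}$. Combined with injectivity of $f$, this forces $h$ to be a two-sided inverse, so $f$ is an isomorphism.

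\textbf{Main obstacle.} I expect the delicate point to be the bookkeeping with the map $\pi$ and the identification of $\ms U = P(C)$ with its image $P(\B(\ms U)) \subset \B(\ms U)$: one must be careful that $f$ is ``identical on $\ms U$'' in the precise sense that makes the uniqueness clause of the cofree property applicable (namely $\pi_{\B(\ms U)} \circ f = \pi_C$, i.e. both are compatible with the same data into $\ms U$). A secondary subtlety in the ``only if'' direction is confirming that $f(C)$ genuinely contains all of $\ms U \subset \B(\ms U)$ — this uses that $f$ is identical on $P(C)$, which is exactly how $f$ was constructed in \eqref{eq:phi_on_U}, together with $P(\B(\ms U)_0) = \ms U_0$ from \cite[Section 12.3]{Sw} and the evident fact that $\ms U_1 = \wedge^1(\ms U_1) \subset P(\wedge(\ms U_1))$. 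Once these identifications are pinned down, both implications are formal consequences of the universal property, with no serious computation.
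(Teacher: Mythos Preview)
Your overall strategy matches the paper's, but there is a genuine gap in each direction.

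\textbf{``Only if'' direction.} The step ``by the uniqueness in the cofree property, $f\circ h=\mr{id}_{\B(\ms U)}$'' does not go through. Uniqueness in Proposition~\ref{prop:smooth_criteion} requires that $f\circ h$ be a super-coalgebra map \emph{on} $\ms U$, i.e.\ that $\pi\circ(f\circ h)=\pi$ as maps $\B(\ms U)\to\ms U$. What you actually establish is only that $(f\circ h)|_{\ms U}=\mr{id}_{\ms U}$, because $\ms U\subset f(C)$; this controls $\pi\circ(f\circ h)$ on the subspace $\Bbbk 1\oplus\ms U$, not on higher terms of the coradical filtration. The extension $h$ was produced by the smoothness hypothesis alone, with no condition relating $\pi_C\circ h$ to $\pi$, so there is no reason $\pi\circ f\circ h=\pi$ globally. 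The paper avoids this by applying \cite[Lemma 11.0.1]{Sw} a second time, now to the retraction $h$: since $h\circ f=\mr{id}_C$ and $f|_{\ms U}=\mr{id}_{\ms U}$, one has $h|_{\ms U}=\mr{id}_{\ms U}$, so $h$ is injective on primitives, hence injective; being already surjective, $h$ is an isomorphism.

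\textbf{``If'' direction.} Your argument invokes the cofree property of $\B(\ms U)$ for the target super-coalgebra $E$, but Proposition~\ref{prop:smooth_criteion} only gives a terminal object among \emph{connected} super-coalgebras on $\ms U$. You observe correctly that $E$ is connected if $D$ is, but nothing in the smoothness definition forces $D$ to be connected. The paper closes this gap with a push-out reduction: it suffices to show that every inclusion $\B(\ms U)\hookrightarrow D$ with $\B(\ms U)\supset\mr{Corad}\,D$ (hence $D$ connected, since $\B(\ms U)$ is) splits; then the cofree property applies directly to $D$ after extending $\pi$ linearly. One could alternatively decompose $E$ into irreducible components, but some such reduction is needed before your extension argument becomes valid.
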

\begin{proof}
If $C$ is smooth, then $f$ has a retraction, which
is necessarily injective again by \cite[Lemma 11.0.1]{Sw}. Therefore, $f$ is an isomorphism.

Conversely, suppose $C= \B(\ms{U})$. To see that $C$ is smooth, it suffices to prove the following, as is seen by a familiar
argument using push-outs:
an inclusion $C \hookrightarrow D$ into a super-cocommutative 
super-coalgebra $D$ such that $C \supset \mr{Corad}\, D$, or $D$ is connected, necessarily splits. 
To regard $D$ as a connected super-coalgebra on $\ms{U}$, 
extend the super-linear map $\pi : C=\B(\ms{U}) \to \ms{U}$ to $D\to \ms{U}$. 
Then the unique super-coalgebra map $D \to C$ on $\ms{U}$ is seen to be the desired retraction.
\end{proof}

Let $C$ be a hyper-super-algebra. 
The pull-back of $C_0 \otimes C_0$ along $\Delta$
\begin{equation}\label{eq:underlineH}
\underline{C}=\Delta^{-1}(C_0 \otimes C_0)
\end{equation}
is the largest purely even super-subcoalgebra of $C$, and is in fact a Hopf subalgebra; see \cite[p.291]{M2}.
This $\underline{C}$ is now a hyper-algebra. Let $\g =P(C)$. Then $\g_0=P(\underline{C})$.   
Choose arbitrarily a totally ordered basis $(v_i)_{i\in I}$ of $\g_1$. Then in $C$, the products
$v_{i_1}\dots v_{i_n}$ 
of increasing basis elements $v_{i_1}<\dots< v_{i_n}$ of arbitrary length $n\ge 0$ form a left $\underline{C}$-free
basis. Hence we have the unit-preserving isomorphism of left $\underline{C}$-super-module super-coalgebras
\begin{equation}\label{eq:isom_module_coalgebra}
\underline{C}\otimes \wedge(\g_1) \overset{\simeq}{\longrightarrow} C 
\end{equation}
which sends $v_{i_1}\wedge \dots \wedge v_{i_n}$ 
in $\wedge(\g_1)$ to the product $v_{i_1}\dots v_{i_n}$ above; see \cite[Theorem 3.6]{M2}.  

\begin{prop}\label{prop:smooth_super-hyper}
For a hyper-super-algebra $C$, the following are equivalent:
\begin{itemize}
\item[(a)] $C$ is smooth as a super-commutative super-coalgebra;
\item[(b)] $\underline{C}$ is smooth as a cocommutative coalgebra;
\item[(c)] $\underline{C}$ is isomorphic to $\B(\g_0)$ as a coalgebra. 
\end{itemize}
If $\mr{char}\, \Bbbk=0$, then we have
\[
C=\U(\g),\quad \underline{C}=\U(\g_0),
\] 
and the equivalent conditions above are necessarily satisfied. 
\end{prop}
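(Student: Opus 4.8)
The plan is to prove the equivalence of (a), (b), (c) first, and then dispose of the characteristic-zero addendum separately using \eqref{eq:U(g)toH} and \eqref{eq:isom_module_coalgebra}.

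For the cycle of equivalences, I would start with (b)$\Leftrightarrow$(c). By Proposition \ref{prop:smoothness_criterion} applied to the connected (purely even) super-coalgebra $\underline{C}$, smoothness of $\underline{C}$ is equivalent to the canonical map $f:\underline{C}\to\B(P(\underline{C}))$ being an isomorphism; since $P(\underline{C})=\g_0$ (established just before the statement), this says exactly that $\underline{C}\cong\B(\g_0)$ as a coalgebra, which is (c). Here one should note that Proposition \ref{prop:smoothness_criterion} is stated for super-coalgebras, and a purely even connected super-coalgebra is the same thing as an ordinary connected cocommutative coalgebra, with $\B(\g_0)=\B((\g_0)_0)$ the cofree object in the purely even setting; so "smooth as a cocommutative coalgebra" in (b) matches the super-coalgebra notion of smoothness for purely even objects. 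The remaining link is (a)$\Leftrightarrow$(b). Using \eqref{eq:isom_module_coalgebra}, we have an isomorphism $\underline{C}\otimes\wedge(\g_1)\xrightarrow{\simeq}C$ of super-coalgebras (it is an isomorphism of left $\underline{C}$-super-module super-coalgebras, hence in particular of super-coalgebras). Since $\wedge(\g_1)=\B(\g_1)$ is always cofree hence smooth (Proposition \ref{prop:smooth_criteion} and its proof), and since by the tensor-product construction in the proof of Proposition \ref{prop:smooth_criteion} a tensor product of cofree connected super-coalgebras is cofree, we get: $C$ is smooth (equivalently, $C\cong\B(P(C))$ by Proposition \ref{prop:smoothness_criterion}) iff $\underline{C}$ is smooth. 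Concretely, $P(C)=\g$ and $\g=\g_0\oplus\g_1$, so $\B(\g)=\B(\g_0)\otimes\wedge(\g_1)$; comparing with \eqref{eq:isom_module_coalgebra}, the canonical map $C\to\B(\g)$ is an isomorphism iff $\underline{C}\to\B(\g_0)$ is, which closes the cycle.

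For the characteristic-zero statement, I would invoke \eqref{eq:U(g)toH}: the canonical hyper-super-algebra map $\U(\g)\to C$ is an isomorphism when $\mr{char}\,\Bbbk=0$. This is the one input I would take as granted from the earlier text (it is asserted right after \eqref{eq:U(g)toH}, to be "re-stated" here). Granting it, $C=\U(\g)$; restricting to the purely even parts and using that $\underline{C}$ is the largest purely even super-subcoalgebra together with $P(\underline{C})=\g_0$ and the PBW-type splitting \eqref{eq:isom_module_coalgebra}, one identifies $\underline{C}=\U(\g_0)$. Finally, in characteristic zero $\U(\g_0)\cong\B(\g_0)$ as coalgebras — this is the classical fact that the universal envelope of a Lie algebra is the cofree connected cocommutative coalgebra on its space of primitives in characteristic zero (Milnor–Moore / \cite[Section 12.3]{Sw}) — so condition (c), hence all three conditions, holds automatically.

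I expect the main obstacle to be bookkeeping rather than depth: namely, making precise that "smooth as a cocommutative coalgebra" for the purely even object $\underline{C}$ coincides with the super-coalgebra notion of smoothness used in Proposition \ref{prop:smoothness_criterion}, and that the cofree object $\B(\g_0)$ in the purely even category agrees with $\B$ of the super-vector space $\g_0$ viewed as purely even. Both are essentially built into the proof of Proposition \ref{prop:smooth_criteion} (which treats $\ms{U}_0$ via the left adjoint to the inclusion of purely even objects), so the argument is a matter of citing the right pieces. A secondary point to handle carefully is that \eqref{eq:isom_module_coalgebra} is only claimed as an isomorphism of $\underline{C}$-super-module super-coalgebras; one must check that forgetting the module structure it still matches the comultiplicative structure of $\B(\g_0)\otimes\wedge(\g_1)=\B(\g)$, i.e. that the coalgebra structure transported along it is the cofree one — but this follows since the map is by construction the one sending $v_{i_1}\wedge\cdots\wedge v_{i_n}$ to $v_{i_1}\cdots v_{i_n}$, compatibly with the universal property defining $\B(\ms{U})$.
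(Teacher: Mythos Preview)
Your proposal is correct and uses the same key ingredients as the paper's proof: Proposition~\ref{prop:smoothness_criterion} for the cofree characterization of smoothness, and the splitting \eqref{eq:isom_module_coalgebra}. Two small points of comparison are worth noting. First, for (a)~$\Rightarrow$~(b) the paper argues directly from the definition of smoothness: any super-coalgebra map from a purely even coalgebra into $C$ lands in $\underline{C}$, so an extension problem for $\underline{C}$ can be solved inside $C$ and then automatically lands back in $\underline{C}$. This is cleaner than going through the cofree criterion and the tensor decomposition, where you implicitly need that a super-coalgebra isomorphism $C\simeq\B(\g)$ restricts to an isomorphism on the largest purely even sub-super-coalgebras (true, but you do not spell it out). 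Second, your treatment of the characteristic-zero statement takes $\U(\g)\to C$ being an isomorphism ``as granted from the earlier text''; but that earlier passage is a forward reference to the present proposition, so this is circular. The paper resolves it by citing Kostant's Theorem \cite[Theorem 13.0.1]{Sw} (for $C=\U(\g)$) and PBW (for the divided-power sequences $x^n/n!$ in $\U(\g_0)$ witnessing $\underline{C}\simeq\B(\g_0)$), which is what your Milnor--Moore reference amounts to once applied at the right spot.
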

\begin{proof}
The last assertion in characteristic zero follows from Kostant's Theorem \cite[Theorem 13.0.1]{Sw}
and the PBW Theorem; note that if $0 \ne x \in \g_0$, the sequence $1, x, \dots, x^n/n!, \dots$
in $\U(\g_0)$ satisfies the first two equalities of \eqref{eq:divided_power_sequence}. 

(a) $\Rightarrow$ (b).\ 
This follows from the definition of smoothness since
a super-coalgebra map from an ordinary coalgebra to $C$ takes values in $\underline{C}$. 

(b) $\Leftrightarrow$ (c).\ This follows by Proposition \ref{prop:smoothness_criterion} 
applied in the non-super situation. 
See also \cite[Theorem 1.8.1, p.48]{T1} for a more generalized result. 

(c) $\Rightarrow$ (a).\ Suppose $\underline{C}\simeq \B(\g_0)$. Then by \eqref{eq:isom_module_coalgebra}
we have an isomorphism 
$C \simeq \B(\g_0)\otimes \wedge(\g_1)=\B(\g)$
of connected super-coalgebras,
which may be supposed to be an isomorphism on $\g$. 
Proposition \ref{prop:smoothness_criterion} ensures (a). 
\end{proof}

\subsection{Duality theorem}\label{subsec:duality}

Let $C$ be a connected super-coalgebra of finite type, and let 
$\Bbbk 1_C =C_{(0)}\subset C_{(1)}\subset \dots$ be the coradical filtration on it. 
Set $\ms{U}=P(C)$, and choose a super-coalgebra injection 
$f : C \to \B(\ms{U})=\B(\ms{U}_0)\otimes \wedge(\ms{U}_1)$ 
on $\ms{U}$ as in \eqref{eq:phi_on_U}. 
Let $A=C^*$ denote the dual super-algebra.
By definition of $C_{(n)}$ (see \eqref{eq:corad_filt}),
the restriction maps $A=C^* \to (C_{(n)})^*$ induce isomorphisms 
\begin{equation}\label{eq:isom_n}
A/\m^{n+1} \simeq (C_{(n)})^*,\quad n >0,
\end{equation}
where $\m$ denotes the kernel of $A \to (C_{(0)})^*=\Bbbk$. 
Therefore, we have $A \simeq \underset{\longleftarrow n}{\lim}\, A/\m^n$, whence $A$ is 
a complete connected super-algebra with the maximal $\m$. Since $(C_{(1)}/\Bbbk 1_C)^* \simeq \m/\m^2$, 
it follows from \eqref{eq:dual_of_B} that $f$ is dualized to a super-algebra surjection
\[ \Bbbk[[\sT_1,\dots,\sT_{m+n}]] \to A, \]
which maps $\sT_1,\dots,\sT_{m+n}$ to a basis of $\m$ modulo $\m^2$. Therefore, 
$A$ is Noetherian. It follows from Propositions \ref{prop:smooth_super-algebra} and
\ref{prop:smoothness_criterion} that the surjection is an isomorphism, if and only if $A$ is smooth,
if and only if $C$ is smooth.

Let $A$ be a Noetherian complete connected super-algebra. 
Since $\m_A$ is then finitely generated, $\m_A^n/\m_A^{n+1}$, $n \ge 0$, are all finite-dimensional.
Therefore, $A/\m_A^{n+1}$, $n \ge 0$, are finite-dimensional super-algebras.
Let $C_{(n)}=(A/\m_A^{n+1})^*$ be the dual super-coalgebras. 
One sees that $\dim C_{(0)}=1$, and $C_{(0)} \subset C_{(1)} \subset \dots$ in $A^*$.  We have
the filtered union 
\[ 
A^{\star} = \bigcup_{n \ge 0} C_{(n)}
\]
of the super-coalgebras. 
This $A^{\star}$, consisting of all continuous linear maps $A \to \Bbbk$,
may be called the \emph{continuous dual} of $A$. 
The composite
\[
A \to (A^{\star})^* \simeq \underset{\longleftarrow n}{\lim}\, (C_{(n)})^* \simeq 
\underset{\longleftarrow n}{\lim}\, A/\m_A^n
\]
of the canonical map $A \to (A^{\star})^*$ with natural isomorphisms
coincides with the canonical isomorphism
$A \simeq \underset{\longleftarrow n}{\lim}\, A/\m_A^n$. Therefore, the canonical 
$A \to (A^{\star})^*$ is an isomorphism. 
The sequence $C_{(n)}\hookrightarrow A^{\star} \to (A^{\star}/C_{(0)})^{\otimes (n+1)}$
given by the iterated coproduct is exact, since it is dualized to the exact sequence
$A/\m_A^{n+1} \leftarrow A \leftarrow \m_A^{\hotimes (n+1)}$ given by the
iterated product. Therefore,
$C_{(0)} \subset C_{(1)} \subset \dots$ is the coradical filtration on $A^{\star}$, whence
$A^{\star}$ is a connected super-coalgebra of finite type. 
 
One sees that the assignments $C \mapsto C^*$ and $A\mapsto A^{\star}$ above give 
rise to contra-variant functors. Moreover, we have the following.
 
\begin{prop}\label{prop:duality}
$C \mapsto C^*$ and $A \mapsto A^{\star} $ give an anti-equivalence between 
\begin{itemize}
\item the category of Noetherian complete connected super-algebras, and 
\item the category of connected super-coalgebras of finite type.
\end{itemize}
This restricts to an anti-equivalence between the full subcategories consisting
of the smooth objects. 
\end{prop}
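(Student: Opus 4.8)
The plan is to show that the two contravariant functors $C\mapsto C^*$ and $A\mapsto A^\star$ are well-defined on the stated categories, that they are mutually quasi-inverse, and that they match up the smooth objects. Much of the groundwork has already been laid in the paragraphs immediately preceding the statement: there it is shown that for a connected super-coalgebra $C$ of finite type, $A:=C^*$ is a Noetherian complete connected super-algebra (with maximal ideal $\m$ the kernel of $C^*\to (C_{(0)})^*=\Bbbk$, using the isomorphisms $A/\m^{n+1}\simeq (C_{(n)})^*$ of \eqref{eq:isom_n}), and conversely that for a Noetherian complete connected super-algebra $A$, the continuous dual $A^\star=\bigcup_n (A/\m_A^{n+1})^*$ is a connected super-coalgebra of finite type. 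So the first step is merely to record that these assignments are functorial: a super-algebra map $f:A\to B$ between Noetherian complete connected super-algebras is automatically continuous (Remark \ref{rem:local_map}), hence induces $f^\star:B^\star\to A^\star$ on continuous duals; and a super-coalgebra map $g:C\to D$ between connected super-coalgebras of finite type dualizes to a super-algebra map $g^*:D^*\to C^*$, which is continuous since it carries $\m_{D^*}$ into $\m_{C^*}$.

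Next I would establish that the two composites are naturally isomorphic to the respective identity functors. For $A$ Noetherian complete connected, the preceding paragraph already exhibits the canonical map $A\to (A^\star)^*$ as an isomorphism, via the chain $A\to (A^\star)^*\simeq \varprojlim_n (C_{(n)})^*\simeq \varprojlim_n A/\m_A^n$ identifying with the completion isomorphism. For $C$ connected of finite type with $A=C^*$, I would argue that the coradical filtration of $C$ is exactly $(C_{(n)})_n$ with $C_{(n)}=(A/\m^{n+1})^*$ — this follows because \eqref{eq:isom_n} identifies $(C_{(n)})^*$ with $A/\m^{n+1}$, and one checks $C_{(n)}$ as defined by \eqref{eq:corad_filt} is the annihilator of $\m^{n+1}$ by dualizing the exact sequence $A/\m^{n+1}\leftarrow A\leftarrow \m^{\hotimes(n+1)}$ coming from the iterated product, exactly as done for $A^\star$ in the text. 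Hence $A^\star=\bigcup_n (A/\m^{n+1})^*=\bigcup_n C_{(n)}=C$, and the canonical map $C\to (C^*)^\star$ is the identity. Naturality of both families of isomorphisms in the relevant morphism is then a routine diagram chase, since everything in sight is built from restriction/transpose maps.

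For the last sentence, I would invoke the analysis already carried out in the excerpt: for $C$ connected of finite type, the coalgebra injection $f:C\to \B(\ms U)$ on $\ms U=P(C)$ dualizes, via \eqref{eq:dual_of_B} and the identification $(C_{(1)}/\Bbbk 1_C)^*\simeq \m/\m^2$, to a super-algebra surjection $\Bbbk[[\sT_1,\dots,\sT_{m+n}]]\to C^*$, and the text already notes — citing Propositions \ref{prop:smooth_super-algebra} and \ref{prop:smoothness_criterion} — that this surjection is an isomorphism iff $C^*$ is smooth iff $C$ is smooth. Combined with Proposition \ref{prop:smooth_super-algebra} (which says the smooth Noetherian complete connected super-algebras are precisely the $\Bbbk[[\sT_1,\dots,\sT_{m+n}]]$), this shows the anti-equivalence carries smooth objects to smooth objects in both directions, hence restricts to an anti-equivalence of the smooth full subcategories.

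I do not expect a serious obstacle here; the statement is essentially a bookkeeping summary of the computations in the two paragraphs preceding it, so the only care needed is in verifying that "continuous dual" and "linear dual" interact correctly — i.e.\ that $\m_{C^*}^{n+1}$ is genuinely the annihilator of $C_{(n)}$ (not just that it has the right codimension), which is where one really uses that $C$ is \emph{connected of finite type} so that the coradical filtration is exhaustive and each $C_{(n)}$ is finite-dimensional. That finiteness is also what makes the natural pairing $C\times C^*\to\Bbbk$ perfect at each filtration step, which is the technical heart of both the completeness of $C^*$ and the reflexivity $C\simeq (C^*)^\star$.
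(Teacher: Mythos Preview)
Your proposal is correct and follows essentially the same approach as the paper: the paper's proof simply cites the already-established isomorphism $A\simeq (A^\star)^*$, then obtains $C\simeq (C^*)^\star$ by dualizing the isomorphisms \eqref{eq:isom_n} to get $C_{(n)}\simeq (A/\m^{n+1})^*$ and taking the union, and recalls that $C$ is smooth iff $C^*$ is. Your write-up is more explicit about functoriality and naturality, but the core argument is identical.
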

\begin{proof}
Since we have seen that given an $A$, we have a natural isomorphism $A \simeq (A^{\star})^*$,
it remains 
to prove that given a $C$, we have a natural isomorphism $C \simeq (C^*)^{\star}$;
recall we have seen that $C$ is smooth if and only if $C^*$ is.
Indeed, the isomorphisms $C_{(n)}\simeq (A/\m^{n+1})^*$, $n >0$, dual to \eqref{eq:isom_n}
amount to a desired isomorphism. 
\end{proof}

\begin{theorem}\label{thm:duality}
The anti-equivalence above induces an anti-equivalence between
\begin{itemize}
\item the category of \textup{(}smooth\textup{)} Noetherian complete Hopf super-algebras $A$ and 
\item the category of \textup{(}smooth\textup{)} hyper-super-algebras $C$ of finite type.
\end{itemize} 
\end{theorem}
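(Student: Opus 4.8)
The strategy is to obtain the theorem as a formal consequence of Proposition \ref{prop:duality}, exploiting the categorical descriptions of the objects involved. Write $\cA$ for the category of complete connected super-algebras and $\cC$ for the category of connected super-coalgebras. By Lemma \ref{lem:co-group_object}, $\cA$ has finite coproducts given by $\hotimes$ and initial object $\Bbbk$, and a complete Hopf super-algebra is precisely a co-group object of $\cA$; by Lemma \ref{lem:group-object}, $\cC$ has finite products given by $\otimes$ and terminal object $\Bbbk$, and a hyper-super-algebra is precisely a group object of $\cC$. An anti-equivalence is an equivalence $F\colon \cA \to \cC^{\mathrm{op}}$, and any equivalence preserves finite colimits; hence $F$ sends the initial object of $\cA$ to the initial object of $\cC^{\mathrm{op}}$, i.e. the terminal object $\Bbbk$ of $\cC$, and sends the coproduct $\hotimes$ in $\cA$ to the coproduct in $\cC^{\mathrm{op}}$, i.e. the product $\otimes$ in $\cC$. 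Consequently $F$ carries group objects of $\cA^{\mathrm{op}}$ — that is, co-group objects of $\cA$ — to group objects of $\cC$, and morphisms of co-group objects to morphisms of group objects. This is the only formal input; it remains to check that the full subcategories in play inherit the pertinent $(\hotimes,\otimes)$-structures, so that the same descriptions of (co-)group objects persist in them.

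First I would verify that the full subcategory of $\cA$ consisting of Noetherian complete connected super-algebras is closed under $\hotimes$ and contains $\Bbbk$. Connectedness of $A\hotimes B$ was recorded in Section \ref{subsec:local_super-algebra}. For Noetherianity, recall from the discussion preceding Proposition \ref{prop:duality} that every Noetherian complete connected super-algebra is a quotient of some $\Bbbk[[\sT_1,\dots,\sT_{m+n}]]$; writing $A$ and $B$ as such quotients, one identifies $A\hotimes B$ with a quotient of a formal power series super-algebra in the combined set of variables, which is Noetherian by Proposition \ref{prop:smooth_super-algebra}. Dually, the full subcategory of $\cC$ consisting of connected super-coalgebras of finite type is closed under $\otimes$, since $P(C\otimes D)=P(C)\oplus P(D)$, and contains $\Bbbk$. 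Because these subcategories are full, the coproduct in the first is still $\hotimes$ and the product in the second is still $\otimes$; hence, invoking Lemmas \ref{lem:co-group_object} and \ref{lem:group-object}, a co-group object of the first subcategory is exactly a Noetherian complete Hopf super-algebra, and a group object of the second is exactly a hyper-super-algebra of finite type.

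Now Proposition \ref{prop:duality} already supplies an anti-equivalence between precisely these two subcategories, so the formal principle above transports Noetherian complete Hopf super-algebras to hyper-super-algebras of finite type and back, together with their morphisms. (By Remark \ref{rem:complete_Hopf}(2), a map of complete Hopf super-algebras is a super-algebra map compatible with the coproduct, which is exactly a morphism of co-group objects; dually, a morphism of group objects in $\cC$ is a super-coalgebra map compatible with the product, hence automatically a super-bialgebra map, i.e. a map of hyper-super-algebras.) This settles the unparenthesized statement. For the "smooth" version, invoke the last sentence of Proposition \ref{prop:duality}: the anti-equivalence restricts to the smooth objects on both sides. By Propositions \ref{prop:smooth_super-algebra} and \ref{prop:smoothness_criterion}, the smooth Noetherian complete connected super-algebras are exactly the formal power series super-algebras $\Bbbk[[\sT_1,\dots,\sT_{m+n}]]$, which are closed under $\hotimes$ (combine variables), while the smooth connected super-coalgebras of finite type are the $\B(\ms{U})$ with $\dim\ms{U}<\infty$, closed under $\otimes$ since $\B(\ms{U}')\otimes\B(\ms{U}'')=\B(\ms{U}'\oplus\ms{U}'')$. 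So these full subcategories again carry $\hotimes$ and $\otimes$ as (co)products, and the identical argument yields the anti-equivalence between smooth Noetherian complete Hopf super-algebras and smooth hyper-super-algebras of finite type.

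The only point that is not pure formalism is the closure of Noetherianity under $\hotimes$ in the non-smooth case — concretely, the identification of $A\hotimes B$ with a quotient of $\Bbbk[[\sT_1,\dots,\sT_{m+n}]]$ — which comes down to comparing the $\m_{A\hotimes B}$-adic filtration with the filtration by $\m_A^{n}\,\hotimes\,B + A\,\hotimes\,\m_B^{n}$ through inclusions of the shape $\m_{A\hotimes B}^{2n}\subseteq \m_A^{n}\,\hotimes\,B + A\,\hotimes\,\m_B^{n}\subseteq \m_{A\hotimes B}^{n}$. This is routine, and once it is in hand the theorem reduces entirely to the fact that an anti-equivalence preserves finite (co)products, initial/terminal objects, and hence (co-)group objects.
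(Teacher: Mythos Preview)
Your proposal is correct and follows essentially the same approach as the paper's proof: use the closure of the Noetherian complete (resp.\ finite-type) subcategories under $\hotimes$ (resp.\ $\otimes$), then invoke the formal principle that the anti-equivalence of Proposition~\ref{prop:duality} carries co-group objects to group objects via Lemmas~\ref{lem:co-group_object} and~\ref{lem:group-object}. The paper's proof merely asserts the closure and the co-group/group transfer without detail, whereas you spell out the Noetherianity check for $A\hotimes B$ and the finite-type check $P(C\otimes D)=P(C)\oplus P(D)$ explicitly.
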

\begin{proof}
Suppose that we are in the situation of Proposition \ref{prop:duality}. 
In the former (resp., latter) category, objects are closed under the coproduct
$\hotimes$ (resp., the product $\otimes$). 
The proved anti-equivalence induces an anti-equivalence
between the co-group objects and the group objects; 
this proves the theorem.
\end{proof}

Note that $C\mapsto \underline{C}$ gives a functor from the category of 
hyper-super-algebras (of finite type) to the category of hyper-algebras (of finite type). 

\begin{corollary}\label{cor:reduced_co-group}
We have the following. 
\begin{itemize}
\item[(1)] If $A$ is a Noetherian complete Hopf super-algebra, 
then $\overline{A}$ is naturally a Noetherian complete Hopf algebra; $A$ is smooth if and only if
$\overline{A}$ is.
The construction gives rise to a functor $A\mapsto \overline{A}$. 
\item[(2)] $A \mapsto \overline{A}$ and $C \mapsto \underline{C}$ are compatible with
the anti-equivalence of Theorem \ref{thm:duality} and the restricted one between the purely even objects.
To be more explicit, if $A = C^*$ or $A^{\star}=C$, then 
$\overline{A} = \underline{C}^*$ or $\overline{A}^{\star}=\underline{C}$. 
\end{itemize}
\end{corollary}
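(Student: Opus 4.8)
The plan is to establish (1) on the algebra side, then obtain (2) by dualizing via Proposition \ref{prop:duality}, and finally read off the smoothness equivalence in (1) from (2) together with Proposition \ref{prop:smooth_super-hyper}. For the first part of (1) I would check that $I_A=(A_1)$ is a Hopf super-ideal: by parity $\Delta(A_1)\subset A_0\,\hotimes\,A_1+A_1\,\hotimes\,A_0$, which lies in the super-ideal $A\,\hotimes\,I_A+I_A\,\hotimes\,A$ of $A\,\hotimes\,A$, and since $\Delta$ is a super-algebra map this forces $\Delta(I_A)\subset A\,\hotimes\,I_A+I_A\,\hotimes\,A$, while plainly $\epsilon(I_A)=0$ and $S(I_A)\subset I_A$. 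Hence $\Delta$ and $\epsilon$ descend to $\overline{A}=A/I_A$, making it a complete Hopf super-algebra; being purely even and connected it is a complete Hopf algebra, and it is Noetherian and complete as a quotient of the Noetherian complete local super-algebra $A$ by a (closed) super-ideal. Any morphism $A\to A'$ of complete Hopf super-algebras carries $A_1$ into $A'_1$, hence $I_A$ into $I_{A'}$, so $A\mapsto\overline{A}$ is a functor.

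For (2), put $C=A^{\star}$, a connected hyper-super-algebra of finite type, with $\g=P(C)$, so that $A=C^{*}$. The Hopf subalgebra inclusion $\underline{C}\hookrightarrow C$ dualizes to a surjection $A=C^{*}\twoheadrightarrow\underline{C}^{*}$ of Hopf algebras with kernel $\underline{C}^{\perp}=\{f\in C^{*}:f|_{\underline{C}}=0\}$; since $\Delta(\underline{C})\subset\underline{C}\otimes\underline{C}$ this kernel is a super-ideal, and it contains $A_1$ because odd functionals vanish on $C_0\supseteq\underline{C}$, so $I_A\subseteq\underline{C}^{\perp}$. For the reverse inclusion I invoke \eqref{eq:isom_module_coalgebra}, which gives an isomorphism $C\simeq\underline{C}\otimes\wedge(\g_1)$ of super-coalgebras; as $\dim\g_1<\infty$, dualizing and using \eqref{eq:selfdual} yields an isomorphism of super-algebras $A\simeq\overline{A}\otimes\wedge(\g_1^{*})$ in which $\underline{C}^{*}$ is the first tensor factor. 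Under it $A_1=\overline{A}\otimes\wedge^{\mathrm{odd}}(\g_1^{*})$, and the super-ideal it generates is $\overline{A}\otimes\wedge^{\ge 1}(\g_1^{*})=\underline{C}^{\perp}$; hence $I_A=\underline{C}^{\perp}$ and $\overline{A}=\underline{C}^{*}$ as Hopf algebras. Applying the quasi-inverse $\star$ of Proposition \ref{prop:duality} gives $\overline{A}^{\star}=\underline{C}$; since these isomorphisms respect the algebra and the coalgebra structures simultaneously, $A\mapsto\overline{A}$ and $C\mapsto\underline{C}$ correspond under the anti-equivalence of Theorem \ref{thm:duality} and its restriction to the purely even objects.

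It remains to settle the smoothness equivalence in (1). By Proposition \ref{prop:duality}, $A$ is smooth if and only if $C=A^{\star}$ is smooth (as a super-coalgebra), and likewise $\overline{A}$ is smooth if and only if $\overline{A}^{\star}=\underline{C}$ is; by Proposition \ref{prop:smooth_super-hyper}, $C$ is smooth if and only if $\underline{C}$ is, so chaining these gives that $A$ is smooth if and only if $\overline{A}$ is. (One may instead argue directly: by $A\simeq\overline{A}\otimes\wedge(\g_1^{*})$ and Proposition \ref{prop:smooth_super-algebra}, $\overline{A}$ smooth forces $A\simeq\Bbbk[[\sT_1,\dots,\sT_{m+n}]]$, and conversely $A\simeq\Bbbk[[\sT_1,\dots,\sT_{m+n}]]$ forces $\overline{A}=A/I_A\simeq\Bbbk[[\sT_1,\dots,\sT_m]]$.) I expect the genuine obstacle to be the reverse inclusion $\underline{C}^{\perp}\subseteq I_A$, that is, transporting \eqref{eq:isom_module_coalgebra} to the dual side while keeping the algebra and the coalgebra structures aligned; the finite-dimensionality of $\g_1$ is exactly what lets the completed tensor products collapse to ordinary ones and makes this bookkeeping routine rather than delicate.
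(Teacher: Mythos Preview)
Your proof is correct and follows essentially the same route as the paper: the key step in both is to dualize the super-coalgebra isomorphism \eqref{eq:isom_module_coalgebra}, $C\simeq\underline{C}\otimes\wedge(\g_1)$, to obtain $A\simeq\underline{C}^*\otimes\wedge(\g_1^*)$ and hence $\overline{A}=\underline{C}^*$, after which smoothness is read off from Propositions~\ref{prop:smooth_super-hyper} and~\ref{prop:duality}. The only real difference is organizational: you first establish directly that $I_A$ is a Hopf super-ideal (so that $\overline{A}$ is a complete Hopf algebra before any duality is invoked), whereas the paper starts from $A=C^*$ and deduces the Hopf structure on $\overline{A}$ \emph{a posteriori} from the identification $\overline{A}=\underline{C}^*$; your way is slightly more self-contained, the paper's slightly shorter. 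One small wording issue: in your second paragraph you write ``$A\simeq\overline{A}\otimes\wedge(\g_1^*)$'' at a point where $\overline{A}=\underline{C}^*$ has not yet been established---it would be cleaner to write $\underline{C}^*$ there and only conclude $\overline{A}=\underline{C}^*$ once $I_A=\underline{C}^\perp$ is shown.
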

\begin{proof}
(1)\ 
By Theorem \ref{thm:duality} we have $A = C^*$, where $C$ is a hyper-super-algebra of finite type. 
We see from \eqref{eq:isom_module_coalgebra} that $\overline{A}=\underline{C}^*$, and this $\overline{A}$ is
a Noetherian complete Hopf algebra such that the canonical $A \to \overline{A}$ preserves the coproduct. 
The assertion on smoothness follows from Propositions \ref{prop:smooth_super-hyper} and
\ref{prop:duality}. The functoriality is easy to see. 

(2)\ 
This is seen from the argument above. 
\end{proof}

\begin{corollary}\label{cor:duality_in_char_zero}
Assume $\mr{char}\, \Bbbk=0$. Then every Noetherian complete Hopf super-algebra is
smooth, and we have a natural anti-equivalence between 
\begin{itemize}
\item the category of Noetherian complete Hopf super-algebras and 
\item the category of finite-dimensional Lie super-algebras.
\end{itemize}
\end{corollary}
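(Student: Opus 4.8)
The plan is to chain the anti-equivalence of Theorem~\ref{thm:duality} with an equivalence between hyper-super-algebras of finite type and finite-dimensional Lie super-algebras, exploiting that the characteristic-zero hypothesis makes all the smoothness conditions automatic.

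First I would record the easy consequence. By Theorem~\ref{thm:duality}, a Noetherian complete Hopf super-algebra $A$ has the form $A = C^*$ for a hyper-super-algebra $C = A^{\star}$ of finite type. The last assertion of Proposition~\ref{prop:smooth_super-hyper} says that in characteristic zero every hyper-super-algebra satisfies $C = \U(\g)$ with $\g = P(C)$ and is automatically smooth; since the anti-equivalence of Theorem~\ref{thm:duality} matches smooth objects with smooth objects, $A$ is smooth as well. This gives the first assertion, and moreover shows that in characteristic zero ``hyper-super-algebra of finite type'' and ``smooth hyper-super-algebra of finite type'' coincide.

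Next I would build the equivalence with Lie super-algebras. In one direction take $C \mapsto P(C)$: a map of hyper-super-algebras restricts to a map on primitive spaces, and $P(C)$ is a Lie super-algebra under the super-commutator (as recalled in the paragraph containing \eqref{eq:U(g)toH}); if $C$ is of finite type then $\dim P(C) < \infty$ by definition. In the other direction take $\g \mapsto \U(\g)$, which is a hyper-super-algebra in which $\g$ sits as primitives, and a Lie super-algebra map extends uniquely to a map of universal envelopes; for finite-dimensional $\g$ the PBW theorem gives $P(\U(\g)) = \g$, so $\U(\g)$ is of finite type. The two functors are mutually quasi-inverse: the unit $\g \overset{\simeq}{\to} P(\U(\g))$ is the equality just mentioned, and the counit $\U(P(C)) \overset{\simeq}{\to} C$ is precisely the isomorphism \eqref{eq:U(g)toH} in characteristic zero provided by Proposition~\ref{prop:smooth_super-hyper}. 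Both are natural, so this is an equivalence of categories.

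Finally, composing the anti-equivalence of Theorem~\ref{thm:duality} with this equivalence yields the asserted anti-equivalence $A \mapsto P(A^{\star})$ between Noetherian complete Hopf super-algebras and finite-dimensional Lie super-algebras, with quasi-inverse $\g \mapsto \U(\g)^*$. I do not anticipate a hard step: all the substantive content (Kostant's theorem and the super PBW theorem) has already been absorbed into Proposition~\ref{prop:smooth_super-hyper}, and what remains is bookkeeping --- keeping the finiteness conditions in correspondence on the two sides, and observing that an anti-equivalence composed with an equivalence is again an anti-equivalence.
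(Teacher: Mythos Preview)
Your proof is correct and follows essentially the same approach as the paper: compose the anti-equivalence of Theorem~\ref{thm:duality} with the equivalence $\g \leftrightarrow \U(\g)$ between finite-dimensional Lie super-algebras and hyper-super-algebras of finite type, invoking Proposition~\ref{prop:smooth_super-hyper} for the characteristic-zero smoothness and the identification $C = \U(P(C))$. The paper's proof is terser, asserting the equivalence $\g \mapsto \U(\g)$ without spelling out the quasi-inverse $P$ or the finiteness bookkeeping, but the content is the same.
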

\begin{proof}
This follows from Theorem \ref{thm:duality}, since if $\mr{char}\, \Bbbk=0$, then 
$\g \mapsto \U(\g)$ gives an equivalence from the category of (finite-dimensional) Lie
super-algebras to the category of hyper-super-algebras (of finite-type), and hence
every hyper-super-algebra is smooth; see Proposition \ref{prop:smooth_super-hyper}. 
\end{proof}

Let $A$ be a Noetherian complete Hopf super-algebra, and let $C=A^{\star}$ be the corresponding 
hyper-super-algebra;
Note that the complete topological right $A$-super-comodules and the complete topological 
left $C$-super-modules (see Proposition \ref{prop:topological_comodule} for definition)
respectively form symmetric tensor categories; for both the tensor product is given by
$\hotimes$, and the unit object is the discrete $\Bbbk$ equipped with the trivial structure. 

\begin{prop}\label{prop:tensor_isomorphic}
The two symmetric tensor categories are isomorphic to each other. 
\end{prop}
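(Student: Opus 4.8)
The plan is to build the isomorphism of tensor categories out of the bijection of Proposition~\ref{prop:topological_comodule} applied to $R = C$ (noting $R^* = C^* = A$ by the duality of Theorem~\ref{thm:duality}, since $A$ is Noetherian complete and $C = A^\star$ is of finite type). First I would check that the object-level bijection from Proposition~\ref{prop:topological_comodule} is actually an isomorphism of categories, not merely a bijection on objects: given complete topological $A$-super-comodules $(\sV,\rho)$, $(\sW,\sigma)$, a continuous super-linear map $\phi : \sV \to \sW$ is a comodule morphism iff $\sigma \circ \phi = (\phi~\hotimes~\op{id}_A)\circ \rho$, and under the adjunction $\sV~\hotimes~A = \Hom(C,\sV)$ recorded in Section~\ref{subsec:top_super-vect_space} this translates verbatim into $\phi$ being a morphism for the adjoint module structures $\rho', \sigma'$; so the correspondence is bijective on hom-sets and is the identity on underlying maps. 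Thus we have an isomorphism $\Psi$ of the underlying categories, and only the monoidal structure remains.

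The heart of the matter is to verify that $\Psi$ is monoidal, i.e.\ that it carries the comodule structure on $\sV~\hotimes~\sW$ to the module structure on $\sV~\hotimes~\sW$. Here I would use that $C$ is a hyper-super-algebra, so its product is a \emph{super-coalgebra} map $C \otimes C \to C$; dually the coproduct $\Delta_A : A \to A~\hotimes~A$ is, and the tensor-product comodule structure on $\sV~\hotimes~\sW$ is $(\op{id}~\hotimes~\Delta_A)$ composed with the two given coactions and a shuffle using the super-symmetry $\tau$. Passing to adjoints, the induced module structure is precisely the one built from $\rho'$, $\sigma'$ via the product of $C$ and the same super-symmetry — this is a diagram chase, formally identical to the discrete Hopf-module case, once one knows all the maps in sight are continuous so that $\hotimes$ behaves functorially (which is guaranteed by the material of Section~\ref{subsec:top_super-vect_space} and Section~\ref{subsec:local_super-algebra}). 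The unit object matches on both sides: it is the discrete $\Bbbk$ with the coaction $\Bbbk \to \Bbbk~\hotimes~A = A$ sending $1$ to the unit, whose adjoint $C \otimes \Bbbk \to \Bbbk$ is $\epsilon_C$, i.e.\ the trivial $C$-module. Finally the associativity, unit, and symmetry constraints are sent to each other because on underlying super-vector spaces $\Psi$ is the identity and those constraints are the canonical ones for $\hotimes$ and $\tau$ on both sides; so no coherence data need be chosen and the monoidal isomorphism is strict.

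The main obstacle I anticipate is purely bookkeeping: carefully matching the super-symmetry signs in the tensor-product coaction against those in the tensor-product action under the adjunction $\sV~\hotimes~A = \Hom(C,\sV)$, since the duality conventions fixed in Section~\ref{subsec:Hopf_group_scheme} (the ``first, simpler'' pairing, with no sign on $v^*\otimes w^*$) are exactly what is needed to make these line up, and one must invoke them explicitly. A secondary point to be careful about is continuity at each stage — that $\rho~\hotimes~\op{id}$, $\op{id}~\hotimes~\Delta_A$, and the shuffle are all continuous and that $\hotimes$ of continuous maps is again continuous — but this is exactly the content already assembled in Section~\ref{subsec:top_super-vect_space}, so it should go through routinely.
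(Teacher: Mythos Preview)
Your proposal is correct and follows essentially the same route as the paper: the paper's proof is the one-liner ``This follows from Proposition~\ref{prop:topological_comodule} for $R$ applied to the present $C$,'' and you have simply unpacked the tensor-category aspects (morphisms, monoidal structure, unit, symmetry) that the paper leaves implicit. Your care with the sign conventions and continuity is appropriate but not something the paper spells out.
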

\begin{proof}
This follows from Proposition \ref{prop:topological_comodule} for $R$ applied to the present $C$. 
\end{proof}

%%%%%%%%%%%%%%%%%%%%%%%%%%%%%%%%%%%%%%%%%%%%%%%%

\section{Super-manifolds}\label{sec:super-mfd}

In what follows, $\Bbbk$ is supposed to be a complete field (see Section \ref{subsec:I1}) of 
characteristic $\ne 2$, unless otherwise stated.

\subsection{Convergent power series super-algebra}\label{subsec:convergent_power}

Given non-negative integers $m$ and $n$, the \emph{convergent power series super-algebra}
\begin{equation}\label{eq:convergent_power}
\Bbbk\{ \sT_1,\dots, \sT_{m+n} \} 
\end{equation}
in $m$ even and $n$ odd variables is the tensor product 
\[ 
\Bbbk\{ \sT_1,\dots, \sT_m\} \otimes \wedge(\sT_{m+1}, \dots,\sT_{m+n})
\]
of the convergent power series algebra $\Bbbk\{ \sT_1,\dots, \sT_m\}$ and
the exterior algebra $\wedge(\sT_{m+1}, \dots,\sT_{m+n})$
as in \eqref{eq:formal_power_series}. This is a smooth 
(or geometrically regular \cite[Theorem A.2]{MZ}) 
Noetherian connected super-algebra,
whose completion is the formal power series super-algebra 
$\Bbbk[[\sT_1,\dots,\sT_{m+n}]]$. 
Note that the variables $\sT_1,\dots, \sT_{m+n}$ give  
a basis of the maximal super-ideal $\m$ of $\Bbbk\{\sT_1,\dots \sT_{m+n} \}$
modulo $\m^2$.

\subsection{Super-manifolds}\label{subsec:super-mfd}
By a \emph{manifold} we mean an analytic manifold $X$ over $\Bbbk$
as defined in \cite[Part II, Chapter III, Section 2]{S}. Thus a \emph{chart} on $X$ is a triple $(U, \psi, n)$
consisting of an open sub-set $U$, a non-negative integer $n$ and a homeomorphism 
$\psi : U \to \psi(U)$ onto an open set of $\Bbbk^n$, where $n$ is not required to be constant. 
In addition, the underlying topological space $|X|$ of 
$X$ is not assumed to be Hausdorff or second countable. 

Let $X$ be a manifold. We let $\cF=\cF_X$ denote the associated sheaf of analytic functions. 
Let $\ms{W}$ be a finite-dimensional vector space. We let 
\begin{equation}\label{eq:ssmfd}
\cF \otimes \wedge(\ms{W})
\end{equation}
denote the sheaf of super-algebras on $|X|$ which associates $\cF(U)\otimes \wedge(\ms{W})$
to every open set $U$. 

\begin{definition}\label{def:super-mfd}
(1)\
A \emph{super-ringed space} is a pair $X=(|X|,\cO_X)$ of a topological space $|X|$ 
and a sheaf $\cO_X$ of super-algebras on $|X|$; 
by convention, $\cO(\emptyset)=0$. We will often write $\cO$ for $\cO_X$.
The super-ringed spaces form a category $\ms{SRS}$.
A morphism $\phi=(|\phi|, \phi^*) : X \to Y$ in $\ms{SRS}$ consists of
a continuous map $|\phi| : |X| \to |Y|$  
and a sheaf-morphism $\phi^* : \cO_Y \to \phi_*\cO_X$ of
super-algebras.  
 
(2)\ 
The pair $(|X|, \cF\otimes \wedge(\ms{W}))$ obtained as above from a manifold $X$
is a super-ringed space. 
A super-ringed space of this form is called a \emph{strongly split super-manifold}, or an 
\emph{s-split super-manifold}, in short; compare with the notion of \emph{split super-manifold}
(see \cite{V}, for example), 
whose structure sheaf is only required to be the exterior algebra on some \emph{locally free} $\mathcal{F}$-module. 
Let $\ms{SSMFD}$ denote the full subcategory of $\ms{SRS}$ which consists of all s-split
super-manifolds; it includes the category $\ms{MFD}$ of manifolds as a full subcategory. 

(3)\
A \emph{super-manifold} is a super-ringed space $X=(|X|, \cO)$ which is locally isomorphic to
an s-split super-manifold, or namely, which has an open covering $|X|=\bigcup_{i\in I}U_i$
such that each $(U_i, \cO|_{U_i})$ is isomorphic to an s-split super-manifold. 
For a non-empty open set  $U$ of $|X|$, $(U, \cO|_U)$ is obviously a super-manifold, and such is called
an \emph{open super-submanifold} of $X$. 
Let $\ms{SMFD}$ denote the full subcategory $\ms{SRS}$ which consists
of all super-manifolds; it includes $\ms{SSMFD}$ and $\ms{MFD}$ as full subcategories. 
\end{definition}

We thus have the chain of categories
\[
\ms{MFD}\subset \ms{SSMFD}\subset \ms{SMFD}\subset \ms{SRS}.
\]
The category $\ms{SRS}$ has the
terminal object $(\{*\}, \Bbbk)$, the one-point set $\{*\}$ equipped with the sheaf
$\{*\} \mapsto \Bbbk$. 
Contained in $\ms{MFD}$, this is necessarily a terminal object of
each of the rest. 

Given a super-manifold $X=(|X|,\cO)$, 
let $\cO_{\red}$ denote the sheafification of the presheaf on $|X|$ which associates 
$\cO(U)_{\red}\, (=\overline{\cO(U)})$ to an open set $U$; see \eqref{eq:Ared}. 
If $U$ is included in an open set $V$ such that $(V,\cO|_V)$ is an s-split super-manifold, then
$\cO_{\red}(U)=\cO(U)_{\red}$. 
One sees that
$(|X|, \cO_{\red})$ is a manifold, which will be presented as
\[ 
X_{\red}=(|X|, \cO_{\red}).
\]
We see easily the following.

\begin{lemma}\label{lem:red}
$X \mapsto X_{\red}$ gives rise to a functor
\[
(\ )_{\red}: \ms{SMFD}\to \ms{MFD},
\]
which, restricted to $\ms{MFD}$, is the identity functor. 
The functor is right adjoint to the
inclusion functor $\ms{MFD}\to \ms{SMFD}$, or explicitly, we have
\begin{equation}\label{eq:right_adjoint}
\operatorname{Mor}(Y,X)=\operatorname{Mor}(Y,X_{\red})
\end{equation}
if $Y \in \ms{MFD}$. 
\end{lemma}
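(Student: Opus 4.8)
The plan is to verify the three assertions in turn, none of which requires substantial work once the construction of $(\ )_{\red}$ is in place. First I would check functoriality. Given a morphism $\phi=(|\phi|,\phi^*) : X \to Y$ of super-manifolds, the sheaf-morphism $\phi^* : \cO_Y \to \phi_*\cO_X$ consists of super-algebra maps on sections, and since $A \mapsto A_{\red}$ is a functor on super-algebras (by the discussion around \eqref{eq:Ared}), it carries $\phi^*$ to a morphism of presheaves $U \mapsto \cO_Y(U)_{\red} \to (\phi_*\cO_X)(U)_{\red}$; sheafifying yields $\phi_{\red}^* : \cO_{Y,\red} \to \phi_*\cO_{X,\red}$, and $(\ )_{\red}$ on objects was already seen to produce a manifold, so $\phi_{\red}=(|\phi|, \phi_{\red}^*) : X_{\red} \to Y_{\red}$ is a morphism in $\ms{MFD}$. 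Compatibility with composition and identities is immediate from functoriality of $(\ )_{\red}$ on super-algebras and of sheafification. That the restriction to $\ms{MFD}$ is the identity is clear: if $X$ is a manifold then $\cO_X=\cF_X$ consists of reduced algebras (being sheaves of analytic functions on an analytic manifold), so the defining presheaf $U \mapsto \cF_X(U)_{\red}=\cF_X(U)$ is already a sheaf, and $\phi^*$ is unchanged.

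Next I would prove the adjunction \eqref{eq:right_adjoint}. Let $Y \in \ms{MFD}$ and $X \in \ms{SMFD}$. The canonical quotient maps $\cO(U) \to \cO(U)_{\red}$ assemble (after sheafification) into a morphism of sheaves of super-algebras $\cO_X \to \cO_{X,\red}$, hence into a morphism $q_X : X_{\red} \to X$ in $\ms{SRS}$ which is the identity on underlying spaces; this is the counit. Given a morphism $\psi : Y \to X$ in $\ms{SMFD}$, one composes with $q$? — rather, one uses that $\psi^* : \cO_X \to \psi_*\cO_Y$ lands in reduced algebras because $\cO_Y=\cF_Y$ is reduced, and a super-algebra map into a reduced (in particular, purely even) algebra factors uniquely through $(\ )_{\red}$ by the universal property recorded after \eqref{eq:Ared}. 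Passing to sheaves, $\psi^*$ factors uniquely through $\cO_{X,\red}$, giving a unique morphism $Y \to X_{\red}$ in $\ms{MFD}$ whose composite with $q_X$ is $\psi$. This is exactly the bijection $\operatorname{Mor}(Y,X) \cong \operatorname{Mor}(Y,X_{\red})$, natural in both variables, so $(\ )_{\red}$ is right adjoint to the inclusion $\ms{MFD}\hookrightarrow \ms{SMFD}$.

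The only point needing a little care — and the step I'd flag as the main obstacle, though it is a mild one — is the interaction between sheafification and the reduction functor, namely verifying that $\cO_{X,\red}$ really is the structure sheaf of a manifold and that $\cO_{X,\red}(U)=\cO_X(U)_{\red}$ on the small opens $U$ contained in an s-split chart. This is precisely the content of the sentence already in the text preceding the lemma (``If $U$ is included in an open set $V$ such that $(V,\cO|_V)$ is an s-split super-manifold, then $\cO_{\red}(U)=\cO(U)_{\red}$''), so I may invoke it; concretely, on such $U$ one has $\cO_X(U)=\cF_V(U)\otimes \wedge(\ms{W})$ with $I_{\cO_X(U)}=\cF_V(U)\otimes \wedge(\ms{W})^{+}$, and since $\cF_V(U)$ is reduced the presheaf quotient $U \mapsto \cF_V(U)$ is already a sheaf on that chart, so no further sheafification is needed locally. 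The factorization in the adjunction argument must then be checked to be compatible with restriction maps so that it glues to a genuine sheaf-morphism; this is routine, as the factorization is forced by the universal property at each open set.
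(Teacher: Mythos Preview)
Your proposal is correct and fills in exactly the routine verification that the paper omits (the paper simply states ``We see easily the following'' before the lemma and gives no proof). The approach you take---functoriality of $A\mapsto A_{\red}$ on super-algebras pushed through sheafification, and the universal property of $A_{\red}$ as the largest reduced quotient to obtain the adjunction---is the natural one and is implicitly what the paper has in mind.
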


Here and in what follows we let
$\operatorname{Mor}(Y,X)$ denote the set of all morphisms $Y \to X$ in $\ms{SMFD}$.  

Let $X=(|X|, \cO)$ be a super-manifold, and let $x \in |X|$. If one chooses an s-split open 
super-submanifold $(U, \cF\otimes \wedge(\sW))$ of $X$ such that $x \in U$, then 
the stalk $\cO_x$ at $x$ is identified with $\cF_x \otimes \wedge(\sW)$, which is
isomorphic to the convergent power series super-algebra
$\Bbbk\{\sT_1,\dots, \sT_{m+n}\}$
in $m$ even and $n$ odd variables (see \eqref{eq:convergent_power}), 
where 
\[
m=\mr{Kdim}\, \cF_x,\quad n=\dim \sW;
\]
$\mr{Kdim}$ indicates the Krull dimension. One sees easily
\begin{align}\label{eq:red_x}
\begin{split}
(\cO_x)_{\red}&= (\cO_{\red})_x \simeq \Bbbk\{\sT_1,\dots, \sT_m\},\\
(\hO_x)_{\red}&= (\hO_{\red})_x \simeq \Bbbk[[\sT_1,\dots, \sT_m]]. 
\end{split}
\end{align}
Let $\m_x$ (or $\m_{X,x}$) denote the maximal super-ideal of $\cO_x$. 
The \emph{tangent super-space} $T_xX$ of $X$ at $x$ 
is the dual super-vector space 
\[
T_xX=(\m_x/\m_x^2)^*
\]
of $\m_x/\m_x^2$, the \emph{cotangent super-vector space}. We have
\[
m=\dim (T_xX)_0,\quad n=\dim (T_xX)_1.
\]

\begin{definition}\label{def:super-dimension}
We present as
\[
m|n= \sdim_x X,
\] 
and call this the \emph{super-dimension} of $X$ at $x$.
\end{definition}
 
One sees 
that $x \mapsto \sdim_x X$ is locally constant; 
see \cite[Part II, Chapter 3, Section 2]{S}. We have 
\begin{equation}\label{eq:TXzero}
(T_xX)_0=T_x(X_{\red}),
\end{equation} 
and so the dimension $m$ equals the dimension $\dim_x(X_{\red})$
of the manifold $X_{\red}$ at $x$, as defined in \cite{S}. 

We see easily the following.

\begin{lemma}\label{lem:split_smfd}
A super-manifold $X=(|X|, \cO)$ is s-split if and only if $\cO\simeq \cO_{\red}\otimes \wedge(\ms{W})$,
where $\ms{W}=(T_xX)_1^*$ for some/any point $x \in|X|$.
\end{lemma}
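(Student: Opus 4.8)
The statement asserts that $X$ is s-split precisely when $\cO$ is globally isomorphic to $\cO_{\red}\otimes\wedge(\ms{W})$ for $\ms{W}$ the dual of the odd tangent space at a point. The "if" direction is immediate: if $\cO\simeq\cO_{\red}\otimes\wedge(\ms{W})$ with $\ms{W}$ finite-dimensional, then taking $X$ to be the manifold $X_{\red}$ and applying Definition \ref{def:super-mfd}(2) shows $(|X|,\cO)$ is literally of the form $(|X|,\cF\otimes\wedge(\ms{W}))$, hence s-split. (One should note in passing that $\dim\ms{W}$ being well-defined --- independent of the point $x$ --- is exactly the local constancy of $\sdim_x X$ recorded after Definition \ref{def:super-dimension}, using \eqref{eq:TXzero} which forces the even part of $\sdim$ to match $X_{\red}$, so it is the odd part $n=\dim(T_xX)_1$ that must also be locally constant; on a given connected component this is a genuine constant, and the statement is to be read componentwise or under the usual assumption that the super-dimension is globally constant.)

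\textbf{The substantive direction.} Assume $X$ is s-split, say $(|X|,\cO)\cong(|Y|,\cF_Y\otimes\wedge(\ms{V}))$ for a manifold $Y$ and finite-dimensional vector space $\ms{V}$. Transporting along this isomorphism we may assume outright that $\cO=\cF\otimes\wedge(\ms{V})$ where $\cF=\cF_X$ is the sheaf of analytic functions on $|X|$. First I would identify $\cO_{\red}$: by the remark just before Lemma \ref{lem:red}, on any open $U$ contained in an s-split open set we have $\cO_{\red}(U)=\cO(U)_{\red}=\overline{\cO(U)}$, and since $I_{\cO(U)}$ is generated by the odd part, $\overline{\cF(U)\otimes\wedge(\ms{V})}=\cF(U)$; hence $\cO_{\red}=\cF$ as sheaves on $|X|$. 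Next I would compute $(T_xX)_1$ for $x\in|X|$: passing to the stalk, $\cO_x=\cF_x\otimes\wedge(\ms{V})$, the maximal super-ideal is $\m_x=\m_{\cF_x}\otimes\wedge(\ms{V})+\cF_x\otimes\wedge^{\ge1}(\ms{V})$, and $\m_x/\m_x^2\cong \m_{\cF_x}/\m_{\cF_x}^2\ \oplus\ \ms{V}$ as super-vector spaces, the second summand being the odd part. Dualizing gives $(T_xX)_1\cong\ms{V}^*$, so $\ms{W}:=(T_xX)_1^*\cong\ms{V}$, canonically up to a choice (and the choice is harmless since $\wedge$ depends only on the isomorphism class of the vector space). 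Therefore $\cO=\cF\otimes\wedge(\ms{V})\cong\cO_{\red}\otimes\wedge(\ms{W})$, which is the desired conclusion.

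\textbf{Main obstacle.} The only point requiring care is the identification $\cO_{\red}=\cF$ and the bookkeeping that shows the construction $\cO_{\red}$ (defined via sheafification of $U\mapsto\overline{\cO(U)}$) genuinely recovers $\cF$ globally, not merely on the s-split chart --- but here we have reduced to the case where the whole of $X$ is s-split, so this is exactly the cited local formula $\cO_{\red}(U)=\overline{\cO(U)}$ and no sheafification subtlety survives. A secondary point is to check that the isomorphism $\cO\cong\cO_{\red}\otimes\wedge(\ms{W})$ can be taken as one of \emph{sheaves of super-algebras}, i.e. that the stalkwise identifications assemble: this follows because the isomorphism $\cF\otimes\wedge(\ms{V})\xrightarrow{\sim}\cF\otimes\wedge(\ms{W})$ is induced by a fixed linear isomorphism $\ms{V}\xrightarrow{\sim}\ms{W}$ applied on each open set, hence is manifestly a sheaf morphism. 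So there is no real difficulty beyond unwinding the definitions; the content of the lemma is essentially the observation that for an s-split super-manifold the odd tangent space at any point canonically reproduces the defining exterior-algebra generators.
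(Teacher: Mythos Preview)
Your proof is correct and is precisely the straightforward unpacking of definitions that the paper has in mind; the paper itself omits the proof entirely, prefacing the lemma only with ``We see easily the following.'' Your identification $\cO_{\red}=\cF$ and the stalk computation $(\m_x/\m_x^2)_1\cong\ms{V}$ are exactly the two observations needed.
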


If these equivalent conditions are satisfied, we will often write
$\cF \otimes \wedge(\ms{W})$ for $\cO=\cO_X$ (as we have done so far), supposing
$\cF=\cF_{X_{\red}}\, (=\cO_{\red})$. 

Let $\phi : X \to Y$ be a morphism of super-manifolds. 
Let $x\in |X|$, and set $y=|\phi|(x)$. 
The sheaf-morphism $\phi^* : \cO_Y \to \phi_*\cO_X$ induces
a super-algebra map
$\phi^*_x : \cO_{Y,y} \to \cO_{X,x}$. This $\phi^*_x$ is necessarily \emph{local} in the sense
$\phi^*_x(\m_{Y,y})\subset \m_{X,x}$. Therefore, it induces a super-linear map
$\m_{Y,y}/\m_{Y,y}^2 \to \m_{X,x}/\m_{X,x}^2$. 
The dual super-linear map is denoted by
\[ 
d\phi_{x} : T_{x}X \to T_{y}Y.
\]

The following is the Inverse Function Theorem for super-manifolds; cf \cite[Proposition 5.1.1]{CCF}. 
This basic result will not be used in what follows, but may be worth to record here.
In addition, the same argument as proving the theorem will be 
used to prove Lemma \ref{lem:isomorphism}, which will be used at the final step
of proving Theorem \ref{thm:equivalence}. 

\begin{theorem}\label{thm:IFT}
Let $\phi : X \to Y$ be a morphism of super-manifolds. Let $x \in |X|$, and set $y=|\phi|(x)$. 
Assume that
$d\phi_{x} : T_{x}X \to T_{y}Y$ is an isomorphism. 
Then there exist open neighborhoods $U$ of $x$ and $V$ of $y$, such that
$\phi$ restricts to an isomorphism $U \to V$ of super-manifolds.  
\end{theorem}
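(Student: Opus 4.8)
The plan is to reduce the statement to an assertion about stalks, and then to prove the stalk-level statement by passing to the associated graded super-algebras $\ms{gr}\,\cO_x$ and applying the classical Inverse Function Theorem for manifolds together with a graded Nakayama-type argument. First I would observe that the hypothesis ``$d\phi_x$ is an isomorphism'' is equivalent, by dualizing, to the statement that the induced map $\m_{Y,y}/\m_{Y,y}^2 \to \m_{X,x}/\m_{X,x}^2$ on cotangent spaces is an isomorphism; in particular $\sdim_x X = \sdim_y Y$, say equal to $m|n$. Splitting this into even and odd parts via \eqref{eq:red_x} and \eqref{eq:TXzero}, the even part says $d(\phi_{\red})_x : T_x(X_{\red}) \to T_y(Y_{\red})$ is an isomorphism of the classical tangent spaces. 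So the classical Inverse Function Theorem (as in \cite[Part II, Chapter III]{S}) already gives open neighborhoods on which $\phi_{\red}$ restricts to an isomorphism of manifolds; shrinking, I may assume $(U,\cO|_U)$ and $(V,\cO|_V)$ are both s-split, of super-dimension $m|n$, and that $|\phi| : U \to V$ is a homeomorphism with $\phi_{\red}|_U : (U,\cO_{\red}|_U) \to (V,\cO_{\red}|_V)$ an isomorphism of manifolds.

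Next I would reduce the problem to showing that the single super-algebra map $\phi^*_x : \cO_{Y,y} \to \cO_{X,x}$ is an isomorphism. Indeed, once $|\phi|$ is a homeomorphism of the chosen neighborhoods, a sheaf-morphism $\cO_Y|_V \to |\phi|_*(\cO_X|_U)$ is an isomorphism of sheaves if and only if it is a stalkwise isomorphism, and on the s-split pieces — where the structure sheaves are $\cF\otimes\wedge(\ms{W})$ with $\cF$ coherent and $\wedge(\ms{W})$ finite — an isomorphism at one point propagates to an isomorphism on a whole neighborhood (the locus where a map of coherent sheaves is an isomorphism is open). Alternatively, and perhaps cleaner: having the classical isomorphism $\phi_{\red}|_U$ in hand, use the s-split description $\cO_X|_U \simeq \cF_{X_{\red}}|_U \otimes \wedge(\ms{W})$ and $\cO_Y|_V \simeq \cF_{Y_{\red}}|_V \otimes \wedge(\ms{W}')$ of Lemma \ref{lem:split_smfd}, with $\dim\ms{W} = n = \dim\ms{W}'$, and argue that $\phi^*$ is determined by where it sends the even coordinate functions and the odd generators, so that the isomorphism on stalks forces an isomorphism on a neighborhood. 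So it suffices to treat $\phi^*_x$.

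For the stalk statement, $\cO_{Y,y}$ and $\cO_{X,x}$ are both isomorphic to the convergent power series super-algebra $\Bbbk\{\sT_1,\dots,\sT_{m+n}\}$, and $\phi^*_x$ is a local super-algebra map inducing an isomorphism on $\m/\m^2$. The plan here is the one flagged in the introduction: pass to completions $\hO_{Y,y}\to\hO_{X,x}$ (both $\simeq \Bbbk[[\sT_1,\dots,\sT_{m+n}]]$), and then to associated graded algebras. Since $\hat\phi^*_x$ is local and induces an isomorphism on $\m/\m^2$, the induced map on $\ms{gr}$ of the completions — a map of graded polynomial-times-exterior algebras generated in degree one — is an isomorphism because it is an isomorphism in degree one and both sides are free on their degree-one parts. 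By a standard filtered/completeness argument ($\ms{gr}$-isomorphism plus completeness implies isomorphism), $\hat\phi^*_x$ is an isomorphism. Finally descend from completions back to the convergent power series super-algebras: because $\cO_{X,x}$ is Noetherian, the map $\cO_{X,x}\to\hO_{X,x}$ is injective, and faithful flatness of completion (or direct power-series manipulation: a map of convergent power series super-algebras whose completion is invertible has a convergent power series inverse, since the formal inverse obtained by solving the even coordinate equations has convergent components by the classical analytic inverse function theorem, and the odd part is a finite polynomial matter) shows $\phi^*_x$ itself is an isomorphism. The main obstacle I anticipate is precisely this last descent — checking that the inverse of an invertible \emph{formal} super-power-series map actually has \emph{convergent} components over a general complete field; this is where one genuinely needs the classical analytic Inverse Function Theorem over $\Bbbk$ (already invoked for $\phi_{\red}$) to supply convergent solutions for the even variables, after which the odd variables, spanning a finite-dimensional exterior part, are handled by a terminating substitution. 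Everything else is formal Hopf-algebra-free bookkeeping of the kind the paper treats as routine.
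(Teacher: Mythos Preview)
Your outline is essentially correct, but it diverges from the paper's proof in one important structural choice: which filtration you take $\ms{gr}$ with respect to. You use the $\m$-adic filtration on $\cO_x$, which forces you to pass to completions and then worry about descending from formal to convergent power series. The paper instead uses the filtration by powers of $I_A = (A_1)$, the super-ideal generated by the odd part (see \eqref{eq:barA_grA} and Remark~\ref{rem:gr}). Since $\cO_{X,x} \simeq \cF_{X,x}\otimes\wedge(\sW_X)$, this filtration is \emph{finite}, and $\ms{gr}\,\cO_{X,x}$ is just $\cF_{X,x}\otimes\wedge(\sW_X)$ with its natural $\mathbb{N}$-grading. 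No completion, no descent problem: $\phi^*_z$ is an isomorphism iff $\ms{gr}(\phi^*_z)$ is, and the latter is determined by its degree-$0$ and degree-$1$ pieces. Degree $0$ is $(\phi_{\red})^*_z$, already an isomorphism; degree $1$ is an $\cF$-linear map $\cF_{Y,w}\otimes\sW_Y \to \cF_{X,z}\otimes\sW_X$ of free modules of the same finite rank.

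This also changes how the ``propagation to a neighborhood'' step goes. You reduce to a stalk isomorphism at the single point $x$ and then invoke a general openness principle that is not quite applicable as stated (the sheaf-morphism $\phi^*$ is not a map of coherent modules over a fixed base). The paper instead observes that the degree-$1$ map above globalizes to a sheaf-morphism $\ms{gr}(\phi^*)(1) : \cF_Y\otimes\sW_Y \to (\phi_{\red})_*\cF_X\otimes\sW_X$ represented, after choosing bases, by an $n\times n$ matrix of analytic functions; its determinant is a single analytic function on $|X|$ which is nonzero at $x$ (since modulo the maximal ideal the map is dual to the isomorphism $(d\phi_x)_1$), hence nonzero on an open neighborhood. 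That neighborhood is your $U$. So what you flagged as the ``main obstacle'' (formal-to-convergent descent) is an artifact of the $\m$-adic route; the paper's $I_A$-graded approach sidesteps it entirely and simultaneously supplies the open neighborhood via the determinant.
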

\begin{proof}
Note that $(d\phi_x)_0$ coincides with $d(\phi_{\red})_x : T_x(X_{\red}) \to T_y(Y_{\red})$. 
The Inverse Function Theorem \cite[Theorem 2, Page 83]{S} for manifolds, applied to this now isomorphism,
allows us to suppose, replacing $|X|$ and $|Y|$ with some
open sub-sets, that $\phi_{\red} : X_{\red} \to Y_{\red}$ is an isomorphism of manifolds and, moreover,
$X$ and $Y$ are s-split so that
\[ \cO_X = \cF_X \otimes \wedge(\sW_X), \quad \cO_Y=\cF_Y \otimes \wedge(\sW_Y). \]
(For simplicity we write $\cF_X$, $\cF_Y$ as above, which should be denoted by $\cF_{X_{\red}}$, 
$\cF_{Y_{\red}}$, to be precise; see the paragraph following Lemma \ref{lem:split_smfd}.)

We wish to prove the sheaf-morphism $\phi^* : \cO_Y \to \phi_*\cO_X$,  restricted to $V=|\phi|(U)$, 
is an isomorphism, where $U \subset |X|$ is some open neighborhood of $x$.
This is equivalent to proving that the map $\phi^*_z : \cO_{Y,w} \to \cO_{X,z}$ of connected super-algebras
is an isomorphism for each $z \in U$ with $w=|\phi|(z)$. 
Applying the functor $\ms{gr}$ given in Section \ref{subsec:associated_super-algebras}, we have an 
$\mathbb{N}$-graded algebra map 
$\ms{gr}(\phi^*_z) : \cF_{Y,w} \otimes \wedge(\sW_Y)\to \cF_{X,z} \otimes \wedge(\sW_X)$; 
note that we are in the special situation as noted in Remark \ref{rem:gr}. 
We see that $\phi^*_z$ is isomorphic if and only if $\ms{gr}(\phi^*_z)$ is isomorphic if and only
if the maps $\ms{gr}(\phi^*_z)(0)$ and $\ms{gr}(\phi^*_z)(1)$ 
in degree $0$ and $1$ are isomorphic. 
The first map in degree $0$ is identified with $(\phi_{\red})^*_z$, and is, therefore,
isomorphic.

Let us turn to the second map 
$\ms{gr}(\phi^*_z)(1) : \cF_{Y,w} \otimes \sW_Y \to \cF_{X,z} \otimes \sW_X$ in degree $1$.
This is regarded as a morphism of finitely generated free modules
over a connected algebra, when we identify the connected algebra 
$\cF_{Y,w}$ with $\cF_{X,z}$ via $\ms{gr}(\phi^*_z)(0)$. 
When $z=x$, this $\ms{gr}(\phi^*_x)(1)$ is isomorphic, 
since the map modulo the maximal ideal is dual to 
the isomorphism $(d\phi_x)_1$, and is, therefore, isomorphic. 
In particular, it follows that $\dim \sW_X=\dim \sW_Y$; we denote this dimension by $n$.  

Associated with the $\phi^*$ above is a sheaf-morphism 
$\ms{gr}(\phi^*) : \cF_Y \otimes \wedge(\sW_Y)\to (\phi_{\red})_*\cF_X \otimes \wedge(\sW_X)$ 
of (super-commutative) $\mathbb{N}$-graded algebras.
Passing to stalks we have the $\mathbb{N}$-graded algebra maps $\ms{gr}(\phi^*)_z$, $z \in |X|$.
They coincide with the $\ms{gr}(\phi^*_z)$ before,
since in the present situation, taking $\ms{gr}$ commutes with the construction of stalks. 
The morphism $\ms{gr}(\phi^*)(0)$ in degree $0$, coinciding with $(\phi_{\red})^*$, is
isomorphic. We suppose that $|\phi_{\red}|$ is the identity map, and identify the two sheaves so that
$\cF_X=(\phi_{\red})_*\cF_X=\cF_Y$. By choosing bases of 
$\sW_X$ and of $\sW_Y$, the morphism 
$\ms{gr}(\phi^*)(1): \cF_Y \otimes \sW_Y \to \cF_X \otimes \sW_X$ in degree $1$ is 
presented by
an $n \times n$ matrix with entries in $\cF_X$. 
Let $\det \in \cF_X(|X|)$ denote the determinant of the matrix evaluated in $|X|$. 
Given $z \in |X|$, we see that $\ms{gr}(\phi^*_z)(1)$ is isomorphic if and only if the natural 
image of $\det$ in $\cF_{X,z}$ is invertible if and only if the analytic function 
$\det : |X| \to \Bbbk$ does not vanish at $z$. Since this is the case for $x$, 
there exists an open neighborhood $U$ of $x$ such that for every $z\in U$, 
the last condition and hence the equivalent other conditions are all satisfied.
This completes the proof. 
\end{proof}

The following is crucial to define super Lie groups; see Definition \ref{def:super_Lie_group}. 

\begin{prop}\label{prop:finite_products}
The category $\ms{SMFD}$ of super-manifolds has finite products. 
\end{prop}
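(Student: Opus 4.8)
The plan is to reduce the construction of finite products to the terminal object $(\{*\}, \Bbbk)$ (which already exists in $\ms{SRS}$, hence in $\ms{SMFD}$) plus the binary product $X \times Y$ of two super-manifolds, and then to construct the binary product by gluing local pieces. First I would treat the s-split case. Given s-split super-manifolds $X = (|X|, \cF_X \otimes \wedge(\sW_X))$ and $Y = (|Y|, \cF_Y \otimes \wedge(\sW_Y))$, I would set $|X \times Y| = |X| \times |Y|$ with the product topology, take the product manifold $X_{\red} \times Y_{\red}$ in $\ms{MFD}$ (which exists by the classical theory, with structure sheaf $\cF_{X_{\red} \times Y_{\red}}$ containing the external tensor product $\cF_X \boxtimes \cF_Y$ as a dense/generating subsheaf), and define
\[
\cO_{X \times Y} := \cF_{X_{\red} \times Y_{\red}} \otimes \wedge(\sW_X \oplus \sW_Y).
\]
This is again s-split, hence a super-manifold, and it is routine to check that the two projections are morphisms in $\ms{SMFD}$.

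The main work is verifying the universal property. Given a super-manifold $Z$ and morphisms $f : Z \to X$, $g : Z \to Y$, I must produce a unique $h : Z \to X \times Y$ with the projections recovering $f, g$. On underlying spaces, $|h| = (|f|, |g|)$ is forced. For the sheaf map $h^* : \cO_{X\times Y} \to |h|_* \cO_Z$, working locally I may assume $Z$ is also s-split, so $\cO_Z = \cF_Z \otimes \wedge(\sW_Z)$. Because $\cF_{X_{\red} \times Y_{\red}}$ is the analytic-functions sheaf of a product manifold, a morphism out of it into any sheaf of analytic super-algebras is determined by — and can be freely prescribed via — its restrictions to the two factors, by the universal property of products in $\ms{MFD}$ combined with the characterization of analytic maps into $\Bbbk^{m+m'}$ by their coordinates. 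Concretely, the even-function part of $h^*$ is forced by the product property of $X_{\red} \times Y_{\red}$ applied to $|f_{\red}|, |g_{\red}|$, and the odd generators in $\sW_X$ (resp. $\sW_Y$) must map to the images of the corresponding generators under $f^*$ (resp. $g^*$), extended multiplicatively. One checks this indeed defines a super-algebra sheaf-morphism (well-definedness amounts to the relations in $\wedge(\sW_X \oplus \sW_Y)$ being respected, which holds since odd elements square to zero on both sides and anticommute), and that it is the unique such map compatible with the projections. The hardest point here, and the one I would spend the most care on, is the claim that a sheaf-morphism on a product manifold's structure sheaf is uniquely and freely determined by its two restrictions — this is where one genuinely uses that $\Bbbk$ is a complete field and that analytic functions of a point in $\Bbbk^m \times \Bbbk^{m'}$ near a chart are power series in the combined coordinates, so a map is pinned down by where it sends each coordinate; in the Hopf-algebraic spirit of the paper one can phrase this via the stalks $\hO_{x} = \Bbbk[[\sT]]$ and the fact that $\Bbbk[[\sT]] \hotimes \Bbbk[[\sP]] = \Bbbk[[\sT,\sP]]$ is the coproduct in complete connected super-algebras (Lemma \ref{lem:co-group_object}), giving the universal property at the formal level, then passing back to convergent series.

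Finally I would remove the s-split hypothesis by gluing. For general super-manifolds $X, Y$, choose open coverings $|X| = \bigcup_i U_i$, $|Y| = \bigcup_j V_j$ with $(U_i, \cO_X|_{U_i})$ and $(V_j, \cO_Y|_{V_j})$ s-split. The products $(U_i, \cO_X|_{U_i}) \times (V_j, \cO_Y|_{V_j})$ constructed above are s-split super-manifolds sitting over the open sets $U_i \times V_j$ of $|X| \times |Y|$, and on overlaps they are canonically isomorphic by the uniqueness just established (both satisfy the same universal property restricted to the overlap), so the gluing cocycle conditions hold automatically. Hence they glue to a super-ringed space $(|X| \times |Y|, \cO_{X \times Y})$ which is locally s-split, i.e. a super-manifold, with projections to $X$ and $Y$. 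The universal property for $X \times Y$ against an arbitrary $Z$ follows by covering $Z$ by s-split opens, applying the local case, and gluing the resulting unique local morphisms — again the gluing is forced by uniqueness. Together with the terminal object $(\{*\},\Bbbk)$ this shows $\ms{SMFD}$ has all finite products. I expect the gluing bookkeeping to be entirely routine once the binary s-split case and its uniqueness are in hand; the one genuine obstacle is the free/unique determination of a sheaf-morphism out of a product manifold's structure sheaf by its two restrictions.
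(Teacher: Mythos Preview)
Your overall architecture (terminal object, binary product in the s-split case, then glue) matches the paper's, but the heart of the argument is missing. The universal property of $X_{\red}\times Y_{\red}$ in $\ms{MFD}$ only tests against \emph{manifolds}; what you actually need is that this product remains a product when the test object $Z$ is a super-manifold. Concretely, you must show that for an open $X\subset\Bbbk^m$, the set $\operatorname{Mor}(Z,X)$ is in natural bijection with $m$-tuples $(f_1,\dots,f_m)\in\cO_Z(|Z|)_0^m$ whose reductions land in $|X|$. The nontrivial direction is \emph{existence}: given such $f_i$, one must produce a super-algebra map $\cF_X\to\cO_Z$ sending an analytic $F$ to ``$F(f_1,\dots,f_m)$''. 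Writing $f_i=\overline{f}_i+g_i$ with $g_i$ nilpotent even, this composite is defined by a finite Taylor-type expansion in the $g_i$; in characteristic zero that is the ordinary Taylor formula, but since the paper works over any complete field of characteristic $\ne 2$, one needs the Hasse--Schmidt divided-derivative operators $\varDelta^{\boldsymbol{k}}$ (from Serre) in place of $\partial^{\boldsymbol{k}}/\boldsymbol{k}!$. This is exactly what the paper's proof supplies, and it is the substantive content of the proposition.

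Your proposed justification via completed stalks does not fill this gap. The coproduct property $\Bbbk[[\sT]]\,\hotimes\,\Bbbk[[\sP]]=\Bbbk[[\sT,\sP]]$ in complete connected super-algebras gives you, for each $z\in|Z|$, a continuous super-algebra map $\hO_{X\times Y,(x,y)}\to\hO_{Z,z}$; but ``passing back to convergent series'' is precisely the issue, not a formality. You must show that the formal composite $F(f_1,\dots,f_m)$ is the germ of an \emph{analytic} section of $\cO_Z$, uniformly in $z$, and that these germs glue to a sheaf morphism. That is where the explicit $\varDelta^{\boldsymbol{k}}$ formula earns its keep. Incidentally, the paper's reduction is a bit sharper than yours: it first factors any s-split super-manifold as a categorical product $(|X|,\cF)\times(\{*\},\wedge(\sW))$ in $\ms{SMFD}$, using the easy identification $\operatorname{Mor}(Z,(\{*\},\wedge(\sW)))=\Hom(\sW,\cO_Z(|Z|)_1)$, which isolates the entire difficulty in the purely even statement about $\operatorname{Mor}(Z,X)$ for $X$ a manifold.
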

\begin{proof}
This follows,  
by modifying the proof of the result \cite[Remark 2.6 (iv)]{DM} in the $C^{\infty}$ situation,
so as to fit in with our analytic situation including the positive characteristic case.
Let us give some details below.

The category contains the terminal object $(\{ *\}, \Bbbk)$, as was seen before.

It remains to show the existence of the product $X_1\times X_2$ of two super-manifolds $X_1$, $X_2$.
One sees that the s-split super-manifold
$(|X|, \cF\otimes \wedge({\sW}))$ decomposes into the (categorical) product
\begin{equation}\label{eq:product}
(|X|, \cF\otimes \wedge({\sW}))=(|X|, \cF) \times (\{*\}, \wedge(\sW)),
\end{equation} 
where $Y:=(\{*\},\wedge(\sW))$ is the one-point set $\{*\}$ equipped with the sheaf 
$\{*\}\mapsto \wedge(\sW)$. 
Essential for this is to see that
given a super-manifold $Z=(|Z|,\cO)$, the set $\operatorname{Mor}(Z, Y)$ of the morphisms is 
identified with the set $\Hom(\sW, \cO(|Z|)_1)$ of all linear maps from $\sW$ to the odd component of $\cO(|Z|)$.
This also shows that the product $(\{*\}, \wedge(\sW_1)) \times (\{*\}, \wedge(\sW_2))$ 
is given by $(\{*\}, \wedge(\sW_1\oplus \sW_2))$.
Therefore,
we may suppose that $X_i$, $i=1,2$, are manifolds, and are, moreover (by gluing), 
open sub-manifolds of some $\Bbbk^{m_i}$, and it suffices to show that their product $X_1\times X_2$
in $\ms{MFD}$ gives the desired product. 

Suppose that $X$ is a non-empty open sub-manifold of $\Bbbk^m$, and let $Z$ be as above. 
The desired result follows from the following.
\medskip

\noindent
\textbf{Claim.}\
\emph{$\operatorname{Mor}(Z, X)$ is in a natural one-to-one
correspondence with the set 
\[
\{ \, (f_1,\dots,f_m) \in \cO(|Z|)_0^m \mid 
(\overline{f}_1(z),\dots,\overline{f}_m(z))\in |X|~~\text{for all}~~z \in |Z|  \, \},
\]
where $\overline{f}_i$ denotes the natural image of $f_i$ in $\cO_{\red}(|Z|)$.}
\medskip

This is proved by \cite[Proposition 2.4]{DM} in the $C^{\infty}$ situation. 
Choose $(f_1,\dots,f_m)$ from the set above. 
Note from \cite{DM} that the main point of the
proof of Claim
is to construct the corresponding morphism $Z \to X$. Let us confirm only that the construction
indeed works in our situation. 
We have a morphism of manifolds,
\[
\phi : Z_{\red}\to X,\quad \phi(z)=(\overline{f}_1(z),\dots,\overline{f}_m(z)).
\]
The associated sheaf-morphism $\phi^* : \cF \to \phi_*\cO_{\red}$
is given by
\begin{equation}\label{eq:assoc_morphism}
\phi^* : \cF(U) \to \cO_{\red}(V),\quad \phi^*(F)=F(\overline{f}_1,\dots,\overline{f}_m),
\end{equation}
where $U \subset |X|$ and $V \subset |Z|$ are open with $\phi(V)\subset U$. 
The desired morphism $(\phi, \widetilde{\phi^*})$ consists of the same continuous map $\phi$ as above,
and the sheaf-morphism $\widetilde{\phi^*} : \cF \to \phi_*\cO$ given by
\begin{equation}\label{eq:desired_morphism}
\widetilde{\phi^*} : \cF(U) \to \cO(V),\quad \widetilde{\phi^*}(F)=F(f_1,\dots,f_m)
\end{equation}
where $U$ and $V$ are as above. 
In \eqref{eq:assoc_morphism}, \eqref{eq:desired_morphism} and in what follows we wrote and will write
$f_i$, $\overline{f}_i$ for $f_i|_V$, $\overline{f}_i|_V$.
Our task is to clarify what the composite $F(f_1,\dots,f_m)$ in \eqref{eq:desired_morphism} is. 

Now, we assume in addition that $V$ is s-split as a super-manifold, and proceed,
depending on \cite[Part II, Chaper II]{S} for basic results and the notation
which uses multi-indexes, $\boldsymbol{k}=(k_1,\dots, k_m)$,\ $k_i \ge 0$. 
We see from \cite[Lemma on Page 70]{S} the following.
\medskip

\noindent
\textbf{Fact.}\
\emph{For each $\boldsymbol{k}$, we have a unique operator $\varDelta^{\boldsymbol{k}} : \cF(U) \to \cF(U)$ 
that gives rise to the operator 
\begin{equation}\label{eq:Delta_operator}
\varDelta^{\boldsymbol{k}}(\sum_{\boldsymbol{i}} c_{\boldsymbol{i}} \, \sT^{\boldsymbol{i}})=
\sum_{\boldsymbol{i}\ge \boldsymbol{k}} \binom{\boldsymbol{i}}{\boldsymbol{k}}\, c_{\boldsymbol{i}}
\, \sT^{\boldsymbol{i}-\boldsymbol{k}}
\end{equation}
on $\cF_x=\Bbbk\{\sT_1,\dots,\sT_m\}$ at every $x \in U$, where
$\sT^{\boldsymbol{i}}=\sT_1^{i_1}\dots \sT_m^{i_m}$ and
$\binom{\boldsymbol{i}}{\boldsymbol{j}}=\binom{i_1}{j_1}\dots \binom{i_m}{j_m}$; 
see Example \ref{ex:Delta-operator}.}
\medskip

One may understand that the formula above 
defines $\varDelta^{\boldsymbol{k}} f$ locally, when $f\in\cF(U)$ is expressed
as a convergent power series in some strict polydisk. 
Since $V$ is s-split, we have the decomposition 
\[ f_i=\overline{f}_i + g_i,\quad 1 \le i \le m, \]
in $\cO(V)$, where $g_i \in \cO(V)$ are nilpotent even elements. 
Define $F(f_1,\dots,f_m)$ by
\[
F(f_1,\dots,f_m)
= \sum_{\boldsymbol{k}}\, \varDelta^{\boldsymbol{k}}\hspace{-0.2mm}F\hspace{0.2mm} (\overline{f}_1,\dots,\overline{f}_m)
\hspace{0.8mm} g^{\boldsymbol{k}},
\]
where $g^{\boldsymbol{k}}=g_1^{k_1}\dots g_m^{k_m}$. Note that this sum is finite.
The formula given in Part~3 of the last cited \cite[Lemma on Page 70]{S} shows that 
$F(f_1,\dots,f_m)$ is indeed the section over $V$ which assigns 
to every $z \in V$, the composite
$F_x((f_1)_z,\dots, (f_m)_z)\in \cO_z$, 
where $x =\phi(z)$. 

We see that
the last assignment, being of local nature, 
defines a section 
without the assumption that $V$ is s-split; apply the result with the assumption to 
open subsets which are s-split as super-manifolds. 
In conclusion,
$F(f_1,\dots,f_m)$ is the section thus defined over any $V$ with $\phi(V)\subset U$. 
\end{proof}

\begin{rem}\label{rem:product_smfd}
Let $X$ and $Y$ be super-manifolds, and set $Z=X\times Y$. 

(1) 
We have $Z_{\red}=X_{\red}\times Y_{\red}$ in $\ms{MFD}$, as is seen from \eqref{eq:right_adjoint}. 
In particular, $|Z|$ is the product $|X| \times |Y|$ of topological spaces. 

(2) 
Suppose that 
$X=(|X|,\cF_{X_{\red}}\otimes \wedge(\ms{W}_X))$ and $Y=(|Y|,\cF_{Y_{\red}}\otimes \wedge(\ms{W}_Y))$ 
are s-split. 
As is seen from the last proof, $Z$ is then s-split so that
\[ Z=\big(|X|\times |Y|,\ \cF_{X_{\red}\times Y_{\red}}\otimes \wedge(\ms{W}_X \oplus \ms{W}_Y)\big). \]
In addition, the canonical morphism from $Z$ to $X$, for example, is the projection
$\mr{pr}:|X|\times |Y| \to |X|$ equipped with the tensor product of (i)~the canonical sheaf-morphism 
$\cF_{X_{\red}} \to \mr{pr}_*\cF_{X_{\red}\times Y_{\red}}$ and (ii)~the super-algebra
map $\wedge(\ms{W}_X)\to \wedge(\ms{W}_X \oplus \ms{W}_Y)$ induced from the inclusion
$\ms{W}_X\hookrightarrow \ms{W}_X \oplus \ms{W}_Y$. 
\end{rem}

\begin{prop}\label{prop:identify}
Let $X$ and $Y$ be super-manifolds, and suppose $x\in |X|$ and $y\in |Y|$. 
The maps
\[ 
\hO_{X,x}\to \hO_{X\times Y, (x,y)} \leftarrow \hO_{Y,y},\quad
T_xX \leftarrow T_{(x,y)}(X\times Y) \to T_yY
\]
of complete connected super-algebras, and of super-vector spaces that are
induced from the canonical morphisms $X \leftarrow X \times Y \to Y$ naturally give rise to
isomorphisms
\begin{equation}\label{eq:isom_couple} 
\hO_{X,x}~\hotimes~\hO_{Y,y} \simeq \hO_{X\times Y,(x,y)},\quad
T_{(x,y)}(X\times Y) \simeq T_xX\oplus T_yY.
\end{equation}
\end{prop}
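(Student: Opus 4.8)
The plan is to reduce immediately to the s-split case and then compute both sides directly. Since the statements in \eqref{eq:isom_couple} concern only the stalks $\hO_{X,x}$, $\hO_{Y,y}$ and $\hO_{X\times Y,(x,y)}$, and since every point has an s-split open neighborhood, I may replace $X$ and $Y$ by s-split open super-submanifolds containing $x$ and $y$ respectively. By Remark \ref{rem:product_smfd}(2), the product $X\times Y$ is then s-split with structure sheaf $\cF_{X_{\red}\times Y_{\red}}\otimes \wedge(\ms{W}_X\oplus\ms{W}_Y)$, and $|X\times Y|=|X|\times|Y|$ with $(x,y)$ the chosen point. Concretely, after shrinking we may even assume $X_{\red}$ and $Y_{\red}$ are open submanifolds of $\Bbbk^{m_1}$ and $\Bbbk^{m_2}$, so that the stalks of the reduced structure sheaves are convergent power series algebras.

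The second isomorphism is the easier one and I would dispatch it first. Passing to stalks, $\cO_{X\times Y,(x,y)}\simeq \cF_{X_{\red},x}\otimes\cF_{Y_{\red},y}\otimes\wedge(\ms{W}_X\oplus\ms{W}_Y)$, which is the convergent power series super-algebra in $m_1+m_2$ even and $n_1+n_2$ odd variables (using \eqref{eq:convergent_power} and the fact that $\cF_{X_{\red},x}\otimes\cF_{Y_{\red},y}$ is the convergent power series algebra in $m_1+m_2$ even variables — this uses that $\Bbbk$ is a complete field so that convergent power series in disjoint variable sets tensor correctly). Its maximal super-ideal $\m_{(x,y)}$ has $\m_{(x,y)}/\m_{(x,y)}^2 \simeq (\m_x/\m_x^2)\oplus(\m_y/\m_y^2)$ since the two blocks of variables are algebraically independent modulo squares; dualizing gives $T_{(x,y)}(X\times Y)\simeq T_xX\oplus T_yY$, and one checks this identification is induced by the maps coming from the canonical projections, because those projections send the respective variable blocks identically and kill the complementary one (Remark \ref{rem:product_smfd}(2) describes the sheaf-morphisms explicitly).

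For the first isomorphism I would argue as follows. The canonical morphisms $X\leftarrow X\times Y\to Y$ induce, on completed stalks, local super-algebra maps $\hO_{X,x}\to\hO_{X\times Y,(x,y)}\leftarrow\hO_{Y,y}$; these are continuous since the targets are connected (Remark \ref{rem:local_map}), so by the universal property of $\hotimes$ as the coproduct of complete connected super-algebras (Lemma \ref{lem:co-group_object}, together with the paragraph on complete tensor products of local super-algebras in Section \ref{subsec:local_super-algebra}) they assemble into a unique continuous super-algebra map $\hO_{X,x}~\hotimes~\hO_{Y,y}\to\hO_{X\times Y,(x,y)}$. To see it is an isomorphism, I would use the explicit s-split description: $\hO_{X,x}\simeq \Bbbk[[\sT_1,\dots,\sT_{m_1+n_1}]]$ and $\hO_{Y,y}\simeq\Bbbk[[\sP_1,\dots,\sP_{m_2+n_2}]]$ by \eqref{eq:red_x} and the fact that the completion of a convergent power series super-algebra is the formal one (Section \ref{subsec:convergent_power}), while $\hO_{X\times Y,(x,y)}\simeq\Bbbk[[\sT_1,\dots,\sT_{m_1+n_1},\sP_1,\dots,\sP_{m_2+n_2}]]$ similarly; and $\Bbbk[[\sT]]~\hotimes~\Bbbk[[\sP]]=\Bbbk[[\sT,\sP]]$ by definition of the complete tensor product of the $\m$-adic topologies. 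One then verifies the canonically constructed map agrees, under these identifications, with the inclusion of the two variable blocks, which is manifestly bijective.

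The main obstacle is the bookkeeping needed to confirm that the \emph{canonical} map — the one forced by functoriality and the coproduct universal property — really coincides with the naive inclusion of variables, rather than some other isomorphism; this requires unwinding the description in Remark \ref{rem:product_smfd}(2) of the sheaf-morphisms attached to the projections and tracing them through passage to stalks and completion. Once one checks that the projection $\mr{pr}_X$ sends a local coordinate $\sT_i$ on $X$ to the corresponding coordinate on $X\times Y$ and that the compatibility of the two projection-induced maps with $\Delta$-free structure is automatic, the rest is formal. I would also remark that everything is natural in $(X,x)$ and $(Y,y)$, which follows since each constituent map is.
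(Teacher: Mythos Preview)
Your approach is exactly the paper's: reduce to the s-split case and invoke the explicit description of Remark~\ref{rem:product_smfd}(2); you simply spell out the details the paper leaves implicit. One slip worth flagging: the claim that $\cF_{X_{\red},x}\otimes\cF_{Y_{\red},y}$ equals the convergent power series algebra in $m_1+m_2$ variables is false --- $\Bbbk\{\sT\}\otimes\Bbbk\{\sP\}$ is strictly smaller than $\Bbbk\{\sT,\sP\}$ in general --- so the stalk $\cO_{X\times Y,(x,y)}$ should be written as $\cF_{X_{\red}\times Y_{\red},(x,y)}\otimes\wedge(\ms{W}_X\oplus\ms{W}_Y)$. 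This does not damage your argument, since both the cotangent computation and the completed-stalk computation only use the quotient $\m/\m^2$ and the $\m$-adic completion, where the distinction disappears.
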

\begin{proof}
We may suppose that $X$ and $Y$ are s-split. The desired result then follows easily from Remark 
\ref{rem:product_smfd} (2). 
\end{proof}

Through the isomorphisms \eqref{eq:isom_couple} we will identify the relevant objects. 

\section{Super Lie groups}\label{sec:super_Lie_group}

\subsection{Super Lie groups and their hyper-super-algebras}\label{subsec:super_Lie_group}
The following definition makes sense by Proposition \ref{prop:finite_products}.

\begin{definition}\label{def:super_Lie_group}
A \emph{super Lie group} is a group object in $\ms{SMFD}$.
We let $\ms{SLG}$ denote the category of
those group objects. See Remark \ref{rem:super_Lie_morphism} (2) for the definition of morphisms. 
\end{definition}

We will see that every super Lie group is s-split as a super manifold; 
see Corollary \ref{cor:super_Lie_group_split}. The underlying topological space
of a super Lie group is a group, in particular. The identity element will be
denoted by $e$. 

By Remark \ref{rem:product_smfd} (2) the functor
$(\ )_{\red}: \ms{SMFD}\to \ms{MFD}$ given by Lemma \ref{lem:red}
preserves finite products, and so it induces a functor between the categories of the group
objects. This proves the following. 

\begin{lemma}\label{lem:associated_Lie}
If $G=(|G|,\cO)$ is a super Lie group, then
\[
G_{\red}=(|G|, \cO_{\red})
\]
is a Lie group,
that is, a group object in $\ms{MFD}$. The assignment $G \mapsto G_{\mr{red}}$ gives rise to
a functor from $\ms{SLG}$ to the category of Lie groups. 
\end{lemma}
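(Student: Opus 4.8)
The plan is to reduce the statement to the general categorical principle that a functor between categories with finite products which preserves them carries group objects to group objects and morphisms of group objects to morphisms of group objects. Thus the only substantive point is to verify that the functor $(\ )_{\red}\colon\ms{SMFD}\to\ms{MFD}$ of Lemma \ref{lem:red} preserves finite products, and the ingredients for this are already available.

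First I would check that $(\ )_{\red}$ preserves the terminal object: since $(\{*\},\Bbbk)$ lies in $\ms{MFD}$ and $\Bbbk$ is reduced, one has $(\{*\},\Bbbk)_{\red}=(\{*\},\Bbbk)$, which is terminal in $\ms{MFD}$. Next, for super-manifolds $X$, $Y$ with $Z=X\times Y$, Remark \ref{rem:product_smfd}(1) identifies $Z_{\red}$ with $X_{\red}\times Y_{\red}$ in $\ms{MFD}$; the point worth making explicit is that this identification is exactly the comparison morphism $Z_{\red}\to X_{\red}\times Y_{\red}$ obtained by applying $(\ )_{\red}$ to the two canonical projections $Z\to X$ and $Z\to Y$. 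This follows from the adjunction \eqref{eq:right_adjoint}: for every manifold $T$ the natural bijection of \eqref{eq:right_adjoint}, applied to $X$, $Y$ and $Z$, gives
\[
\operatorname{Mor}(T,Z_{\red})=\operatorname{Mor}(T,Z)=\operatorname{Mor}(T,X)\times\operatorname{Mor}(T,Y)=\operatorname{Mor}(T,X_{\red})\times\operatorname{Mor}(T,Y_{\red}),
\]
and tracing the identity of $Z_{\red}$ through these bijections shows the comparison morphism to be an isomorphism. Hence $(\ )_{\red}$ preserves binary products, and therefore all finite products.

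Then I would invoke the general fact that if $F\colon\cC\to\mathcal{D}$ is a finite-product-preserving functor and $G$ is a group object in $\cC$ with multiplication $\mu$, unit $\eta$ and inversion $\sigma$, then $FG$ becomes a group object in $\mathcal{D}$ with structure morphisms $F\mu$, $F\eta$, $F\sigma$, read off through the canonical comparison isomorphism $FG\times FG\xrightarrow{\ \sim\ }F(G\times G)$ and the identification of the terminal objects; the group-object axioms are commutative diagrams assembled from finite products, projections and the structure morphisms, so applying $F$ and inserting the comparison isomorphisms yields the corresponding diagrams in $\mathcal{D}$. Taking $F=(\ )_{\red}$, a super Lie group $G=(|G|,\cO)$ is sent to a group object $G_{\red}=(|G|,\cO_{\red})$ in $\ms{MFD}$, that is, to a Lie group with underlying topological group $|G|$. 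A morphism of super Lie groups, being a morphism in $\ms{SMFD}$ compatible with the structure morphisms (Remark \ref{rem:super_Lie_morphism}(2)), is carried by $(\ )_{\red}$, together with the same comparison isomorphisms, to a morphism compatible with the induced structure morphisms, hence to a morphism of Lie groups; functoriality of $G\mapsto G_{\red}$ is inherited from that of $(\ )_{\red}$.

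There is no serious obstacle here. The one place to be careful — which I would make explicit, as above — is that the isomorphism $(X\times Y)_{\red}\simeq X_{\red}\times Y_{\red}$ is the canonical comparison morphism rather than merely an abstract isomorphism, since this is precisely what makes the group-axiom diagrams transport. Everything else is the routine bookkeeping of transporting commutative diagrams along a product-preserving functor.
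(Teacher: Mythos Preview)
Your proposal is correct and matches the paper's own argument: the paper simply observes that, by Remark \ref{rem:product_smfd}, the functor $(\ )_{\red}$ preserves finite products and hence induces a functor on group objects. You have unpacked this a bit more carefully than the paper does, in particular by making explicit that the isomorphism $(X\times Y)_{\red}\simeq X_{\red}\times Y_{\red}$ is the canonical comparison morphism, which is exactly the right point to check.
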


We see easily the following.

\begin{lemma}\label{lem:constant_sdim}
Let $G=(|G|, \cO)$ be a super Lie group.
\begin{itemize}
\item[(1)]
Let $g\in |G|$. The left multiplication by $g$
\[ L_g : G \to G, \ L_g(h) = gh \]
is an isomorphism in $\ms{SMFD}$, which induces isomorphisms
\[
(L_g^*)_e : \cO_g \overset{\simeq}{\longrightarrow}\cO_e,\quad 
(dL_g)_e : T_eG \overset{\simeq}{\longrightarrow} T_gG.
\]
\item[(2)] $g \mapsto \mr{sdim}_g\, G$ is constant; see Definition \ref{def:super-dimension}. 
\end{itemize}
\end{lemma}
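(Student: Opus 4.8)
The plan is to prove both parts by direct appeal to the group structure, using that left translation is induced by the group multiplication morphism.

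\smallskip

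\noindent\textbf{Part (1).} First I would observe that, by definition of a group object, we have a multiplication morphism $\mu : G\times G \to G$, a unit morphism $\eta : (\{*\},\Bbbk)\to G$ picking out $e\in|G|$, and an inversion morphism $\iota : G\to G$. Fix $g\in|G|$ and let $\gamma_g : (\{*\},\Bbbk)\to G$ be the morphism with image $g$ (its existence uses \eqref{eq:right_adjoint}, since a point of the manifold $G_{\red}$ is the same as such a morphism). Then $L_g$ is the composite
\[
G \simeq (\{*\},\Bbbk)\times G \xrightarrow{\ \gamma_g\times \mr{id}_G\ } G\times G \xrightarrow{\ \mu\ } G,
\]
which is a morphism in $\ms{SMFD}$ by Proposition \ref{prop:finite_products}. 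Its inverse is $L_{g^{-1}}$, obtained the same way from $\gamma_{g^{-1}}=\iota\circ\gamma_g$; the group axioms $L_g\circ L_{g^{-1}}=\mr{id}_G=L_{g^{-1}}\circ L_g$ follow formally from associativity and the unit axiom. Hence $L_g$ is an isomorphism in $\ms{SMFD}$. Since $|L_g|(e)=ge=g$, the sheaf-morphism induces an isomorphism of stalks $(L_g^*)_e:\cO_g\to\cO_e$, and this is an isomorphism of local (indeed connected) super-algebras because $L_g$ is an isomorphism; dualizing the induced isomorphism $\m_g/\m_g^2 \xrightarrow{\simeq} \m_e/\m_e^2$ gives $(dL_g)_e : T_eG \xrightarrow{\simeq} T_gG$. (One should note, to match the display, that what is induced on stalks goes $\cO_g\to\cO_e$, i.e. $(dL_g)_e$ is the dual of the cotangent map at $e$ pulled back from $g$; this is consistent with the statement.)

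\smallskip

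\noindent\textbf{Part (2).} Given part (1), the isomorphism $(dL_g)_e : T_eG \to T_gG$ respects the $\mathbb{Z}_2$-grading, so $\dim(T_gG)_0=\dim(T_eG)_0$ and $\dim(T_gG)_1=\dim(T_eG)_1$ for every $g\in|G|$. By Definition \ref{def:super-dimension} this says $\sdim_g G = \sdim_e G$ for all $g$, i.e. $g\mapsto \sdim_g G$ is constant on $|G|$. (Alternatively one can invoke that $g\mapsto\sdim_g G$ is locally constant together with the transitivity of the translation action to conclude it is globally constant, but the argument via $(dL_g)_e$ is cleaner and self-contained.)

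\smallskip

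\noindent I do not expect any serious obstacle here: the only points requiring a little care are (i) presenting $L_g$ honestly as a composite of morphisms already available in $\ms{SMFD}$, which is exactly what Proposition \ref{prop:finite_products} is for, and (ii) keeping straight the direction of the induced maps on stalks versus tangent spaces, so that the displayed arrows in the statement are reproduced verbatim. Everything else is formal manipulation with the group-object axioms.
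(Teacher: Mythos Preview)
Your proof is correct. The paper does not actually give a proof of this lemma: it simply prefaces the statement with ``We see easily the following'' and moves on. Your write-up supplies exactly the routine details one would expect---realizing $L_g$ as $\mu\circ(\gamma_g\times\mathrm{id}_G)$ via the terminal object and Proposition~\ref{prop:finite_products}, inverting it by $L_{g^{-1}}$ using the group-object axioms, and then reading off the stalk and tangent isomorphisms---so there is nothing to compare; you have just made explicit what the authors left implicit.
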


The last constant value is denoted by $\mr{sdim}\, G$.

Let $G=(|G|,\cO)$ be a super Lie group. 
The product and the inverse on the underlying group $|G|$ give
morphisms 
\[
\mu : G \times G\to G,\ \mu(g,h)=gh; \quad \iota : G \to G,\ \iota(g) = g^{-1}
\]
in $\ms{SMFD}$. The associated sheaf-morphisms are denoted by
\begin{equation}\label{eq:Delta_S}
\Delta =\mu^* : \cO \to \mu_*\cO_{G\times G},\quad S=\iota^* : \cO \to \iota_*\cO.
\end{equation}
Recall that for every $g \in |G|$, $\cO_g$ is a smooth Noetherian connected super-algebra
with the completion $\hO_g$. We let
\begin{equation}\label{eq:proj_to_residue}
\epsilon_g : \cO_g \to \Bbbk,\quad \hat{\epsilon}_g : \hO_g \to \Bbbk
\end{equation}
denote the projections to the residue field.
By \eqref{eq:isom_couple} we have the identification $\hO_g~\hotimes~\hO_h = \hO_{G\times G,(g,h)}$,
where $g,h \in |G|$. Using this we have the super-algebra map
\begin{equation}\label{eq:Delta_gh} 
\hat{\Delta}_{g,h} : \hO_{gh} \to \hO_g~\hotimes~\hO_h. 
\end{equation}
The following is easy to see.

\begin{lemma}\label{lemma:hatO}
$(\hO_e,\hat{\Delta}_{e,e})$ is a smooth Noetherian complete Hopf super-algebra
\textup{(}see Definition \ref{def:complete_Hopf}\textup{)},
which has $\hat{\epsilon}_e$ and $\hat{S}_e$ as its counit and antipode, respectively.
\end{lemma}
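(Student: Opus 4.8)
The plan is to verify the Hopf super-algebra axioms for $(\hO_e, \hat\Delta_{e,e})$ by transporting, via completion, the geometric group-object structure of $G$ encoded in $\mu$, $\iota$ and the identity section $e : (\{*\},\Bbbk)\to G$. First I would record that $\cO_e$ is a smooth Noetherian connected super-algebra by the identification of stalks with a convergent power series super-algebra in Section~\ref{subsec:super-mfd}, so $\hO_e$ is a smooth Noetherian complete connected super-algebra by Proposition~\ref{prop:smooth_super-algebra}; in particular $\hat\epsilon_e : \hO_e\to\Bbbk$ is the projection onto the residue field, which is the datum needed to even speak of a complete Hopf super-algebra in the sense of Definition~\ref{def:complete_Hopf}. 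It also follows that $\hat\Delta_{e,e} : \hO_e\to\hO_e\,\hotimes\,\hO_e$, being the completion of the local super-algebra map $\mu^*_{(e,e)} : \cO_e\to\cO_{G\times G,(e,e)}$ composed with the isomorphism $\hO_{G\times G,(e,e)}\simeq\hO_e\,\hotimes\,\hO_e$ of Proposition~\ref{prop:identify}, is automatically a continuous super-algebra map, so no continuity needs separate checking.

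Next I would establish coassociativity and the counit property. For coassociativity, apply the completed-stalk functor at the identity to the associativity identity $\mu\circ(\mu\times\mathrm{id}) = \mu\circ(\mathrm{id}\times\mu) : G\times G\times G\to G$ of the group object $G$; taking stalks at $(e,e,e)$, pulling back along $\mu^*$ twice, and identifying $\hO_{G\times G\times G,(e,e,e)}$ with $\hO_e\,\hotimes\,\hO_e\,\hotimes\,\hO_e$ via the triple version of Proposition~\ref{prop:identify} (which follows by applying the binary case twice, using $\hotimes$-associativity of complete tensor products from Section~\ref{subsec:local_super-algebra}), yields exactly $(\hat\Delta_{e,e}\,\hotimes\,\mathrm{id})\circ\hat\Delta_{e,e} = (\mathrm{id}\,\hotimes\,\hat\Delta_{e,e})\circ\hat\Delta_{e,e}$. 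For the counit property, I would use the identity-element axioms $\mu\circ(e\times\mathrm{id})=\mathrm{id}=\mu\circ(\mathrm{id}\times e)$, where $e$ denotes the point morphism; passing to completed stalks, the pullback along the point morphism $e:(\{*\},\Bbbk)\to G$ is precisely $\hat\epsilon_e:\hO_e\to\Bbbk$ (it is the local map sending $\m_e$ into the maximal ideal of the residue field, hence is the residue projection), and $\hO_e\,\hotimes\,\Bbbk = \hO_e = \Bbbk\,\hotimes\,\hO_e$ by the analogue of \eqref{eq:tensor_product_with_k}, so we get $(\hat\epsilon_e\,\hotimes\,\mathrm{id})\circ\hat\Delta_{e,e} = \mathrm{id}_{\hO_e} = (\mathrm{id}\,\hotimes\,\hat\epsilon_e)\circ\hat\Delta_{e,e}$. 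That $\hat S_e = (\iota^*)^{\hat{}}_e : \hO_e\to\hO_e$ is an antipode follows the same way from the inverse axioms $\mu\circ(\mathrm{id}\times\iota)\circ\delta = e\circ(\text{terminal})=\mu\circ(\iota\times\mathrm{id})\circ\delta$, where $\delta:G\to G\times G$ is the diagonal; applying completed stalks at $e$ gives the convolution identities $m\circ(\mathrm{id}\,\hotimes\,\hat S_e)\circ\hat\Delta_{e,e} = u\circ\hat\epsilon_e = m\circ(\hat S_e\,\hotimes\,\mathrm{id})\circ\hat\Delta_{e,e}$. Smoothness and the Noetherian property have already been noted, so $\hO_e$ is a smooth Noetherian complete Hopf super-algebra as claimed.

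The main obstacle, and the only place genuine care is required, is the compatibility of the completed-stalk functor with products — that is, verifying that for a morphism $\phi:X\to Y$ of super-manifolds and points, the square relating $\hat{(\phi^*_{})}$ to the identifications $\hO_{X\times X',(x,x')}\simeq\hO_{X,x}\,\hotimes\,\hO_{X',x'}$ commutes, and that these identifications are associative and unital in the strong sense needed to chain them. This is essentially bookkeeping built on Proposition~\ref{prop:identify} together with functoriality of $d\phi$, $\cO_{\mathrm{red}}$ and the stalk construction, but one must be honest that the identity axioms involve the point morphism $(\{*\},\Bbbk)\to G$ and the isomorphisms $\hO_e\,\hotimes\,\Bbbk\simeq\hO_e$, so the reduction ``take completed stalks at $e$ of every structure morphism'' has to be applied consistently. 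I expect the cleanest write-up is to observe once and for all that $X\mapsto\hO_{X,x}$ (with $x$ tracked along morphisms) is a product-preserving functor from pointed super-manifolds to complete connected super-algebras with $\hotimes$ as product, and then simply say that a group object is sent to a co-group object, i.e. a complete Hopf super-algebra by Lemma~\ref{lem:co-group_object}, with the counit being $\hat\epsilon_e$ because the terminal object $(\{*\},\Bbbk)$ goes to the initial object $\Bbbk$. The remaining clause — that $\hat\epsilon_e$ and $\hat S_e$ are the counit and antipode — is then immediate from uniqueness of these structures (Remark~\ref{rem:complete_Hopf}(2)) once one identifies $\hat\epsilon_e$ with the image of the terminal-to-$G$ morphism, which is forced since any local super-algebra map to $\Bbbk$ with connected source is the residue projection.
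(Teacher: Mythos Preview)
Your proposal is correct and is precisely the argument the paper has in mind: the paper states the lemma as ``easy to see'' and gives no proof, but the intended reasoning is exactly the one you spell out, namely that $X\mapsto \hO_{X,x}$ is a product-preserving contravariant functor from pointed super-manifolds to complete connected super-algebras (Proposition~\ref{prop:identify}), so the group-object structure on $G$ yields a co-group object structure on $\hO_e$, which by Lemma~\ref{lem:co-group_object} is a complete Hopf super-algebra; smoothness and the Noetherian property come from the stalk identification in Section~\ref{subsec:super-mfd}. Your care about the identifications being coherent under iterated products is well-placed but, as you suspect, routine once Proposition~\ref{prop:identify} is in hand.
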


\begin{rem}\label{rem:super_Lie_morphism}
(1)\ Let $g, h \in |G|$. It is easy to show the formulas
\begin{gather}
(\widehat{L^*_g})_h=(\hat{\epsilon}_g \, \hat{\otimes} \, \mathrm{id}_{\hO_h})\circ \hat{\Delta}_{g,h},\quad
(\widehat{R^*_g})_h=(\mathrm{id}_{\hO_h}\, \hat{\otimes}\, 
\hat{\epsilon}_g)\circ \hat{\Delta}_{h,g}, \label{eq:L_Delta}\\
(\widehat{L^*_g})_h\circ \hat{S}_{gh}= \hat{S}_h\circ (\widehat{R^*_{g^{-1}}})_{h^{-1}}. \label{eq:L_S}
\end{gather}

(2)\ Let $G=(|G|,\cO)$, and $G'=(|G'|,\cO')$ be super Lie groups, and let $\Delta$, $S$ and $\Delta', S'$ 
denote the associated sheaf-morphisms as given by \eqref{eq:Delta_S}. 
A morphism $\phi : G \to G'$
in $\ms{SLG}$ is a morphism of super manifolds such that $|\phi|$ is a group homomorphism and
the sheaf-morphism $\phi^*:\cO' \to \phi_*\cO$ is compatible with $\Delta^{(')}$. We claim that
this $\phi^*$ is necessarily compatible with $S^{(')}$. Let $g'$ denote $|\phi|(g)$
for every $g\in |G|$. 
To prove the claim it suffices, passing to the completed stalks, to prove
\begin{equation*}\label{eq:compatible_with_S}
\widehat{S}_{g}\circ \hat{\phi}^*_{g^{-1}} =
\hat{\phi}^*_g\circ \widehat{S}'_{g'},\quad g \in |G|.
\end{equation*}
When $g=e$, this holds, since the super-algebra map $\hat{\phi}^*_e : \widehat{\cO}'_e \to \widehat{\cO}_e$
between the complete Hopf super-algebras
is compatible with the coproducts $\widehat{\Delta}^{(')}_{e,e}$, 
and hence is necessarily compatible with 
the antipodes $\widehat{S}^{(')}_e$. This, together with the formula \eqref{eq:L_S} applied when $h=e$,  
reduces the proof to proving the formulas
\begin{equation}\label{eq:LR_phi}
(\widehat{L^*_g})_e\circ \hat{\phi}^*_g = \hat{\phi}^*_e\circ (\widehat{L^*_{g'}})_e,\quad
(\widehat{R^*_g})_e\circ \hat{\phi}^*_g = \hat{\phi}^*_e\circ (\widehat{R^*_{g'}})_e
\end{equation}
for $g\in |G|$. By Remark \ref{rem:local_map} we have
$\hat\epsilon_{g}\circ \hat{\phi}^*_g=  \hat{\epsilon}_{g'}$ for $g \in |G|$. 
Since in addition, the compatibility of $\phi^*$ with $\Delta^{(')}$
is equivalent to 
\[
\widehat{\Delta}_{g,h}\circ \hat{\phi}^*_{gh} = (\hat{\phi}^*_g\, \hotimes\, \hat{\phi}^*_h)
\circ \widehat{\Delta}'_{g',h'},\quad g,\ h \in |G|,
\]
the desired formulas \eqref{eq:LR_phi} follow from \eqref{eq:L_Delta}. 

An alternative, shorter proof of the claim is the following. Note that $G$ represents
the group-valued functor $\operatorname{Mor}(-, G)$ defined on $\ms{SMFD}$. Since
$\phi^*$ is assumed to be compatible with $\Delta^{(')}$, the natural transformation
$\operatorname{Mor}(-, \phi):\operatorname{Mor}(-, G)\to \operatorname{Mor}(-, G')$ induced by $\phi$ preserves
the product, or namely, for every $X \in \ms{SMFD}$, the map 
$\operatorname{Mor}(X, \phi)$ between groups preserves the product. It necessarily 
preserves the inverse, and sends in particular, the inverse of $\mr{id}_G$ to the inverse of $\phi$. 
This shows the desired compatibility. 
\end{rem}

Let $G=(|G|,\cO)$ be a super Lie group. 
By Lemma \ref{lemma:hatO} and Theorem \ref{thm:duality},
$(\hO_e)^{\star}$ is a smooth hyper-super-algebra of finite type, and 
hence $P((\hO_e)^{\star})$ is a finite-dimensional Lie super-algebra. 

\begin{definition}\label{def:hy_Lie}
We call
\[
\hy(G) = (\hO_e)^{\star},\quad \Lie(G) =P((\hO_e)^{\star})
\]
the \emph{hyper-super-algebra} of $G$, and the \emph{Lie super-algebra} of $G$,
respectively. Note that $\Lie(G)$ is identified with $T_eG$. 
\end{definition}

The following is easy to see.

\begin{prop}\label{prop:hy_Lie} $\hy$ and $\Lie$ define functors from 
$\ms{SLG}$ to the category of smooth hyper-super-algebras of finite type, and to the category
of finite-dimensional Lie super-algebras, respectively. Given a morphism $\phi : G \to G'$
of super Lie groups, we have
\[
\hy(\phi)= (\phi^*_e)^{\star},\quad \Lie(\phi) = \hy(\phi)|_{\Lie(G)}= d\phi_e.
\]
\end{prop}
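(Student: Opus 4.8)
The plan is to handle $\hy$ and $\Lie$ together, since $\Lie$ is obtained from $\hy$ by passing to primitives. On objects there is nothing left to do: the paragraph preceding Definition \ref{def:hy_Lie} already records that $(\hO_e,\hat\Delta_{e,e})$ is a smooth Noetherian complete Hopf super-algebra (Lemma \ref{lemma:hatO}), so by Theorem \ref{thm:duality} its continuous dual $(\hO_e)^\star$ is a smooth hyper-super-algebra of finite type, and $P((\hO_e)^\star)$ is the finite-dimensional Lie super-algebra $\Lie(G)$, identified with $T_eG$.

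For a morphism $\phi:G\to G'$ of super Lie groups I would set $\hy(\phi):=(\hat\phi^*_e)^\star$, where $\hat\phi^*_e:\hO'_{e'}\to\hO_e$ is the map induced on completed stalks at the identities (here $e'=|\phi|(e)$ is the identity of $G'$, since $|\phi|$ is a group homomorphism). First I would check that $\hat\phi^*_e$ is a map of complete Hopf super-algebras: it is continuous because $\hO_e$ is connected (Remark \ref{rem:local_map}), and it is compatible with the coproducts $\hat\Delta^{(')}_{e,e}$ because $\phi^*$ is compatible with $\Delta^{(')}$ --- this is exactly the $g=h=e$ instance of the equivalence recalled in Remark \ref{rem:super_Lie_morphism}(2). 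Hence, applying the anti-equivalence of Theorem \ref{thm:duality} (together with Proposition \ref{prop:duality}), $(\hat\phi^*_e)^\star$ is a map of smooth hyper-super-algebras of finite type. Since any Hopf super-algebra map carries primitives to primitives and respects the super-commutator, $\hy(\phi)$ restricts to a Lie super-algebra map $\Lie(\phi):=\hy(\phi)|_{\Lie(G)}:\Lie(G)\to\Lie(G')$, which defines $\Lie$ on morphisms and gives the middle equality of the displayed formula.

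Functoriality of both assignments is then purely formal: $\phi\mapsto\hat\phi^*_e$ (stalk, completion) and $A\mapsto A^\star$ (continuous dual) are each contravariant and send identities to identities, so their composite $\hy$ is a covariant functor with $\hy(\psi\circ\phi)=\hy(\psi)\circ\hy(\phi)$; restricting to primitives yields the same for $\Lie$.

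The only point that takes a line of unwinding is the identity $\Lie(\phi)=d\phi_e$. Under the identification $\Lie(G)=P((\hO_e)^\star)=T_eG$, a primitive $\lambda$ is a continuous $\hat\epsilon_e$-derivation $\hO_e\to\Bbbk$; it annihilates $\Bbbk\cdot 1$ and the square of the maximal super-ideal, hence factors through $\m_e/\m_e^2$, giving the isomorphism $P((\hO_e)^\star)\xrightarrow{\ \sim\ }(\m_e/\m_e^2)^*=T_eG$. The functorial map $P((\hat\phi^*_e)^\star)$ sends $\lambda\mapsto\lambda\circ\hat\phi^*_e$; since $\hat\phi^*_e$ is local it induces $\m_{e'}/\m_{e'}^2\to\m_e/\m_e^2$, and precomposition by this induced map is precisely the transpose defining $d\phi_e$ in the paragraph before Theorem \ref{thm:IFT}. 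Chasing these identifications shows the relevant triangle commutes, i.e. $\Lie(\phi)=d\phi_e$. I expect the main (and essentially only) obstacle to be this bookkeeping of the several identifications --- $\hy(G)=(\hO_e)^\star$ versus $T_eG$, completed versus uncompleted stalks, primitives versus the cotangent space --- so that the diagram genuinely commutes; no new idea is needed beyond what Theorem \ref{thm:duality} already supplies.
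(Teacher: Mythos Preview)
Your proposal is correct and is precisely the natural expansion of what the paper leaves implicit: the paper offers no proof beyond the sentence ``The following is easy to see,'' and your argument---verifying that $\hat\phi^*_e$ is a map of complete Hopf super-algebras via Remark~\ref{rem:local_map} and Remark~\ref{rem:super_Lie_morphism}(2), then applying the anti-equivalence of Theorem~\ref{thm:duality} and unwinding the identification $P((\hO_e)^\star)\simeq(\m_e/\m_e^2)^*$---is exactly the intended route. The only cosmetic discrepancy is that the paper writes $(\phi^*_e)^\star$ rather than $(\hat\phi^*_e)^\star$, but since $\cO_e/\m_e^{n+1}=\hO_e/\hat\m_e^{n+1}$ these coincide, so no issue arises.
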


\begin{prop}\label{prop:hy_Lie_of_Gred}
Let $G$ be a super Lie group.
The hyper-algebra $\hy(G_{\red})$ of the Lie group $G_{\red}$ is the
largest purely even super-subcoalgebra $\underline{\hy(G)}$ of $\hy(G)$, 
while the Lie algebra $\Lie(G_{\red})$ of $G_{\red}$ is the
even component $\Lie(G)_0$ of $\Lie(G)$. We thus have
\begin{equation}\label{eq:hyGred}
\hy(G_{\red})=\underline{\hy(G)},\quad \Lie(G_{\red})=\Lie(G)_0.
\end{equation}
\end{prop}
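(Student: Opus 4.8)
The plan is to reduce the statement entirely to the duality of Section~\ref{sec:complete_Hopf}, specifically to Corollary~\ref{cor:reduced_co-group}, applied to the smooth Noetherian complete Hopf super-algebra $\hO_e$ attached to $G$ by Lemma~\ref{lemma:hatO}; recall that $\hy(G)=(\hO_e)^{\star}$ is the hyper-super-algebra corresponding to it under Theorem~\ref{thm:duality}. The first observation I would make is that the purely even complete Hopf algebra $\overline{\hO_e}$ furnished by Corollary~\ref{cor:reduced_co-group}(1) is, as a complete \emph{algebra}, nothing but the completed stalk at $e$ of the Lie group $G_{\red}$: indeed $\overline{\hO_e}=(\hO_e)_{\red}=(\hO_{\red})_e$ by the geometric identity~\eqref{eq:red_x} (here $\overline{\,\cdot\,}$ and $(\,\cdot\,)_{\red}$ agree because the stalks involved are power series super-algebras, which are reduced after killing the odd part), and $(\hO_{\red})_e=\widehat{(\cO_{\red})_e}$ is by definition the completed stalk at $e$ of $G_{\red}$, so that $\hy(G_{\red})=(\widehat{(\cO_{\red})_e})^{\star}=(\overline{\hO_e})^{\star}$.

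The second step is to match the Hopf structures. The slick way is functorial: the canonical quotient sheaf-morphism $\cO\to\cO_{\red}$ is the identity on underlying spaces and defines a morphism $\pi:G_{\red}\to G$ in $\ms{SLG}$ (its underlying map is the identity group homomorphism, and $\pi^{*}=(\cO\to\cO_{\red})$ is compatible with the coproducts because the structure morphism $\mu$ of $G$ and $\mu_{\red}$ of $G_{\red}$ are related by $\mu_{\red}=(\,\cdot\,)_{\red}(\mu)$ under $(G\times G)_{\red}=G_{\red}\times G_{\red}$, by Lemma~\ref{lem:associated_Lie} and Remark~\ref{rem:product_smfd}(1)). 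By Proposition~\ref{prop:hy_Lie}, $\hy(\pi)=((\pi^{*})_{e})^{\star}$, and $(\pi^{*})_{e}:\cO_e\to(\cO_{\red})_e=\overline{\cO_e}$ is the canonical quotient of connected super-algebras, hence so is its completion $\hO_e\to\overline{\hO_e}$. Applying the contravariant functor $(\,\cdot\,)^{\star}$ to this surjection and invoking Corollary~\ref{cor:reduced_co-group}(2) (for $A=\hO_e$, $C=\hy(G)$, whence $(\overline{\hO_e})^{\star}=\underline{\hy(G)}$), one sees that $\hy(\pi)$ is injective with image exactly the canonical sub-hyper-algebra $\underline{\hy(G)}\subseteq\hy(G)$. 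Combined with the first step this gives the identity $\hy(G_{\red})=\underline{\hy(G)}$ and simultaneously shows the two Hopf structures coincide.

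For the Lie algebras I would just pass to primitives: $\Lie(G_{\red})=P(\hy(G_{\red}))=P(\underline{\hy(G)})$, and by the remark preceding Proposition~\ref{prop:smooth_super-hyper} one has $P(\underline{C})=P(C)_0$ for every hyper-super-algebra $C$; taking $C=\hy(G)$ yields $\Lie(G_{\red})=\Lie(G)_0$. Equivalently, this drops out of $\Lie(\pi)=\hy(\pi)|_{\Lie(G_{\red})}=d\pi_e$ being the inclusion $\Lie(G)_0\hookrightarrow\Lie(G)$.

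I do not expect a genuine obstacle: the content is carried entirely by Corollary~\ref{cor:reduced_co-group} and the identification~\eqref{eq:red_x}. The one point needing care is the compatibility of the two a priori different Hopf-algebra structures on $\overline{\hO_e}$ — the one inherited from $\hO_e$ versus the one coming from $G_{\red}$ as a Lie group, i.e.\ checking that ``reduce then complete'' and ``complete then reduce'' send $\hat{\Delta}_{e,e}$ to the same map $\overline{\hO_e}\to\overline{\hO_e}\,\hotimes\,\overline{\hO_e}$; routing through $\pi$ as above packages this (together with $(G\times G)_{\red}=G_{\red}\times G_{\red}$ and the product formula for completed stalks in Proposition~\ref{prop:identify}) into the single functorial statement $\hy(\pi)=((\pi^{*})_{e})^{\star}$, and everything else is a direct application of the duality theorem.
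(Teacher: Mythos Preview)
Your proposal is correct and follows essentially the same route as the paper's own proof, which simply cites Theorem~\ref{thm:duality} and~\eqref{eq:TXzero}. You have unpacked what that terse citation means: the duality statement is precisely Corollary~\ref{cor:reduced_co-group}(2) applied to $A=\hO_e$, and the identification $\overline{\hO_e}=(\widehat{\cO_{\red}})_e$ as complete Hopf algebras is exactly~\eqref{eq:red_x} together with the functoriality encoded by the morphism $G_{\red}\hookrightarrow G$; your extra care in matching the Hopf structures via $\hy(\pi)$ makes explicit a step the paper leaves implicit, and your Lie-algebra argument (either via primitives or via $d\pi_e$) is equivalent to the paper's direct appeal to~\eqref{eq:TXzero}.
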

\begin{proof}
This follows from Theorem \ref{thm:duality} and \eqref{eq:TXzero}. 
\end{proof}

\begin{lemma}\label{lem:isomorphism}
A morphism $\phi : G \to G'$ of super Lie groups is an isomorphism if and only if the associated morphisms
\[ \phi_{\mr{red}} : G_{\mr{red}} \to G'_{\mr{red}},\quad 
\Lie(\phi)_1 : \Lie(G)_1 \to \Lie(G')_1 \]
of Lie groups and of super-vector spaces 
are both isomorphisms. 
\end{lemma}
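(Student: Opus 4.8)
The \emph{only if} direction is immediate: if $\phi$ is an isomorphism then $\phi_{\red}$ is an isomorphism of Lie groups by functoriality of $(\ )_{\red}$ (Lemma~\ref{lem:red}), and $\Lie(\phi)$ is an isomorphism of Lie super-algebras by functoriality of $\Lie$ (Proposition~\ref{prop:hy_Lie}); restricting the latter to odd components shows that $\Lie(\phi)_1$ is an isomorphism. For the converse, assume that both $\phi_{\red}$ and $\Lie(\phi)_1$ are isomorphisms, and write $g'=|\phi|(g)$ for $g\in|G|$, so in particular $e'=|\phi|(e)$. Since $|\phi|=|\phi_{\red}|$ is then a homeomorphism and a group isomorphism, it will suffice to prove that the sheaf-morphism $\phi^*:\cO'\to\phi_*\cO$ is an isomorphism, equivalently that each stalk map $\phi^*_g:\cO'_{g'}\to\cO_g$ of connected super-algebras is an isomorphism; granting this, $\phi$ is an isomorphism in $\ms{SMFD}$, and its inverse is again a morphism of super Lie groups, so $\phi$ is an isomorphism in $\ms{SLG}$.

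The first step is to treat the identity element. I would note that $d\phi_e:T_eG\to T_{e'}G'$ is an isomorphism: it is $\mathbb{Z}_2$-graded, its even part $(d\phi_e)_0$ coincides with $d(\phi_{\red})_e$ and hence is an isomorphism, while its odd part $(d\phi_e)_1=\Lie(\phi)_1$ is an isomorphism by hypothesis. Then I would show that $\phi^*_e:\cO'_{e'}\to\cO_e$ is an isomorphism by re-running the argument that proves Theorem~\ref{thm:IFT} (rather than invoking that theorem, although one could, now that $d\phi_e$ is known to be an isomorphism). Apply the functor $\ms{gr}$ of Section~\ref{subsec:associated_super-algebras} to $\phi^*_e$: because the odd ideal of $\cO_e$ is nilpotent, the $I$-adic filtrations on $\cO_e$ and $\cO'_{e'}$ are finite, so $\phi^*_e$ is an isomorphism if and only if $\ms{gr}(\phi^*_e)$ is; and since the $\mathbb{N}$-graded algebra $\ms{gr}\,\cO_e=(\cO_{\red})_e\otimes\wedge(\sW)$, with $\sW=(T_eG)_1^*$, is generated in degrees $0$ and $1$, this holds if and only if $\ms{gr}(\phi^*_e)$ is an isomorphism in degrees $0$ and $1$. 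In degree $0$ the map is $(\phi_{\red}^*)_e$, an isomorphism because $\phi_{\red}$ is; in degree $1$ it is a morphism of finitely generated free modules over the connected algebra $(\cO_{\red})_e$ whose reduction modulo the maximal ideal is dual to $\Lie(\phi)_1$, hence an isomorphism, so the degree-$1$ map is an isomorphism by Nakayama.

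It remains to globalize. Because $|\phi|$ is a group homomorphism, $\phi\circ L_g=L_{g'}\circ\phi$ for every $g\in|G|$; passing to stalks at $e$ yields $(L_g^*)_e\circ\phi^*_g=\phi^*_e\circ(L_{g'}^*)_{e'}$, and since $(L_g^*)_e$ and $(L_{g'}^*)_{e'}$ are isomorphisms by Lemma~\ref{lem:constant_sdim}(1) and $\phi^*_e$ is, it follows that $\phi^*_g$ is an isomorphism for every $g$, which finishes the converse. The point I expect to need the most care is precisely this passage from the infinitesimal data at $e$ to a statement at every point: one must use the left translations to transport the local isomorphism at $e$ around $|G|$, and already at $e$ one must be careful that the $\ms{gr}$-reduction genuinely detects isomorphisms, which works only because the odd ideal is nilpotent and the relevant filtration therefore finite. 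The remaining verifications are routine bookkeeping.
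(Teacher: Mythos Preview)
Your proof is correct and follows essentially the same approach as the paper's: both reduce the question to showing that $\phi^*_e$ (equivalently, $\ms{gr}(\phi^*_e)$ in degrees $0$ and $1$) is an isomorphism, and then transport this to every point using the left translations of Lemma~\ref{lem:constant_sdim}(1). The only cosmetic difference is that the paper packages the degree-$1$ step by pointing to the last two paragraphs of the proof of Theorem~\ref{thm:IFT} (the determinant/neighborhood argument), whereas you invoke Nakayama at the single stalk $e$ and then globalize stalkwise via the identity $(L_g^*)_e\circ\phi^*_g=\phi^*_e\circ(L_{g'}^*)_{e'}$; these amount to the same thing.
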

\begin{proof} 
To prove the non-trivial ``if" part assume that $\phi_{\mr{red}}$ and $\Lie(\phi)_1$ are isomorphisms. 
By the assumption on $\phi_{\mr{red}}$ it suffices to prove that the sheaf-morphism 
$\phi^* : \cO_{G'} \to \phi_*\cO_G$ is locally isomorphic, in fact at $e$ by 
Lemma \ref{lem:constant_sdim} (1).
The same argument as proving Theorem \ref{thm:IFT} (see the last two paragraphs of the proof) shows:
(1)~$\ms{gr}(\phi^*_e) : \ms{gr}(\cO_{G',e}) \to \ms{gr}(\cO_{G,e})$ is isomorphic
since it is so in degree $0$, $1$ (as for degree $1$, this is ensured by the assumption on $\Lie(\phi)_1$), 
and (2)~the result implies the desired local-isomorphy. 
\end{proof}

\subsection{The Hopf algebra of representative functions}\label{subsec:repres_function}

In this subsection we let $F=(|F|,\cF)$ be a Lie group. 

A matrix representation $\pi: F \to \mathrm{GL}_r(\Bbbk)$ of $F$ is said to be \emph{analytic}, if the composites 
$\op{pr}_{ij}\circ \, \pi$, where $\op{pr}_{ij} : \mathrm{GL}_r(\Bbbk)\to \Bbbk$ projects
the $(i,j)$-th entry, 
$1\le i,j\le r$, are all analytic, or are contained in $\cF(|F|)$. 
A finite-dimensional left super-module $\ms{M}$ over the group $F$ 
is said to be \emph{analytic}, if the matrix representation associated with $\ms{M}$ with respect to some/any basis
is analytic. A left $F$-super-module
is said to be \emph{analytic}, if it is a union of finite-dimensional $F$-super-submodules which are analytic. 
An \emph{analytic right $F$-super-module} is defined analogously. 

\begin{convention}\label{con:F-super-module}
By \emph{left} or \emph{right} $F$-(\emph{super-})\emph{modules}
we will mean analytic ones until the end of Section \ref{sec:equivalence}.
\end{convention}

Those left (resp., right) $F$-super-modules naturally form a symmetric tensor 
category, which we denote by
\begin{equation}\label{eq:F-SMod}
F\text{-}\mathsf{SMod}\quad (\text{resp.,}\ \mathsf{SMod}\text{-}F). 
\end{equation}
Recall the analogous notation \eqref{eq:SMod_SComod}, which will be used soon below.

Let $\cR(F)$ denote the (possibly, infinitely generated) Hopf algebra 
of all analytic representative functions on $F$; this is by definition the sum $\sum_{\pi}\mathrm{Im}\, \pi^*$
of the images $\mathrm{Im}\, \pi^*$ in $(\Bbbk F)^*$, where $\pi$ runs over all analytic matrix representations
$F \to \mathrm{GL}_r(\Bbbk)$ of arbitrary degree $r$, and the duals of their linearizations 
$\Bbbk F \to \mathrm{M}_r(\Bbbk)$
are denoted by $\pi^* : \mathrm{M}_r(\Bbbk)^* \to (\Bbbk F)^*$.  
This $\cR(F)$ is a Hopf subalgebra of the \emph{dual Hopf algebra} $(\Bbbk F)^{\circ}$ of $\Bbbk F$;
see \cite[Section 6.2]{Sw}. 
Since $\Bbbk F$, being spanned by group-likes, is cocommutative, $\cR(F)$ is commutative.
We have the canonical isomorphisms
\begin{equation}\label{eq:tensor_isom}
F\text{-}\mathsf{SMod}\simeq \mathsf{SComod}\text{-}\cR(F),\quad 
\mathsf{SMod}\text{-}F\simeq \cR(F)\text{-}\mathsf{SComod}. 
\end{equation}
of symmetric tensor categories.

We have $\cR(F) \subset \cF(|F|)$. This, composed with the canonical map $\cF(|F|) \to \hF_e$,
leads to an algebra map $\cR(F) \to \hF_e$. 

\begin{lemma}\label{lem:Hopf_pairing}
The thus obtained $\cR(F) \to \hF_e$ is a Hopf algebra map
\textup{(}see Remark \ref{rem:complete_Hopf} (3)\textup{)}, through which a \textup{(}left or right\textup{)}
$\cR(F)$-super-comodule turns into a discrete topological $\hF_e$-super-comodule.
The construction is functorial, and results as symmetric tensor functors
\begin{equation}\label{eq:FhyF}
F\text{-}\mathsf{SMod}\to \hy(F)\text{-}\mathsf{SMod},\quad 
\mathsf{SMod}\text{-}F\to \mathsf{SMod}\text{-}\hy(F).
\end{equation}
\end{lemma}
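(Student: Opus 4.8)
The plan is to verify, in order, the three assertions of Lemma~\ref{lem:Hopf_pairing}: that $\cR(F)\to\hF_e$ is a map of Hopf algebras (in the wider sense of Remark~\ref{rem:complete_Hopf}~(3)), that it is functorial, and that the induced comodule-to-module passage gives symmetric tensor functors as in~\eqref{eq:FhyF}.

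First I would establish that $\cR(F)\to\hF_e$ is a Hopf algebra map. It is already an algebra map by construction, so what remains is compatibility with coproduct and counit; compatibility with the antipode is then automatic, as noted in Remark~\ref{rem:complete_Hopf}~(3). For the counit, the counit of $\cR(F)$ sends a representative function $f$ to $f(e)$, while the counit of $\hF_e$ is $\hat\epsilon_e$, i.e.\ evaluation at the residue field; these agree on the image of a germ at $e$. For the coproduct, the coproduct on $\cR(F)$ is the restriction of the one on $(\Bbbk F)^\circ$, which is dual to the group multiplication, hence $\Delta_{\cR(F)}(f)(g,h)=f(gh)$; on the other side the coproduct $\hat\Delta_{e,e}:\hF_e\to\hF_e\,\hotimes\,\hF_e$ is $\mu^*$ completed at $(e,e)$, and under $\hF_e\,\hotimes\,\hF_e=\hF_{F\times F,(e,e)}$ this is again ``precompose with multiplication''. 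So the square commutes essentially by definition once one unwinds the identification $\hF_e\,\hotimes\,\hF_e=\hF_{F\times F,(e,e)}$ of Proposition~\ref{prop:identify} and the fact that a representative function, being analytic, has a well-defined germ at $e$ and the germ of $f\circ\mu$ at $(e,e)$ is the image of the germ of $f$ at $e$ under $\hat\Delta_{e,e}$. I would write this out as a short diagram chase, invoking that $\mu^*$ on global analytic functions is $f\mapsto f\circ\mu$ and that taking germs is compatible with $\mu^*$.

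Next, functoriality: a morphism $\phi:F\to F'$ of Lie groups induces $\phi^*:\cR(F')\to\cR(F)$ (pullback of representative functions) and $\hat\phi^*_e:\hF'_e\to\hF_e$, and the relevant square commutes because both are induced by ``precompose with $\phi$'' on analytic functions, compatibly with taking germs at $e$. Then the passage from a (left or right) $\cR(F)$-super-comodule to a discrete topological $\hF_e$-super-comodule is just corestriction of scalars along the Hopf algebra map, which is manifestly functorial; composing with the isomorphisms~\eqref{eq:tensor_isom} gives functors $F\text{-}\ms{SMod}\to\hy(F)\text{-}\ms{SMod}$ and $\ms{SMod}\text{-}F\to\ms{SMod}\text{-}\hy(F)$ after identifying, via Proposition~\ref{prop:tensor_isomorphic} (applied to $A=\hF_e$, $C=\hy(F)$), discrete topological $\hF_e$-super-comodules with discrete topological $\hy(F)$-super-modules. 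One checks that a finite-dimensional analytic comodule is carried to a genuine (finite-dimensional, hence discrete) module, and that a general analytic comodule, being a union of such, goes to a union of such; so the target is as claimed.

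Finally, the symmetric tensor structure. Since $\Bbbk F$ is cocommutative, $\cR(F)$ is commutative, and since $\hF_e$ is a (super-)commutative Hopf super-algebra on a purely even manifold, $\hy(F)$ is cocommutative; hence all four categories in sight are symmetric tensor categories with tensor product over $\Bbbk$ and the Hopf map $\cR(F)\to\hF_e$ respects the coproducts, so corestriction of scalars is a tensor functor, and it is symmetric because the braidings on comodules (resp.\ modules) are both the plain flip $\tau$ on the underlying super-vector spaces. The main obstacle, such as it is, is purely bookkeeping: making the identification $\hF_e\,\hotimes\,\hF_e=\hF_{F\times F,(e,e)}$ explicit enough that ``$\hat\Delta_{e,e}(\text{germ of }f)=\text{germ of }f\circ\mu$'' is visibly the restriction of $\Delta_{\cR(F)}$. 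Everything else is a routine chase through the definitions already set up, together with the duality Proposition~\ref{prop:tensor_isomorphic}. I would therefore present the proof compactly: prove the coproduct/counit compatibility by the germ argument, remark that functoriality and the tensor properties are then formal, and cite Proposition~\ref{prop:tensor_isomorphic} for the comodule/module identification.
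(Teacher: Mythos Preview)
Your proposal is correct and follows essentially the same approach as the paper. The paper's proof is extremely terse: it notes that $\cR(F)\otimes\cR(F)$ embeds into $\cF_{F\times F}(|F|\times|F|)$ and that $\Delta(p)$, as an analytic function on $|F|\times|F|$, takes the value $p(gh)$ at $(g,h)$, so the coproduct is preserved; the counit is immediate; and the functors arise from Proposition~\ref{prop:topological_comodule} since $\hF_e=\hy(F)^*$. Your germ argument for the coproduct is exactly this, spelled out, and your appeal to Proposition~\ref{prop:tensor_isomorphic} is just the tensor-categorical packaging of the paper's appeal to Proposition~\ref{prop:topological_comodule}; the explicit discussion of functoriality and the symmetric tensor structure that you include is left implicit in the paper.
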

\begin{proof}
Note that $\cR(F) \otimes \cR(F)$ is naturally embedded into $\cF_{F \times F}(|F| \times |F|)$.
Since the coproduct $\Delta(p)$ of $p \in \cR(F)$, regarded as an analytic function on $|F| \times |F|$,
takes the value $p(gh)$ at $(g,h) \in |F|\times |F|$, the map $\cR(F) \to \hF_e$ preserves the coproduct.
It is easily seen to preserve the counit, as well. 
The functors arise by Proposition \ref{prop:topological_comodule}, since $\hF_e=\hy(F)^*$. 
\end{proof}

\begin{rem}\label{rem:explicit_induced_action}
(1)\
One may present the construction above alternatively, as follows. The map $\cR(F)\to \hF_e=\hy(F)^*$
naturally derives a \emph{Hopf pairing}
\begin{equation}\label{eq:Hopf_pairing}
\langle \ , \ \rangle :\hy(F) \times \cR(F) \to \Bbbk.
\end{equation}
Thus, this is a bilinear form which satisfies
\[ \langle a, pq \rangle = \langle a_{(1)}, p\rangle\langle a_{(2)},q\rangle,
\quad \langle a,1\rangle =\epsilon(a),\quad a \in \hy(F),\ p, q \in \cR(F) \]
and the mirror-symmetric conditions. 
To an object $\sV \in \mathsf{SMod}\text{-}F$ given by a left $\cR(F)$-super-comodule structure
\begin{equation}\label{eq:-1_0_sigma_notation}
\sV \to \cR(F) \otimes \sV, \ v \mapsto v^{(-1)} \otimes v^{(0)},
\end{equation}
the second functor above associates the right $\hy(F)$-super-module structure
\begin{equation}\label{eq:hy(F)-action}
v \triangleleft a = \langle a, v^{(-1)}\rangle v^{(0)},\quad v \in \sV,\ a \in \hy(F). 
\end{equation} 

(2)\ Let $\f =\Lie(F)$. 
We have a canonical hyper-algebra map
$\U(\f) \to \hy(F)$; see \eqref{eq:U(g)toH}.
The associated, restriction functors extend
the symmetric tensor functors \eqref{eq:FhyF} so as (for the second)
\[
\mathsf{SMod}\text{-}F \to \mathsf{SMod}\text{-}\hy(F) \to \mathsf{SMod}\text{-}\U(\f). 
\]
\end{rem}

\subsection{The $\hy(G)$-action on $\cO$}\label{subsec:action_by_hy(Gred)}
Let $G=(|G|,\cO)$ be a super Lie group.
Let $g \in |G|$. 
By $\hat{\Delta}_{g,e} : \hO_{g}\to \hO_{g}~\hotimes~\hO_{e}$,\ 
$\hO_g$ is a topological right $\hO_e$-super-comodule;
it is in fact an algebra-object in the tensor category of those super-comodules. 
By Proposition \ref{prop:tensor_isomorphic} for $A$ applied to $\hO_e$,
it follows that $\hO_g$ turns into a
left $\hy(G)$-super-module super-algebra. 
Here one should recall $\hy(G)=(\hO_e)^{\star}$,
and note that
every element $a \in \hy(G)$ gives a continuous linear map 
$\hO_e \to \Bbbk$. 
The result is formulated as follows. 

\begin{lemma}\label{lem:hy(G_red)-action}
Let $g \in |G|$. Let every element $a$ of $\hy(G)$ act on $\hO_g$ as the 
continuous super-linear endomorphism 
\[ 
\hO_g \overset{\hat{\Delta}_{g,e}}{\longrightarrow}\hO_g~\hotimes~\hO_e 
\overset{\mr{id} \hotimes a}{\longrightarrow} \hO_g~\hotimes~\Bbbk=\hO_g.
\]
Then $\hO_g$ turns into a left $\hy(G)$-super-module super-algebra.
\end{lemma}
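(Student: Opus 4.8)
The plan is to render precise, and record as a lemma, the argument sketched in the paragraph immediately preceding the statement. The first step is to check that $\hat{\Delta}_{g,e}\colon\hO_g\to\hO_g\,\hotimes\,\hO_e$ is a topological right $\hO_e$-super-comodule structure on $\hO_g$; continuity being clear, what must be verified are coassociativity and counitality. After passing to completed stalks and using the identifications $\hO_{G\times G,(g,e)}\cong\hO_g\,\hotimes\,\hO_e$ of Proposition \ref{prop:identify} (applied also to the triple product, at the point $(g,e,e)$), coassociativity of $\hat{\Delta}_{g,e}$ is exactly the associativity $\mu\circ(\mu\times\mr{id})=\mu\circ(\mr{id}\times\mu)$ of the group object $G$ read at $(g,e,e)$, and the counit axiom $(\mr{id}\,\hotimes\,\hat{\epsilon}_e)\circ\hat{\Delta}_{g,e}=\mr{id}_{\hO_g}$ is the right unit axiom $\mu(-,e)=\mr{id}$, with $\hat{\epsilon}_e$ playing the role of the counit of $\hO_e$ as in Lemma \ref{lemma:hatO}; this is entirely parallel to the formulas in Remark \ref{rem:super_Lie_morphism}(1).

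Next I would verify that $\hO_g$, with this coaction, is an algebra object in the symmetric tensor category of complete topological right $\hO_e$-super-comodules. The unit $\Bbbk\to\hO_g$ is a comodule map since $\hat{\Delta}_{g,e}(1)=1\otimes1$; and the multiplication $\hO_g\,\hotimes\,\hO_g\to\hO_g$ is a comodule map because, once the comodule-morphism condition is unwound against the tensor-product coaction (which brings in the super-symmetry of $\ms{SMod}_{\Bbbk}$), it becomes precisely the assertion that $\hat{\Delta}_{g,e}$ is multiplicative as a map into the super-tensor-product super-algebra $\hO_g\,\hotimes\,\hO_e$ --- and this holds because $\hat{\Delta}_{g,e}$ is the stalkwise completion of the sheaf-morphism $\Delta=\mu^*$ of super-algebras.

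Finally I would apply Proposition \ref{prop:tensor_isomorphic} with $A=\hO_e$ and $C=A^{\star}=\hy(G)$, which is legitimate because $\hO_e$ is a smooth Noetherian complete Hopf super-algebra by Lemma \ref{lemma:hatO} and $\hO_e\cong\hy(G)^*$ by Proposition \ref{prop:duality}. The asserted isomorphism of symmetric tensor categories carries the algebra object $\hO_g$ constructed above to an algebra object among complete topological left $\hy(G)$-super-modules, i.e.\ to a left $\hy(G)$-super-module super-algebra. It then only remains to match this module structure with the one displayed in the statement: by the explicit description of $\rho\mapsto\rho'$ in Proposition \ref{prop:topological_comodule}, via the adjunction $\hO_g\,\hotimes\,\hO_e=\Hom(\hy(G),\hO_g)$, an element $a\in\hy(G)$ acts on $x\in\hO_g$ by $(\mr{id}\,\hotimes\,a)\bigl(\hat{\Delta}_{g,e}(x)\bigr)$, which is exactly the continuous super-linear endomorphism written in the lemma. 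The only step requiring genuine attention rather than pure bookkeeping is the transport of the group-object axioms of $G$ through Proposition \ref{prop:identify} in the first step; everything else is formal and is in effect already contained in the discussion preceding the lemma.
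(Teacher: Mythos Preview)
Your proposal is correct and follows exactly the approach the paper itself outlines: the paragraph immediately preceding the lemma already sketches the argument (verify $\hat{\Delta}_{g,e}$ makes $\hO_g$ a topological right $\hO_e$-super-comodule, observe it is an algebra object in that tensor category, then invoke Proposition~\ref{prop:tensor_isomorphic} with $A=\hO_e$), and the paper offers no further proof beyond that sketch. Your write-up simply fills in the details of each of those steps, including the explicit identification of the resulting $\hy(G)$-action with the displayed formula via Proposition~\ref{prop:topological_comodule}.
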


\begin{prop}\label{prop:hy(G_red)-module}
Let $U \subset |G|$ be an open set. Through the natural embedding
$\cO(U) \to \prod_{g\in U} \hO_g$,\
$\cO(U)$ is a super-submodule
of the product $\prod_{g\in U} \hO_g$ of the
left $\hy(G)$-super-modules $\hO_g$, $g \in U$. Therefore, $\cO(U)$ turns into 
a left $\hy(G)$-super-module super-algebra. 
\end{prop}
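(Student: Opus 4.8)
The plan is to show that $\cO(U)$, viewed inside $\prod_{g\in U}\hO_g$ via the canonical map sending a section $s$ to the family of its completed germs $(s_g)_{g\in U}$, is closed under the $\hy(G)$-action defined germ-wise in Lemma \ref{lem:hy(G_red)-action}. Since each $\hO_g$ is a left $\hy(G)$-super-module super-algebra by that lemma, the product $\prod_{g\in U}\hO_g$ is one as well (the category of left $\hy(G)$-super-module super-algebras is closed under products), and the embedding is clearly a super-algebra map; so once stability under the action is established, $\cO(U)$ inherits the structure of a left $\hy(G)$-super-module super-algebra, which is exactly the assertion.

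First I would reduce to the s-split case: the action is defined germ-wise and the statement is local on $|G|$, so by covering $U$ by s-split open super-submanifolds it suffices to treat $U$ with $\cO|_U = \cF\otimes\wedge(\ms{W})$ for a finite-dimensional vector space $\ms{W}$, where $\cF = \cF_{G_{\red}}|_U$. Next I would unwind the action concretely. The coproduct $\Delta:\cO\to\mu_*\cO_{G\times G}$ from \eqref{eq:Delta_S} restricts, over an appropriate open set, to a map $\cO(U)\to \cO_{G\times G}(\mu^{-1}(U))$, and for a section $s\in\cO(U)$ and $g\in U$ the completed germ $\hat\Delta_{g,e}(s_g)$ is the germ at $(g,e)$ of $\Delta(s)$ in $\hO_g\,\hotimes\,\hO_e$. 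The action of $a\in\hy(G)$ on the family $(s_g)_g$ is $(\,(\mathrm{id}\hotimes a)\hat\Delta_{g,e}(s_g)\,)_g$. The point is therefore to check that this family is again the family of completed germs of an honest section in $\cO(U)$.

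The cleanest way to see this is to recognize the $\hy(G)$-action on sections as coming from right-invariant ``differential operators''. Concretely, for $a\in\hy(G)$ the assignment $s\mapsto (\mathrm{id}\hotimes a)\circ\Delta(s)$, computed as a section of $\cO$ over $U$ rather than germ by germ, makes sense: $\Delta(s)\in\cO_{G\times G}(\mu^{-1}(U))$, and composing with the sheaf map induced by $\mathrm{id}_G\times(\text{inclusion of }e)$ together with the continuous functional $a$ on the completed stalk $\hO_e$ produces a section in $\cO(U)$ — here one uses that $\hy(G)=(\hO_e)^\star$ consists of continuous maps $\hO_e\to\Bbbk$ and that $\cF$-coefficients of $\Delta(s)$ near the $e$-fiber converge, so applying $a$ is legitimate. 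One verifies that the germ at $g$ of this section is precisely $(\mathrm{id}\hotimes a)\hat\Delta_{g,e}(s_g)$, using compatibility of $\Delta$ with passage to completed stalks and the identification $\hO_{G\times G,(g,e)} = \hO_g\,\hotimes\,\hO_e$ from Proposition \ref{prop:identify}. This exhibits the germ-wise action as the germ family of a genuine section, proving the embedding $\cO(U)\hookrightarrow\prod_{g\in U}\hO_g$ is $\hy(G)$-stable.

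The main obstacle is the last verification: making rigorous that applying the functional $a$ to the ``$\hO_e$-variable'' of $\Delta(s)$ commutes with taking germs and yields a section whose germs are the prescribed ones — i.e.\ that the formation of $(\mathrm{id}\hotimes a)\circ\Delta(-)$ is compatible with restriction and localization. This is where the s-split reduction and the explicit description of $\Delta$ on s-split pieces (Remark \ref{rem:product_smfd} (2) and the proof of Proposition \ref{prop:finite_products}, which describes how functions pull back along morphisms) do the work: writing $s$ locally as $\sum_{\ms{w}} F_{\ms{w}}\otimes\ms{w}$ with $F_{\ms{w}}\in\cF(U)$ and expanding $\Delta$ in local coordinates, one sees that $(\mathrm{id}\hotimes a)\Delta(s)$ is again of this form with analytic coefficients, and that these coefficients are exactly the ones obtained germ-wise. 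Everything else — that the product $\prod_{g\in U}\hO_g$ is a $\hy(G)$-super-module super-algebra, that the embedding respects products and units — is formal and follows from Lemma \ref{lem:hy(G_red)-action} and the tensor-categorical fact that module-algebra structures are stable under products.
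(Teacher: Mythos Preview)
Your strategy is the same as the paper's: reduce to an s-split neighborhood, use $\Delta(s)\in\cO_{G\times G}(\mu^{-1}(U))$, and show that applying $a\in\hy(G)$ to the $\hO_e$-variable returns an honest section whose germs are the prescribed ones. The paper, however, carries out precisely the step you flag as the ``main obstacle'' and leave at the level of ``one sees'': it first replaces $\hO_e$ by the finite-dimensional quotient $\ms{M}_e=\cF_e/\n_e^{\ell}\otimes\wedge(\sW)$ through which $a$ factors, so the claim becomes that
\[
\cO_{G\times G}(V\times V')\longrightarrow \Big(\prod_{g\in V}\hO_g\Big)\otimes\ms{M}_e
\]
takes values in $\cO(V)\otimes\ms{M}_e$; then it verifies this explicitly on the even part by constructing the map $\sigma:\cF(V\times V')\to\cF(V)\otimes\cF_e/\n_e^{\ell}$ via the divided-power operators $\varDelta^{(\boldsymbol{0},\boldsymbol{i})}$ from the proof of Proposition~\ref{prop:finite_products}. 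Your pointer to that proposition is apt, but the actual content---that the finite Taylor jet in the second variable has analytic coefficients in the first, which is exactly what the $\varDelta^{\boldsymbol{k}}$ encode---needs to be written down; the phrase ``$\cF$-coefficients of $\Delta(s)$ near the $e$-fiber converge, so applying $a$ is legitimate'' is not yet a proof. Once you insert this finite-truncation argument, your write-up becomes the paper's proof.
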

\begin{proof}
Suppose $m|n=\dim G$. 
Let $G_{\red}=(|G|, \cF)$, where $\cF=\cO_{\red}$, denote the Lie group associated with $G$. 
We may suppose $\cO_e=\cF_e \otimes \wedge(\sW)$, where $\sW$ is a vector space of dimension $n$, and hence 
$\hy(G)=\cF_e^{\star}\otimes \wedge(\sW)^*$. Let $\n_e$ be the maximal ideal of $\cF_e$.

Let $a \in \hy(G)$. Then there exists a positive integer $\ell$ 
such that $a(\hat{\n}_e^{\ell}\otimes \wedge(\sW))=0$, or in other words, $a \in (\cF_e/\n_e^{\ell}\otimes \wedge(\sW))^*$. 
Set 
\begin{equation}\label{eq:Me}
\mathsf{M}_e:=\cF_e/\n_e^{\ell}\otimes \wedge(\sW).
\end{equation}
This $\ms{M}_e$ is a finite-dimensional super-vector space. 
To show that the action by $a$ stabilizes $\cO(U)$ in $\prod_{g\in U}\hO_g$, 
it suffices to prove that the composite
\[
\cO(U) \to \prod_{g\in U} \hO_g \xrightarrow{\hat{\Delta}_{g,e}}
\prod_{g\in U}(\hO_g~\hotimes~\hO_e) \to \prod_{g\in U} (\hO_g \otimes \mathsf{M}_e )
= \big(\prod_{g\in U} \hO_g\big) \otimes \mathsf{M}_e, 
\]
where the last arrow arises 
from the natural projection $\hO_e =\hF_e\otimes \wedge(\sW)\to \cF_e/\n_e^{\ell}\otimes \wedge(\sW)=
\mathsf{M}_e$ (see \eqref{eq:tensor_product_with_discrete}), 
takes values in $\cO(U) \otimes \mathsf{M}_e$. 
 
Let $s=\dim \ms{M}_e$. Given a basis $m_1,\dots, m_s$ of $\ms{M}_e$, 
let $p \in \cO(U)$, and denote its image in $\prod_{g\in U}\hO_g \otimes \ms{M}_e$ by
$\sum_{i=1}^s(p_{g,i})_{g\in U} \otimes m_i$ with $p_{g,i}\in \hO_g$. 
We wish to show that every $g_0 \in U$ has an open neighborhood 
$V\, (\subset U)$ such that for every $i$, the elements $p_{g, i}$, $g \in V$, arise
(necessarily uniquely) from some $p_{V, i}$ in $\cO(V)$. 

Let $g_0 \in U$. There exist neighborhoods $V$ of $g_0$ and $V'$ of $e$,
such that $V \subset U$ and $VV' \subset U$. 
The desired result will follow if we prove that the map
\begin{equation*}\label{eq:O(U)}
\cO(U)\to \prod_{g\in V} \hO_g \otimes \ms{M}_e
\end{equation*}
analogous to the composite above takes values in $\cO(V) \otimes \ms{M}_e$. 
Since the map naturally factors through
\begin{equation}\label{eq:O(VV')}
\cO_{G \times G}(V \times V') \to \prod_{g\in V} \hO_{G\times G,(g,e)}
= \prod_{g\in V} \hO_g~\hotimes~\hO_e
\to \prod_{g\in V} \hO_g \otimes \ms{M}_e,
\end{equation}
it suffices to prove that this last composite takes values in $\cO(V) \otimes \ms{M}_e$. 
We may re-choose $V$ and $V'$ so that they are open sub-manifolds of $\Bbbk^m$, and 
$(V, \cO|_V)$ and $(V',\cO|_{V'})$ are s-split, whence in particular,
\begin{equation}\label{eq:O(V)}
\cO(V) = \cF(V) \otimes \wedge(\ms{W}), 
\end{equation}
where $\ms{W}$ is as before. 

We use the operators $\varDelta^{\boldsymbol{k}}$ on $\cF(V\times V')\, (=\cF_{\Bbbk^{2m}}(V\times V'))$
used in the proof of Proposition \ref{prop:finite_products}. 
As ${\boldsymbol{k}}$ we use only the multi-indexes of the form $(\boldsymbol{0}, \boldsymbol{i})$,
where $\boldsymbol{0}=(0,\dots,0)$ and $\boldsymbol{i}=(i_1,\dots,i_m)$ both have $m$ entries. 
Suppose $\cF_e=\Bbbk\{ \ms{T}_1,\dots, \ms{T}_m\}$ naturally. Then $\cF_e/\n_e^{\ell}$ has
$\ms{T}^{\boldsymbol{i}}\, \mr{mod}\, \n_e^{\ell}$, $|\boldsymbol{i}|<\ell$, as a basis, where
$\ms{T}^{\boldsymbol{i}}=\ms{T}_1^{i_1}\dots \ms{T}_m^{i_m}$. 
Let 
\[ \sigma : \cF(V\times V')\to \cF(V) \otimes \cF_e/\n_e^{\ell} \] 
be the map which associates to each $f=f(g,h) \in \cF(V\times V')$, the sum
\[ 
\sum_{|\boldsymbol{i}| < \ell} \varDelta^{(\boldsymbol{0},\boldsymbol{i})}f|_{h=e} \, \otimes 
\, \ms{T}^{\boldsymbol{i}}\, \mr{mod}\, \n_e^{\ell}.
\] 
As is seen from \eqref{eq:Delta_operator}, if  
$(\varDelta^{(\boldsymbol{0},\boldsymbol{i})}f|_{h=e})_g\, (\in \cF_g)$ denotes the natural image
of $\varDelta^{(\boldsymbol{0},\boldsymbol{i})}f|_{h=e} \, (\in \cF(V))$, then 
the natural image 
$f_{(g,e)} \, (\in \cF_{(g,e)})$ of $f$ equals the sum 
$\sum_{\boldsymbol{i}}(\varDelta^{(\boldsymbol{0},\boldsymbol{i})}f|_{h=e})_g\ms{T}^{\boldsymbol{i}}$. 
We have proved that the natural map 
$\cF(V\times V')\to \prod_{g\in V} \hF_g \otimes \cF_e/\mathfrak{n}_e^{\ell}$ factors through $\sigma$, 
whence it takes values in $\cF(V) \otimes \cF_e/\mathfrak{n}_e^{\ell}$. 

The composite \eqref{eq:O(VV')} is identified with
\begin{align*}
\cF(V\times V')\otimes \wedge(\ms{W})\otimes \wedge(\ms{W})\overset{\sigma \otimes \mr{id}}{\longrightarrow} 
&~\cF(V) \otimes \cF_e/\n_e^{\ell}\otimes \wedge(\ms{W})\otimes \wedge(\ms{W})\\
&= \cO(V) \otimes \ms{M}_e \to \prod_{g\in V} \hO_g \otimes \ms{M}_e,
\end{align*}
where the last equality is given by \eqref{eq:O(V)} and \eqref{eq:Me}.
Therefore, the map \eqref{eq:O(VV')}
takes values in $\cO(V) \otimes \cF_e/\n_e^{\ell}$, as desired. 
\end{proof}

\begin{corollary}\label{cor:hy(G_red)-module_sheaf}
With each $\cO(U)$ given the structure of a left $\hy(G)$-super-module super-algebra as above,
$\cO$ is a sheaf of such super-algebras. 
\end{corollary}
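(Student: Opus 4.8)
The plan is to note that the corollary is essentially formal once Proposition \ref{prop:hy(G_red)-module} is in hand: the left $\hy(G)$-super-module super-algebra structure placed on each $\cO(U)$ was defined there entirely through the natural embedding $\cO(U)\hookrightarrow\prod_{g\in U}\hO_g$ into the componentwise $\hy(G)$-action on a product of the super-module super-algebras $\hO_g$ of Lemma \ref{lem:hy(G_red)-action}. Hence it remains only to check two things: that the sheaf restriction maps are morphisms of left $\hy(G)$-super-module super-algebras, and that the gluing (equalizer) condition, already valid for $\cO$ as a sheaf of super-algebras, survives passage to this enriched category.

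For the first point I would argue as follows. For open sets $V\subset U$ there is a commutative square whose horizontal arrows are the embeddings $\cO(U)\hookrightarrow\prod_{g\in U}\hO_g$ and $\cO(V)\hookrightarrow\prod_{g\in V}\hO_g$, whose left vertical arrow is the sheaf restriction $\cO(U)\to\cO(V)$, and whose right vertical arrow is the projection that forgets the factors indexed by $g\in U\setminus V$. Since $\hy(G)$ acts componentwise on these products, the projection is visibly a morphism of left $\hy(G)$-super-module super-algebras; and since the horizontal arrows exhibit $\cO(U)$ and $\cO(V)$ as sub-objects (Proposition \ref{prop:hy(G_red)-module}) through which the restriction factors, the restriction map is itself such a morphism. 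Thus $\cO$ is at least a presheaf of left $\hy(G)$-super-module super-algebras.

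For the second point, fix an open cover $U=\bigcup_i U_i$. The sheaf axiom for $\cO$ identifies $\cO(U)$, as a super-algebra, with the set of families $(s_i)\in\prod_i\cO(U_i)$ satisfying $s_i|_{U_i\cap U_j}=s_j|_{U_i\cap U_j}$ for all $i,j$; by the first point, all restriction maps occurring here are $\hy(G)$-linear super-algebra maps, so this is a sub-super-vector space of $\prod_i\cO(U_i)$ cut out by equalities of $\hy(G)$-linear maps, hence stable under the $\hy(G)$-action and closed under multiplication. By uniqueness of the structure --- it is forced by the embedding into $\prod_{g\in U}\hO_g$ --- this sub-super-module super-algebra is exactly $\cO(U)$. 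Equivalently, the forgetful functor from left $\hy(G)$-super-module super-algebras to super-vector spaces creates the limits occurring in the sheaf condition, so an equalizer diagram that is an equalizer of super-vector spaces is automatically one in the enriched category. There is really no serious obstacle here; the only point requiring care is to keep in mind that the $\hy(G)$-structure on $\cO(U)$ is, by construction, the one induced from $\prod_{g\in U}\hO_g$, which makes it manifestly natural in $U$ and lets the ordinary sheaf axiom do the rest.
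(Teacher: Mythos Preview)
Your proposal is correct and follows the same approach as the paper. The paper's proof is a single sentence---``the given structure is compatible with the restriction maps $\cO(U)\to\cO(V)$''---which is exactly your first point, and your second point (gluing) is a formality the paper leaves implicit since the sheaf condition is already known at the level of super-algebras and the forgetful functor creates the relevant limit.
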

\begin{proof}
Indeed, the given structure is compatible with the restriction maps $\cO(U) \to \cO(V)$ associated with 
an inclusion $U\supset V$ of open sets. 
\end{proof}

\begin{example}\label{ex:Delta-operator}
Let $m$ be a positive integer. Let $F=(\Bbbk^m,\cF)$ be the Lie group 
constructed on $\Bbbk^m$ with respect to the addition. Note that $\hF_e=\Bbbk[[\sP_1,\dots,\sP_m]]$
with $\sP_i$ primitive, and the hyper-algebra $\hy(F)$ of $F$ is
the commutative hyper-algebra with basis
\[
b_{\boldsymbol{k}}=b^{(1)}_{k_1}\dots b^{(m)}_{k_m},\quad \boldsymbol{k}=(k_1,\dots,k_m),\ k_i \ge 0
\]
such that 
\[ b_{\boldsymbol{k}}(\sP^{\boldsymbol{j}})=\delta_{\boldsymbol{k},\boldsymbol{j}},\ 
\text{where} \
\sP^{\boldsymbol{j}}=\sP_1^{j_1}\dots \sP_m^{j_m},\] 
and each sequence $b^{(i)}_0, b^{(i)}_1, 
b^{(i)}_2, \dots, \
1\le i \le m$, satisfies \eqref{eq:divided_power_sequence}. By Corollary \ref{cor:hy(G_red)-module_sheaf}
applied in the purely even situation,
$\cF$ is a sheaf of left $\hy(F)$-module algebras. 
The action $b_{\boldsymbol{k}} : \cF \to \cF$ by the basis element coincides with 
the operator $\varDelta^{\boldsymbol{k}}$, as is seen from the formula 
\[
\hat{\Delta}_{g,e}(\sT^{\boldsymbol{i}})
=\sum_{\boldsymbol{i}\ge \boldsymbol{j}} \binom{\boldsymbol{i}}{\boldsymbol{j}}\,
\sT^{\boldsymbol{i}-\boldsymbol{j}}\otimes \sP^{\boldsymbol{j}},\quad g \in \Bbbk^m
\]
compared with \eqref{eq:Delta_operator},
where we suppose $\hF_g=\Bbbk[[\sT_1,\dots,\sT_m]]$ naturally.
\end{example}

%%%%%%%%%%%%%%%%%%%%%%%%%%%%%%%%%%%%%%%%%%%%%%%%%%%%%%%%%%%%%%%%%%%%%%

\subsection{The adjoint $G_{\red}$-action on $\hy(G)$}\label{subsec:adjoint_G-action}
Let $G=(|G|,\cO)$ be a super Lie group,
and let $G_{\red}=(|G|,\cF)$, where $\cF=\cO_{\red}$, denote the associated Lie group.  
Suppose $g \in |G|$. Recall from \eqref{eq:red_x} that $(\hO_g)_{\red}=\hF_g$, and let
\begin{equation}\label{eq:red_g}
\widehat{\red}_g : \hO_g \to \hF_g
\end{equation}
denote the natural projection.

The right $G$-adjoint action $G\times G\to G,\ (h, g) \mapsto g^{-1}hg$ on $G$ is a morphism
in $\ms{SMFD}$, which induces
a super-algebra map $\hO_e \to \hO_e~\hat{\otimes}~\hO_g$ for each $g \in |G|$. 
The composite of the last map 
with $\mr{id}~\hotimes\, \widehat{\red}_g$
is denoted by 
\[
\alpha_g : \hO_e \to \hO_e~\hat{\otimes}~\hF_g,
\]
whose composite with 
$\mr{id}\otimes \hat{\epsilon}_g: \hO_e~\hotimes~\hF_g\to \hO_e~\hotimes~\Bbbk=\hO_e$
is denoted by 
\begin{equation}\label{eq:beta_g}
\beta_g : \hO_e \to \hO_e.
\end{equation}
Since one sees that $\alpha_g$ is compatible with $\hat{\Delta}_{e,e}$ on $\hO_e$, it follows 
that 
$\beta_g$ is an endomorphism of the Noetherian complete Hopf algebra $\hO_e$.  
Let
\[
\gamma_g=(\beta_g)^{\star} : \hy(G) \to \hy(G) 
\]
denote the corresponding endomorphism of the hyper-super-algebra $\hy(G)$.

\begin{prop}\label{prop:Gred-super-module}
We have the following. 
\begin{itemize}
\item[(1)] 
By $\beta_g$, $g \in |G|$,\ the abstract group $G$ acts on $\hO_e$ from the left as automorphisms  
of the complete Hopf algebra.
\item[(2)]
By $\gamma_g$, $g \in |G|$,\ the Lie group $G_{\red}$ acts analytically on $\hy(G)$ from the right 
as automorphisms of the hyper-super-algebra. Thus, $\hy(G)$ 
turns into a Hopf-algebra object in $\ms{SMod}\text{-}G_{\red}$.
\item[(3)]
By Lemma \ref{lem:Hopf_pairing} there is induced a right $\hy(G_{\red})$-super-module
structure on $\hy(G)$, by which $\hy(G)$ turns, moreover, into a Hopf-algebra object in 
$\ms{SMod}\text{-}\hy(G_{\red})$. The induced action is precisely the right adjoint action
$\mr{ad}_r(a)$, $a \in \hy(G_{\red})$, given by
\begin{equation}\label{eq:adr}
\mr{ad}_r(a)(b) = S(a_{(1)})b a_{(2)},\quad  b \in \hy(G). 
\end{equation}
\end{itemize} 
\end{prop}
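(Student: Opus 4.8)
\emph{Proof plan.} All three parts are obtained by dualizing the right adjoint-action morphism $c\colon G\times G\to G$, $c(h,g)=g^{-1}hg$, and identifying the operators produced; write $\hat{\m}_e$ for the maximal super-ideal of $\hO_e$.

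\emph{Part (1).} As a morphism of super-manifolds, $c$ satisfies the action identities $c\circ(\mr{id}_G\times e)=\mr{id}_G$, with $e\colon(\{*\},\Bbbk)\to G$ the unit section, and $c\circ(c\times\mr{id}_G)=c\circ(\mr{id}_G\times\mu)$ as morphisms $G\times G\times G\to G$. I would pass both identities to completed stalks at $e$ (note that $c$ sends every point in question to $e$), identify $\hO_{G\times G\times G,(e,g_1,g_2)}\simeq\hO_e\,\hotimes\,\hO_{g_1}\,\hotimes\,\hO_{g_2}$ via Proposition~\ref{prop:identify}, and then apply $\mr{id}\,\hotimes\,\widehat{\red}_{g_1}\,\hotimes\,\widehat{\red}_{g_2}$ followed by $\mr{id}\otimes\hat\epsilon_{g_1}\otimes\hat\epsilon_{g_2}$. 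Since $\hat\epsilon_g$ factors through $\widehat{\red}_g$ and $\beta_g=(\mr{id}\otimes\hat\epsilon_g)\circ\alpha_g$, a short computation — using only the evaluation identity $(\hat\epsilon_{g_1}\otimes\hat\epsilon_{g_2})\circ\hat\Delta_{g_1,g_2}=\hat\epsilon_{g_1g_2}$ — turns the first identity into $\beta_e=\mr{id}_{\hO_e}$ and the second into $\beta_{g_1g_2}=\beta_{g_1}\circ\beta_{g_2}$. As it was already recorded (just before the proposition) that each $\beta_g$ is an endomorphism of the complete Hopf super-algebra $\hO_e$, and since $\beta_g\circ\beta_{g^{-1}}=\beta_e=\mr{id}$, we conclude that $g\mapsto\beta_g$ is a homomorphism from $|G|$ to the automorphism group of the complete Hopf super-algebra $\hO_e$, which is (1).

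\emph{Part (2).} Applying the contravariant continuous-dual functor of Theorem~\ref{thm:duality} to (1) shows immediately that each $\gamma_g=(\beta_g)^{\star}$ is an automorphism of the hyper-super-algebra $\hy(G)$, that $g\mapsto\gamma_g$ is a \emph{right} action of $|G|$, and that the structure maps of $\hy(G)$ are $\gamma$-equivariant, so $\hy(G)$ is a Hopf-algebra object in the category of right $|G|$-modules. It remains to check analyticity, i.e.\ that this is an analytic right $G_{\red}$-module in the sense of Convention~\ref{con:F-super-module}. For this I would use the coradical-filtration pieces $C_{(n)}=(\hO_e/\hat{\m}_e^{\,n+1})^{*}$: since $\beta_g$ is a local automorphism of $\hO_e$ it preserves $\hat{\m}_e^{\,n+1}$, so each $C_{(n)}$ is a $\gamma$-stable finite-dimensional subspace and $\hy(G)=\bigcup_n C_{(n)}$. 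That the matrix representation $g\mapsto\gamma_g|_{C_{(n)}}$ has analytic entries is proved by the same local computation as in Proposition~\ref{prop:hy(G_red)-module}: the adjoint-action morphism, composed with $\mr{id}_G\times\iota_{\red}$ on the conjugating factor (so that the relevant stalk maps become the $\alpha_g$ of \S\ref{subsec:adjoint_G-action}) and with the truncation $\hO_e\twoheadrightarrow\hO_e/\hat{\m}_e^{\,n+1}$ on the acted factor, is unpacked on s-split charts by the operators $\varDelta^{\boldsymbol{k}}$ into a super-algebra map $\hO_e/\hat{\m}_e^{\,n+1}\to(\hO_e/\hat{\m}_e^{\,n+1})\otimes\cF(W)$ whose evaluation at each point $g\in W$ recovers $\beta_g$ modulo $\hat{\m}_e^{\,n+1}$. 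Dualizing, each $C_{(n)}$ is an analytic right $G_{\red}$-submodule; together with the $G_{\red}$-equivariance of the structure maps this proves (2).

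\emph{Part (3).} By (2) and Lemma~\ref{lem:Hopf_pairing} there is an induced right $\hy(G_{\red})$-module structure $b\triangleleft a$ on $\hy(G)$; by Remark~\ref{rem:explicit_induced_action}(1) it is read off from the left $\cR(G_{\red})$-comodule structure underlying $\gamma$, and unwinding the definitions identifies it with $\hy(c)(b\otimes a)$, where $\hy(c)\colon\hy(G)\otimes\hy(G)=\hy(G\times G)\to\hy(G)$ is the super-coalgebra map dual to $c^{*}_{e}\colon\hO_e\to\hO_e\,\hotimes\,\hO_e$ and $a\in\hy(G_{\red})=\underline{\hy(G)}$ is viewed inside $\hy(G)$ via Proposition~\ref{prop:hy_Lie_of_Gred}; here one uses that $\alpha_g$, hence $\gamma_g$, remembers only the reduced stalk $\hF_g$, so that the dualization pairs the conjugating variable exactly against $\hy(G_{\red})=\hF_e^{\,\star}$. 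Now factor $c=\mu\circ(\mu\times\mr{id}_G)\circ(\iota\times\mr{id}_G\times\mr{id}_G)\circ\delta$, with $\delta\colon G\times G\to G\times G\times G$, $\delta(h,g)=(g,h,g)$. Since the continuous-dual functor is product-preserving and sends $\mu$ to the product $m$ and $\iota$ to the antipode $S$ of $\hy(G)$ (because $\hat\Delta_{e,e}=\mu^{*}_{e}$ and $\hat S_{e}=\iota^{*}_{e}$), applying it to this factorization gives, with $\hy(\delta)(b\otimes a)=\sum(-1)^{|a_{(1)}||b|}a_{(1)}\otimes b\otimes a_{(2)}$,
\[
b\triangleleft a\;=\;m\,(m\otimes\mr{id})\,(S\otimes\mr{id}\otimes\mr{id})\,\hy(\delta)(b\otimes a)\;=\;S(a_{(1)})\,b\,a_{(2)}\;=\;\mr{ad}_r(a)(b),
\]
the Koszul sign disappearing because $\hy(G_{\red})$ is purely even (Proposition~\ref{prop:hy_Lie_of_Gred}). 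As $\mr{ad}_r$ acts by automorphisms of the Hopf super-algebra $\hy(G)$, it follows that $\hy(G)$ is a Hopf-algebra object in $\ms{SMod}\text{-}\hy(G_{\red})$, proving (3).

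\emph{Main obstacle.} I expect the delicate step to be the identification in (3) of the abstractly induced $\hy(G_{\red})$-action of Lemma~\ref{lem:Hopf_pairing} with $\hy(c)(b\otimes-)$ — matching ``evaluate the family $\{\beta_g\}$ at points of $G_{\red}$, then pass to $\hy(G_{\red})$'' with ``apply $\hy$ to $c$'' — together with the super-sign bookkeeping in the factorization of $c$; the redeeming feature is that every sign which could a priori survive is annihilated by the purely even hyper-algebra $\hy(G_{\red})$. The analyticity claim in (2) is the other point requiring care, but it reduces to the same $\varDelta^{\boldsymbol{k}}$-computation already carried out in Proposition~\ref{prop:hy(G_red)-module}.
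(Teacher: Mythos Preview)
Your proposal is correct and follows essentially the same route as the paper. A few minor differences worth noting: for Part~(1) the paper writes $\beta_g=(\widehat{L^*_{g^{-1}}})_e\circ(\widehat{R^*_g})_e$ and reads off $\beta_e=\mr{id}$, $\beta_{gh}=\beta_g\circ\beta_h$ directly from this decomposition, whereas you pass the abstract action identities for $c$ through completed stalks---both arguments are short and equivalent. Part~(2) is handled identically. For Part~(3), the paper's key step is exactly the identification you flag as delicate: it shows that the right $\cR(G_{\red})$-coaction on $\cO_e/\m_e^{\ell}$ (dual to the one on $(\cO_e/\m_e^{\ell})^*$) composed with $\cR(G_{\red})\to\hF_e$ recovers the $\alpha'_e$ built in the analyticity proof of Part~(2), whence the induced $\hy(G_{\red})$-action equals $(\alpha_e)^{\star}$; it then takes the equality $(\alpha_e)^{\star}=\mr{ad}_r$ as understood. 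You instead make the factorization of $c$ and the resulting formula $S(a_{(1)})ba_{(2)}$ explicit, which is a welcome addition. The paper's advantage is that the identification step is grounded concretely in the matrix coefficients $p_{ij}$ already produced in Part~(2); your ``unwinding the definitions'' for that step would benefit from pointing to exactly this: the analytic functions $p_{ij}$ constructed in the analyticity argument are by construction the matrix coefficients of the $\cR(G_{\red})$-coaction, and their germs at $e$ are the $p_{e,ij}$ appearing in $\alpha'_e$.
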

\begin{proof}
(1)\
We see
\[ 
\beta_g=(\widehat{L^*_{g^{-1}}})_e\circ (\widehat{R^*_{g}})_e
=(\widehat{R^*_{g}})_e\circ (\widehat{L^*_{g^{-1}}})_e,
\]
which implies that $\beta_e= \mr{id}$ and
$\beta_{gh}=\beta_g \circ \beta_h$,\ $g,h \in |G|$. This proves Part~1.

(2)\
The last equalities shows that $\gamma_e = \mr{id}$ and
$\gamma_{gh}=\gamma_h \circ \gamma_g$,\ $g,h \in |G|$. 

It remains to prove that the $G_{\red}$-action on $\hy(G)$ is analytic. 
Fix $\ell >0$. Then maps $\alpha_g$, $g\in |G|$, induce super-algebra maps
\[
\alpha'_g : \cO_e/\m_e^{\ell}\to \cO_e/\m_e^{\ell} \otimes \hF_g,\quad g \in |G|.
\]
Let $s=\dim \cO_e/\m_e^{\ell}$. Choose a basis $z_1,\dots, z_s$ of $\cO_e/\m_e^{\ell}$, and write 
$\alpha'_g(z_j)=\sum_{i=1}^s z_i\otimes p_{g,ij}$ with $p_{g,ij} \in \hF_g$. 
We have an open neighborhood $U$ of $e$ such that $z_j$, $1\le j \le s$, are all contained
in the image of the natural composite $\cO(U)\to \hO_e \to \cO_e/\m_e^{\ell}$. 
Let $g_0 \in |G|$.
Then there exist neighborhoods $U'$ of $e$ and $V$ of $g_0$, such that
$U' \times V$ is mapped to $U$ by the $G$-adjoint action, $(U',\cO|_{U'})$ is s-split, and
$U'_{\red}$ and $V$ are open sub-manifolds of $\Bbbk^m$, $m=\dim G_{\red}$. 
We see as in the proof of Proposition \ref{prop:hy(G_red)-module} that
the natural map $\cO_{G\times G_{\red}}(U'\times V) \to \cO_e/\mathfrak{m}_e^{\ell}\otimes \prod_{g\in V}\hF_g$
takes values in $\cO_e/\mathfrak{m}_e^{\ell}\otimes \cF(V)$. It follows that
the composite
\[
\cO(U) \to \hO_e \to \cO_e/\m_e^{\ell} \xrightarrow{(\alpha'_g)_{g\in V}}
\prod_{g\in V}\cO_e/\m_e^{\ell}\otimes \hF_g = \cO_e/\m_e^{\ell}\otimes \prod_{g\in V}\hF_g
\]
takes values in $\cO_e/\m_e^{\ell}\otimes \cF(V)$. Fix $1 \le i,j \le s$. It follows that 
$p_{g,ij}$, $g \in V$,
uniquely arise from some $p_{V,ij} \in \cF(V)$. 
Note that $V$ can run over an open covering of $|G|$.
All $p_{V,ij}$ are then glued together into an analytic function, say 
$p_{ij}$, on the whole $|G|$. Since
\[
\beta_g(z_j) =\sum_{i=1}^s\hat{\epsilon}_g(p_{g,ij})z_i
=\sum_{i=1}^s p_{ij}(g)z_i,
\]
the $G_{\red}$-action on $\cO_e/\m_e^{\ell}$ is given by the matrix representation 
$g \mapsto (p_{ij}(g))$, and is, therefore, analytic. Hence
the dual $G_{\red}$-action on $\hy(G)$ is analytic, as desired. 

(3)\
Let $F=G_{\red}$. We aim to prove that the induced $\hy(F)$-action $\hy(G) \otimes \hy(F) \to \hy(G)$
coincides with $(\alpha_e)^{\star}$, which implies Part~3. 
With the notation above, restrict the left $\cR(F)$-coaction on $\hy(G)$ 
to $(\cO_e/\m_e^{\ell})^*$. One then sees that the dual right $\cR(F)$-coaction
$\cO_e/\m_e^{\ell}\to \cO_e/\m_e^{\ell} \otimes \cR(F)$ induces, through the natural map 
$\cR(F) \to \hF_e$, the $\alpha'_e$ above.
Therefore, the induced 
$\hy(F)$-action restricted to $(\cO_e/\m_e^{\ell})^*$
coincides with $(\alpha'_e)^*$. This implies the desired result. 
\end{proof}

\begin{rem}\label{rem:adjoint_action}
We call the $G_{\red}$-action on $\hy(G)$ above, the \emph{right adjoint action}.
Note that by restriction, $\Lie(G)=P(\hy(G))$ turns into a Lie-algebra object 
in $\ms{SMod}\text{-}G_{\red}$. The restricted action as well is called
the \emph{right adjoint action}. It follow 
by Proposition \ref{prop:Gred-super-module} (3) that 
the induced $\Lie(G_{\red})$-action on $\Lie(G)$ coincides with the right adjoint action
for Lie super-algebras
\[
\mr{ad}_r(x)(y)=[y, x],\quad x \in \Lie(G_{\red}),\ y \in \Lie(G).
\]
\end{rem}

%%%%%%%%%%%%%%%%%%%%%%%%%%%%%%%%%%%%%%%%%%%%%%%%%%%%%%%%%%%%%%%%%%%%%%%%%%%%%%%%%%

\section{Category equivalence $\mathsf{SLG} \approx \mathsf{HCP}$}\label{sec:equivalence}

\subsection{Functor to Harish-Chandra pairs}\label{subsec:Phi_to_HCP}
The concept of Harish-Chandra pairs goes back to Kostant \cite{Kostant}. 
The following version of definition is a reformulation of \cite[Definition 6.4]{M4} 
into our analytic situation. 

Let $(F, \sV)$ be a pair of a Lie group $F$ and a finite-dimensional 
right $F$-module $\sV$; see Convention \ref{con:F-super-module}. The action by $F$ on $\sV$ (or any action
by a Lie group on a super-module) will be presented as
$v^g$, where $g \in F,\ v\in \sV$. 
There is induced a right $\hy(F)$-module structure on $\sV$; this action as well as the restricted
one by $\Lie(F)=P(\hy(F))$ (see (P2) below)
will be indicated by $\triangleleft$, as in \eqref{eq:hy(F)-action}. 

\begin{definition}\label{def:HCP}
(1)\ The pair $(F, \sV)$ is called
a \emph{Harish-Chandra pair}, if it is equipped with 
an $F$-equivariant linear map $[\ , \ ] : \sV \otimes \sV \to \Lie(F)$,
where $\Lie(F)$ is supposed to be given the right adjoint $F$-action, 
such that 
\begin{itemize}
\item[(P1)] $[v, w]=[w, v]$,\quad $v, w \in \sV$,  
\item[(P2)] $v \triangleleft [v,v]= 0$,\quad $v \in \sV$.  
\end{itemize}

(2)\
A \emph{morphism} $(\phi, \psi) : (F_1,\sV_1) \to (F_2,\sV_2)$ of Harish-Chandra pairs consists 
of a morphism $\phi : F_1 \to F_2$ of Lie groups and a linear map $\psi : \sV_1\to \sV_2$ such that
\begin{itemize}
\item[(P3)] $\psi(v^g) =\psi(v)^{\phi(g)},\quad v\in \sV_1,\ g \in F_1$, 
\item[(P4)] $[\psi(v), \psi(w) ] =\Lie(\phi)([v, w])$, \quad $v, w \in \sV_1$.
\end{itemize}
Here, $\Lie(\phi)$ in (P4) may be replaced by $\hy(\phi)$; see 
Remark \ref{rem:explicit_induced_action} (2). 
The composite of morphisms is defined in the obvious manner. 

(3)\
$\mathsf{HCP}$ denotes the category of Harish-Chandra pairs.
\end{definition}

Recall that $\ms{SLG}$ denotes the category of super Lie groups.
Suppose $G=(|G|,\cO)\in \ms{SLG}$, and let $\g = \Lie(G)$ denote the Lie super-algebra of $G$. 
Recall from Remark \ref{rem:adjoint_action}
that $\g$ is a Lie-algebra object in $\mathsf{SMod}\text{-}G_{\red}$
with respect to the right adjoint action by the associated Lie group $G_{\red}$. 
Regard the odd component $\g_1$ of $\g$ as a right $G_{\red}$-module by restriction. 
The bracket on $\g$, restricted to $\g_1$, is a $G_{\red}$-equivariant linear map which
takes values in $\g_0$, the Lie algebra of $G_{\red}$; see \eqref{eq:hyGred}. 

\begin{prop}\label{prop:Phi}
The pair $(G_{\red},\g_1)$, equipped with the restricted bracket, turns into a Harish-Chandra pair. 
Moreover, this construction gives rise to the functor
\[ \Phi : \mathsf{SLG} \to \mathsf{HCP},\ \Phi(G)= (G_{\red},\Lie(G)_1). \]
\end{prop}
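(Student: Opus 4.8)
The plan is to verify first that $(G_{\red},\g_1)$ together with the restricted bracket satisfies (P1) and (P2) of Definition \ref{def:HCP}, and then to check functoriality. The equivariance of $[\ ,\ ]:\g_1\otimes\g_1\to\Lie(G_{\red})=\g_0$ is automatic because the full bracket $[\ ,\ ]:\g\otimes\g\to\g$ is a morphism in $\ms{SMod}\text{-}G_{\red}$ by Remark \ref{rem:adjoint_action}, and $\g_1$, $\g_0$ are $G_{\red}$-submodules; restricting a $G_{\red}$-equivariant map to submodules yields a $G_{\red}$-equivariant map. For (P1): the super-symmetry of the bracket (axiom (B1) of Section \ref{subsec:super-vec}) reads $[\ ,\ ]\circ(\op{id}+\tau_{\g,\g})=0$, and on the odd part $\tau$ acts by $\tau(v\otimes w)=(-1)^{|v||w|}w\otimes v=-w\otimes v$ for $v,w\in\g_1$, so $[v,w]-[w,v]$ would be what (B1) kills --- wait, more carefully: $(B1)$ gives $[v,w]+(-1)^{|v||w|}[w,v]=0$, i.e.\ $[v,w]=[w,v]$ for $v,w$ odd. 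This is exactly (P1). For (P2): I would use (B3) (equivalently (B2) when $\op{char}\Bbbk\ne 3$, and imposed directly when $\op{char}\Bbbk=3$), which says $[[v,v],v]=0$ for $v\in\g_1$. By Remark \ref{rem:adjoint_action}, the induced $\Lie(G_{\red})$-action on $\Lie(G)=\g$ is $\mr{ad}_r(x)(y)=[y,x]$ for $x\in\g_0$, $y\in\g$. Since $[v,v]\in\g_0$, we get $v\triangleleft[v,v]=\mr{ad}_r([v,v])(v)=[v,[v,v]]$, which equals $\pm[[v,v],v]=0$ by (B3). Thus (P2) holds.

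Next, functoriality. Given a morphism $\phi:G\to G'$ of super Lie groups, I would set $\Phi(\phi)=(\phi_{\red},\Lie(\phi)_1)$, where $\phi_{\red}:G_{\red}\to G'_{\red}$ is the morphism of Lie groups from Lemma \ref{lem:associated_Lie} and $\Lie(\phi):\Lie(G)\to\Lie(G')$ is the Lie-super-algebra map from Proposition \ref{prop:hy_Lie}, which restricts to $\Lie(\phi)_1:\g_1\to\g'_1$ on odd parts (this makes sense because $\Lie(\phi)=d\phi_e$ is $\mathbb{Z}_2$-graded). I must check (P3) and (P4). For (P4): $\Lie(\phi)$ is a morphism of Lie super-algebras, hence compatible with brackets, and restricting to the odd components gives $[\Lie(\phi)_1(v),\Lie(\phi)_1(w)]=\Lie(\phi)([v,w])$ for $v,w\in\g_1$, which is (P4) (noting $\Lie(\phi)$ agrees with $\hy(\phi)$ on $\Lie(G)$, as remarked in Definition \ref{def:HCP}). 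For (P3): I need that $\Lie(\phi)_1$ is equivariant along $\phi_{\red}$, i.e.\ $\Lie(\phi)(v^g)=\Lie(\phi)(v)^{\phi_{\red}(g)}$ for $v\in\g_1$, $g\in|G|$. This should follow from the naturality of the whole adjoint-action construction of Section \ref{subsec:adjoint_G-action}: the right $G$-adjoint action is a morphism $G\times G\to G$ natural in $G$, so $\phi$ intertwines the adjoint actions; passing to completed stalks at $e$, then dualizing via $\hy=(\ )^\star$, the map $\hy(\phi)=(\phi^*_e)^\star$ intertwines $\gamma_g$ on $\hy(G)$ with $\gamma_{\phi_{\red}(g)}$ on $\hy(G')$. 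Restricting to primitives gives the required equivariance on $\Lie(G)$, hence on $\g_1$. Finally $\Phi$ preserves identities and composites because both $(\ )_{\red}$ and $\Lie$ are functors (Lemmas \ref{lem:associated_Lie}, Proposition \ref{prop:hy_Lie}).

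The main obstacle I anticipate is (P3) --- making the compatibility of $\Lie(\phi)$ with the adjoint actions fully rigorous. This requires tracking a morphism of super Lie groups through several constructions: the commuting square relating $\phi$ and the two adjoint-action morphisms $G\times G\to G$, $G'\times G'\to G'$; passage to completed stalks using the identifications $\hO_{G\times G,(g,e)}=\hO_g\,\hotimes\,\hO_e$ of Proposition \ref{prop:identify} and their naturality; the compatibility of $\widehat{\red}_g$ (see \eqref{eq:red_g}) with $\hat\phi^*$; and finally the duality $\hy=(\ )^\star$ of Theorem \ref{thm:duality}, together with the fact from Remark \ref{rem:super_Lie_morphism}(1),(2) that $\hat\phi^*$ commutes with left/right translations and the counits. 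Each individual step is routine, but assembling them cleanly --- in particular verifying that $\phi_{\red}$ is precisely the Lie-group morphism induced on reduced parts and that $\mr{id}\,\hotimes\,\widehat{\red}_g$ and $\mr{id}\otimes\hat\epsilon_g$ are respected --- is where the bookkeeping is heaviest. The remaining verifications (P1), (P2), (P4) are short formal consequences of the Lie-super-algebra axioms and Remark \ref{rem:adjoint_action}.
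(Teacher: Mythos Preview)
Your proposal is correct and follows essentially the same approach as the paper: (P1) from the super-symmetry axiom (B1), (P2) from Remark~\ref{rem:adjoint_action} reducing $v\triangleleft[v,v]$ to $[v,[v,v]]=0$, and functoriality from the naturality of $G\mapsto(\Lie(G)\text{ in }\ms{SMod}\text{-}G_{\red})$. The paper dispatches functoriality in a single sentence, whereas you unpack (P3) and (P4) explicitly; your identification of (P3) as the step requiring the most bookkeeping is accurate, and your outline of how to verify it via the naturality of the adjoint construction of Section~\ref{subsec:adjoint_G-action} is exactly what underlies the paper's one-line appeal to functoriality.
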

\begin{proof}
Obviously, (P1) is satisfied. By Remark \ref{rem:adjoint_action}, (P2) now reads
$[v, [v,v]]=0$, which holds in $\Lie(G)$. 
Since the assignment $G \mapsto (\Lie(G)\ \text{in}\ \mathsf{SMod}\text{-}G_{\red})$ 
is functorial, so is $\Phi$ as well. 
\end{proof}

\begin{theorem}\label{thm:equivalence}
The functor $\Phi : \mathsf{SLG} \to \mathsf{HCP}$ is an equivalence.
\end{theorem}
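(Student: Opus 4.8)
The plan is to prove the equivalence by constructing an explicit quasi-inverse functor $\Psi : \mathsf{HCP} \to \mathsf{SLG}$ and checking that $\Psi \circ \Phi \cong \mathrm{id}$ and $\Phi \circ \Psi \cong \mathrm{id}$. The construction of $\Psi$ proceeds through a Hopf super-algebra. Given a Harish-Chandra pair $(F, \sV)$, set $\f = \Lie(F)$ and form the Lie super-algebra $\g = \f \oplus \sV$ with $\g_0 = \f$, $\g_1 = \sV$, bracket on $\f$ the given one, bracket $\f \otimes \sV \to \sV$ the (derivative of the) $F$-action, and bracket $\sV \otimes \sV \to \f$ the given map $[\ ,\ ]$; conditions (P1) and (P2) together with $F$-equivariance are exactly what is needed to verify the Jacobi identity (B2) (and (B3) in characteristic $3$). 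Then, following \cite{MS1, M3}, build the Hopf super-algebra $\cB$: as a coalgebra it should be $\cR(F) \otimes \wedge(\sV^*)$ (equivalently, its dual / completed dual models $\hy(G)$ at the identity), with the algebra structure twisted using the $F$-action on $\sV$ and the bracket $\sV\otimes \sV \to \f$ via a smash-type product. The super Lie group $\Psi(F,\sV)$ is then the super-ringed space $(|F|, \cF_F \otimes \wedge(\sV^*))$ — note this is automatically s-split — with structure sheaf morphisms $\Delta$, $S$ induced from $\cB$; one checks this is a group object in $\ms{SMFD}$ using Proposition \ref{prop:finite_products}, Remark \ref{rem:product_smfd}, and the identification of stalks in Proposition \ref{prop:identify}.

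Next I would verify functoriality of $\Psi$: a morphism $(\phi,\psi)$ of Harish-Chandra pairs induces, via (P3) and (P4), a Lie super-algebra map $\g_1 \to \g_2$ compatible with the $F$-structures, hence a map of the associated Hopf super-algebras $\cB_1 \to \cB_2$, hence a morphism of the resulting super Lie groups whose underlying continuous map is $|\phi|$. Then I would establish the natural isomorphism $\Phi \circ \Psi \cong \mathrm{id}_{\mathsf{HCP}}$: for $\Psi(F,\sV) = G$ one has $G_{\red} = F$ on the nose by construction (the reduced structure sheaf is $\cF_F$), and $\Lie(G)_1 = P(\cB)_1 = \sV$ with the adjoint $F$-action and bracket recovered as built in — this is essentially bookkeeping, using \eqref{eq:hyGred} and the description of $P$ of a hyper-super-algebra. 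The more delicate direction is $\Psi \circ \Phi \cong \mathrm{id}_{\mathsf{SLG}}$. Given $G = (|G|,\cO) \in \ms{SLG}$ with $\Phi(G) = (G_{\red}, \g_1)$, I need a natural isomorphism $\Psi(\Phi(G)) \cong G$. The underlying spaces agree since both are $|G| = |G_{\red}|$, and on reduced manifolds both give $G_{\red}$; one constructs a comparison morphism $\Psi(\Phi(G)) \to G$ of super Lie groups — e.g.\ using that $\Psi(\Phi(G))$ is s-split and mapping via the canonical data (the inclusion $\cF_{G_{\red}} \hookrightarrow \cO$ and the odd generators $\g_1^* \cong (\m_e/\m_e^2)_1 \to \cO_e$) assembled $G_{\red}$-equivariantly using Corollary \ref{cor:super_Lie_group_split} (which says $G$ is s-split) — and then checks it is an isomorphism.

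The main obstacle is this last isomorphy check. The cleanest route is Lemma \ref{lem:isomorphism}: a morphism of super Lie groups is an isomorphism iff it induces isomorphisms on $(\ )_{\red}$ and on $\Lie(\ )_1$. For the comparison morphism $\Psi(\Phi(G)) \to G$, the reduced part is the identity on $G_{\red}$, and the odd tangent part is the identity on $\g_1 = \Lie(G)_1$ by the way the morphism is defined; hence it is an isomorphism. So the real work is (a) checking that $\Psi$ genuinely lands in $\ms{SLG}$ — i.e.\ that the Hopf super-algebra $\cB$ associated to $\g = \f\oplus\sV$ exists with the claimed coalgebra structure and yields well-defined, associative sheaf-morphisms $\Delta, S$ satisfying the group-object axioms, which is where the Harish-Chandra conditions (P1), (P2) are consumed in proving coassociativity / the Hopf axioms; and (b) constructing the natural transformation $\Psi\Phi \Rightarrow \mathrm{id}$ coherently in $G$, i.e.\ naturality in morphisms, which reduces to the universal property of $\cB$ as carrying the structure sheaf via analytic representative functions (anticipating Theorem \ref{thm:G-super-mod_struc}). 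I would handle (a) by citing the construction of $\cB$ from \cite{MS1, M3} and re-deriving it in the present analytic setting as indicated in Section \ref{subsec:quasi-inverse}, and (b) by the argument sketched above via \cite[Lemma 11.0.1]{Sw}-type injectivity together with dimension count from $\mr{sdim}\, G$.
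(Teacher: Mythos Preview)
Your overall architecture---build $\Psi$ from a Hopf super-algebra, verify $\Phi\circ\Psi\simeq\mathrm{id}$ by unwinding the construction, and then apply Lemma~\ref{lem:isomorphism} to a comparison morphism for $\Psi\circ\Phi\simeq\mathrm{id}$---is exactly the paper's, and your sketch of $\Psi$ (including the option of passing through $\U(\g)$ rather than the hyper-super-algebra $C$; the paper uses both, see the proof of Proposition~\ref{prop:calB}) is serviceable.

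The genuine gap is in how you produce the comparison morphism $\Psi(\Phi(G))\to G$. You propose to assemble it from ``the inclusion $\cF_{G_{\red}}\hookrightarrow\cO$'' and ``odd generators $\g_1^*\to\cO_e$'', made coherent via Corollary~\ref{cor:super_Lie_group_split}. Two problems: first, there is \emph{no} canonical section $\cF_{G_{\red}}\hookrightarrow\cO$ (one has only the quotient $\cO\to\cF_{G_{\red}}$), and likewise $\g_1^*=(\m_e/\m_e^2)_1$ is a \emph{quotient} of $(\m_e)_1$, not a subspace; second, invoking Corollary~\ref{cor:super_Lie_group_split} here is circular, since that corollary is deduced \emph{from} the present theorem. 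So as written you cannot even define the morphism, let alone check it respects the group structure.

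What the paper does instead is supply a concrete formula that requires no splitting of $G$. Using that $\cO$ is a sheaf of left $\hy(G)$-super-module super-algebras (Corollary~\ref{cor:hy(G_red)-module_sheaf}), one defines a sheaf-morphism $\eta:\cO\to\cP=\Hom_D(C,\cF)$ by
\[
\eta_U(p)(a)=\red_U(a\triangleright p),\qquad p\in\cO(U),\ a\in C=\hy(G),
\]
and then shows by a direct computation with completed stalks (using the pairing $\langle\ ,\ \rangle:C\times\hO_e\to\Bbbk$ and relation \eqref{eq:F-balanced}) that $(\mathrm{id},\eta)$ is a morphism of super Lie groups; only then is Lemma~\ref{lem:isomorphism} invoked. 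Naturality of $\eta$ in $G$ is immediate from the formula, so no appeal to Theorem~\ref{thm:G-super-mod_struc} (which is logically downstream) is needed. Your point~(b) about naturality via ``the universal property of $\cB$'' is therefore also mis-aimed: the explicit $\eta$ makes naturality a triviality, whereas your route would again risk circularity.
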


Modifying the construction in \cite{M3}
we will construct an explicit quasi-inverse functor. 

\subsection{Construction of a quasi-inverse}\label{subsec:quasi-inverse}
Let $(F,\sV) \in \mathsf{HCP}$. Let $\cF$ and $D=\hy(F)$ denote the structure sheaf of $F$
and the hyper-algebra of $F$, respectively. 

By \cite[Theorem 10]{M3} we have a unique hyper-super-algebra $C$ with the properties: 
(1)~$\underline{C}=D$,\ $P(C)_1=\sV$, and (2)~$C$  
is generated by $D$ and $\sV$ (to be more precise, $C$ is generated by $\sV$ as a $D$-ring) and 
is defined by the relations
\begin{equation}\label{eq:relations_H}
v a = a_{(1)}(v \triangleleft a_{(2)}), \quad vw+wv =[v,w], 
\end{equation}
where  $v, w \in \sV$ and $a \in D$. Choosing arbitrarily a basis $(v_i)_{1\le i \le n}$, $n=\dim \sV$, of $\sV$, 
we have as in \eqref{eq:isom_module_coalgebra}, the unit-preserving isomorphism of left $D$-super-module 
super-coalgebras
\begin{equation*}
D \otimes \wedge(\sV)\overset{\simeq}{\longrightarrow}C,\quad 
a \otimes (v_{i_1}\wedge v_{i_2}\wedge \dots \wedge v_{i_r})
\mapsto a\, v_{i_1}v_{i_2}\dots v_{i_r},
\end{equation*}
where $1\le i_1 <i_2<\dots<i_r \le n$, $0\le r \le n$. Fixing an isomorphism as above we will identify 
so as 
\begin{equation}\label{eq:H}
C=D \otimes \wedge(\sV). 
\end{equation}

By Proposition \ref{prop:Gred-super-module} (2) applied in the purely even situation,  
$F$ acts analytically on $D$ from the right as Hopf-algebra automorphisms.   

\begin{lemma}\label{lem:H_SMod-F}
This $F$-action on $D$, together with the original one on $\sV$, uniquely extend onto $C$ so that
$C$ turns into a Hopf-algebra object in $\mathsf{SMod}\text{-}F$.
\end{lemma}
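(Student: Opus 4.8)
The plan is to build the $F$-action on $C$ by transporting a natural right $F$-action along the identification \eqref{eq:H}, and then to verify that the two defining relations \eqref{eq:relations_H} are preserved, so that the action is well defined on the quotient $C$, and finally that it is by Hopf-algebra automorphisms. Concretely, $F$ acts on $D=\hy(F)$ on the right as Hopf-algebra automorphisms by Proposition \ref{prop:Gred-super-module} (2) (applied in the purely even case), and on $\sV$ by the given Harish-Chandra pair structure; hence $F$ acts on the tensor product $D\otimes\wedge(\sV)$ diagonally (using that $\wedge(\sV)$ is functorial in $\sV$, so an $F$-action on $\sV$ induces one on $\wedge(\sV)$ by Hopf-algebra automorphisms). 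Via the identification \eqref{eq:H}, this gives a candidate right $F$-action on the underlying super-coalgebra of $C$; I would first check it is by super-coalgebra automorphisms, which is immediate since both tensor factors carry coalgebra-automorphism actions and \eqref{eq:H} is an isomorphism of super-coalgebras.

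The substance is to check that this action respects the algebra structure of $C$, equivalently that it is compatible with the relations \eqref{eq:relations_H}. For the first relation $va = a_{(1)}(v\triangleleft a_{(2)})$: applying $g\in F$ and using that $g$ acts as an automorphism on $D$ (so $(a^g)_{(1)}\otimes (a^g)_{(2)} = a_{(1)}{}^g\otimes a_{(2)}{}^g$ by coalgebra-compatibility) and on $\sV$, one must verify
\[
v^g\, a^g = (a_{(1)})^g\,\bigl(v^g \triangleleft (a_{(2)})^g\bigr).
\]
This reduces to the statement that the induced $\hy(F)$-action $\triangleleft$ on $\sV$ is $F$-equivariant in the appropriate sense, i.e. $(v\triangleleft a)^g = v^g\triangleleft a^g$ for $v\in\sV$, $a\in D=\hy(F)$, $g\in F$. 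This in turn follows from the fact (Remark \ref{rem:explicit_induced_action}) that $\triangleleft$ is obtained from the $\cR(F)$-comodule structure via the Hopf pairing $\langle\ ,\ \rangle : \hy(F)\times\cR(F)\to\Bbbk$, together with the observation that the $F$-action on $\hy(F)$ is dual to the (right adjoint) $\cR(F)$-coaction; spelled out, conjugation by $g$ on $\hy(F)$ is adjoint to the $\cR(F)$-coaction twisted by $g$, and the comodule map defining the $F$-action on $\sV$ intertwines these. For the second relation $vw+wv=[v,w]$: applying $g$ gives $v^g w^g + w^g v^g$ on the left, and on the right $[v,w]^g$, where the action on $[v,w]\in\Lie(F)\subset D$ is the restriction of the $F$-action on $D$, which is the right adjoint action on $\hy(F)$ and restricts to the right adjoint action on $\Lie(F)$ (cf. Remark \ref{rem:adjoint_action}); so the required identity $[v^g, w^g] = [v,w]^g$ is precisely the $F$-equivariance (P3)--(P4)-type property of the bracket, which is built into the definition of a Harish-Chandra pair (Definition \ref{def:HCP}(1): $[\ ,\ ]$ is $F$-equivariant for the right adjoint action on $\Lie(F)$). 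Since both relations are preserved, the $F$-action on $D\otimes\wedge(\sV)$ descends to a well-defined right $F$-action on $C$ by algebra automorphisms; combined with the coalgebra-automorphism property already noted, $C$ becomes a Hopf-algebra object in $\ms{SMod}\text{-}F$. Uniqueness of the extension is clear, since $C$ is generated as an algebra by $D$ and $\sV$, on which the action is prescribed.

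The main obstacle I expect is the bookkeeping in the first relation: one has to be careful that "$F$ acts on $\hy(F)$" really is by coalgebra automorphisms (so that Sweedler notation commutes with the action) and that the induced $\hy(F)$-module structure $\triangleleft$ on $\sV$ is compatible with conjugation by $g$. Both facts are essentially in the excerpt — the former from Proposition \ref{prop:Gred-super-module} (2)/(3) and the latter from the pairing description in Remark \ref{rem:explicit_induced_action} together with functoriality of the $\hy$-construction — but assembling them so that the displayed equality above holds, including tracking the convolution inverse / antipode that appears when $g$ acts on $a_{(1)}\otimes a_{(2)}$, is where care is needed. A secondary point is to make sure the $F$-action on $\wedge(\sV)$ is indeed by Hopf-super-algebra automorphisms (it is, since $\wedge$ is a symmetric-monoidal functor from super-vector spaces to Hopf super-algebras), so that the diagonal action on $D\otimes\wedge(\sV)$ is by Hopf-algebra automorphisms before passing to the quotient. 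Once these are in place, everything else is routine verification on generators.
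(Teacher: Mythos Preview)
Your proposal is correct and follows essentially the same approach as the paper's proof. Both reduce to checking compatibility of the $F$-action with the two defining relations \eqref{eq:relations_H}: the second via $F$-equivariance of the bracket, and the first via the key identity $(v\triangleleft a)^g = v^g \triangleleft a^g$, which the paper verifies by a short explicit computation using the Hopf pairing description of $\triangleleft$ (Remark \ref{rem:explicit_induced_action}) together with the formula $\langle a^g, p\rangle = p_{(1)}(g^{-1})\langle a, p_{(2)}\rangle\, p_{(3)}(g)$ for the adjoint $F$-action on $\hy(F)$.
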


The extended action as well will be presented as $a^g$, where $a \in C$, $g\in F$. 

Recall from \eqref{eq:tensor_isom} that $\sV$ is 
a left comodule over the
Hopf algebra $\cR(F)$ of analytic representative functions on $F$, whose structure we present
as $v \mapsto v^{(-1)}\otimes v^{(0)}$; see \eqref{eq:-1_0_sigma_notation}.  

\begin{proof}
It suffices to show that the relations given in \eqref{eq:relations_H} are compatible
with the $F$-action. For the second relation this follows from the $F$-equivariance of $[ \ , \ ]$. 
We wish to prove $(v \triangleleft \, a)^g= v^g \triangleleft \, a^g$ for
$g \in F$, $a \in D$ and $v \in \sV$; this implies the compatibility for the first relation.
Given $p\in \cR(F)$, Lemma \ref{lem:Hopf_pairing} (see also \eqref{eq:F-balanced} below) shows
\begin{align}\label{eq:agp1}
\begin{split}
&\langle a^g, p \rangle = p_{(1)}(g^{-1})\, \langle a, p_{(2)} \rangle \, p_{(3)}(g)\\
&\text{or}\quad
p_{(1)}(g)\, \langle a^g, p_{(2)} \rangle 
=\langle a, p_{(1)} \rangle \, p_{(2)}(g),
\end{split}
\end{align}
where $\langle \ , \ \rangle$ denotes the pairing \eqref{eq:Hopf_pairing}. 
If we write $v^{(-2)}\otimes v^{(-1)}\otimes v^{(0)}$ for $\Delta(v^{(-1)})\otimes v^{(0)}$, 
then the desired equality follows from the last equation and \eqref{eq:hy(F)-action}, so as
\begin{align*}
v^g \triangleleft \, a^g&=v^{(-2)}(g) \, \langle a^g, v^{(-1)}\rangle \, v^{(0)}
=\langle a, v^{(-2)}\rangle \, v^{(-1)}(g)\, v^{(0)}\\
&=\langle a, v^{(-1)}\rangle (v^{(0)})^g
=(v \triangleleft \, a)^g. 
\end{align*}
\end{proof}

By Corollary \ref{cor:hy(G_red)-module_sheaf}, 
$\cF$ is a sheaf of left $D$-module algebras.
Given an open set $U \subset |F|$, let 
$\cP(U) = \Hom_{D}(C, \cF(U))$
denote the super-vector space of all left $D$-module maps $C \to \cF(U)$, which is given the
parity $\cP(U)_i = \Hom_{D}(C_i, \cF(U))$, $i=0,1$. 
This forms a super-algebra with respect to the convolution-product 
\cite[p.6]{Mo}, and gives rise to a presheaf
\begin{equation}\label{eq:cP}
\cP=\Hom_D(C,\cF)
\end{equation}
of super-algebras on $|F|$. 
By \eqref{eq:H} this $\cP$ is identified with $\cF \otimes \wedge(\sV^*)$ 
(see \eqref{eq:ssmfd}), 
so that $\cP$ is indeed a sheaf, and $(|F|, \cP)$ is an s-split super-manifold. 
Given $g \in |F|$, $\cF_g$ and $\hF_g$ are naturally (topological) left $D$-module algebra
(see Lemma \ref{lem:hy(G_red)-action}), and we have
\begin{equation}\label{eq:P_g}
\cP_g = \Hom_D(C, \cF_g), \quad \hP_g = \Hom_D(C, \hF_g).
\end{equation}

Let $n>0$ be an integer, and let
$\cF^{(n)}=\cF_{F^n}$ denote the structure sheaf of the product $F^n$ of $n$ copies of the Lie group 
$F=(|F|,\cF)$.
It is a sheaf of left $D^{\otimes n}$-module algebra, since $\hy(F^n)=D^{\otimes n}$. 
The construction of $\cP$ is generalized so that the super-algebra associated with every open set
$U \subset |F|^n$ 
\begin{equation*}
\cP^{(n)}(U) = \Hom_{D^{\otimes n}}(C^{\otimes n}, \cF^{(n)}(U))
\end{equation*}
which consists of all left $D^{\otimes n}$-module maps gives rise to a sheaf $\cP^{(n)}$;
by \eqref{eq:H}, this is identified with $\cF^{(n)}\otimes \wedge(\sV^*)^{\otimes n}
\, (= \cF^{(n)} \otimes \wedge((\sV^*)^{\oplus n}))$, and 
is, therefore, regarded as 
the structure sheaf of the product $(|F|,\cP)^n$ of $n$ copies of $(|F|,\cP)$; 
see Remark \ref{rem:product_smfd} (2). 

Let $\mu : F \times F \to F,\ \mu(g,h)=gh$ and
$\iota : F \to F, \ \iota(g)=g^{-1}$ denote the product and the inverse of 
the underlying group of $F$. The first step of making $(|F|,\cP)$ into a super Lie group
is to give to it the group structure the same as $\mu$, $\iota$. 
For any open set $V \subset |F|$ we let $\langle \ , \ \rangle : V \times \cF(V) \to \Bbbk$, $\langle g, p\rangle
=p(g)$ denote the evaluation map. The next step is to define sheaf-morphisms 
\[
\Delta : \cP \to \mu_*\cP^{(2)},\quad S: \cP \to \iota_*\cP
\]
which shall satisfy $\Delta = \mu^*$,\ $S=\iota^*$.
Given open sets $U \subset |F| \times |F|$,\ $V \subset |F|$ with 
$\mu(U) \subset V$, and an element $p \in \cP(V)$, we define  
the element $\Delta(p)$ of $\cP^{(2)}(U)= \op{Hom}_{D\otimes D}(C\otimes C, \cF^{(2)}(U))$ 
to be the map which sends $a\otimes b \in C\otimes C$
to the analytic function on $U$ with the value
\begin{equation}\label{eq:sheaf_mor_Delta}
\Delta(p)(a\otimes b)(g,h):=\langle g h, \ p(a^h\, b)\rangle \ \text{at} \ (g,h) \in U.
\end{equation}
Given open sets $U, V \subset |F|$ with $\iota(U) \subset V$, and an element $p \in \cP(V)$, 
we define the element
$S(p)$ of $\cP(U)=\op{Hom}_D(C,\cF(U))$ to be the map which
sends $a \in C$ to the analytic function on $U$ with the value 
\begin{equation}\label{eq:sheaf_mor_S}
S(p)(a)(g) := \langle g^{-1}, \ p(S_C(a)^{g^{-1}})\rangle \ \text{at} \ g \in U. 
\end{equation}

\begin{prop}\label{prop:super_Lie_from_HCP}
These indeed define sheaf-morphisms $\Delta$, $S$ of super-algebras, with which
 $(|F|,\cP)$, given the group structure $\mu$, $\iota$,
is a super Lie group such that $\Delta = \mu^*$,\ $S=\iota^*$. 
\end{prop}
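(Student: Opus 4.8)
## Proof proposal

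The plan is to verify, step by step, that the formulas \eqref{eq:sheaf_mor_Delta} and \eqref{eq:sheaf_mor_S} are well-defined, are sheaf-morphisms of super-algebras, and satisfy the group-object axioms; then the last sentence follows because $(|F|,\cP)$ is already known to be an s-split super-manifold (it was identified with $\cF\otimes\wedge(\sV^*)$).

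First I would check that $\Delta(p)$ and $S(p)$ really land in the asserted $\op{Hom}$-spaces. For $\Delta(p)$ this means showing that $a\otimes b\mapsto \big((g,h)\mapsto \langle gh,\ p(a^h b)\rangle\big)$ is a left $D\otimes D$-module map $C\otimes C\to\cF^{(2)}(U)$; here one uses that $\cF$ is a sheaf of left $D$-module algebras (Corollary \ref{cor:hy(G_red)-module_sheaf}), that the $F$-action on $C$ extends the one on $D$ and $\sV$ (Lemma \ref{lem:H_SMod-F}), and that $p$ itself is a left $D$-module map. The key computational point is that the map $(a,b,g,h)\mapsto\langle gh, p(a^h b)\rangle$ is analytic in $(g,h)$ — for this one expresses it, as in the proof of Proposition \ref{prop:hy(G_red)-module}, through the coproduct on $\cF$ restricted along $\mu$, using that the $F$-action $a\mapsto a^h$ is analytic and that $C=D\otimes\wedge(\sV)$ is a union of finite-dimensional $F$-submodules. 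The analogous check for $S(p)$ uses analyticity of $g\mapsto g^{-1}$ and of the $F$-action, together with $S_C$ being a super-algebra anti-involution. One also checks compatibility with restriction of opens, which is immediate from the pointwise definition.

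Next I would verify that $\Delta$ and $S$ are \emph{super-algebra} maps for the convolution-product on $\cP$. For $\Delta$ this is a direct manipulation: given $p,q\in\cP(V)$, one has $(p*q)(a^h b)=\pm p((a^h b)_{(1)})q((a^h b)_{(2)})$, and using that the coproduct of $C$ is multiplicative, that the $F$-action on $C$ is a coalgebra map (both from Lemma \ref{lem:H_SMod-F}), and the Sweedler identity $(a^h)_{(1)}\otimes(a^h)_{(2)}=a_{(1)}^{\,h}\otimes a_{(2)}^{\,h}$, one reorganizes the sum into $\big(\Delta(p)*\Delta(q)\big)(a\otimes b)(g,h)$; the sign bookkeeping from the super-symmetry is the only delicate part and is handled by tracking parities of $a_{(1)},a_{(2)},b_{(1)},b_{(2)}$. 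The unit is preserved since $p=\epsilon_{\cF(V)}\circ\epsilon_C$ maps to the corresponding unit. For $S$ one uses that $S_C$ is an anti-coalgebra map and the identity $S_C(a)^{g^{-1}}$ interacts with the product via $\langle g^{-1},-\rangle$; again the verification is formal once the sign conventions are fixed.

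Finally I would confirm the group-object axioms for $\big((|F|,\cP),\mu,\iota\big)$: coassociativity $(\Delta\otimes\mathrm{id})\circ\Delta=(\mathrm{id}\otimes\Delta)\circ\Delta$, the counit being the evaluation at $e$ composed with $\epsilon_C$, and the antipode axiom $m\circ(S\otimes\mathrm{id})\circ\Delta=u\circ\epsilon=m\circ(\mathrm{id}\otimes S)\circ\Delta$. Coassociativity reduces, after unwinding \eqref{eq:sheaf_mor_Delta} on the triple product $F^3$, to the cocycle-type identity $(a^k)^{h}\cdot\text{(shift)}=a^{hk}$ together with associativity of $\mu$ on $|F|$ and coassociativity of $\Delta_C$; the counit and antipode axioms reduce similarly to $a^e=a$, $\epsilon_C(S_C(a))=\epsilon_C(a)$, and the defining antipode property of $C$ evaluated against the group operation of $|F|$. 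The statement $\Delta=\mu^*$, $S=\iota^*$ is then a matter of unwinding definitions: $\mu^*$ is the sheaf-morphism $\cP\to\mu_*\cP^{(2)}$ induced on the identified sheaf $\cF\otimes\wedge(\sV^*)$ by the group multiplication of the super Lie group we have just built, which is by construction the map sending $p$ to the section whose value on $a\otimes b$ at $(g,h)$ is the convolution of $p$ with itself along $gh$ twisted by the $h$-action, i.e.\ exactly \eqref{eq:sheaf_mor_Delta}.

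I expect the main obstacle to be the analyticity verification in the first step — that $(g,h)\mapsto\langle gh,\,p(a^h b)\rangle$ defines a genuine section of $\cF^{(2)}$ over $U$ rather than merely a set-theoretic function. This is where the positive-characteristic, non-$C^\infty$ setting bites: one cannot appeal to smoothness of the action but must instead argue as in Proposition \ref{prop:hy(G_red)-module}, factoring through finite-dimensional quotients $\cF_e/\n_e^\ell$, using the operators $\varDelta^{\boldsymbol{k}}$ and the analyticity of the matrix coefficients $p_{ij}$ of the $F$-action on $D$ (hence on $C=D\otimes\wedge(\sV)$) established in Proposition \ref{prop:Gred-super-module}(2). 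Once that local description is in hand, the remaining algebraic identities (multiplicativity, coassociativity, antipode) are purely formal Hopf-algebraic computations in $C$ and present no conceptual difficulty beyond careful sign tracking.
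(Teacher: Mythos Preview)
Your proposal is correct in outline but takes a genuinely different route from the paper. You propose a direct verification: check that the formulas land in the right $\op{Hom}$-spaces and are analytic, then verify multiplicativity, coassociativity, counit, and antipode by explicit Hopf-algebraic manipulation in $C$. The paper instead deduces everything from the algebraic counterpart (Proposition~\ref{prop:calB}) via push-outs: it forms $\cF(V)\otimes_{\cR(F)}\cA$ and $\cF^{(2)}(U)\otimes_{\cR(F)^{\otimes 2}}\cA^{\otimes 2}$, observes that $\Delta_{\cA}$, $S_{\cA}$, $\mu^*$, $\iota^*$ induce maps between these, completes to obtain sheaf-morphisms on the larger sheaf $\mr{Hom}_D(\cC,\cF)$, and then restricts to $\cP$ using that $C$ is a quotient of $\cC$; the group-object axioms are finally checked on completed stalks, where they reduce to the already-established Hopf axioms for $\cA$ and $\cR(F)$ together with the Lie-group structure of $F$. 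What your approach buys is self-containment; what the paper's buys is that it never has to redo the coassociativity or antipode computations for $\cP$ by hand, since these are inherited through functoriality of push-out and completion.

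Two small points on your write-up. First, the analyticity step is easier than you fear: since $C$ is an object of $\ms{SMod}\text{-}F$ (Lemma~\ref{lem:H_SMod-F}), any fixed $a\in C$ lies in a finite-dimensional analytic $F$-submodule, so $a^h=\sum_i f_i(h)c_i$ with $f_i\in\cR(F)$ and $c_i\in C$, whence $\langle gh,\,p(a^h b)\rangle=\sum_i f_i(h)\cdot p(c_i b)(gh)$ is visibly a section of $\cF^{(2)}$; no appeal to the $\varDelta^{\boldsymbol k}$-machinery of Proposition~\ref{prop:hy(G_red)-module} is needed. Second, your final sentence is circular: once you have shown that $(\mu,\Delta)$ and $(\iota,S)$ are morphisms in $\ms{SMFD}$ satisfying the group-object axioms, ``$\Delta=\mu^*$'' is not an additional assertion to prove but simply the notation for the sheaf component of the product morphism you have just constructed.
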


In the algebraic situation treated by \cite{M3,MS1}, 
every affine super-group scheme (see Section \ref{subsec:Hopf_group_scheme})
is constructed uniquely from an (algebraic) Harish-Chandra pair.
The construction is conceptual, involving quite similar expressions to
\eqref{eq:sheaf_mor_Delta} and \eqref{eq:sheaf_mor_S}.
To prove Proposition \ref{prop:super_Lie_from_HCP} we choose to deduce it from the algebraic counterpart
just cited, because the resulting construction
is conceptual, again, and because a Hopf super-algebra, $\cB$, 
which we will construct in the course, will play a role in Section \ref{subsec:super-repres_function}.

We apply the algebraic construction of \cite[Section 4.4]{MS1} to the Hopf algebra $\cR(F)$
and the hyper-super-algebra $C$ as above. The Hopf pairing \eqref{eq:Hopf_pairing} makes
$\cR(F)$ into a left module algebra over $D=\hy(F)$, so that 
$a \triangleright p = p_{(1)} \langle a, p_{(2)} \rangle$,
where $a \in D,\ p \in \cR(F)$. Let 
\[ 
\cB = \mr{Hom}_D(C, \cR(F)) 
\]
denote the super-algebra (with respect to the convolution-product and the parity induced from $C$,
as before) consisting of all left $D$-module
maps $D \to \cR(F)$. Note that
\[ 
\cB^{\otimes n}= \mr{Hom}_{D^{\otimes n}}(C^{\otimes n}, \cR(F)^{\otimes n}),\quad n \ge 1. 
\]

\begin{prop}\label{prop:calB}
$\cB$ forms a Hopf super-algebra with respect to the counit
\begin{equation}\label{eq:epsilon(p)}
\epsilon: \cB \to \Bbbk,\quad \epsilon(p)=\epsilon_{\cR(F)}(p(1))
\end{equation}
together with the coproduct and the antipode
\[
\Delta : \cB \to \cB \otimes \cB \quad \text{and} \quad 
S : \cB \to \cB 
\]
given by formulas analogous to \eqref{eq:sheaf_mor_Delta} and \eqref{eq:sheaf_mor_S}.
\end{prop}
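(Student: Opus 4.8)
The plan is to deduce this directly from the algebraic construction of \cite[Section 4.4]{MS1}, applied to the specific data at hand. The input to that construction is a commutative Hopf algebra $\cR$ together with a hyper-super-algebra $C$ such that $\underline{C}=D$ acts on $\cR$ as a left module algebra via a Hopf pairing $\langle\ ,\ \rangle : D\times \cR\to\Bbbk$, subject to the compatibility axioms of \cite[Section 4.1]{MS1} (i.e.\ that $\cR$ is a $D$-module coalgebra and the pairing is a Hopf pairing). First I would verify that our present data $\cR=\cR(F)$, $C$ as in \eqref{eq:H}, and the pairing \eqref{eq:Hopf_pairing} satisfy exactly these hypotheses: that $\cR(F)$ is commutative was noted just after \eqref{eq:tensor_isom}; that $D=\hy(F)$ acts on $\cR(F)$ as a module algebra by $a\triangleright p=p_{(1)}\langle a,p_{(2)}\rangle$ follows from the Hopf-pairing axioms of Remark \ref{rem:explicit_induced_action} (1), which are precisely what makes $\cR(F)$ a $D$-module algebra and $D$-module coalgebra. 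With these in place, \cite[Section 4.4]{MS1} outputs a super-commutative Hopf super-algebra structure on $\cB=\mr{Hom}_D(C,\cR(F))$; one reads off that its counit is \eqref{eq:epsilon(p)}, and that its coproduct $\Delta:\cB\to\cB\otimes\cB=\mr{Hom}_{D\otimes D}(C\otimes C,\cR(F)\otimes\cR(F))$ and antipode $S$ are given, in terms of the evaluation pairing, by the formulas literally analogous to \eqref{eq:sheaf_mor_Delta} and \eqref{eq:sheaf_mor_S}, namely $\Delta(p)(a\otimes b)=p_{(1)}\big((a^{(2)}{}_{?}\,\cdots)\big)$—more precisely the expressions in which the analytic-function value $\langle gh,\ p(a^h b)\rangle$ is replaced by the element $p(a\cdot ?)$ of $\cR(F)\otimes\cR(F)$ obtained through the coalgebra structures. (I would write these out once, carefully matching notation with \cite{MS1}, rather than re-deriving the Hopf-algebra axioms.)

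The main point to check, and the step I expect to require the most care, is the \emph{translation of notation}: the formulas \eqref{eq:sheaf_mor_Delta}, \eqref{eq:sheaf_mor_S} are written using the \emph{evaluation} $\langle g,p\rangle=p(g)$ of analytic functions at points of $|F|$, whereas the construction of \cite{MS1} is purely Hopf-theoretic, phrased through the comultiplications of $\cR(F)$ and $C$. I would make explicit the dictionary: an element $p\in\cB=\mr{Hom}_D(C,\cR(F))$ is, upon composing with evaluation at $g$, an element of $\cP_g=\mr{Hom}_D(C,\cF_g)$ (compare \eqref{eq:P_g}), and the coproduct of $\cR(F)$ computes the value at a product $gh$. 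Under this dictionary the \cite{MS1}-coproduct on $\cB$ becomes exactly \eqref{eq:sheaf_mor_Delta} after evaluation, and likewise for the antipode using $\iota(g)=g^{-1}$. Thus verifying Proposition \ref{prop:calB} reduces to (i) confirming the hypotheses of \cite{MS1} hold—routine given the results already assembled—and (ii) identifying the output formulas with \eqref{eq:epsilon(p)} and the stated analogues of \eqref{eq:sheaf_mor_Delta}, \eqref{eq:sheaf_mor_S}. Since all Hopf-algebra axioms (coassociativity, counit, antipode) are already established in the cited source for the general construction, no fresh computation of those axioms is needed here; one only transports the structure along the isomorphism $\cB^{\otimes n}=\mr{Hom}_{D^{\otimes n}}(C^{\otimes n},\cR(F)^{\otimes n})$. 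That $\cB$ is super-commutative follows because $\cR(F)$ is commutative and the sign conventions in $\mr{Hom}_D(C,-)$ with $C$ super-cocommutative are handled as in \cite[Section 4.4]{MS1}.

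In summary: set up the data $(\cR(F),D,C,\langle\ ,\ \rangle)$ as an instance of the input to \cite[Section 4.4]{MS1}; invoke that construction to obtain a Hopf super-algebra structure on $\cB=\mr{Hom}_D(C,\cR(F))$; then unwind the resulting maps through the evaluation pairing to see that the counit is \eqref{eq:epsilon(p)} and the coproduct and antipode are the asserted analogues of \eqref{eq:sheaf_mor_Delta} and \eqref{eq:sheaf_mor_S}. The bookkeeping of signs and of the $D^{\otimes n}$-linearity identification is the only non-formal part; everything else is a citation.
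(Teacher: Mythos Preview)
Your approach is the same as the paper's: invoke the algebraic construction of \cite[Section 4.4]{MS1} and transport the structure. However, you gloss over a point the paper treats as the main subtlety. The construction in \cite{MS1} is \emph{not} stated with an arbitrary hyper-super-algebra $C$ and its even part $D$ as input; it is stated with the universal envelopes $\U(\g)$ and $\U(\g_0)$, so that the Hopf super-algebra produced there is $\mr{Hom}_{\U(\g_0)}(\U(\g),\cR(F))$, not $\mr{Hom}_D(C,\cR(F))$. Over a field of positive characteristic these are different objects: $\U(\g_0)\to D=\hy(F)$ need not be an isomorphism, and likewise $\U(\g)\to C$. The paper handles this in two ways: first, it invokes the generalization \cite[Remark 4.21]{MS1}, which relaxes the hypotheses to allow a Hopf pairing $\mathfrak{D}\times O_{\ms{F}}\to\Bbbk$ with a general hyper-algebra $\mathfrak{D}$ (and the paper's Remark \ref{rem:modify_rem} spells out exactly how to adapt that Remark to the present $C$, $D$); second, it observes that both $\mr{Hom}_{\U(\g_0)}(\U(\g),\cR(F))$ and $\mr{Hom}_D(C,\cR(F))$ are canonically identified with $\mr{Hom}(\wedge(\sV),\cR(F))$ via the coalgebra splittings $\U(\g)\simeq\U(\g_0)\otimes\wedge(\sV)$ and $C\simeq D\otimes\wedge(\sV)$, so the two presentations agree.

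So your plan is correct in outline, but step (i) as you describe it---``confirming the hypotheses of \cite{MS1} hold''---is not quite routine: you must either cite \cite[Remark 4.21]{MS1} rather than Section~4.4 proper, and verify its conditions (the $\ms{F}$-equivariance of the pairing and the compatibility with the adjoint action, as in \eqref{eq:agp2}), or else carry out the identification of the two $\mr{Hom}$ spaces through $\wedge(\sV)$ and check that the structure maps match under it. Either route works; the paper does both.
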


To describe explicitly the formulas last mentioned, let 
\begin{equation*}\label{eq:Fal}
F^{\mr{al}}=\op{Sp}(\cR(F))
\end{equation*}
denote the affine group scheme represented by $\cR(F)$;  as in \cite[p.1141]{HM} it may be called
\emph{the universal algebraic hull of} $F$.
By definition $F^{\mr{al}}$ is
the functor which associates to each algebra $R$, the group $F^{\mr{al}}(R)=\mr{Alg}(\cR(F), R)$ 
of all algebra maps $\cR(F) \to R$. This group scheme may not be algebraic, that is, 
$\cR(F)$ may not be finitely generated.
A right (resp., left) $F^{\mr{al}}$-(super-) module 
%\footnote{Warning on line-break, (super-)module.}
is identified with a left (resp., right) $\cR(F)$-(super-)comodule, and so by \eqref{eq:tensor_isom}, with
a right (resp., left) $F$-(super-)module.  
For example, the $\sV$ of the Harish-Chandra pair is a right $F^{\mr{al}}$-module,
on which each point $g \in F^{\mr{al}}(R)$ acts as the $R$-linear automorphism of $\sV \otimes R$ determined
by
\[
v \otimes 1 \mapsto v^g:= v^{(0)}\otimes g(v^{(-1)}),
\]
where we have used the notation \eqref{eq:-1_0_sigma_notation}. 
Let $p \in \cB$. The value $\Delta(p)(a \otimes b)$ of the element $\Delta(p)$ in 
$\cB \otimes \cB=\mr{Hom}_{D\otimes D}(C\otimes C, \cR(F)\otimes \cR(F))$ at $a \otimes b \in C\otimes C$ 
is an element of the Hopf algebra $\cR(F) \otimes \cR(F)$ representing $F^{\mr{al}}\times F^{\mr{al}}$;
it takes at every $R$-point $(g,h) \in F^{\mr{al}}(R) \times F^{\mr{al}}(R)$ the value
\begin{equation}\label{eq:Delta(p)}
\langle (g,h), \ \Delta(p)(a\otimes b)\rangle:=\langle g h, \ p(a^h\, b)\rangle,
\end{equation}
which is, more precisely, the value of $(p\otimes \mr{id}_R)(a^h (b\otimes 1))$ at $gh \in F^{\mr{al}}_R(R)$. 
For $a \in C$ the element $S(p)(a)$ in $\cR(F)$ tales at every $R$-point $g \in F^{\mr{al}}(R)$ the value
\begin{equation}\label{eq:S(p)}
\langle g,\ S(p)(a) \rangle := \langle g^{-1}, \ p(S_C(a)^{g^{-1}})\rangle,
\end{equation}
which is, more precisely, the value of $(p\otimes \mr{id}_R)(S_C(a)^{g^{-1}})$ at
$g^{-1} \in F^{\mr{al}}_R(R)$. 

The formulas \eqref{eq:Delta(p)} and \eqref{eq:S(p)} are seen to be quite similar to 
\eqref{eq:sheaf_mor_Delta} and \eqref{eq:sheaf_mor_S}, respectively.

\begin{proof}[Proof of Proposition \ref{prop:calB}]
The formulas \eqref{eq:epsilon(p)}--\eqref{eq:S(p)}
are the same as those given by 
\cite[Lemma 4.14]{MS1}
as the structure maps of a Hopf super-algebra $\mathbf{A}$; see also \cite[Lemma 26]{M3}. 
This construction of $\mathbf{A}$ can be done in a slightly more generalized situation, as is remarked
by \cite[Remark 4.21]{MS1}. Proposition \ref{prop:calB} follows
from the last cited, generalized construction, with careful treatment.
Indeed, we must be careful to apply those Lemma 4.14 and Remark 4.21 of \cite{MS1}, since
there, the Hopf super-algebra is represented alternatively as
\begin{equation}\label{eq:HomU(g)}
\mr{Hom}_{\U(\g_0)}(\U(\g), \cR(F)), 
\end{equation}
where $\g=P(C)$, if we modify the notation so as to fit with the present situation; 
on the other hand, in \cite{M3} 
it is represented essentially in the same way as our $\mathcal{B}$ is. 
(Working over a commutative ring in general, the article \cite{MS1} must work with universal envelopes of Lie super-algebras.
But, when working over a field of characteristic $\ne 2$ as we are doing, it is more natural to
work with hyper-super-algebras.) 
Recall from Lemma \ref{lem:H_SMod-F} that $C$ is a Hopf-algebra object in $\ms{SMod}\text{-}F$
including $D$ as a sub-object.  
One can restrict the structure to $\g$, and then extend to $\U(\g)$, so that 
$\g$ turns into a Lie-algebra object, and 
$\U(\g)$ turns into a Hopf-algebra object including $\U(\g_0)$ as a sub-object.
Note that the Hopf super-algebra map $\U(\g) \to C$ extending
$\g\subset C$ is a morphism in $\ms{SMod}\text{-}F$
which maps $\U(\g_0)$ into $D$. Moreover, the super-vector space given in 
\eqref{eq:HomU(g)} is identified 
with our $\cB=\mr{Hom}(\wedge(\sV),\cR(F))$, 
since $\U(\g) \simeq \U(\g_0) \otimes \wedge(\sV)$ as left $\U(\g_0)$-super-module super-coalgebras. 
Then we can apply the cited results to prove the proposition;
choose as the paring $\g_0 \times \cR(F) \to \Bbbk$ required  
in \cite[Remark 4.21]{MS1}, the restriction of the pairing $D \times \cR(F) \to \Bbbk$ in \eqref{eq:Hopf_pairing}. 
\end{proof}

\begin{rem}\label{rem:modify_rem}
We record here how we should modify
\cite[Remark 4.21]{MS1}, which was applied above, so as to completely fit in with our situation. 

Suppose that the base field $\Bbbk$ is an arbitrary field of characteristic $\ne  2$.  
Let $\ms{F}$ be an affine group scheme represented by a Hopf algebra $O_{\ms{F}}$, and let
$\mathfrak{C}$ be a hyper-super-algebra of finite type with $\mathfrak{D}=\underline{\mathfrak{C}}$. 
One sees that 
the same formulas as \eqref{eq:epsilon(p)}--\eqref{eq:S(p)}
construct a Hopf super-algebra on $\mr{Hom}_{\mathfrak{D}}(\mathfrak{C},O_{\ms{F}})$,
if the following are satisfied (which are in fact modified from 
the assumptions posed 
in \cite[Remark 4.21]{MS1} in order to construct a Hopf super-algebra on $\mr{Hom}_{\U(\g_0)}(\U(\g), O_{\ms{F}})$):
\begin{itemize}
\item[(1)] $\mathfrak{C}$ is a Hopf-algebra object in the 
category $\ms{SMod}\text{-}{\ms{F}}$ of right $\ms{F}$-super-modules; 
it then necessarily includes $\mathfrak{D}$ as a purely even sub-object;
\item[(2)] We are given a Hopf pairing $\langle \ , \ \rangle : \mathfrak{D} \times O_{\ms{F}} \to \Bbbk$ 
such that 
\begin{itemize}
\item[(2a)] $\langle a^g,\ p\rangle_R = \langle a,\ {}^gp\rangle_R$ for all $a \in \mathfrak{D}$,\ 
$p \in O_{\ms{F}}$ and $g \in {\ms{F}}(R)$, where $R$ is an arbitrary algebra, and
\item[(2b)] through the pairing, the right ${\ms{F}}$-action on $\mathfrak{D}$ or the associated left 
$O_{\ms{F}}$-coaction $\mathfrak{D} \to O_{\ms{F}}\otimes \mathfrak{D},\ b \mapsto b^{(-1)}\otimes b^{(0)}$
induces the right adjoint action
(see \eqref{eq:adr}) in the sense that
$\mr{ad}_r(a)(b) = \langle a, b^{(-1)}\rangle \, b^{(0)}$ \ for all \ $a, b \in \mathfrak{D}$.
\end{itemize}
\end{itemize}
In (2a), ${}^gp=p_{(2)}\otimes g(S(p_{(1)})p_{(3)})$ denotes the left ${\ms{F}}$-adjoint action on $O_{\ms{F}}$,
and $\langle \ , \ \rangle_R$ denotes the base extension of $\langle \ , \ \rangle$ to $R$. 

In our situation we see easily that the affine group scheme $F^{\mr{al}}$ 
and the hyper-super-algebra $C$ satisfy
the assumptions (1), (2) above, whence Proposition \ref{prop:calB} follows. 
The Hopf pairing \eqref{eq:Hopf_pairing} is chosen 
as the one required by (2). We remark that (2a) is now reduced to
\begin{equation}\label{eq:agp2}
\langle a^g,\ p\rangle = \langle a,\ {}^gp\rangle, \quad a \in D,\ 
p \in \cR(F), \ g \in F,
\end{equation}
which is satisfied by \eqref{eq:agp1}. 
\end{rem}

Let us recall from \cite[Section 4.4]{MS1} (see also \cite[Section 4.6]{M3}) 
how naturally the structure maps of $\cB$ arise. The process will be used  
when we prove Proposition \ref{prop:super_Lie_from_HCP} and Theorem \ref{thm:G-super-mod_struc}. 
\medskip

{\bf Step 1.~~Construction of $\cA$ as a smash coproduct.}\quad
Let $\sW=\sV^*$ denote the dual vector space of $\sV$; 
these are both regarded as purely odd super-vector spaces.
Let $\T(\sV)=\bigoplus_{n\ge 0}\T^n(\sV)$ denote the tensor algebra on $\sV$. 
This is an $\mathbb{N}$-graded Hopf super-algebra (see Section \ref{subsec:Hopf_group_scheme})
in which every element in $\sV$ 
is supposed to be an odd primitive; it is super-cocommutative, but is not super-commutative
unless $\sV = 0$. The tensor coalgebra $\T_{\mr{c}}(\sW)$ on $\sW$ is the graded dual \cite[Section 11.2]{Sw}
\[ \T_{\mr{c}}(\sW)=\bigoplus_{n\ge 0}\T^n(\sV)^* \, (=\bigoplus_{n\ge 0}\T^n(\sW) ) \]
of $\T(\sV)$; it is, therefore, a super-commutative Hopf super-algebra. 

The left $\cR(F)$-coaction
on $\sV$ uniquely extends onto $\T(\sV)$ so that $\T(\sV)$ turns into an $\mathbb{N}$-\emph{graded Hopf-algebra
object} in $\cR(F)\text{-}\mathsf{SComod}$ (or namely, an $\mathbb{N}$-graded Hopf super-algebra such that
each $\mathbb{N}$-homogeneous component is a left $\cR(F)$-comodule and 
the structure maps are all $\cR(F)$-colinear). 
By duality $\T_{\mr{c}}(\sW)$ turns into an 
$\mathbb{N}$-graded Hopf-algebra object in $\mathsf{SComod}\text{-}\cR(F)$. The associated smash coproduct
\cite[Page 3455, line--1]{MS1} is denoted by
\[ \cA=\cR(F) \cmdblackltimes \T_{\mr{c}}(\sW). \]
This is a super-commutative $\mathbb{N}$-graded Hopf super-algebra, 
being the 
tensor product $\cR(F) \otimes \T_{\mr{c}}(\sW)$ as a super-algebra. The $n$-th component
is $\cA(n)=\cR(F)\otimes \T^n(\sW)$. 
\medskip

{\bf Step 2.~~Completion of $\cA$.}\quad  
The projection $\cA \to \cA(0)=\cR(F)$ is a Hopf super-algebra
map, and so the kernel $\cA^+:= \bigoplus_{n>0}\cA(n)$ is a homogeneous Hopf super-ideal. 
Therefore, the $\cA^+$-adic completion $\hat{\cA}$ of $\cA$ turns into a complete Hopf super-algebra. 
The last word is used in the wider sense as noted in Remark \ref{rem:complete_Hopf} (3). 
But this $\hat{\cA}$ is just the direct product $\prod_{n\ge 0}\cR(F) \otimes \T^n(\sW)$, and 
$\hat{\cA}~\hotimes~\hat{\cA}= \prod_{m,n \ge0}\cR(F) \otimes \T^m(\sW)\otimes \cR(F) \otimes \T^n(\sW)$;
the structure maps
\begin{equation}\label{eq:completed_structure}
\hat{\Delta} : \hat{\cA} \to \hat{\cA}~\hat{\otimes}~\hat{\cA},\quad 
\hat{\epsilon}: \hat{\cA} \to \Bbbk, \quad \hat{S} : \hat{\cA}\to \hat{\cA}
\end{equation}
only extend those of $\cA$ to the relevant direct products in the obvious manner. 

\begin{rem}\label{rem:hatA}
The right $F$-action on $\sV$ uniquely extends onto $\T(\sV)$ so that $\T(\sV)$ turns into an $\mathbb{N}$-graded
Hopf-algebra object in $\ms{SMod}\text{-}F$. The associated smash product $F \ltimes \T(\sV)$  
is a (not necessarily super-commutative) Hopf super-algebra, in which
every element of $F$ is supposed to be group-like. 
The dual $(F \ltimes \T(\sV))^*$ is a
complete Hopf super-algebra in the wider sense; see Remark \ref{rem:complete_Hopf} (3), again. See also 
Proposition \ref{prop:topological_comodule} for the complete topological dual (super-)coalgebra structure. 
The natural pairings $(F \times \T^n(\sV))\times (\cR(F) \otimes \T^n(\sW)) \to \Bbbk$, $n \ge 0$, 
amount to $(F \ltimes \T(\sV)) \times \hat{\cA} \to \Bbbk$, which induces a continuous injection
$\hat{\cA} \to (F \ltimes \T(\sV))^*$ of Hopf super-algebras, as is seen 
from the construction above.
\end{rem}

{\bf Step 3.~~Transfer of the completed structure.}\quad
Note from Lemma \ref{lem:Hopf_pairing} that $\T(\sV)$ is an $\mathbb{N}$-graded
Hopf-algebra object in $\ms{SMod}\text{-}D$, as well. 
The associated smash product
\[
\cC:=D \ltimes \T(\sV). 
\]
is a (not necessarily super-commutative) $\mathbb{N}$-graded Hopf super-algebra, 
being the tensor product $D \otimes \T(\sV)$ as a super-coalgebra. 
The $n$-th component is $\cC(n)=D\otimes \T^n(\sV)$.
Note that $\cC$ is a Hopf-algebra object in $\ms{SMod}\text{-}F$, just as
$C$ is by Lemma \ref{lem:H_SMod-F}. 
We have the natural identification
\begin{equation}\label{eq:A=Hom}
\hat{\cA}=\mr{Hom}_D(\cC, \cR(F))\, (=\mr{Hom}(\T(\sV),\cR(F))
\end{equation}
and, more generally, 
\begin{equation*}
\hat{\cA}^{\hat{\otimes}n}=\mr{Hom}_{D^{\otimes n}}(\cC^{\otimes n}, \cR(F)^{\otimes n}),\quad n \ge 1.
\end{equation*}
Transfer the structure maps \eqref{eq:completed_structure} of $\hat{\cA}$ to $\mr{Hom}_D(\cC, \cR(F))$.
As was proved by \cite[Proposition 4.11]{MS1},  
the resulting maps are given by the formulas \eqref{eq:epsilon(p)}--\eqref{eq:S(p)}
modified so that  
$a, b \in \cC$, \ $p\in \mr{Hom}_D(\cC, \cR(F))$,\
and $S_C$ is replaced with $S_{\cC}$. 
\medskip

{\bf Step 4.~~Restriction of the transferred structure.}\quad 
Note that $C$ is a quotient of $\cC$, as a Hopf-algebra object in $\ms{SMod}\text{-}F$ including $D$ as a 
sub-object. We then see that
$\hat{\cA}^{\hat{\otimes}n}$ naturally includes 
$\cB^{\otimes n}=\mr{Hom}_{D^{\otimes n}}(C^{\otimes n},$ $\cR(F)^{\otimes n})$. 
Moreover, the transferred structure maps of $\mr{Hom}_D(\cC, \cR(F))$ restrict to those of $\cB$ given by
\eqref{eq:epsilon(p)}--\eqref{eq:S(p)}, which, therefore, satisfy the Hopf-algebra axioms. Thus
$\cB$ turns into a Hopf super-algebra; see \cite[Lemma 4.14]{MS1}.

\begin{proof}[Proof of Proposition \ref{prop:super_Lie_from_HCP}]
Given open sets $U \subset |F| \times |F|$, $V \subset |F|$ with $\mu(U)\subset V$, 
we have the push-outs 
\[
\begin{xy}
(0,0)   *++{\cR(F)}  ="1",
(30,0)  *++{\cA}    ="2",
(0,-18) *++{\cF(V)} ="3",
(30,-18)*++{\cF(V)\otimes_{\cR(F)} \cA,} ="4",
(55,0)   *++{\cR(F)^{\otimes 2}}  ="5",
(90,0)  *++{\cA^{\otimes 2}}    ="6",
(55,-18) *++{\cF^{(2)}(U)}          ="7",
(90,-18)*++{\cF^{(2)}(U)\otimes_{\cR(F)^{\otimes 2}} \cA^{\otimes 2}}    ="8",
{"1" \SelectTips{cm}{} \ar @{->} "2"},
{"1" \SelectTips{cm}{} \ar @{->} "3"},
{"2" \SelectTips{cm}{} \ar @{->} "4"},
{"3" \SelectTips{cm}{} \ar @{->} "4"},
{"5" \SelectTips{cm}{} \ar @{->} "6"},
{"5" \SelectTips{cm}{} \ar @{->} "7"},
{"6" \SelectTips{cm}{} \ar @{->} "8"},
{"7" \SelectTips{cm}{} \ar @{->} "8"}
\end{xy}
\]
of super-algebras, where the vertical arrows on the left-hand side are the inclusions 
$\cR(F)^{\otimes n}\hookrightarrow\cF(|F|)^{\otimes n}\subset \cF^{(n)}(|F|^n)$, $n=1,2$, 
followed by the restriction maps. 
The two tensor products are identified with $\cF(V) \otimes \T_{\mr{c}}(\sW)$
and $\cF^{(2)}(U) \otimes \T_{\mr{c}}(\sW)\otimes \T_{\mr{c}}(\sW)$, respectively. 
Since the coproducts $\Delta_{\cR(F)}$ of $\cR(F)$ and $\Delta_{\cA}$ of $\cA$ and the
sheaf-morphism $\mu^*:\cF(V) \to \cF^{(2)}(U)$ associated with the product $\mu$ on $F$ 
are compatible with the relevant arrows, we have uniquely a map
\begin{equation*}
\Delta' : \cF(V) \otimes \T_{\mr{c}}(\sW)\to
\cF^{(2)}(U) \otimes \T_{\mr{c}}(\sW)\otimes \T_{\mr{c}}(\sW)
\end{equation*}
which, together with the preceding three, constitutes a map between the two push-outs. 
By completion (to direct products) followed by an identification just as for $\hat{\cA}$, it derives
\[
\hat{\Delta}' : \mr{Hom}_D(\cC, \cF(V)) \to \mr{Hom}_{D^{\otimes 2}}(\cC^{\otimes2},\cF^{(2)}(U)).
\]
Similarly, given open sets $U, V \subset |F|$ with $\iota(U) \subset V$, we have the push-out 
\[
S' : \cF(V)\otimes \T_{\mr{c}}(\sW)  \to \cF(U)\otimes \T_{\mr{c}}(\sW)
\]
of $\iota^*$ and the antipode $S_{\cA}$ of $\cA$ over the antipode $S_{\cR(F)}$ of $\cR(F)$,
which derives
\[
\hat{S}' : \mr{Hom}_D(\cC, \cF(V)) \to \mr{Hom}_D(\cC, \cF(U)).
\]

We have the sheaf of super-algebras on $|F|$
\begin{equation}\label{eq:Hom(H,F)}
\mr{Hom}_D(\cC, \cF) 
\end{equation}
which associates $\mr{Hom}_D(\cC, \cF(U))$ to every open set $U \subset |F|$.
For $n > 0$ the sheaves $\mr{Hom}_{D^{\otimes n}}(\cC^{\otimes n}, \cF^{(n)})$ on $|F|^n$
are analogously defined.
One sees that the $\hat{\Delta}'$ and $\hat{S}'$ above
give rise to sheaf-morphisms $\hat{\Delta}': \mr{Hom}_D(\cC, \cF)
\to \mu_*\mr{Hom}_{D^{\otimes 2}}(\cC^{\otimes 2}, \cF^{(2)})$ and 
$\hat{S}' : \mr{Hom}_D(\cC, \cF)\to \iota_*\mr{Hom}_D(\cC, \cF)$. 
These are presented by formulas analogous to \eqref{eq:sheaf_mor_Delta} and \eqref{eq:sheaf_mor_S},
just as the structure maps of $\hat{\cA}$ are
given by formulas analogous to \eqref{eq:epsilon(p)}--\eqref{eq:S(p)}.

Recall from Step 4 above that 
$C$ is a quotient of $\cC$ as a Hopf-algebra object 
in $\ms{SMod}\text{-}F$ including $D$.
One then sees that $\cP$ is a sub-sheaf of the sheaf $\mr{Hom}_D(\cC, \cF)$, 
and the sheaf-morphisms $\hat{\Delta}'$ and $\hat{S}'$ for the last sheaf
restrict to the sheaf-morphisms $\Delta$ and $S$ for $\cP$ which are
presented by \eqref{eq:sheaf_mor_Delta} and \eqref{eq:sheaf_mor_S}.

In view of Proposition \ref{prop:identify}
it remains to prove, passing to the completed stalks, that
\[ \hat{\Delta}_{g,h}:\hat{\cP}_{gh} \to \hat{\cP}_g~\hat{\otimes}~\hat{\cP}_h,\quad
\hat{S}_{g}:\hat{\cP}_{g^{-1}} \to \hat{\cP}_g,\quad g,h \in |F| \]
satisfy the axioms of coalgebra and of antipode. For example, the coassociativity means
$ \hat{\Delta}_{g,h}~\hat{\otimes}~\mr{id}_{\hat{\cP}_{\ell}}
=\mr{id}_{\hat{\cP}_g}~\hat{\otimes}~\hat{\Delta}_{h,\ell}$, and the left counit-property means
that the composite
\[
\hat{\cP}_{g} \xrightarrow{\hat{\Delta}_{e,g}}~\hat{\cP}_e~\hat{\otimes}~\hat{\cP}_g
\xrightarrow{\hat{\epsilon}_e\hat{\otimes}{\mr{id}}} \Bbbk~\hat{\otimes}~\hat{\cP}_g
=\hat{\cP}_g
\]
coincides with the identity, where $\hat{\epsilon}_e:\hat{\cP}_e \to \Bbbk$ denotes the projection to
the residue field; see \eqref{eq:proj_to_residue}. 
But one sees from the argument of the last paragraph that it suffices to prove the
analogous assertions for the sheaf $\mr{Hom}_D(\cC, \cF)$, or even for the sheaf 
$\cF \otimes \T_{\mr{c}}(\sW)$ which associates
$\cF(U) \otimes \T_{\mr{c}}(\sW)$ to every open set $U\subset |F|$. 
To be explicit the last mentioned,
analogous assertion is the following: the unique maps 
$\hat{\cF}_{gh}\otimes \T_{\mr{c}}(\sW) \to 
\hat{\cF}_{g}~\hat{\otimes}~\hat{\cF}_h \otimes \T_{\mr{c}}(\sW) \otimes \T_{\mr{c}}(\sW)$
and
$\hF_{g^{-1}}\otimes \T_{\mr{c}}(\sW) \to \hF_g\otimes \T_{\mr{c}}(\sW)$
that together with 
\[
\widehat{\mu^*}_{g,h} : \hat{\cF}_{gh}\to \hat{\cF}_{g}~\hat{\otimes}~\hat{\cF}_h,\quad
\Delta_{\cA} : \cA \to \cA \otimes \cA,\quad
\Delta_{\cR(F)}:\cR(F) \to \cR(F) \otimes \cR(F),
\]
and respectively with
\[
\widehat{\iota^*}_g : \hat{\cF}_{g^{-1}}\to \hat{\cF}_{g},\quad
S_{\cA} : \cA \to \cA,\quad
S_{\cR(F)}:\cR(F) \to \cR(F)
\]
constitute maps of push-outs
satisfy the axioms of coalgebra and of antipode. 
Indeed, the
assertion is true since the displayed six maps satisfy the axioms. 
Note that
as the counit on $\hat{\cF}_e \otimes \T_{\mr{c}}(\sW)$
one should choose the tensor product
$\hat{\epsilon}_e\otimes \epsilon_0 : \hat{\cF}_e \otimes \T_{\mr{c}}(\sW) \to \Bbbk\otimes \Bbbk
=\Bbbk$, where $\epsilon_0 :\T_{\mr{c}}(\sW) \to \T^0(\sW)=\Bbbk$ denotes the projection; it constitutes a map of push-outs,
together with $\hat{\epsilon}_e:\hat{\cF}_e\to \Bbbk$, $\epsilon_{\cA}:\cA \to \Bbbk$ and 
$\epsilon_{\cR(F)}:\cR(F) \to \Bbbk$. 
\end{proof}

\begin{lemma}\label{lem:Psi}
Let $\Psi(F,\sV)$ denote the super Lie group $(|F|,\cP)$ which is thus constructed from 
a Harish-Chandra pair $(F, \sV)$. The assignment gives rise to a functor
\[ \Psi:\ms{HCP}\to \ms{SLG}, \ \Psi(F,\sV)= (|F|,\cP). \]
\end{lemma}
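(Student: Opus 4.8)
The plan is to check that the construction $(F,\sV)\mapsto (|F|,\cP)$ of Proposition \ref{prop:super_Lie_from_HCP} is natural in $(F,\sV)$. So let $(\phi,\psi):(F_1,\sV_1)\to(F_2,\sV_2)$ be a morphism in $\ms{HCP}$; I must produce a morphism $\Psi(\phi,\psi):(|F_1|,\cP_1)\to(|F_2|,\cP_2)$ in $\ms{SLG}$ and then verify the two functor axioms (preservation of identities and of composites), which will be immediate from the explicit formula for $\Psi(\phi,\psi)$. The underlying continuous map is of course $|\phi|:|F_1|\to|F_2|$, which is already a group homomorphism; the content is the sheaf-morphism.

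First I would assemble the algebraic data attached to $(\phi,\psi)$. The morphism $\phi$ of Lie groups induces a Hopf-algebra map $\hy(\phi):D_1=\hy(F_1)\to D_2=\hy(F_2)$ and, by restriction/linearization of analytic matrix representations, a Hopf-algebra map $\cR(\phi):\cR(F_2)\to\cR(F_1)$; these are compatible with the Hopf pairings \eqref{eq:Hopf_pairing} in the sense $\langle \hy(\phi)(a),p\rangle_{F_2}=\langle a,\cR(\phi)(p)\rangle_{F_1}$ for $a\in D_1$, $p\in\cR(F_2)$, and with the sheaf-morphism $\phi^*:\cF_2\to\phi_*\cF_1$. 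Conditions (P3), (P4) of Definition \ref{def:HCP} say precisely that $\psi:\sV_1\to\sV_2$ is compatible with the $F$-actions (equivalently, with the $\cR(F)$-coactions via $\cR(\phi)$) and with the brackets (via $\hy(\phi)$). By the universal property of $C$ described via \eqref{eq:relations_H} — $C$ is the $D$-ring on $\sV$ modulo those relations — the pair $(\hy(\phi),\psi)$ extends uniquely to a map of hyper-super-algebras $C(\phi,\psi):C_1\to C_2$ which is $\hy(\phi)$ on $D_1$ and $\psi$ on $\sV_1$: one checks that $\psi(v)\hy(\phi)(a)-\hy(\phi)(a_{(1)})(\psi(v)\triangleleft a_{(2)})$ and $\psi(v)\psi(w)+\psi(w)\psi(v)-[\psi(v),\psi(w)]$ both vanish in $C_2$, which is exactly (P3), (P4) together with the pairing-compatibility above. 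Moreover $C(\phi,\psi)$ is a morphism of Hopf-algebra objects lying over the base-change $\cR(\phi)$, i.e.\ it is $\cR(\phi)$-semilinear for the $F$-super-module structures of Lemma \ref{lem:H_SMod-F}.

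Next I would define $\Psi(\phi,\psi)^*$. Using \eqref{eq:cP}, write $\cP_i=\Hom_{D_i}(C_i,\cF_i)$. For an open $U\subset|F_2|$ and $q\in\cP_2(U)=\Hom_{D_2}(C_2,\cF_2(U))$, the pull-back should send $q$ to the element of $\cP_1(|\phi|^{-1}(U))=\Hom_{D_1}(C_1,\cF_1(|\phi|^{-1}(U)))$ given by
\[
\Psi(\phi,\psi)^*(q):\ a\ \longmapsto\ \phi^*\!\bigl(q(C(\phi,\psi)(a))\bigr),\qquad a\in C_1,
\]
which is $D_1$-linear because $C(\phi,\psi)$ is $\hy(\phi)$-semilinear and $\phi^*$ intertwines the $D_2$- and $D_1$-actions on $\cF_2$, $\cF_1$ (this last point needs the compatibility of $\phi^*$ with the hyper-algebra actions, which follows from Corollary \ref{cor:hy(G_red)-module_sheaf} applied to $\phi$ as a morphism of Lie groups, i.e.\ the naturality of the $\hy$-action on structure sheaves). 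This is a sheaf-morphism of super-algebras, and under the identification $\cP_i=\cF_i\otimes\wedge(\sV_i^*)$ it is simply $\phi^*\otimes\wedge(\psi^*)$, so it is visibly a morphism of (s-split) super-manifolds. It remains to check compatibility with the coproducts, i.e.\ with the $\Delta$ of \eqref{eq:sheaf_mor_Delta}: using the formula $\Delta_i(p)(a\otimes b)(g,h)=\langle gh,\ p(a^{h}b)\rangle$ and the facts that $C(\phi,\psi)$ is a coalgebra map commuting with the $F$-actions over $\cR(\phi)$, that $\hy(\phi)$ sends $a^h$ in $C_1$ to $(C(\phi,\psi)(a))^{|\phi|(h)}$, and that $\phi^*$ is an algebra map compatible with the evaluation pairings (as $p(|\phi|(g))=(\phi^*p)(g)$), one gets $\Delta_2\circ\Psi(\phi,\psi)^* = (\Psi(\phi,\psi)^*\otimes\Psi(\phi,\psi)^*)\circ\Delta_1$ on stalks; by Remark \ref{rem:super_Lie_morphism}(2) compatibility with the antipodes is then automatic. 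Hence $\Psi(\phi,\psi)\in\ms{SLG}$. Finally, from the displayed formula it is transparent that $\Psi(\mathrm{id})=\mathrm{id}$ and $\Psi(\phi',\psi')\circ\Psi(\phi,\psi)=\Psi((\phi',\psi')\circ(\phi,\psi))$, since $C(-,-)$ and $(-)^*$ on sheaves are themselves functorial; thus $\Psi:\ms{HCP}\to\ms{SLG}$ is a functor.

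The routine but slightly delicate step — the one I expect to be the main obstacle — is the verification that $\Psi(\phi,\psi)^*$ respects the coproduct formula \eqref{eq:sheaf_mor_Delta} at the level of stalks, because it interlocks three separate naturality statements (of $C$ in the Harish-Chandra pair, of the $F$-action used in the twist $a^h$, and of the evaluation/restriction maps $\cF_2(V)\to\cF_1$). Rather than grinding through it by hand, I would instead observe — exactly as in the proof of Proposition \ref{prop:super_Lie_from_HCP} — that everything descends from the pre-completion algebraic picture: $\phi,\psi$ induce a morphism of smash coproducts $\cA_1=\cR(F_1)\cmdblackltimes\T_{\mr{c}}(\sV_1^*)\to$ ... in the appropriate variance, compatible with all the push-out squares used there, and then the sheaf-theoretic statement is obtained by the same completion-and-restriction argument, so that no new computation is actually required beyond invoking naturality of the constructions in Steps 1--4 of Section \ref{subsec:quasi-inverse}.
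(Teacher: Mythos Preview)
Your proposal is correct and follows essentially the same approach as the paper. The paper's own proof is a two-line sketch: it simply observes that (i) the construction of the inclusion $C\supset D$ of Hopf-algebra objects in $\ms{SMod}\text{-}F$ and (ii) the sheaf $\cF$ of left $D$-module algebras are both functorial in $(F,\sV)$, and asserts that this implies $\Psi$ is functorial. Your main body unpacks exactly these two points---your $C(\phi,\psi)$ is the functoriality in (i), and the compatibility $\phi^*(\hy(\phi)(d)\triangleright f)=d\triangleright\phi^*(f)$ is the functoriality in (ii)---and your final paragraph, appealing to naturality of Steps~1--4, is essentially the paper's whole argument in one sentence.
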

\begin{proof}
With the notation above one sees easily that the constructions of
\begin{itemize}
\item[(i)] the inclusion $C \supset D$ of Hopf-algebra objects in $\ms{SMod}\text{-}F$, and
\item[(ii)] the sheaf $\cF$ of left $D$-module algebras
\end{itemize}
from $(F,\sV)$ are both functorial. This implies that $\Psi$ is functorial. 
\end{proof}

\begin{proof}[Proof of Theorem \ref{thm:equivalence} (First half)]
We shall prove that $\Psi$ is a quasi-inverse of $\Phi$.
But our aim here is only to prove $\Phi \circ \Psi \simeq \mr{id}$.
The following subsection is devoted to proving $\Psi \circ \Phi \simeq \mr{id}$.

Given $(F, \sV) \in \ms{HCP}$, let $C$ and $G:=(|F|,\cP)$ be the hyper-super-algebra and
the super Lie group, respectively, constructed as above. 
For every open set $U \subset |F|$ we have $\cP_{\mr{red}}(U)
=\mr{Hom}_D(D,\cF(U))= \cF(U)$ so that $G_{\mr{red}}=F\, (=(|F|, \cF))$. 
It remains to show that the right $F$-module $\sV$ and the bracket $[\ , \ ]$ of the Harish-Chandra pair
recover from $G$. 
The former $\sV$ appears as the odd component $P(C)_1$ of $P(C)$; 
recall from Lemma \ref{lem:H_SMod-F} that $F$ acts on $C$ from the right, whence 
$P(C)$ and $P(C)_1$ are right $F$-modules by restriction.  
In view of \eqref{eq:relations_H} it suffices to prove that
there is a natural $F$-equivariant isomorphism 
$C \overset{\simeq}{\longrightarrow} \hy(G)$ which is identical on $D=\hy(F)$, 
or equivalently, there is such an isomorphism  
$\hP_e \overset{\simeq}{\longrightarrow} C^*$ which is compatible with the natural maps
to $\hF_e$. Recall from \eqref{eq:P_g} and Proposition \ref{prop:Gred-super-module}
that $\hP_e=\mr{Hom}_D(C,\hF_e)$, and it is equipped with
such a left $F$-action that induces the right $F$-adjoint action on $\hy(G)$; see also
Remark \ref{rem:adjoint_action}. 

Let $\hat{\Delta}^{(2)}_{g^{-1},h,g}:
\hP_{g^{-1}hg}\to \hP_{g^{-1}} \, \hotimes \, \hP_h \, \hotimes \, \hP_g$ represent 
$(\op{id}\, \hotimes \, \hat{\Delta}_{h,g})\circ \hat{\Delta}_{g^{-1},hg} \,
(=(\hat{\Delta}_{g^{-1},h}\, \hotimes\op{id})
\circ \hat{\Delta}_{g^{-1}h,g})$. 
Then we see from \eqref{eq:sheaf_mor_Delta} that 
\[
(\hat{\epsilon}_{g^{-1}}~\hat{\otimes}~\hat{\epsilon}_h~\hat{\otimes}~\hat{\epsilon}_g)
(\hat{\Delta}^{(2)}_{g^{-1},h,g}(p)(a\otimes b\otimes c))
=\langle g^{-1}hg,\ p(a^{hg}\, b^g\, c)\rangle,
\]
where $g, h \in F$, $p\in \hP_{g^{-1}hg}$ and $a,b,c \in C$.
(We remark that to be precise, the $p$ on the right-hand side should read an element
of some/any $\cP(U)$ with $g^{-1}hg \in U$, whose germ is the $p \in \hP_{g^{-1}hg}$.) 
It follows that 
the $F$-action on $\hP_e$ is given by 
\[
({}^g p)(b) ={}^g(p(b^g)),\quad g \in F,\ p \in \hP_e,\ b \in C. 
\]
Define
$\xi : \hP_e \to C^*$ by
\[ \xi(p)(a) = \hat{\epsilon}_e(p(a)),\quad p \in \hP_e,\ a \in C.  \]
This $\xi$ is seen to be $F$-equivariant. Since one sees that $\hF_e=D^*$ as a left $D$-module algebra,
it follows that $\xi$ is a natural super-algebra isomorphism
with the desired compatibility; indeed, it is identified with the
canonical isomorphism $\op{Hom}_D(C, D^*) \simeq C^*$. 
Moreover, it preserves the coproduct since  
$(\xi~\hat{\otimes}~\xi)\circ \hat{\Delta}_{e,e}(p)(a\otimes b)$ coincides with the value 
$\Delta(p)(a\otimes b)(e,e)=\langle e, p(ab)\rangle$
given by \eqref{eq:sheaf_mor_Delta}, where $p \in \hP_e$ and $a, b \in C$. (To be
precise the same remark as above should apply to the $p$ in the last equation.)
\end{proof}

\subsection{Completion of the proof}\label{subsec:complete_proof}
Let $G=(|G|,\cO) \in \ms{SLG}$, and set 
\[ (F,\sV)=\Phi(G) \in \ms{HCP},  \]
or explicitly, $(F,\sV)=(G_{\mathrm{red}}, \Lie(G)_1)$. Let $\cF=\cO_{\mathrm{red}}$ 
be the structure sheaf of the Lie group $F=G_{\mr{red}}$, and set $D=\hy(F)\, (=\underline{\hy(G)})$. 

Let $C$ denote the hyper-super-algebra constructed from $(F,\sV)$ as in the preceding subsection;
see  \eqref{eq:relations_H}, \eqref{eq:H}. 

\begin{lemma}\label{lem:coincidence_of_H}
Recall from Lemma \ref{lem:H_SMod-F} and
Proposition \ref{prop:Gred-super-module} (2) that the hyper-super-algebras $C$ and $\hy(G)$ are both 
Hopf-algebra objects in $\ms{SMod}\text{-}F$.
These Hopf-algebra objects are naturally identified. 
\end{lemma}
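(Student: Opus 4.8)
The plan is to exhibit a natural Hopf-algebra isomorphism $C \simeq \hy(G)$ in $\ms{SMod}\text{-}F$ which is the identity on $D$. Both objects are built on the same base: by \eqref{eq:H} we have $C = D \otimes \wedge(\sV)$ as left $D$-super-module super-coalgebras, and by Proposition \ref{prop:hy_Lie_of_Gred} together with \eqref{eq:isom_module_coalgebra} applied to $\hy(G)$ we have $\hy(G) = \underline{\hy(G)} \otimes \wedge(\Lie(G)_1) = D \otimes \wedge(\sV)$, since $\sV = \Lie(G)_1$ and $D = \hy(G_{\red}) = \underline{\hy(G)}$ by definition. So first I would observe that there is a canonical embedding $\sV = P(\hy(G))_1 \hookrightarrow \hy(G)$, and by the universal property of $C$ (that is, by \cite[Theorem 10]{M3}, recalled just before \eqref{eq:relations_H}) this embedding together with $\op{id}_D$ extends to a unique hyper-super-algebra map $C \to \hy(G)$, provided we check that the defining relations \eqref{eq:relations_H} of $C$ hold in $\hy(G)$.

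The key step is therefore verifying the two relations in \eqref{eq:relations_H} inside $\hy(G)$: namely $va = a_{(1)}(v \triangleleft a_{(2)})$ and $vw + wv = [v,w]$ for $v,w \in \sV$, $a \in D$. The second is just the defining relation for the super-commutator in the Lie super-algebra $\Lie(G) = P(\hy(G))$, hence automatic. The first is precisely the statement that the right $\hy(G_{\red})$-action induced on $\hy(G)$ by conjugation is the right adjoint action $\mr{ad}_r$ — this is exactly Proposition \ref{prop:Gred-super-module} (3), formula \eqref{eq:adr}: rewriting $\mr{ad}_r(a)(b) = S(a_{(1)})ba_{(2)}$ for $a \in D$, $b = v \in \sV$ and using that in a hyper-super-algebra $S(a_{(1)})a_{(2)}\otimes\cdots$ unwinds, one gets $a_{(1)}(v \triangleleft a_{(2)}) = a_{(1)}S(a_{(2)})v a_{(3)} = va$ after a standard antipode manipulation. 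So the map $C \to \hy(G)$ exists; it is a map of connected super-coalgebras identical on $P(C) = \g$, hence injective by \cite[Lemma 11.0.1]{Sw}; and since both sides are $\simeq D \otimes \wedge(\sV)$ with the same underlying left $D$-super-module structure, a dimension/basis count via the PBW-type bases of \eqref{eq:isom_module_coalgebra} shows it is surjective, hence an isomorphism of hyper-super-algebras.

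It then remains to check $F$-equivariance, i.e. that this isomorphism intertwines the $F$-action on $C$ from Lemma \ref{lem:H_SMod-F} with the right adjoint $F$-action on $\hy(G)$ from Proposition \ref{prop:Gred-super-module} (2). Since both actions are by Hopf-algebra automorphisms, are the original $F$-action on $\sV$ when restricted there (for $\hy(G)$ this is the content of Remark \ref{rem:adjoint_action}; for $C$ it is the construction in Lemma \ref{lem:H_SMod-F}), and agree on $D$ (the purely even sub-object, carrying in both cases the conjugation action of $F$ on $\hy(F)$), and since $C$ is generated as an algebra by $D$ and $\sV$, the two $F$-actions agree on all of $C$; the isomorphism is automatically $F$-equivariant. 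The main obstacle is the antipode bookkeeping in matching \eqref{eq:relations_H} with \eqref{eq:adr} — getting the three-fold coproduct and the super-signs right — but this is purely formal Hopf-algebra computation once one notes that $D$ is purely even so no signs intervene there, and the only odd element in sight is the single $v$.
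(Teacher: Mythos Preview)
Your proof is correct and follows essentially the same approach as the paper's: both construct the map $C \to \hy(G)$ from the inclusions $D \hookrightarrow \hy(G) \hookleftarrow \sV$ by verifying the relations \eqref{eq:relations_H} hold in $\hy(G)$, and then conclude it is an $F$-equivariant isomorphism. The paper compresses this into a single sentence, citing \cite[Theorem 3.6]{M2} (the result behind \eqref{eq:isom_module_coalgebra}) for the isomorphism, whereas you spell out the antipode computation for the first relation via Proposition \ref{prop:Gred-super-module} (3), the injectivity via \cite[Lemma 11.0.1]{Sw}, the surjectivity via the PBW-type bases, and the $F$-equivariance via agreement on the generators $D$ and $\sV$.
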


\begin{proof}
By \eqref{eq:relations_H} the inclusions 
$D=\underline{\hy(G)} \hookrightarrow \hy(G)\hookleftarrow \Lie(G)_1=\sV$ 
amount to an $F$-equivariant map $C \to \hy(G)$ of hyper-super-algebras, which is an isomorphism by 
\cite[Theorem 3.6]{M2}; see also \eqref{eq:isom_module_coalgebra}. 
\end{proof}

By Lemma \ref{lem:coincidence_of_H} we have $C=(\hO_e)^{\star}$. The resulting canonical pairing
will be used in the proof below, denoted by $\langle \ , \ \rangle : C \times \hO_e \to \Bbbk$; it satisfies
\begin{equation}\label{eq:F-balanced}
\langle a^g,\ p\rangle = \langle a,\ {}^gp\rangle,\quad a\in C,\ p \in \hO_e,\ g \in F.
\end{equation}

Concerning the super Lie group $\Psi(F,\sV)$ the associated Lie group is $F=G_{\red}$. 
Let $\cP$ be the structure sheaf given in \eqref{eq:cP}, and so let
\[ (|F|,\cP)= \Psi(F,\sV) \in \ms{SLG}. \]

In what follows we let $\red : \cO \to \cO_{\red}=\cF$
denote the natural sheaf-epimorphism. Consistent with this is the symbol 
$\widehat{\red}_g:\hO_g \to \hF_g$
which was used  (see \eqref{eq:red_g}) and will be soon used to denote the natural projection.

\begin{prop}\label{prop:eta}
We have the following.
\begin{itemize}
\item[(1)]
For every open set $U \subset |G|$, define
\[ 
\eta_U(p)(a)=\red_U(a \triangleright p),\quad p\in \cO(U),\ a \in C\, (=\hy(G)), 
\]
where $a\triangleright p$ indicates the $\hy(G)$-action on $\cO$ given by Proposition \ref{prop:hy(G_red)-module}.
Then this indeed defines a morphism $\eta :\cO\to \cP$ of sheaves of superalgebras. 
\item[(2)]
$(\mr{id},\eta) : (|F|,\cP) \to (|G|,\cO)=G$ 
is an isomorphism of super Lie groups.
\end{itemize}
\end{prop}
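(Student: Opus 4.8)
The plan is to verify Part~(1) first and then deduce Part~(2) from Lemma~\ref{lem:isomorphism}, for which we only need to check that $\eta$ reduces to the identity on $G_{\red}=F$ and that $d\eta_e$ is an isomorphism on odd tangent spaces.

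For Part~(1), fix an open set $U\subset|G|$. The first point is that $\eta_U$ is well-defined, i.e.\ that $p\mapsto(a\mapsto\red_U(a\triangleright p))$ actually lands in $\cP(U)=\Hom_D(C,\cF(U))$. Since $\cO(U)$ is a left $\hy(G)$-super-module super-algebra by Proposition~\ref{prop:hy(G_red)-module}, and $\red_U:\cO(U)\to\cF(U)$ is $D$-linear (it is so at the level of stalks by the very definition of the $D$-action in Lemma~\ref{lem:hy(G_red)-action} and the fact that $\widehat{\red}_g$ commutes with $\hat\Delta_{g,e}$), the map $a\mapsto\red_U(a\triangleright p)$ is a left $D$-module map $C\to\cF(U)$. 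That it is a super-algebra map is an easy computation with the convolution product on $\Hom_D(C,\cF(U))$: one uses that $\cO(U)$ is an $\hy(G)$-super-module super-algebra, that $C$ is a super-coalgebra, and that $\red_U$ is an algebra map; concretely $\eta_U(pq)(a)=\red_U(a\triangleright(pq))=\red_U((a_{(1)}\triangleright p)(a_{(2)}\triangleright q))=\eta_U(p)\ast\eta_U(q)(a)$, with the appropriate Koszul sign bookkeeping. Compatibility with restriction maps $\cO(U)\to\cO(V)$ is automatic by Corollary~\ref{cor:hy(G_red)-module_sheaf}, so the $\eta_U$ glue to a sheaf morphism $\eta:\cO\to\cP$.

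For Part~(2), by Lemma~\ref{lem:isomorphism} applied to the morphism $(\mr{id},\eta):(|F|,\cP)\to G$ of super Lie groups (here we must first check $\eta$ is a morphism in $\ms{SLG}$, i.e.\ that $\eta$ is compatible with the coproducts $\Delta$ defined in \eqref{eq:sheaf_mor_Delta} and the one on $\cO$ from \eqref{eq:Delta_S} --- this is where the pairing $\langle\ ,\ \rangle:C\times\hO_e\to\Bbbk$ of \eqref{eq:F-balanced} enters, via the formula $\hat\Delta_{g,h}$ and the identity \eqref{eq:agp1}/\eqref{eq:F-balanced}), it suffices to show the reduced morphism is an isomorphism of Lie groups and that $\Lie$ of it is an isomorphism on odd parts. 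The reduced morphism: for $a\in D\subset C$, $\eta_U(p)(a)=\red_U(a\triangleright p)$ agrees with the $D$-module map $D\to\cF(U)$ that $\red_U(p)$ gives under $\cP_{\red}(U)=\Hom_D(D,\cF(U))=\cF(U)$, so $\eta_{\red}=\mr{id}_F$. The odd tangent map: passing to completed stalks at $e$, $\hat\eta_e:\hO_e\to\hP_e=\Hom_D(C,\hF_e)$ is, up to the identification $\hF_e=D^*$, adjoint to the canonical pairing $C\times\hO_e\to\Bbbk$ coming from $C=(\hO_e)^{\star}$ (Lemma~\ref{lem:coincidence_of_H}); tracing through the first half of the proof of Theorem~\ref{thm:equivalence} --- where the isomorphism $\xi:\hP_e\xrightarrow{\simeq}C^*$ was built by $\xi(p)(a)=\hat\epsilon_e(p(a))$ --- one sees $\xi\circ\hat\eta_e$ is the canonical isomorphism $\hO_e\simeq(\hO_e)^{\star\,*}=C^*$, hence $\hat\eta_e$ is an isomorphism. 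In particular it induces an isomorphism on $\m_e/\m_e^2$, so $d\eta_e$ is an isomorphism (of the whole tangent super-space, even), and Lemma~\ref{lem:isomorphism} finishes the proof.

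\textbf{Expected main obstacle.} The routine-but-delicate part is establishing that $\eta$ is a morphism of super Lie groups, i.e.\ the compatibility of $\eta$ with the group-theoretic structure maps: one has to match the abstractly defined coproduct $\Delta$ on $\cO$ (from the multiplication morphism $\mu:G\times G\to G$, via $\hat\Delta_{g,h}$ and the identification of Proposition~\ref{prop:identify}) with the explicit formula \eqref{eq:sheaf_mor_Delta} for the coproduct on $\cP$, and this requires unwinding the definition of the $\hy(G)$-action on $\cO$, the $F$-action $a\mapsto a^g$ on $C$, and the pairing identity \eqref{eq:F-balanced}. This is exactly the kind of computation that the authors defer to ``analogous argument'' with \cite{M3}; in a full write-up one would reduce it, as in the proof of Proposition~\ref{prop:super_Lie_from_HCP}, to a statement about push-outs over $\cR(F)$ and the already-known Hopf-algebra structure of $\cB$, rather than verifying the coalgebra axioms by hand on completed stalks.
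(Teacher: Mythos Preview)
Your overall strategy matches the paper's: verify Part~(1) directly, then for Part~(2) first check that $(\mr{id},\eta)$ is a morphism in $\ms{SLG}$ (compatibility with the coproducts) and finally invoke Lemma~\ref{lem:isomorphism}. Two points of comparison are worth noting.

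First, your verification that $d\eta_e$ is an isomorphism goes through the map $\xi:\hP_e\to C^*$ from the first half of the proof of Theorem~\ref{thm:equivalence}, arguing that $\xi\circ\hat\eta_e$ is the canonical isomorphism $\hO_e\simeq C^*$. This is correct and in fact gives more than needed (an isomorphism of the whole completed stalk). The paper instead checks directly that $\Lie(\phi)_1$ is the identity: from $\epsilon_e(v\triangleright q)=\langle v,q\rangle$ for $v\in\sV$, one sees that $q\mapsto\epsilon_e(\eta_e(q)|_{\sV})$ is the natural projection $\cO_e\to(\m_e/\m_e^2)_1=\sW$. Your route is slightly more conceptual; the paper's is slightly more self-contained.

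Second, your ``expected main obstacle'' paragraph mischaracterizes what the paper does. You suggest the authors defer the coproduct compatibility to an ``analogous argument'' with \cite{M3}, and propose reducing to push-outs over $\cR(F)$ as in Proposition~\ref{prop:super_Lie_from_HCP}. In fact the paper carries out this verification explicitly on completed stalks: it reduces to showing
\[
(\hat{\epsilon}_g\,\hat{\otimes}\,\hat{\epsilon}_h)\big((\hat{\eta}_g\,\hat{\otimes}\,\hat{\eta}_h)\circ\hat{\Delta}_{g,h}(q)(a\otimes b)\big)=\hat{\epsilon}_{gh}(\hat{\eta}_{gh}(q)(a^h b)),
\]
and then computes both sides in sigma notation using \eqref{eq:F-balanced}; the key step is inserting $\hat\epsilon_h(q_{(2)})\hat\epsilon_{h^{-1}}(q_{(3)})$ to convert $\langle a,q_{(2)}\rangle$ into $\langle a,{}^h q_{(3)}\rangle=\langle a^h,q_{(3)}\rangle$. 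So the computation is done by hand, not by a push-out argument, and is short once the sigma notation is set up.
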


\begin{proof}
(1)\ 
This is easy to see.

For the proof of Part 2 below we remark that 
for every $g \in |F|\, (=|G|)$, \ $\hat{\eta}_g : \hO_g \to \hP_g =\Hom_{D}(C, \hF_g)$ is given by
\begin{equation*}\label{eq:etag}
\hat{\eta}_g(p)(a) = (\widehat{\red}_g \hotimes~a)\circ \hat{\Delta}_{g,e}(p),\quad 
p \in \hO_g,\ a \in C. 
\end{equation*}
This formula is expressed by using the sigma notation 
$\hat{\Delta}_{g,e}(p)=p_{(1)}\otimes p_{(2)}$, so as
\begin{equation}\label{eq:hatetag}
\hat{\eta}_g(p)(a) = \widehat{\red}_g(p_{(1)})\, \langle a, p_{(2)}\rangle.
\end{equation}
Note that the sum on the right-hand side above is finite since $p_{(1)}\otimes p_{(2)}$
may be regarded as an element of some ordinary tensor product $\hO_g \otimes (\hO_e/\hat{\m}_e^r)$,
where $r>0$ is an integer such that $\langle a, \hat{\m}_e^r\rangle=0$.

(2)\ 
Let us prove that $(\mr{id}, \eta)$ is a morphism in $\ms{SLG}$,
using the notation of Section \ref{subsec:super_Lie_group}.  
Let $\eta^{(2)}: \cO^{(2)}=\cO_{G\times G} \to \cP^{(2)}$ denote the sheaf-morphism associated
with $(\mr{id},\eta)\times(\mr{id},\eta)$.
In view of Remark \ref{rem:super_Lie_morphism} (2) we have to prove
that for an open set $\emptyset \ne U \subset |G|$,
\[
\eta^{(2)}(\Delta(p))(a \otimes b)(g, h)= \eta(p)(a^h\, b)(g h), 
\]
where $p \in \cO(U),\ a, b  \in C$, and $(g, h) \in \mu^{-1}(U)$ (or $gh \in U$). 
If we let $q \in \hO_{gh}$ denote the germ given by $p$, it suffices in view of Remark 
\ref{rem:product_smfd} (2) to prove
\[
(\hat{\epsilon}_g~\hotimes~\hat{\epsilon}_h)
\big((\hat{\eta}_g~\hotimes~\hat{\eta}_h) \circ \hat{\Delta}_{g,h}(q) \, (a \otimes b)\big)
=
\hat{\epsilon}_{g h}(\hat{\eta}_{g h}(q)(a^h\, b)). 
\]
To verify this we use the sigma notation, such as used in \eqref{eq:hatetag} above,
\[ 
\hat{\Delta}_{g_1,\dots,g_s}(q)=q_{(1)}\otimes \dots \otimes q_{(s)},\quad g_1\dots g_s=gh.
\]
This is justified by the coassociativity of $\hat{\Delta}$; see the proof of Proposition \ref{prop:super_Lie_from_HCP}. 
Again in this situation the value may be supposed to be in some ordinary tensor product so that
the relevant sum is finite.  The verification is done, using \eqref{eq:F-balanced}, so as
\begin{align*}
\text{LHS} 
&= \hat{\epsilon}_g(q_{(1)})\, \langle a,q_{(2)}\rangle\,
\hat{\epsilon}_h(q_{(3)})\, \langle b,q_{(4)}\rangle\\
&= \hat{\epsilon}_g(q_{(1)})\, \hat{\epsilon}_h(q_{(2)})\, 
\hat{\epsilon}_{h^{-1}}(q_{(3)})\, \langle a,q_{(4)}\rangle\, 
\hat{\epsilon}_h(q_{(5)})\, \langle b,q_{(6)}\rangle\\
&= \hat{\epsilon}_g(q_{(1)})\, \hat{\epsilon}_h(q_{(2)})\, 
\langle a, {}^h q_{(3)}\rangle\, \langle b,q_{(4)}\rangle\\
&= \hat{\epsilon}_{gh}(q_{(1)})\, \langle a^h,q_{(2)}\rangle\, \langle b,q_{(3)}\rangle
=\text{RHS}.
\end{align*}

To complete the proof we will see by applying Lemma \ref{lem:isomorphism} 
that $\phi:=(\mr{id}, \eta)$ is an isomorphism. 
Obviously, $\phi_{\mr{red}} : F \to F=G_{\red}$ is the identity map. It remains to verify that
$\Lie(\phi)_1$ is an isomorphism.  Let $\sW =\sV^*$, as before. 
One sees from 
\[ \epsilon_e(v\triangleright q)=\langle v,q \rangle,\quad v\in\sV,\ q \in \cO_e \] 
that the map
$\cO_e \to \sW,\ q \mapsto \epsilon_e(\eta_e(q)|_{\sV})$ coincides with
the natural projection
$\cO_e\to (\m_e/\m_e^2)_1=\sW$. This implies that $\Lie(\phi)_1$ is the identity map.  
\end{proof}

% \footnote{Lemma 3 of \cite{V} is not what is to be proved, but what is to be required, 
% I think.}
Obviously, $\eta$ is natural in $G$. This completes the proof of Theorem \ref{thm:equivalence}.

\begin{rem}\label{rem:sign_be_removed}
The sheaf-isomorphism $\eta$ above looks like 
the $\eta^*$ given by \cite[Page 133, lines 2--3]{CCF} and \cite[Page 41, lines 17--18]{CF}
in the $C^{\infty}$- and the complex analytic situations. 
An essential difference is in that the $\eta^*$ contains the sign
$(-1)^{|X|}$ in its expression. But the sign is dispensable, since obviously, 
one remains to have a desired isomorphism after removing it. 
\end{rem}

\subsection{Consequences of the category equivalence}\label{subsec:consequences}
Here we have two corollaries to Theorem \ref{thm:equivalence}.  The first one is the following.

\begin{corollary}\label{cor:super_Lie_group_split}
Every super Lie group is s-split as a super-manifold; see Definition \ref{def:super-mfd} (2). 
\end{corollary}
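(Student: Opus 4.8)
The plan is to deduce this immediately from Theorem~\ref{thm:equivalence} together with the explicit form of the quasi-inverse functor $\Psi$ constructed in Section~\ref{subsec:quasi-inverse}. The key observation is that for \emph{any} Harish-Chandra pair $(F,\sV)$, the super Lie group $\Psi(F,\sV)=(|F|,\cP)$ has structure sheaf $\cP=\Hom_D(C,\cF)$ which, by the identification $C=D\otimes\wedge(\sV)$ of \eqref{eq:H}, is nothing but $\cF\otimes\wedge(\sV^*)$; this was already noted right after \eqref{eq:cP}, where it was observed that $(|F|,\cP)$ is an s-split super-manifold. So $\Psi$ takes values in the full subcategory $\ms{SSMFD}\subset\ms{SMFD}$ of s-split super-manifolds.

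First I would let $G=(|G|,\cO)$ be an arbitrary super Lie group. By Theorem~\ref{thm:equivalence} (more precisely, by Proposition~\ref{prop:eta}, which is the final step of its proof), the morphism $(\mr{id},\eta):\Psi(\Phi(G))\to G$ is an isomorphism of super Lie groups, in particular an isomorphism of super-manifolds. Now $\Psi(\Phi(G))=(|G_{\red}|,\cP)$ with $\cP=\Hom_D(C,\cF)\cong\cF\otimes\wedge(\sV^*)$, where $\cF=\cO_{\red}$ and $\sV=\Lie(G)_1$; this is an s-split super-manifold by Definition~\ref{def:super-mfd}~(2). Since being s-split is manifestly preserved under isomorphism in $\ms{SRS}$ (an isomorphism of super-ringed spaces carries a structure sheaf of the form $\cF\otimes\wedge(\ms{W})$ to an isomorphic one via the underlying homeomorphism), it follows that $G$ itself is s-split.

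There is really no obstacle here: the content has all been done in establishing Theorem~\ref{thm:equivalence}, and the corollary is a formal extraction. The one point worth spelling out, if desired, is the compatibility with Lemma~\ref{lem:split_smfd}: one checks that $\sW:=\sV^*=(\Lie(G)_1)^*$ indeed equals $(T_eG)_1^*$, which holds because $\Lie(G)$ is identified with $T_eG$ (Definition~\ref{def:hy_Lie}) and hence $\Lie(G)_1=(T_eG)_1$; then Lemma~\ref{lem:split_smfd} gives the explicit description $\cO\cong\cO_{\red}\otimes\wedge((T_eG)_1^*)$, and by Lemma~\ref{lem:constant_sdim}~(2) the point $e$ may be replaced by any point of $|G|$. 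Thus the proof reduces to the two sentences: ``By Theorem~\ref{thm:equivalence} and Proposition~\ref{prop:eta}, $G$ is isomorphic as a super-manifold to $\Psi(\Phi(G))$, which is s-split by construction; see the remark following \eqref{eq:cP}.''

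\begin{proof}
Let $G=(|G|,\cO)$ be a super Lie group, and set $(F,\sV)=\Phi(G)=(G_{\red},\Lie(G)_1)$. By Proposition~\ref{prop:eta}~(2), the super Lie group $G$ is isomorphic, in particular as a super-manifold, to $\Psi(F,\sV)=(|F|,\cP)$. As noted after \eqref{eq:cP}, the identification $C=D\otimes\wedge(\sV)$ of \eqref{eq:H} gives $\cP=\Hom_D(C,\cF)\simeq\cF\otimes\wedge(\sV^*)$ with $\cF=\cO_{\red}$, so that $\Psi(F,\sV)$ is an s-split super-manifold. Since the property of being s-split is preserved under isomorphism in $\ms{SRS}$, the super-manifold $G$ is s-split. (Concretely, $\cO\simeq\cO_{\red}\otimes\wedge(\sV^*)$, and $\sV^*=(\Lie(G)_1)^*=(T_eG)_1^*$ by Definition~\ref{def:hy_Lie}; by Lemma~\ref{lem:constant_sdim}~(2) one may replace $e$ by any point of $|G|$, in accordance with Lemma~\ref{lem:split_smfd}.)
\end{proof}
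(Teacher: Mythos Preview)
Your proof is correct and follows essentially the same approach as the paper: invoke Theorem~\ref{thm:equivalence} (or equivalently Proposition~\ref{prop:eta}) to identify $G$ with $\Psi(\Phi(G))=(|F|,\cP)$, and then use the observation after \eqref{eq:cP} that $\cP\simeq\cF\otimes\wedge(\sV^*)$ is s-split. The paper's proof is just a one-sentence version of what you wrote.
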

\begin{proof}
By the theorem every super Lie group is of the form $(|F|,\cP)$,
whence it is s-split since $\cP=\cF\otimes \wedge(\sW)$; see \eqref{eq:cP}.
\end{proof}

To state the second one, we need some preliminaries.
Assume for a while that the base field $\Bbbk$ is an arbitrary infinite field
(e.g. a complete field), and let $\overline{\Bbbk}$
denote the algebraic closure of $\Bbbk$. Let $\ms{F}$ be
an affine algebraic group scheme $\ms{F}$ (over $\Bbbk$) represented by $O_{\ms{F}}$. 
We say that $\ms{F}$ is an 
\emph{algebraic matrix group} \cite[Sections 4.4--4.5]{W}, if the following equivalent conditions are satisfied:
\begin{itemize}
\item[(i)] The intersection of the maximal ideals of $O_{\ms{F}}$ of codimension $1$ is zero;
\item[(ii)] $O_\ms{F}$ is reduced, and 
the group $\ms{F}(\Bbbk)$ of $\Bbbk$-points
is Zariski dense in $\ms{F}(\overline{\Bbbk})$ or equivalently, Zariski dense in $\mr{Spec}(O_{\ms{F}})$;
\item[(iii)] The algebra map $O_{\ms{F}}\to \mr{Map}(\ms{F}(\Bbbk), \Bbbk)$
which associates to $p \in O_{\ms{F}}$, the map $\ms{F}(\Bbbk)\to \Bbbk,\ g \mapsto g(p)$
is an injection. 
\end{itemize}

The image of the algebra map in (iii) is denoted by $\mr{Poly}(\ms{F}(\Bbbk), \Bbbk)$, whose
elements are called \emph{polynomial functions}. Roughly speaking, 
an algebraic matrix group is an affine algebraic group scheme which arises 
from a Zariski closed matrix group.
If $\ms{F}$ is an algebraic matrix group, then we have 
$O_{\ms{F}}\otimes \overline{\Bbbk}\simeq \mr{Poly}(\ms{F}(\overline{\Bbbk}), \overline{\Bbbk})$, 
whence $O_{\ms{F}}\otimes \overline{\Bbbk}$ is reduced, or equivalently,
$O_{\ms{F}}$ (or $\ms{F})$ is smooth.  
In addition,  any element of $O_{\ms{F}}\otimes \overline{\Bbbk}$ that takes values in $\Bbbk$ on  
$\ms{F}(\Bbbk)$ is contained in $O_{\ms{F}}$; 
this, applied to idempotents, shows that the 
largest finite-dimensional separable subalgebra
$\pi_0(O_{\ms{F}})$ of $O_{\ms{F}}$ (see \cite[Section 6.5]{W}) is spanned by idempotents. 
If $\Bbbk$ is algebraically closed,
an algebraic matrix group is the same as a smooth affine algebraic group scheme. 
If in addition, $\mr{char}\, \Bbbk =0$, 
then every affine algebraic group scheme is smooth, and is, therefore, an algebraic matrix group. 

Assume $\mr{char}\, \Bbbk \ne 2$, and let 
$\ms{ASG}$ denote the category of affine algebraic super-group schemes. Suppose $\ms{G} \in \ms{ASG}$.
Let $O_{\ms{G}}$ denote the finitely generated Hopf super-algebra which represents $\ms{G}$, and 
let $\m_{\ms{G}}=\mr{Ker}\, \epsilon$ denote the kernel
of the counit of $O_{\ms{G}}$. 
The \emph{hyper-super-algebra} $\mr{hy}(\ms{G})$ \emph{of} $\ms{G}$ is the Hopf super-subalgebra
\[ \mr{hy}(\ms{G})=\bigcup_{n>0}(O_{\ms{G}}/\m_{\ms{G}}^n)^* \]
of the dual Hopf super-algebra $O_\ms{G}^{\circ}$ of $O_{\ms{G}}$; the dual 
$\hy(\ms{G})^*$ of this $\hy(\ms{G})$ is the
$\m_{\ms{G}}$-adic completion $\widehat{O_{\ms{G}}}$ of $O_{\ms{G}}$, or equally, 
the completion of the localized super-algebra
$(O_{\ms{G}})_{\m_{\ms{G}}}$. 
The \emph{Lie super-algebra} $\Lie(\ms{G})$ \emph{of} $\ms{G}$
is $P(\mr{hy}(\ms{G}))$. The constructions of $\hy(\ms{G})$ and of $\Lie(\ms{G})$ are functorial; cf. Proposition \ref{prop:hy_Lie}. 

As was touched before, an algebraic counterpart of Theorem \ref{thm:equivalence} (see \cite[Theorem 29]{M3}, \cite[Theorem 6.5]{M4})
shows a category equivalence between $\ms{ASG}$ and the category of
\emph{algebraic} Harish-Chandra pairs. 
Such a pair $(\ms{F},\sV)$ consists of an affine algebraic group scheme $\ms{F}$, a finite-dimensional $\ms{F}$-module
$\sV$ and an $\ms{F}$-equivariant linear map $[\ , \ ] :\sV \otimes \sV \to \Lie(\ms{F})$ which satisfy the
same formulas as (P1) and (P2) in Definition \ref{def:HCP}; $\Lie(\ms{F})$ is regarded as a right 
$\ms{F}$-module by the right adjoint action (induced from the right adjoint $\ms{F}$-action on $\ms{F}$). 

Let $\ms{G} \in \ms{ASG}$, and suppose that it corresponds to an algebraic
Harish-Chandra pair $(\ms{F},\sV)$. The affine algebraic group scheme $\ms{F}$ is the group-valued functor
obtained from $\ms{G}$, restricting the domain to the category of algebras, and it is represented by
$\overline{O_{\ms{G}}}=O_{\ms{G}}/((O_{\ms{G}})_1)$; see \eqref{eq:barA_grA}. 
We say that $\ms{G}$ is \emph{smooth} if $O_{\ms{G}}$ is smooth. This is equivalent to saying that 
each/one of $\ms{F}$, $\hy(\ms{F})$ and $\hy(\ms{G})$ is smooth (see \cite[Proposition A.3]{MZ}), 
and the equivalent conditions are necessarily satisfied if $\mr{char}\, \Bbbk =0$. 
We may say that $\ms{G}$ is \emph{connected}, precisely when
$\ms{F}$ is connected. 

\begin{definition}\label{def:AMSG}
We say that $\ms{G}$ is an \emph{algebraic matrix super-group}, if the intersection of the maximal
super-ideals in $O_{\ms{G}}$ of codimension $1$ is zero, or equivalently, if $\ms{F}$ is an algebraic matrix group.
We let $\ms{AMSG}$ denote the full subcategory of $\ms{ASG}$ consisting of all algebraic matrix super-groups.
\end{definition} 

Every algebraic matrix super-group is smooth. 
If $\Bbbk$ is an algebraically closed field of characteristic zero, we have $\ms{AMSG}=\ms{ASG}$.

Let us return to the situation that $\Bbbk$ is a complete field of characteristic $\ne2$.
The second corollary to Theorem \ref{thm:equivalence} is the following. 

\begin{corollary}\label{cor:algebraic_to_analytic}
We have a natural functor
\begin{equation}\label{eq:functor_an}
\ms{AMSG}\to \ms{SLG},\quad \ms{G} \mapsto \ms{G}^{\mr{an}}, 
\end{equation}
such that 
\begin{equation}\label{eq:hy_Lie}
\mr{hy}(\ms{G})=\mr{hy}(\ms{G}^{\mr{an}}),\quad \Lie(\ms{G})=\Lie(\ms{G}^{\mr{an}}). 
\end{equation}
The functor, restricted to the full subcategory of $\ms{AMSG}$ consisting of all connected objects,
is faithful. 
\end{corollary}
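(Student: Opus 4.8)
The plan is to factor the functor \eqref{eq:functor_an} through Harish-Chandra pairs. Given $\ms{G}\in\ms{AMSG}$, the algebraic analogue of Theorem~\ref{thm:equivalence} (cited above from \cite[Theorem~29]{M3}, \cite[Theorem~6.5]{M4}) produces, functorially, an algebraic Harish-Chandra pair $(\ms{F},\sV)$ in which $\ms{F}$ is an algebraic matrix group. First I would attach a Lie group to $\ms{F}$: since an algebraic matrix group is smooth, the set $\ms{F}(\Bbbk)$ of $\Bbbk$-points carries a canonical analytic-manifold structure over the complete field $\Bbbk$ (by the local description of smooth affine schemes over complete fields; cf.\ \cite[Part~II, Chapter~III]{S}), and the group operations of $\ms{F}$, being locally polynomial, are analytic, so that $\ms{F}^{\mr{an}}:=(\ms{F}(\Bbbk),\cF)$ is a Lie group; a morphism of algebraic matrix groups restricts on $\Bbbk$-points to an analytic group homomorphism, so $\ms{F}\mapsto\ms{F}^{\mr{an}}$ is a functor.

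Next I would carry the remaining Harish-Chandra datum over to $\ms{F}^{\mr{an}}$. Each finite-dimensional algebraic representation of $\ms{F}$ restricts to a finite-dimensional analytic representation of $\ms{F}^{\mr{an}}$, so that matrix coefficients restrict to analytic representative functions; using condition~(iii) in the definition of an algebraic matrix group this yields an injective Hopf-algebra map $O_{\ms{F}}\to\cR(\ms{F}^{\mr{an}})$. Through this map and the identifications \eqref{eq:tensor_isom}, the right $\ms{F}$-module $\sV$ becomes an analytic right $\ms{F}^{\mr{an}}$-module. Likewise $O_{\ms{F}}/\m_{\ms{F}}^{\,n}\cong\cF_e/\n_e^{\,n}$ for each $n>0$ (both are the $n$-th infinitesimal neighbourhood of a smooth point), which gives a Hopf-algebra isomorphism $\hy(\ms{F})\cong\hy(\ms{F}^{\mr{an}})$ compatible with the maps from $O_{\ms{F}}$, and in particular $\Lie(\ms{F})=\Lie(\ms{F}^{\mr{an}})$. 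Because the adjoint actions of $\ms{F}$ and of $\ms{F}^{\mr{an}}$ are both induced from the one multiplication morphism, the $\ms{F}$-equivariant bracket $[\ ,\ ]:\sV\otimes\sV\to\Lie(\ms{F})$ stays $\ms{F}^{\mr{an}}$-equivariant, while (P1) and (P2) of Definition~\ref{def:HCP} are literally the same conditions on both sides; hence $(\ms{F}^{\mr{an}},\sV)\in\ms{HCP}$, functorially. Setting $\ms{G}^{\mr{an}}:=\Psi(\ms{F}^{\mr{an}},\sV)$ with $\Psi$ the functor of Lemma~\ref{lem:Psi} and composing the three functors gives \eqref{eq:functor_an}.

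For \eqref{eq:hy_Lie} I would compare the two hyper-super-algebras through their presentations. On the algebraic side, $\hy(\ms{G})$ is, by the construction underlying the cited algebraic equivalence \cite{M3,MS1}, the hyper-super-algebra generated by $D=\hy(\ms{F})$ and $\sV$ subject to the relations \eqref{eq:relations_H}; on the analytic side, $\hy(\ms{G}^{\mr{an}})=\hy(\Psi(\ms{F}^{\mr{an}},\sV))$ is, by the first half of the proof of Theorem~\ref{thm:equivalence} (see also Lemma~\ref{lem:coincidence_of_H}), the hyper-super-algebra generated by $\hy(\ms{F}^{\mr{an}})$ and $\sV$ subject to the same relations. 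As $\hy(\ms{F})\cong\hy(\ms{F}^{\mr{an}})$ as Hopf algebras and the induced $\sV$-actions and brackets match under this isomorphism, the two presentations coincide, whence $\hy(\ms{G})=\hy(\ms{G}^{\mr{an}})$ and then $\Lie(\ms{G})=P(\hy(\ms{G}))=P(\hy(\ms{G}^{\mr{an}}))=\Lie(\ms{G}^{\mr{an}})$.

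Finally, faithfulness on connected objects: since $\Psi$ and the algebraic Harish-Chandra equivalence are both equivalences, and the $\sV$-component of a morphism of Harish-Chandra pairs is untouched by the construction (it is $\Lie(-)_1$ on both sides, with $\Lie(\ms{F})=\Lie(\ms{F}^{\mr{an}})$), it suffices to show $\ms{F}\mapsto\ms{F}^{\mr{an}}$ is faithful on morphisms of connected algebraic matrix groups. If $\phi_0,\phi_0':\ms{F}\to\ms{F}'$ induce the same Lie-group morphism, they agree on $\Bbbk$-points; when $\ms{F}$ is connected it is irreducible (being smooth), so $\ms{F}$ is a reduced scheme on which $\ms{F}(\Bbbk)$ is Zariski dense, and therefore the equalizer of $\phi_0$ and $\phi_0'$ (a closed subscheme of $\ms{F}$, as $\ms{F}'$ is separated) is all of $\ms{F}$, i.e.\ $\phi_0=\phi_0'$. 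I expect the principal difficulty to lie in the second and third paragraphs: keeping mutually compatible the whole web of comparisons between the algebraic objects attached to $\ms{F}$ and the analytic objects attached to $\ms{F}^{\mr{an}}$ — the Hopf map $O_{\ms{F}}\to\cR(\ms{F}^{\mr{an}})$, the isomorphism $\hy(\ms{F})\cong\hy(\ms{F}^{\mr{an}})$, and their compatibility with (co)module structures and with the adjoint action — so that the identification of the two presentations of the hyper-super-algebra goes through cleanly.
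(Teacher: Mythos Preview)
Your proposal is correct and follows the same overall architecture as the paper: factor through Harish-Chandra pairs, turn the algebraic matrix group $\ms{F}$ into a Lie group $\ms{F}^{\mr{an}}$, check that $\hy(\ms{F})\cong\hy(\ms{F}^{\mr{an}})$ (via completed stalks/infinitesimal neighbourhoods) compatibly with adjoint actions, and transport the module $\sV$ and the bracket. Two sub-steps are handled differently. For \eqref{eq:hy_Lie} the paper invokes the \emph{dual Harish-Chandra pair} formalism of \cite[Theorem~10]{M2}, observing that $(\ms{F},\sV)$ and $(\ms{F}^{\mr{an}},\sV)$ dualize to the same pair $(\hy(\ms{F}),\sV)$ and hence give the same hyper-super-algebra; you instead compare the two explicit presentations via the relations \eqref{eq:relations_H}, which is equally valid and slightly more self-contained. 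For faithfulness the paper deduces it from the equality $\hy(\ms{G})=\hy(\ms{G}^{\mr{an}})$ together with the cited fact \cite[Proposition~21]{M3} that $\hy$ is faithful on connected affine super-group schemes; you instead reduce to faithfulness of $\ms{F}\mapsto\ms{F}^{\mr{an}}$ and argue by Zariski density. Your route is more elementary and in fact does not really need connectedness: condition~(i)/(iii) in the definition of an algebraic matrix group already says that the map $O_{\ms{F}}\to\mr{Map}(\ms{F}(\Bbbk),\Bbbk)$ is injective, so two Hopf-algebra maps into $O_{\ms{F}}$ that agree on all $\Bbbk$-points must coincide. The connectedness hypothesis is an artefact of the paper's chosen citation, not of the statement itself.
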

\begin{proof}
We will prove this in terms of Harish-Chandra pairs, depending on the established category
equivalences in algebraic and analytic situations. 

Let $\ms{F}$ be an affine algebraic group scheme. Then there is a closed embedding of $\ms{F}$ into some 
$\mr{GL}_n$. 
Let $N=n^2$. 
The $\mr{GL}_n$ is an algebraic matrix group (see \cite[Section 4.5, Corollary]{W}),
and the group $\mr{GL}_n(\Bbbk)$, naturally regarded as a Zariski closed subset of $\Bbbk^{N+1}$,
includes $\ms{F}(\Bbbk)$ as a Zariski closed subgroup. 
On the other hand, $\mr{GL}_n(\Bbbk)$, regarded as an open sub-manifold of $\Bbbk^{N}$,
is a Lie group, and it includes $\ms{F}(\Bbbk)$ as a closed
topological subgroup with respect to the Euclidian topology; it is finer than the Zariski topology. 

Assume that $\ms{F}$ is an algebraic matrix group, so that $O_{\ms{F}}$ is smooth, and
$O_{\ms{F}}=\mr{Poly}(\ms{F}(\Bbbk),\Bbbk)$. Let $d= \mr{Kdim}(O_{\ms{F}})$. 
The argument of \cite[Chapter II, Section 2.3]{Sh} shows that every point of
$\ms{F}(\Bbbk)$ has an open (in the Euclidian topology) neighborhood $U \subset \Bbbk^N$, and polynomials 
$f_1,\dots, f_{N-d}$ in $\Bbbk[\ms{T}_1,\dots, \ms{T}_N]$, such that 
$(f_1,\dots, f_{N-d}, \ms{T}_1,\dots,\ms{T}_d): U \to \Bbbk^N$ gives a homeomorphism onto an open set
of $\Bbbk^N$, and $\ms{F}(\Bbbk)\cap U=\{ \, x \in U \, | \, f_1(x)=\dots=f_{N-d}(x)=0\, \}$.  
It follows from Item 2) in
\cite[Part II, Chapter IV, Section 2]{S} that $\ms{F}(\Bbbk)$ is a Lie group, 
which we will denote by $\ms{F}^{\mr{an}}$. Being independent of choice of the embedding
$\ms{F}\subset \mr{GL}_n$, the topology on $|\ms{F}^{\mr{an}}|$ is defined so that the subsets
\[ 
B(p_1,\dots,p_r; \epsilon)=\{ \, g \in \ms{F}(\Bbbk) \, | \, |g(p_1)|<\epsilon,\dots, |g(p_r)|<\epsilon \, \}, 
\]
where $\epsilon >0$, and $p_1,\dots, p_r$ are finitely many elements of $O_{\ms{F}}$,  
give an open base; see \cite[Chapter VII, Section 1.1]{Sh}. 
The analytic structure of $\ms{F}^{\mr{an}}$ is precisely what is induced through those
local homeomorphisms $(q_1,\dots,q_d): U\to \Bbbk^d$ onto open sets of $\Bbbk^d$
which are given by polynomial functions $q_1,\dots, q_d$ in $O_{\ms{F}}$. 

We regard $\sf{F}(\Bbbk)$ naturally
as the ringed subspace of the affine scheme $\mr{Spec}(O_{\ms{F}})$; $\sf{F}(\Bbbk)$ thus consists of
the maximal ideals of codimension $1$, and its structure sheaf is the inverse image of the one 
$\mathcal{O}_{\mr{Spec}(O_{\ms{F}})}$ of $\mr{Spec}(O_{\ms{F}})$. 
Recall that $\ms{F}^{\mr{an}}$ is a ringed space, equipped with the sheaf $\cF_{\ms{F}^{\mr{an}}}$
of analytic functions. One sees that the identity map
\[
i=\mr{id} : \ms{F}^{\mr{an}} \to \ms{F}(\Bbbk)
\]
is a morphism of ringed spaces such that
$i^*_g : \mathcal{O}_{{\ms{F}(\Bbbk)}, g} \to \mathcal{F}_{\ms{F}^{\mr{an}},g}$
turns, by completion, into an isomorphism at every $g \in \ms{F}(\Bbbk)\, (=|\ms{F}^{\mr{an}}|)$. 
The isomorphism $\hat{i^*_e}$ at $e$, which is seen to preserve the $\ms{F}(\Bbbk)$-actions
induced by the right adjoint action of the group, gives the identifications 
\begin{equation}\label{eq:identify}
\mr{hy}(\ms{F})=\mr{hy}(\ms{F}^{\mr{an}}),\quad \Lie(\ms{F})=\Lie(\ms{F}^{\mr{an}})
\end{equation}
of $\ms{F}(\Bbbk)$-objects.

Giving a linear representation $\ms{F} \to \mr{GL}_r$ is the same as giving
a group homomorphism $\ms{F}(\Bbbk) \to \mr{GL}_r(\Bbbk)$ whose composites with the projections
$\op{pr}_{ij} : \mathrm{GL}_r(\Bbbk)\to \Bbbk$, $1\le i,j \le r$, (see Section \ref{subsec:repres_function}) are
polynomial functions; the group homomorphism is, therefore, regarded as an analytic representation 
$\ms{F}^{\mr{an}}\to \mr{GL}_r(\Bbbk)$. In view of \eqref{eq:identify}, as well, 
it follows that
given an algebraic Harish-Chandra pair $(\ms{F},\sV)$ with $\ms{F}$ an algebraic matrix group, 
the pair $(\ms{F}^{\mr{an}},\sV)$, equipped with the same $[\ , \ ]$ and the induced analytic module structure, 
is an (analytic) Harish-Chandra pair 
as defined by Definition \ref{def:HCP}. 
Moreover, the assignment $(\ms{F},\sV) \mapsto (\ms{F}^{\mr{an}},\sV)$ is functorial, and gives rise to the desired
functor $\ms{AMSG}\to \ms{SLG}$. 

By \cite[Theorem 10]{M2} every hyper-super-algebra uniquely arises from 
a dual Harish-Chandra pair \cite[Definiton 6]{M2} in a natural manner that 
dualizes the construction of super-groups from Harish-Chandra pairs, both in algebraic and analytic
situations.  
One sees easily that
$(\ms{F},\sV)$ and $(\ms{F}^{\mr{an}}, \sV)$ are dualized to the same dual Harish-Chandra pair, 
$(\mr{hy}(\ms{F}), \ms{V})=(\mr{hy}(\ms{F}^{\mr{an}}), \ms{V})$, from which, therefore, 
the same hyper-super-algebra,
$\hy(\ms{G})=\hy(\ms{G}^{\mr{an}})$, arises. Proved is the first equality of \eqref{eq:hy_Lie},
which, with $P(\ )$ applied, yields the second. 
The last assertion on faithfulness follows from the first equality,
since it is known the functor $\hy$ defined on $\ms{ASG}$ is faithful, when restricted to
the full subcategory consisting of all connected ones; see \cite[Proposition 21]{M3}.
\end{proof}

One sometimes encounters with such presentations of real or complex 
super Lie groups
that look presenting by use of matrices, algebraic matrix super-groups.
Corollary \ref{cor:algebraic_to_analytic} justifies such presentations;
they can be understood to
indicate the super Lie groups which arise from the actually presented algebraic matrix super-groups.
(In the real case one has to verify that the associated affine algebraic group scheme
is indeed an algebraic matrix group.)
Here is a familiar example.

\begin{example}\label{ex:GL}
Suppose that $\Bbbk$ is an infinite field of characteristic $\ne 2$. 
Let $\ms{M}=\ms{M}_0\oplus \ms{M}_1$ be a finite-dimensional super-vector space 
with $m = \dim(\ms{M}_0)$,\ $n = \dim(\ms{M}_1)$. 
Let $\ms{G}=\mr{GL}(\ms{M})$ 
denote the functor which associates to each super-algebra $R$ the group $\mr{GL}_R(\ms{M}\otimes R)$
of all $R$-super-module automorphisms of $\ms{M}\otimes R$. In fact this is a smooth affine algebraic super-group
scheme; it is represented by the finitely generated Hopf super-algebra
\[ O_{\ms{G}}= \Bbbk\big[\ms{X}_{ij}, \ms{Y}_{k\ell}, \mathrm{det}(\mathbf{X})^{-1},
\mathrm{det}(\mathbf{Y})^{-1}\big] 
\otimes \wedge(\ms{P}_{i\ell}, \ms{Q}_{kj}), \]
where $\ms{X}_{ij}$, $\ms{Y}_{k\ell}$ (resp., $\ms{P}_{i\ell}, \ms{Q}_{kj}$) 
are even (resp., odd) variables, and are presented as entries of the matrix 
\[
\left( \begin{matrix} \mathbf{X} & \mathbf{P} \\ \mathbf{Q} & \mathbf{Y} \end{matrix} \right)=
\left( \begin{matrix} \ms{X}_{ij} & \ms{P}_{i\ell} \\ \ms{Q}_{kj} & \ms{Y}_{k\ell} \end{matrix} \right),
\quad 1\le i, j \le m,\ 1 \le k, \ell \le n.
\]
The structure maps of $O_{\ms{G}}$ are given by
\begin{align*}
\Delta\left( \begin{matrix} \mathbf{X} & \mathbf{P} \\ \mathbf{Q} & \mathbf{Y} \end{matrix} \right) &=
\left( \begin{matrix} \mathbf{X} & \mathbf{P} \\ \mathbf{Q} & \mathbf{Y} \end{matrix} \right)
\otimes \left( \begin{matrix} \mathbf{X} & \mathbf{P} \\ \mathbf{Q} & \mathbf{Y} \end{matrix} \right),\quad
\varepsilon \left( \begin{matrix} \mathbf{X} & \mathbf{P} \\ \mathbf{Q} & \mathbf{Y} \end{matrix} \right) 
= \left( \begin{matrix} I & O \\ O 
& I \end{matrix} \right), \\
S\left( \begin{matrix} \mathbf{X} & \mathbf{P} \\ \mathbf{Q} & \mathbf{Y} \end{matrix} \right)
&=\left( \begin{matrix}
(\mathbf{X}-\mathbf{P}\mathbf{Y}^{-1}\mathbf{Q})^{-1} & -\mathbf{X}^{-1}\mathbf{P}\, S(\mathbf{Y})\\
-\mathbf{Y}^{-1}\mathbf{Q}\, S(\mathbf{X}) & (\mathbf{Y}-\mathbf{Q}\mathbf{X}^{-1}\mathbf{P})^{-1}
\end{matrix} \right). 
\end{align*}
The corresponding algebraic Harish-Chandra pair consists of the affine algebraic group scheme
\[ \sf{F}=\mr{GL}(\ms{M}_0) \times \mr{GL}(\ms{M}_1), \]
which is indeed an algebraic matrix group, and the super-vector space 
\[ \sV=\mr{Hom}(\ms{M}_0,\ms{M}_1)\oplus \mr{Hom}(\ms{M}_1,\ms{M}_0)\, (\subset \mr{End}(\ms{M})) \]
consisting of the parity-reversing linear endomorphisms of $\ms{M}$, on which $\ms{F}$ acts 
from the right so that
\begin{equation}\label{eq:HCP_GL}
v^g= g^{-1}\circ v \circ g\ \, \text{in}\ \, \mr{End}_R(\ms{M}\otimes R), 
\end{equation}
where $v \in \sV$, $g \in \ms{F}(R)$, and $R$ is an arbitrary algebra. The associated 
$[\ , \ ]: \sV \otimes \sV \to \Lie(\ms{F})= \mr{End}(\ms{M}_0) \oplus \mr{End}(\ms{M}_1)$ 
is given by the anti-commutator $[v,w]=v\circ w+w\circ v$. Note that the right $\ms{F}$-adjoint action 
on $\Lie(\ms{F})$ is given by the same formula as \eqref{eq:HCP_GL}. 

Return to the situation that $\Bbbk$ is a complete field. By Corollary \ref{cor:algebraic_to_analytic}
we have a super Lie group, $\ms{G}^{\mr{an}}$, which we denote by the same symbol $\mr{GL}(\ms{M})$. 
The corresponding Harish-Chandra pair is $(\ms{F}^{\mr{an}}, \sV)$ with which 
is associated the same $[\ , \ ]$ as above. We remark that $\ms{F}^{\mr{an}}= \ms{F}(\Bbbk)$ 
is the product of the general linear groups of $\ms{M}_0$ and of $\ms{M}_1$, which acts on $\sV$ and 
$\Lie(\ms{F}^{\mr{an}})\, (=\Lie(\ms{F}))$ by the same formula as \eqref{eq:HCP_GL} in the restricted 
situation $R=\Bbbk$.  
\end{example}

\begin{rem}\label{rem:analytification}
Suppose $\Bbbk=\mathbb{C}$. 
Corollary \ref{cor:algebraic_to_analytic} then follows alternatively by applying the following unpublished result due to A. Zubkov and the
second-named author, whose proof will be
published somewhere else; it is highly expected that an analogous result holds over any complete field of characteristic $\ne 2$. 
\emph{There is a natural functor from the category of smooth
locally algebraic super-schemes over $\mathbb{C}$ to the category of complex super-manifolds. It preserves finite products
and, therefore, group objects.}
\end{rem}

\subsection{The universal algebraic hull of a super Lie group}\label{subsec:super-repres_function}

Let $G$ be a super Lie group. Given a finite-dimensional super-vector space $\ms{M}$, 
a \emph{representation} of $G$ on $\ms{M}$ is 
a homomorphism $G \to \mr{GL}(\ms{M})$ of super Lie groups.
Given such a representation, $\ms{M}$ is called 
a \emph{finite-dimensional} (\emph{left}) $G$-\emph{super-module}. 
A (\emph{left}) $G$-\emph{super-module} is a filtered union of finite-dimensional (left) $G$-super-modules.  

Let $(F,\sV)=\Phi(G)$ be the Harish-Chandra pair corresponding to $G$. 
Let $\cB$ be the associated Hopf super-algebra constructed 
by Proposition \ref{prop:calB}, and let  
\[ G^{\mr{al}}=\op{SSp}(\cB) \]
denote the affine super-group scheme represented by $\cB$. 
A $G^{\mr{al}}$-super-module is naturally identified with a right $\cB$-super-comodule. 

\begin{theorem}\label{thm:G-super-mod_struc}
Let $\ms{M}$ be a super-vector space of possibly infinite dimension. 
There exist natural one-to-one correspondences among the
following three sets:
\begin{itemize}
\item[(a)] the set of all $G$-super-module structures on $\ms{M}$;
\item[(b)] the set of all $G^{\mr{al}}$-super-module structures on $\ms{M}$;
\item[(c)] the set of all pairs $(\cdot, \text{\SMALL{$\blacktriangleright $}})$ 
of an $F$-super-module structure $F \times \ms{M}\to \ms{M},\ (g,m)\mapsto g\cdot m$ 
on $\ms{M}$ and a super-linear map 
$\sV \otimes \ms{M} \to \ms{M}$, $v \otimes m \mapsto v\, \text{\SMALL{$\blacktriangleright $}}\, m$, with $\sV$ supposed to be purely odd, such that
\begin{align}
&g\cdot (v^g\, \text{\SMALL{$\blacktriangleright $}}\, m) = v\,
\text{\SMALL{$\blacktriangleright $}} (g\cdot m),\label{eq:FVsuper-module1}\\
&v\, \text{\SMALL{$\blacktriangleright $}}(w\, \text{\SMALL{$\blacktriangleright $}}\, m) + 
w\, \text{\SMALL{$\blacktriangleright $}}(v\, \text{\SMALL{$\blacktriangleright $}}\, m) = [v,w]\triangleright m, 
\label{eq:FVsuper-module2}
\end{align}
where $g \in F$, $v,w\in \sV$ and $m \in \ms{M}$. The last $\triangleright$ indicates 
the action by $\Lie(F)$ induced from the action $\cdot$ by $F$; cf. \eqref{eq:hy(F)-action}. 
\end{itemize}
\end{theorem}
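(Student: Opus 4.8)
The plan is to establish the bijections (a)$\leftrightarrow$(c) and (b)$\leftrightarrow$(c) separately, in each case first for $\dim\ms{M}<\infty$ (where the relevant general linear super-objects are available) and then to pass to filtered colimits. For the reduction to finite dimensions: a right $\cB$-super-comodule is a filtered union of finite-dimensional sub-comodules; an $F$-super-module is, by Convention \ref{con:F-super-module}, a filtered union of finite-dimensional $F$-super-submodules; and given a pair $(\cdot,\blacktriangleright)$ as in (c) and a finite-dimensional $F$-super-submodule $\ms{N}\subset\ms{M}$ (automatically stable under the induced $\triangleright$), the span of $\ms{N}$ together with all iterated images $v_{i_1}\blacktriangleright\cdots\blacktriangleright v_{i_r}\blacktriangleright\ms{N}$, with the $v_{i_j}$ ranging over a fixed basis of $\sV$, is finite-dimensional by a PBW-type straightening using \eqref{eq:FVsuper-module2} (and $\mr{char}\,\Bbbk\ne2$, which gives $v\blacktriangleright(v\blacktriangleright m)=\tfrac12[v,v]\triangleright m$), and is moreover stable under $\cdot$, $\blacktriangleright$ and $\triangleright$ by \eqref{eq:FVsuper-module1}. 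Hence all three sets are filtered colimits of their finite-dimensional analogues, and it suffices to produce compatible bijections in finite dimensions.

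For (a)$\leftrightarrow$(c) with $\dim\ms{M}<\infty$: a $G$-super-module structure on $\ms{M}$ is by definition a homomorphism $\phi:G\to\mr{GL}(\ms{M})$ of super Lie groups, which by Theorem \ref{thm:equivalence} is the same thing as a morphism $(F,\sV)\to\Phi(\mr{GL}(\ms{M}))$ of Harish-Chandra pairs. By Example \ref{ex:GL}, $\Phi(\mr{GL}(\ms{M}))$ is the pair whose Lie group is $\mr{GL}(\ms{M}_0)\times\mr{GL}(\ms{M}_1)$ and whose purely odd part is $\Hom(\ms{M}_0,\ms{M}_1)\oplus\Hom(\ms{M}_1,\ms{M}_0)$, the group acting by conjugation $w^h=h^{-1}\circ w\circ h$ and the bracket being the anti-commutator $w\circ w'+w'\circ w$. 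Unravelling a morphism $(\phi_{\red},\psi)$ into this pair: the Lie-group homomorphism $\phi_{\red}$ is exactly an $F$-super-module structure $\cdot$ on $\ms{M}$; the linear map $\psi:\sV\to\Hom(\ms{M}_0,\ms{M}_1)\oplus\Hom(\ms{M}_1,\ms{M}_0)$ is exactly a super-linear map $\blacktriangleright:\sV\otimes\ms{M}\to\ms{M}$; condition (P3) of Definition \ref{def:HCP} becomes \eqref{eq:FVsuper-module1}; and condition (P4), with the anti-commutator bracket on the target and $\Lie(\phi_{\red})([v,w])$ acting as $[v,w]\triangleright(-)$, becomes \eqref{eq:FVsuper-module2}. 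This bijection is natural in $\ms{M}$ and compatible with inclusions of finite-dimensional submodules.

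For (b)$\leftrightarrow$(c) with $\dim\ms{M}<\infty$: this is the formally identical argument carried out in the algebraic setting. A $G^{\mr{al}}$-super-module structure on $\ms{M}$ is a homomorphism $G^{\mr{al}}\to\mr{GL}(\ms{M})$ of affine super-group schemes, which by the algebraic Harish-Chandra pair equivalence (\cite[Theorem 29]{M3}, \cite[Theorem 6.5]{M4}, in the generalized form underlying Remark \ref{rem:modify_rem}) is a morphism of algebraic Harish-Chandra pairs. The algebraic pair attached to the algebraic super-group scheme $\mr{GL}(\ms{M})$ is again the one described in Example \ref{ex:GL} (same even group, same purely odd part, same conjugation action and anti-commutator bracket), while the one attached to $G^{\mr{al}}=\op{SSp}(\cB)$ is $(F^{\mr{al}},\sV)$ equipped with the original bracket $[\ ,\ ]$ — this is exactly how $\cB$ was built in Proposition \ref{prop:calB}, using that $\overline{\cB}=\cR(F)$ (so the even part is $F^{\mr{al}}$) and $\hy(G^{\mr{al}})=C$ (so $P(\hy(G^{\mr{al}}))_1=\sV$). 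Finally, a homomorphism of affine algebraic group schemes $F^{\mr{al}}\to\mr{GL}(\ms{M}_0)\times\mr{GL}(\ms{M}_1)$ is the same as a pair of analytic $F$-module structures on $\ms{M}_0$ and $\ms{M}_1$, since $O_{F^{\mr{al}}}=\cR(F)$ is precisely the Hopf algebra of analytic representative functions, and $\Lie(F^{\mr{al}})$ is identified with $\Lie(F)$ via the pairing \eqref{eq:Hopf_pairing}, so the $\Lie$-valued bracket condition reads the same. Unravelling the morphism of algebraic Harish-Chandra pairs therefore again produces exactly the data and relations of (c), compatibly with inclusions; passing to colimits yields (b)$\leftrightarrow$(c) for arbitrary $\ms{M}$, and composing with (a)$\leftrightarrow$(c) completes the proof.

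I expect the main obstacle to lie in the algebraic-side input of the third paragraph: making rigorous that the algebraic Harish-Chandra pair attached to $G^{\mr{al}}=\op{SSp}(\cB)$ is precisely $(F^{\mr{al}},\sV)$ with the original bracket, and that $\Lie(F^{\mr{al}})=\Lie(F)$, in the generality where $\cB$ need not be finitely generated; this rests on the ``universal algebraic hull'' properties of $F^{\mr{al}}$ and on the pairing \eqref{eq:Hopf_pairing} (cf. \cite[p.1141]{HM}), together with a check that the cited algebraic equivalence and its module-theoretic reformulation persist in the generalized setting of Remark \ref{rem:modify_rem}. A secondary, purely bookkeeping difficulty is confirming that, since $\sV$ is purely odd and $\blacktriangleright$ parity-reversing on $\ms{M}$, the conjugation action $w^h=h^{-1}\circ w\circ h$ and the anti-commutator bracket inherited by $\sV_{\mr{GL}(\ms{M})}$ yield the unsigned identities \eqref{eq:FVsuper-module1}--\eqref{eq:FVsuper-module2} rather than sign-twisted variants.
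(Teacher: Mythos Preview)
Your treatment of (a)$\leftrightarrow$(c) in finite dimensions is exactly the paper's: both unwind a morphism of Harish-Chandra pairs into $\Phi(\mr{GL}(\ms{M}))$ via Example \ref{ex:GL} and Theorem \ref{thm:equivalence}. Your direct reduction of (c) to finite dimensions by PBW-straightening is correct, though the paper gets this for free as a corollary of (b)$\leftrightarrow$(c), which it proves first and for arbitrary $\ms{M}$.

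The genuine gap is in (b)$\leftrightarrow$(c). You invoke the algebraic Harish-Chandra equivalence for $G^{\mr{al}}=\op{SSp}(\cB)$, but the results you cite (\cite[Theorem 29]{M3}, \cite[Theorem 6.5]{M4}) are stated for affine \emph{algebraic} super-group schemes, and $\cB$ is typically not finitely generated. Remark \ref{rem:modify_rem} only shows that the \emph{construction} of a Hopf super-algebra from the data $(\ms{F},\mathfrak{C},\langle\ ,\ \rangle)$ goes through in that generality; it does not supply a category equivalence, nor does it identify the algebraic Harish-Chandra pair attached to $G^{\mr{al}}$. Your assertion $\Lie(F^{\mr{al}})=\Lie(F)$ is not established anywhere in the paper and need not hold: the Hopf pairing \eqref{eq:Hopf_pairing} gives a map $\Lie(F)\to\Lie(F^{\mr{al}})$, but there is no reason for it to be surjective when $\cR(F)$ is not finitely generated. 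You flag these issues yourself, but they are not side difficulties; they are the whole content of (b)$\leftrightarrow$(c).

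The paper's route for (b)$\leftrightarrow$(c) is entirely different and bypasses these problems. It uses the explicit construction of $\cB$ as a sub-object of the completion $\hat{\cA}$, which in turn embeds into $(F\ltimes\T(\sV))^*$ (Remark \ref{rem:hatA}). A pair $(\cdot,\blacktriangleright)$ satisfying \eqref{eq:FVsuper-module1} alone is the same as a left $F\ltimes\T(\sV)$-super-module structure whose $F$-part is analytic; by Proposition \ref{prop:topological_comodule} this is a topological right $(F\ltimes\T(\sV))^*$-super-comodule structure, and the analyticity condition is exactly what forces the coaction to land in $\ms{M}\,\hat{\otimes}\,\hat{\cA}$. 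One then checks directly that the coaction factors through $\ms{M}\otimes\cB\subset\ms{M}\,\hat{\otimes}\,\hat{\cA}$ if and only if \eqref{eq:FVsuper-module2} holds. This argument works for $\ms{M}$ of arbitrary dimension, needs no category equivalence on the algebraic side, and never touches $\Lie(F^{\mr{al}})$.
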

\begin{proof}
The first (and main) step is to prove the one-to-one correspondence (b) $\leftrightarrow$ (c). 
Recall from Section \ref{subsec:quasi-inverse} (see Remark \ref{rem:hatA}, in particular)
that $\cB$ is constructed as a quotient of 
the complete Hopf super-algebra in the wider sense $\hat{\cA}$, which is regarded as
a complete Hopf super-subalgebra of $(F \ltimes \T(\sV))^*$. 

Suppose that we are given such a pair $(\cdot,\text{\SMALL{$\blacktriangleright $}})$
as in (c) that satisfies \eqref{eq:FVsuper-module1}, but may not satisfy \eqref{eq:FVsuper-module2}.
Then $\text{\SMALL{$\blacktriangleright $}}$ uniquely extends to a left 
$\T(\sV)$-super-module structure on $\ms{M}$; it will be denoted by the same symbol 
$\text{\SMALL{$\blacktriangleright $}}$.  
Moreover, 
\eqref{eq:FVsuper-module1} ensures that the extended structure and $\cdot$ make $\ms{M}$ into a
left $F\ltimes \T(\sV)$-super-module. Conversely, every left $F\ltimes \T(\sV)$-super-module 
structure on $\ms{M}$ uniquely arises in this way, 
provided the following condition (An) is satisfied:
(An)~the structure restricted to $F\times \ms{M}\to \ms{M}$ is analytic. 
\medskip

\noindent
{\bf Claim.}\ 
\emph{The pair $(\cdot,\text{\SMALL{$\blacktriangleright $}})$ above gives rise to a
topological right $\hat{\cA}$-super-comodule structure
on the discrete super-vector space $\ms{M}$ 
\[
\rho : \ms{M} \to \ms{M} \, \hat{\otimes}\, \hat{\cA} = \mr{Hom}(\mathfrak{T}(\sV), \ms{M} \otimes \cR(F)) 
\]
so that
\[ \rho(m)(z) = \rho_0(z \, \text{\SMALL{$\blacktriangleright $}} \, m),\quad m \in \ms{M},\ z \in\T(\sV), \]
where $\rho_0$ denotes the $\cR(F)$-super-comodule structure corresponding to the 
$F$- (or $F^{\mr{al}}$-)super-module structure $\cdot$ of the pair.
Conversely, every topological right $\hat{\cA}$-super-comodule structure on $\ms{M}$ 
uniquely arises in this way.}
\medskip
 
Indeed, the $F \ltimes \T(\sV)$-super-module structure on $\ms{M}$ which arises from the pair 
$(\cdot,\text{\SMALL{$\blacktriangleright $}})$
corresponds to the topological right $(F\ltimes \T(\sV))^*$-super-comodule structure
\[
\tilde{\rho} : \ms{M} \to \ms{M} \, \hat{\otimes}\, (F\ltimes \T(\sV))^* = 
\mr{Hom}(\mathfrak{T}(\sV), \mr{Map}(F, \ms{M})) 
\]
given by
\[
\tilde{\rho}(m)(z)(g)= g\cdot (z \, \text{\SMALL{$\blacktriangleright $}} \, m), 
\quad m \in \ms{M},\ z \in\T(\sV),\ g \in F;
\]
see Proposition \ref{prop:topological_comodule}. Here $\mr{Map}(F, \ms{M})$ denotes
the super-vector space of all maps $F \to \ms{M}$; it naturally includes $\ms{M} \otimes \cR(F)$.
One sees that
$\tilde{\rho}$ takes values in 
$\ms{M} \, \hat{\otimes}\, \hat{\cA} = \mr{Hom}(\mathfrak{T}(\sV), \ms{M} \otimes \cR(F))$; 
indeed, this is equivalent to the condition (An) above. 
This proves the Claim. 

It remains to show that the $\rho$ arising from the pair 
$(\cdot,\text{\SMALL{$\blacktriangleright $}})$ takes values in 
\[
\ms{M} \otimes \cB=\mr{Hom}_D(C, \ms{M}\otimes \cR(F))\ \, \text{in} \ \, 
\ms{M}\, \hat{\otimes}\, \hat{\cA}= \mr{Hom}_D(\mathcal{C}, \ms{M}\otimes \cR(F)),
\]
if and only if the pair
satisfies \eqref{eq:FVsuper-module2}, where $\ms{M}\otimes \cR(F)$ is 
regarded as a left $D$-module, with $D$ acting on the factor $\cR(F)$. 
This is easily seen from $\rho_0(a \triangleright m)= a \triangleright \rho_0(m)$, 
where $a \in \Lie(F)$,\ $m \in \ms{M}$. This completes the first step.

It results that given a pair of (c), $\ms{M}$ is a filtered union of finite-dimensional super-vector subspaces
which are stable under the pair of actions. Therefore, to prove (a) $\leftrightarrow$ (c), we may suppose
$\dim \ms{M} < \infty$. Then (c) is precisely the set of all morphism in $\ms{HCP}$, 
from $(F,\sV)$ to the Harish-Chandra pair of $\mr{GL}(\ms{M})$ given in Example \ref{ex:GL}. Hence  
the desired result follows by Theorem \ref{thm:equivalence}. 
\end{proof}

In view of Theorem \ref{thm:G-super-mod_struc}, $\cB$ and $G^{\mr{al}}$ may be called
\emph{the Hopf super-algebra of analytic representative functions}
on the super Lie group $G$, and \emph{the universal algebraic hull of} $G$, respectively.
These are super-analogues of the corresponding objects investigated by 
Hochschild and Mostow \cite{HM}; one of them is
the universal algebraic hull $F^{\mr{al}}$ of a Lie group $F$, discussed before.

%%%%%%%%%%%%%%%%%%%%%%%%%%%%%%%%%%%%%%%%%%%%%%%%%%%%%%%

\section{New construction of the quotient $G/H$}\label{sec:quotient}

\subsection{Super Lie subgroups}\label{subsec:super_Lie_sub}
Let $G$ be a super Lie group. By a \emph{super Lie subgroup} of $G$, 
we mean a super Lie group $H$, given a morphism $\phi : H \to G$ of super Lie groups
such that the associated morphism $(\phi_{\red}, \Lie(\phi)_1): (H_{\red},\Lie(H)_1) \to (G_{\red},\Lie(G)_1)$
of Harish-Chandra pairs satisfies
\begin{itemize}
\item[(i)] $\phi_{\red}: H_{\red} \to G_{\red}$ is a closed embedding, and
\item[(ii)] $\Lie(\phi)_1\, (=\Lie(\phi)|_{\Lie(H)_1}) : \Lie(H)_1 \to \Lie(G)_1$ is injective.
\end{itemize}
Obviously, (ii) may be replaced with one of the following conditions which are equivalent to each other:
\begin{itemize}
\item[(iii)] $\Lie(\phi):\Lie(H) \to \Lie(G)$ is injective;
\item[(iii)$'$] $\hy(\phi) : \hy(H) \to \hy(G)$ is injective.
\end{itemize}
We present $\phi$ so as $H \subset G$, and regard $\phi_{\red}$, $\Lie(\phi)$ and $\hy(\phi)$ as inclusions. 
The Lie group $H_{\red}$ is, therefore, a Lie subgroup of $G_{\red}$; see \cite[Definition 6.6, Proposition 6.7]{We}. 

Notice that $G_{\red}$ is a super Lie subgroup of $G$. One sees easily that the following are equivalent:
\begin{itemize}
\item[(a)] The inclusion $G_{\red}\hookrightarrow G$ splits as a morphism of super Lie groups;
\item[(b)] The Lie super-algebra $\Lie(G)$ of $G$ is $\mathbb{N}$-graded, or equivalently, $[\Lie(G)_1, \Lie(G)_1]=0$.
\end{itemize}
If these are satisfied we say that $G$ is \emph{$\mathbb{N}$-graded}. 
We remark that $G_{\red}\hookrightarrow G$ then has a canonical retraction since the corresponding morphism
$(G_{\red}, 0) \to (G_{\red},\Lie(G)_1)$ of Harish-Chandra pairs does. 

Modify the Harish-Chandra pair $(G_{\red}, \Lie(G)_1)$ corresponding to $G$, replacing the equipped bracket with the zero map.
To the modified Harish-Chandra pair corresponds an $\mathbb{N}$-graded super Lie group, which we denote by $\ms{gr}\, G$. 
This $\ms{gr}\, G$ is intrinsically constructed from the original $G$, by applying $\ms{gr}$ to the structure sheaf of $G$ and 
to the sheaf-morphisms $\Delta$, $S$, such as given by \eqref{eq:Delta_S}, 
that are associated with the product $\mu$ and the inverse $\iota$ on $G$; see \cite[Section 4]{V}, for example. 

%%%%%%%%%%%%%%%%%%%%%%%%%%%%%%%%%%%%%%%%%%%%%%%%%%%%%%%

\subsection{The theorem on $G/H$}\label{subsec:quotient_statement}
Let $G=(|G|,\cO)$ be a super Lie group with the associated Lie group $G_{\red}=(|G|,\cF)$.
We thus denote the structure sheaves $\cO_G$, $\cF_{G_{\red}}$ by $\cO$, $\cF$, simply. 
We set $\sW=(\Lie(G)_1)^*$. 

Recall that $G$ is a group-object in $\ms{SMFD}$. A left (resp., right) $G$-equivariant object will be called
a \emph{left} (resp., \emph{right}) \emph{$G$-equivariant super-manifold}. Note that $G$ itself is left and right $G$- and $G_{\red}$-equivariant. 
Recall from Proposition \ref{prop:eta} and \eqref{eq:cP}
the canonical sheaf-isomorphism $\eta$
composed with a non-canonical one,
\[ 
\cO \overset{\simeq}{\longrightarrow} \cF \otimes \wedge(\sW). 
\]
In view of \eqref{eq:Delta(p)} (suppose $a=1$ there), this last gives an isomorphism  
\begin{equation*}
G_{\red}\times (\{*\},~\wedge(\sW)) \overset{\simeq}{\longrightarrow} G
\end{equation*}
of left $G_{\red}$-equivariant super-manifolds, which is reduced to the identity map $\mr{id}_{G_{\red}}$ on $G_{\red}$. 
We have an analogous, right $G_{\red}$-equivariant isomorphism 
\begin{equation}\label{eq:Gred-equiv}
(\{*\},~\wedge(\sW)) \times G_{\red} \overset{\simeq}{\longrightarrow} G,
\end{equation}
which will be of use, in fact. 
If $G$ is $\mathbb{N}$-graded, we can choose as the last two isomorphisms, canonical isomorphisms of super Lie groups,
replacing the direct products on the left-hand side with semi-direct products; see
the remark given in the second last paragraph of the preceding subsection.

Recall that $H_{\red}$ is a Lie subgroup of $G_{\red}$. 
Notice from Proposition \ref{prop:Gred-super-module} 
that $H_{\red}$ acts analytically by left adjoint on the hyper-super-algebra $\hy(H)$ of $H$, so that we have
the super-cocommutative Hopf super-algebra of semi-direct product
\[ 
J:=\hy(H)\rtimes H_{\red}, 
\]
in which every element of $H_{\red}$ is supposed to be group-like. This includes
\[ 
K:=\hy(H_{\red})\rtimes H_{\red} 
\]
as the largest purely even Hopf super-subalgebra. 
Just as above and in what follows, as well,
 (super-)modules over a Lie group or its actions are explicitly said to be \emph{analytic}, if that is the case;
cf. Convention \ref{con:F-super-module}. 

It is known (see \cite[Theorem 1 on Page 108]{S}) that the set of left cosets
\[ 
Q:=G_{\red}/H_{\red},
\]
equipped with the quotient topology, is uniquely made into a manifold so that the quotient morphism
which we denote by
\[
\pi : G_{\red}\to Q=G_{\red}/H_{\red}
\]
is a submersion, that is, $T_g\pi : T_g(G_{\red}) \to T_{\pi(g)}(Q)$ is surjective for every $g \in G_{red}$; 
the symbols are and will be used to present the underlying topological spaces $|G_{\red}|$, $|H_{\red}|$,
$|Q|$ or the continuous map $|\pi|$, as well. 
Given an open set $U$ of $Q$, the open submanifold $\pi^{-1}(U)$ of $G_{\red}$ is right $H_{\red}$-stable 
(i.e., stable under the right multiplication by $H_{\red}$),
and so the induced action makes $\cF(\pi^{-1}(U))$ into a left $H_{\red}$-module algebra. 
With this action combined with the natural action by $\hy(H_{\red})$ on $\cF$ (see Proposition \ref{prop:hy(G_red)-module}), 
$\cF(\pi^{-1}(U))$ turns into a left $K$-module algebras. 
Moreover, $\pi_*\cF$ turns into a sheaf of those algebras; 
see the proof of Lemma \ref{lem:J-super-algebra} below. 
One sees that the structure sheaf $\cF_Q$ of $Q$ associates to every open set $U\subset Q$ the subalgebra 
\begin{equation}\label{eq:FQ}
\cF_Q(U)=\cF(\pi^{-1}(U))^K
\end{equation}
of $\cF(\pi^{-1}(U))$ which consists of the $K$-invariants; in general, an \emph{invariant} is defined to be an element
$f$ such that $a\triangleright f=\epsilon(a)f$ for all elements $a$ of the Hopf (super-)algebra in question.  
It is easy to see $\cF_Q(U)\subset \cF(\pi^{-1}(U))^K$, and the equality follows
by passing to the stalks, using the fact:
$\pi$ is a principal $H_{\red}$-bundle \cite[Example 8.21]{We}. 
This means that $Q$ is covered by admissible open subsets.
Here we say that an open set $\emptyset \ne U \subset Q$ is \emph{admissible} if there exists a \emph{trivialization}
\begin{equation}\label{eq:triv}
\ms{triv}_U : U\times H_{\red} \overset{\simeq}{\longrightarrow} \pi^{-1}(U),
\end{equation}
that is, a right $H_{\red}$-equivariant isomorphism which makes the diagram
\[
\begin{xy}
(0,0)   *++{U\times H_{\red}}  ="1",
(30,0)  *++{ \pi^{-1}(U)}    ="2",
(15,-18) *++{U}          ="3",
{"1" \SelectTips{cm}{} \ar @{->}^{\ms{triv}_U} "2"},
{"1" \SelectTips{cm}{} \ar @{->}_{\mr{pr}} "3"},
{"2" \SelectTips{cm}{} \ar @{->}^{\pi} "3"}
\end{xy}
\]
commutative, where $\mr{pr}$ denotes the natural projection.

Notice from Proposition \ref{prop:hy(G_red)-module} 
that $\cO$ is a sheaf of left $\hy(H)$-super-module super-algebras. 

\begin{lemma}\label{lem:J-super-algebra}
Given an open set $U$ of $Q$,\ $\pi^{-1}(U)$, regarded as an open super-submanifold of $G$, is right $H_{\red}$-stable.
The induced left $H_{\red}$-action on $\cO(\pi^{-1}(U))$, combined with the action by $\hy(H)$ noticed above,
makes $\cO(\pi^{-1}(U))$ into a left $J$-super-module super-algebra. This action by $J$ is natural in $U$. 
\end{lemma}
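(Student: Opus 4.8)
The statement decomposes into three assertions: (a) $\pi^{-1}(U)$, viewed as an open super-submanifold of $G$, is right $H_{\red}$-stable; (b) the resulting left $H_{\red}$-action on $\cO(\pi^{-1}(U))$ together with the $\hy(H)$-action of Proposition \ref{prop:hy(G_red)-module} makes $\cO(\pi^{-1}(U))$ a left $J$-super-module super-algebra; (c) the $J$-action is natural in $U$. The plan is to reduce everything to statements at the level of the underlying group $|G|$ and the hyper-super-algebra $\hy(H)$, since $J = \hy(H)\rtimes H_{\red}$ is generated by these two pieces and the cross relations are dictated by the left adjoint action of $H_{\red}$ on $\hy(H)$.

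For (a): the topological space $|\pi^{-1}(U)|$ is, by definition of $\pi$, a union of left cosets of $H_{\red}$ in $G_{\red}$, hence stable under right multiplication by elements of $H_{\red}$; since $|G|=|G_{\red}|$ and $H_{\red}\subset G_{\red}$ as a Lie subgroup, $|\pi^{-1}(U)|$ is stable under right translation by $|H_{\red}|$ in $|G|$. The right multiplication morphism $R_h:G\to G$ for $h\in H_{\red}\subset G$ is an isomorphism of super-manifolds (by an obvious right-handed analogue of Lemma \ref{lem:constant_sdim}~(1)), and it maps the open super-submanifold $\pi^{-1}(U)$ isomorphically onto itself because it does so on the underlying space. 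This gives the left $H_{\red}$-action on $\cO(\pi^{-1}(U))$ by $h\mapsto (R_h^{*})^{-1}$. First I would verify this is an action by super-algebra automorphisms (immediate from functoriality of $R_{(-)}$) and that it is compatible with restriction along inclusions $U'\subset U$ of open sets of $Q$, which handles the $H_{\red}$-part of (c). Analyticity of the $H_{\red}$-action on $\cO(\pi^{-1}(U))$ is established by the same stalk-by-stalk argument as in the proof of Proposition \ref{prop:hy(G_red)-module} and Proposition \ref{prop:Gred-super-module}~(2): choose a basis of $\cO_e/\m_e^{\ell}$, pull back along the adjoint-type map, and show the matrix coefficients are analytic functions; I expect this to go through verbatim, so I would only indicate it.

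For (b), the real content is the \emph{cross relation} between the $H_{\red}$-action and the $\hy(H)$-action: for $h\in H_{\red}$, $a\in\hy(H)$ and $p\in\cO(\pi^{-1}(U))$ one must have
\[
h\triangleright(a\triangleright p) = (\mathrm{Ad}(h)(a))\triangleright(h\triangleright p),
\]
which is precisely the defining relation in the semidirect product $J=\hy(H)\rtimes H_{\red}$ (with $H_{\red}$ acting on $\hy(H)$ by left adjoint, as in the definition of $J$). The $\hy(H)$-action on $\cO$ is, by Lemma \ref{lem:hy(G_red)-action}, the one induced from the right coaction $\hat\Delta_{g,e}:\hO_g\to\hO_g\hotimes\hO_e$ via $\hy(G)=(\hO_e)^\star$, restricted along $\hy(H)\hookrightarrow\hy(G)$; the $H_{\red}$-action is induced from $R_h$. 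Passing to completed stalks and using the identification $\hO_{G\times G,(g,h)}=\hO_g\hotimes\hO_h$ from Proposition \ref{prop:identify}, the compatibility becomes an identity of super-algebra maps $\hO_{gh}\to\hO_{gh}$ obtained by unwinding the coassociativity of $\hat\Delta$ together with the formula $\beta_h=(\widehat{L^*_{h^{-1}}})_e\circ(\widehat{R^*_h})_e$ from the proof of Proposition \ref{prop:Gred-super-module}~(1). Concretely: the left adjoint action of $h$ on $\hy(H)$ dualizes to $\beta_h$ on $\hO_e$ (restricted appropriately), and the required commutation is then a consequence of the fact that right translation $R_h$ on $G$ intertwines, via $\hat\Delta$, the coaction on $\hO_{gh}$ with the coaction on $\hO_g$ twisted by $\beta_h$ — essentially the same computation as the one verifying $\hat\Delta_{g,h}$ is a coaction, combined with the naturality square for $R_h$. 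That $\cO(\pi^{-1}(U))$ is a super-\emph{algebra} object for the $J$-action follows because both pieces act by super-algebra maps: $H_{\red}$ via automorphisms, $\hy(H)$ via the super-module super-algebra structure of Lemma \ref{lem:hy(G_red)-action}, and an algebra over $\hy(H)\rtimes H_{\red}$ is nothing but an algebra that is simultaneously an algebra for each factor with the compatible cross relation.

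\textbf{Expected main obstacle.} The bookkeeping in step (b) — carefully matching the \emph{side} conventions (left vs.\ right coaction, left vs.\ right adjoint, the direction of the semidirect product $J=\hy(H)\rtimes H_{\red}$) — is where I expect to have to be most careful; a sign or side error there would give the opposite of a semidirect-product relation. The strategy to control this is to do the entire verification on completed stalks in terms of the explicit maps $\hat\Delta_{g,h}$, $\widehat{L^*_g}$, $\widehat{R^*_g}$, $\hat\epsilon_g$ and their relations collected in Remark \ref{rem:super_Lie_morphism}~(1) and the proof of Proposition \ref{prop:Gred-super-module}, rather than working abstractly. Everything else — $H_{\red}$-stability, the automorphism property, analyticity, and naturality in $U$ — I regard as routine given the machinery already developed, and I would state these with brief pointers to the analogous earlier arguments.
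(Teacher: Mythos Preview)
Your proposal is correct and follows essentially the same approach as the paper: dispose of stability and naturality quickly, then verify the semidirect-product cross relation ${}^ha = h\circ a\circ h^{-1}$ on completed stalks using the explicit formula $(\widehat{R^*_h})_g = (\mathrm{id}\,\hat\otimes\,\hat\epsilon_h)\circ\hat\Delta_{g,h}$ together with the pairing identity \eqref{eq:F-balanced}. One small remark: analyticity of the $H_{\red}$-action on $\cO(\pi^{-1}(U))$ is not part of the statement (being a $J$-super-module is a purely algebraic condition over the Hopf super-algebra $J$), so that part of your plan can be dropped.
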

\begin{proof}
The first stability follows from \eqref{eq:Gred-equiv}, while the last naturality follows since the actions by
$H_{\red}$ and by $\hy(H)$ are both natural. 

To complete the proof, it suffices to prove that for $h \in H_{\red}$,\ $a\in \hy(H)$, 
the induced actions 
\[ {}^ha\quad \text{and}\quad h\circ a\circ h^{-1} \]
on completed stalks $\hO_g$, $g \in \pi^{-1}(U)$, coincide. Notice that the action by $h$ is
\[
\hO_{gh}\overset{\hat{\Delta}_{g,h}}{\longrightarrow}\hO_g\, \hat{\otimes}~\hO_h
\overset{\mr{id} \hat{\otimes} \hat{\epsilon}_h}{\longrightarrow} \hO_g. 
\]
Then the desired result follows easily from the relation \eqref{eq:F-balanced}
modified for the opposite-sided actions. 
\end{proof}

\begin{definition}\label{def}
We let $G/H$ denote the super-ringed space with the underlying topological space $Q=G_{\red}/H_{\red}$, 
equipped with the sheaf which associates to every open set $U$, 
\[
\cO_{G/H}(U):= \cO(\pi^{-1}(U))^J,
\]
the super-subalgebra of $J$-invariants in $\cO(\pi^{-1}(U))$; this is seen to define a sheaf by the naturality
shown in the preceding lemma. 

We define a morphism of super-ringed spaces
\[
\Pi =(\pi, \Pi^*) : G \to G/H, 
\]
which consists of the continuous map $\pi : |G|=G_{\red} \to Q= |G/H|$ and the
the sheaf-morphism
\[
\Pi^* : \cO_{G/H}(U) =\cO(\pi^{-1}(U))^J \hookrightarrow \cO(\pi^{-1}(U))=\pi_*\cO(U)
\]
given by the inclusion.
\end{definition}

\begin{theorem}\label{thm:quotient}
We have the following.
\begin{itemize}
\item[(1)]
The super-ringed space $G/H$ is a super-manifold, whence $\Pi : G \to G/H$ is a morphism in the category
$\ms{SMFD}$ of super-manifolds. This $\Pi$ has the following properties:
\begin{itemize}
\item[(i)]
For every $g \in |G|$,\ $d\Pi_g : T_gG \to T_{\pi(g)}(G/H)$ is surjective; 
\item[(ii)]
$G \times H \rightrightarrows G \overset{\Pi}{\longrightarrow} G/H$ is 
a co-equalizer diagram in $\ms{SMFD}$, where
the paired arrows indicate the product on $G$ restricted to $G \times H$ and the natural projection. 
\end{itemize}
\item[(2)] 
Given an admissible open set $U$ of $Q$ and a trivialization such as in \eqref{eq:triv}, 
$\pi^{-1}(U)$, regarded as an open super-submanifold of $G$, is right $H$-stable, and
there exists an isomorphism 
\begin{equation*}
\ms{Triv}_U : (U,\cO_{G/H}|_U) \times H \overset{\simeq}{\longrightarrow} (\pi^{-1}(U), \cO|_{\pi^{-1}(U)})
\end{equation*}
of right $H$-equivariant super-manifolds such that
\[
(\ms{Triv}_U)_{\red}=\ms{triv}_U,
\]
and the diagram
\[
\begin{xy}
(0,0)   *++{(U,\cO_{G/H}|_U) \times H}  ="1",
(44,0)  *++{(\pi^{-1}(U), \cO|_{\pi^{-1}(U)})}    ="2",
(22,-22) *++{(U,\cO_{G/H}|_U)}          ="3",
{"1" \SelectTips{cm}{} \ar @{->}^{\ms{Triv}_U} "2"},
{"1" \SelectTips{cm}{} \ar @{->}_{\mr{projection}} "3"},
{"2" \SelectTips{cm}{} \ar @{->}^{\Pi} "3"}
\end{xy}
\]
is commutative. 
\end{itemize}
\end{theorem}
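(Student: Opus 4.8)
The plan is to prove Part~(2) first, since it simultaneously yields the local structure of $\cO_{G/H}$, and then to deduce Part~(1). Fix an admissible open set $U\subset Q$ together with a classical trivialization $\ms{triv}_U:U\times H_{\red}\overset{\simeq}{\longrightarrow}\pi^{-1}(U)$. Since $\pi^{-1}(U)\subset|G|$ is right $|H|=|H_{\red}|$-stable, the right $H$-action on $G$ (the morphism $\mu$ restricted to $G\times H$) restricts to an action on the open super-submanifold $(\pi^{-1}(U),\cO|_{\pi^{-1}(U)})$; this gives the stability assertion in~(2). By Corollary~\ref{cor:super_Lie_group_split} that super-submanifold is s-split, and via the canonical isomorphism $\eta$ of Proposition~\ref{prop:eta} together with the explicit description of the quasi-inverse, $\cO(\pi^{-1}(U))$ is identified with $\Hom_{\hy(G_{\red})}(\hy(G),\cF(\pi^{-1}(U)))$, carrying the $J=\hy(H)\rtimes H_{\red}$-action of Lemma~\ref{lem:J-super-algebra}; similarly $\cO_H(|H|)\cong\Hom_{\hy(H_{\red})}(\hy(H),\cF(H_{\red}))$, and $\cF_Q(U)=\cF(\pi^{-1}(U))^{K}$ with $K=\hy(H_{\red})\rtimes H_{\red}$.

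The heart of the matter is to lift $\ms{triv}_U$ to the super level. The classical trivialization says precisely that $\cF(\pi^{-1}(U))$ is ``free'' as a $K$-module, with invariants $\cF_Q(U)$ and $\cF(\pi^{-1}(U))\cong\cF_Q(U)\,\hotimes\,\cF(H_{\red})$, $K$ acting through the second factor. Feeding this, together with the freeness of $\hy(G)$ over the hyper-super-subalgebra $\hy(H)$ (a PBW-type fact, cf.~\cite[Theorem~3.6]{M2}) and of $\wedge(\Lie(G)_1)$ over $\wedge(\Lie(H)_1)$, into the fundamental theorem of Hopf modules and the relevant $\Hom$--$\otimes$ adjunction --- exactly as in the algebraic argument behind \cite[Theorem~4.12]{MT} --- produces a right $H$-equivariant, $\cO_{G/H}(U)$-algebra isomorphism $\cO_{G/H}(U)\,\hotimes\,\cO_H(|H|)\overset{\simeq}{\longrightarrow}\cO(\pi^{-1}(U))$, which may moreover be normalized so as to reduce modulo the odd ideals to $\ms{triv}_U^{*}$. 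Equivalently, one gets a super-section $s:(U,\cO_{G/H}|_U)\to(\pi^{-1}(U),\cO|_{\pi^{-1}(U)})$ of $\Pi$ with $s_{\red}=\ms{triv}_U|_{U\times\{*\}}$ (the classical section), and then $\ms{Triv}_U(x,h):=s(x)\cdot h$ is automatically right $H$-equivariant, compatible with the projections onto $U$, and satisfies $(\ms{Triv}_U)_{\red}=\ms{triv}_U$. That $\ms{Triv}_U$ is an isomorphism follows from the $\ms{gr}$-argument used in Theorem~\ref{thm:IFT} and Lemma~\ref{lem:isomorphism}: its reduction $\ms{triv}_U$ is an isomorphism (so the degree-$0$ part is fine), while in degree~$1$ its differential at a point over $x$ is, by Proposition~\ref{prop:identify}, the sum of $ds_x$ --- a section of $d\Pi$, hence injective with image a complement of $\ker d\Pi$ --- and the orbit map of the $H$-action, whose odd part is the inclusion $\Lie(H)_1\hookrightarrow\Lie(G)_1$ exhausting $\ker d\Pi$ in odd degree; hence it is bijective, and right $H_{\red}$-equivariance propagates bijectivity over all of $\pi^{-1}(U)$. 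As a by-product $\cO_{G/H}|_U\cong\cF_Q|_U\otimes\wedge\big((\Lie(G)_1/\Lie(H)_1)^{*}\big)$, so $(U,\cO_{G/H}|_U)$ is s-split.

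Part~(1) then follows quickly. The admissible sets $U$ cover $Q$ and each $(U,\cO_{G/H}|_U)$ is s-split, so $G/H$ is a super-manifold; since $\ms{SMFD}$ is a full subcategory of $\ms{SRS}$, the super-ringed-space morphism $\Pi=(\pi,\Pi^{*})$ is a morphism in $\ms{SMFD}$. For~(i), locally $\Pi$ is $\ms{Triv}_U^{-1}$ followed by the projection $(U,\cO_{G/H}|_U)\times H\to(U,\cO_{G/H}|_U)$, whose differential is, by Proposition~\ref{prop:identify}, the split surjection $T_x(U,\cO_{G/H}|_U)\oplus T_hH\to T_x(U,\cO_{G/H}|_U)$; hence $d\Pi_g$ is surjective. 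For~(ii), on underlying spaces $|\Pi|$ coequalizes the two arrows and $Q$ carries the quotient topology, so any morphism $\psi:G\to Z$ in $\ms{SMFD}$ with $\psi\circ\mu|_{G\times H}=\psi\circ\mr{pr}_1$ induces a unique continuous $|\bar\psi|:Q\to|Z|$. On structure sheaves I would isolate the lemma: for a local section $f$ of $\cO$ over $\pi^{-1}(W)$, $f$ is $J$-invariant if and only if $\mu^{*}(f)$ and $f\otimes 1$ agree over the relevant open subset of $G\times H$; this is checked on completed stalks, using that the $H_{\red}$-part of the $J$-action evaluates $\mu^{*}f$ against points of $H$ while the $\hy(H)$-part pairs it against $\hO_{H,e}$, and that these jointly detect $\cO_H$. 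Granting it, the hypothesis forces $\mu^{*}(\psi^{*}g)$ and $\psi^{*}g\otimes 1$ to agree for every local section $g$ of $\cO_Z$, so $\psi^{*}g$ is $J$-invariant; thus $\psi^{*}$ factors through $\cO_{G/H}=(\pi_*\cO)^{J}\hookrightarrow\pi_*\cO$, yielding the unique $\bar\psi=(|\bar\psi|,\bar\psi^{*})$ with $\bar\psi\circ\Pi=\psi$, which lies in $\ms{SMFD}$ as a morphism between super-manifolds.

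The step I expect to be the real obstacle is the lifting in the second paragraph --- making rigorous the passage from the classical principal $H_{\red}$-bundle to its super companion. Over a general complete field, $H_{\red}$-modules need not be semisimple, so $\Lie(G)_1\twoheadrightarrow\Lie(G)_1/\Lie(H)_1$ admits no $H_{\red}$-equivariant splitting, and the naive ``choose odd fibre coordinates'' recipe does not yield an equivariant $\ms{Triv}_U$; one genuinely needs the Hopf-module / cleft-extension machinery, as in \cite{MT}. The technical work is to transport that algebraic argument through the present sheaf-theoretic, convergent-power-series and completed-stalk bookkeeping: one must verify that the $J$-action built from $\mu^{*}$ really makes $\cO(\pi^{-1}(U))$ a relative Hopf module over $(\cO_{G/H}(U),\cO_H(|H|))$ with the expected coinvariants, that the classical freeness over $K$ upgrades to the faithful-flatness / normal-basis input needed in the super setting, and that the resulting isomorphism can be normalized to reduce to the prescribed $\ms{triv}_U$. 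By contrast, checking that $\ms{Triv}_U$ is an isomorphism (via the $\ms{gr}$-argument) and the ``$J$-invariant $\Leftrightarrow$ $\mu^{*}f=f\otimes 1$'' lemma are comparatively routine.
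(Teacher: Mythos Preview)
Your overall strategy---prove Part~(2) first via Hopf-module techniques lifted from \cite{MT}, then read off Part~(1)---matches the paper's, and you have correctly identified both the real obstacle (the equivariant lifting when no $H_{\red}$-splitting of $\Lie(G)_1\twoheadrightarrow\Lie(G)_1/\Lie(H)_1$ exists) and the remedy (Hopf modules, not naive coordinate choices). The execution, however, is organized differently. The paper does not build a super-section $s$ and then argue that $(x,h)\mapsto s(x)h$ is an isomorphism via a differential/$\ms{gr}$ check; instead it constructs in one stroke an explicit $J$-super-linear sheaf isomorphism
\[
\Omega_U:\pi_*\cO|_U\ \overset{\simeq}{\longrightarrow}\ \Hom_{\hy(H_{\red})}\!\big(\hy(H),\ \wedge(\sZ)\otimes\pi_*\cF|_U\big),
\qquad \sZ=(\Lie(G)_1/\Lie(H)_1)^*,
\]
assembled from (i)~the canonical $\Lambda:\cO\to\Hom_{\hy(H_{\red})}(\hy(H),\cO)$, (ii)~the right-$G_{\red}$-equivariant splitting $\cO\simeq\wedge(\sW)\otimes\cF$, and (iii)~a $(K,\pi_*\cF|_U)$-linear retraction $\Theta$ built from the standard Hopf-module isomorphisms $\nu_{\sW},\nu_{\sZ}$ over $\cR(H_{\red})$ (base-changed along $\varrho_{H_{\red}}$) together with an \emph{arbitrary} linear retraction $\sW\to\sZ$. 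That $\Omega_U$ is an isomorphism is proved by passing to $\ms{gr}\,G$ and invoking the equivalence $\mathsf{SMod}_{\wedge(\sW)}^{\wedge(\sW_H)}\approx\mathsf{SMod}_{\wedge(\sZ)}$ from \cite[Theorem~3.1]{MT}; taking $J$-invariants immediately gives $\cO_{G/H}|_U\simeq\wedge(\sZ)\otimes\cF_Q|_U$, and $\ms{Triv}_U$ is then read off from the completed-stalk decomposition of $\Omega_U$ along $\hat{\Delta}_{f,h}$. So the paper's ``isomorphism'' step happens at the level of the full sheaf map $\Omega_U$, not via an Inverse-Function-Theorem argument on a separately constructed $\ms{Triv}_U$. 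Your invocation of ``freeness of $\hy(G)$ over $\hy(H)$'' is not what the paper uses (and would need justification in positive characteristic); the actual input is only the surjection $\sW\to\sW_H$ and the resulting Hopf-module equivalence. Your treatment of (1)(ii) via the lemma ``$J$-invariant $\Leftrightarrow \mu^*f=f\otimes 1$'' is reasonable; the paper simply says this ``follows easily'' from Part~(2).
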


\begin{rem}\label{rem:algebra_analogue}
(1)\
Due to the property (i) shown in Pat 1 above, the super-manifold $G/H$ may be called the \emph{quotient of $G$ by $H$}. 
Due to the property shown in Part 2, $\Pi : G\to G/H$ may be called a \emph{principal super $H$-bundle}. 
In fact, there is more shown: any non-super trivialization
can \emph{lift} to one in the super context. This result is likely new even when 
$\Bbbk = \mathbb{R}$ or $\mathbb{C}$. 

(2)\
Very recently, the second- and third-named authors \cite{MT} applied Hopf-algebraic 
techniques to give 
a new construction of the quotient $G/H$ in the algebraic situation
where $G$ is an affine algebraic super-group scheme over an arbitrary
field of characteristic $\ne 2$, and $H$ is a closed super-subgroup scheme.  
Pursuing an analogue of the construction in the present analytic situation 
resulted successfully in the theorem above; it was a surprise to the authors. 
\end{rem}

%%%%%%%%%%%%%%%%%%%%%%

\subsection{Proof of the theorem}\label{subsec:quotient_proof}
Choose arbitrarily 
an admissible open set $U \subset Q$ and a trivialization such as in \eqref{eq:triv}, 
and fix them. 
One sees that 
the direct image $\mr{pr}_*\cF_{U\times H_{red}}$ of 
the structure sheaf $\cF_{U\times H_{red}}$ of $U\times H_{\red}$
is a sheaf of left $K$-module algebras, and the sheaf-isomorphism 
\[ 
(\ms{triv}_U)^* : \pi_*\cF|_U\overset{\simeq}{\longrightarrow}  \mr{pr}_*\cF_{U\times H_{red}}
\]
associated with $\ms{triv}_U$ preserves the $K$-action. 
Compose the inverse $((\ms{triv}_U)^*)^{-1}$ 
with the algebra map $\cF_{H_{\red}}(H_{\red}) \to \cF_{U\times H_{\red}}(U\times H_{\red})$ associated with
the projection $U \times H_{\red} \to H_{\red}$, and restrict the resulting $\cF_{H_{\red}}(H_{\red}) \to \pi_*\cF|_U$ to the Hopf
algebra $\cR(H_{\red})$ of the analytic representative functions on $H_{\red}$; see Section \ref{subsec:repres_function}. 
We denote the thus (non-canonically) obtained map by
\[ 
\varrho_{H_{\red}} :\cR(H_{\red}) \to \pi_* \cF|_U, 
\]
through which $\pi_* \cF|_U$ turns into a sheaf of $K$-module algebras over $\cR(H_{\red})$
(that is, every $\varrho_{H_{\red},V} :\cR(H_{\red}) \to \cF(\pi^{-1}(V))$ is a $K$-module algebra map) on $U$. 
On the other hand 
we let
\[
\sigma_{G_{\red}} : \cR(G_{\red}) \to \pi_* \cF|_U
\]
denote the canonical map $\cF(G_{\red})\to \cF|_U$ restricted to the Hopf algebra $\cR(G_{\red})$ of the
analytic representative functions on $G_{\red}$, through which $\pi_* \cF|_U$ turns into a sheaf of $K$-module
algebras over $\cR(G_{\red})$ on $U$. 

Since $\Lie(G)_1$ is an analytic right $G_{\red}$-module by adjoint (see Remark \ref{rem:adjoint_action}), 
$\sW=(\Lie(G)_1)^*$ is an analytic left module over $G_{\red}$ and hence over $H_{\red}$. 
Let $\sW_H=(\Lie(H)_1)^*$. This as well is an analytic left $H_{\red}$-module. 
Recall $\Lie(H) \subset \Lie(G)$, and define an analytic left $H_{\red}$-module by
\[
\sZ =(\Lie(G)_1/\Lie(H)_1)^*. 
\]
Alternatively, this is defined by the short exact sequence  
$0 \to \sZ \to \sW \to \sW_H \to 0$
of analytic left $H_{\red}$-modules. 
Recall that analytic left $H_{\red}$-modules are identified with right $\cR(H_{\red})$-comodules.
Such comodules are naturally regarded as left $K$-modules by virtue of
the canonical Hopf paring $K\times \cR(H_{\red})\to \Bbbk$; cf.~\eqref{eq:Hopf_pairing}.

By a \emph{$(K, \pi_*\cF|_U)$-module} (\emph{sheaf}) we mean a $\pi_*\cF|_U$-module $\mathcal{M}$ 
which is at the same time a sheaf of left $K$-modules such that the structure $\pi_*\cF|_U\otimes \mathcal{M}\to \mathcal{M}$
is $K$-linear. A \emph{morphism} of such modules is a $K$-linear and $\pi_*\cF|_U$-linear one.
We are going to define two morphisms of $(J,\pi_*\cF|_U)$-modules,  
\begin{equation}\label{eq:composite}
\sW \otimes \pi_*\cF|_U\overset{\tilde{\kappa}_{\sW}^{-1}}{\longrightarrow} \pi_*\cF|_U\otimes \sW
\overset{\tilde{\theta}}{\longrightarrow} \pi_*\cF|_U\otimes \sZ,
\end{equation}
the first of which is the inverse of an isomorphism $\tilde{\kappa}_{\sW}$. 

In general, given a right $\cR(H_{\red})$-comodule $\ms{M}$ with the structure
$\ms{M} \to \ms{M}\otimes \cR(H_{\red}),\ m \mapsto m^{(0)} \otimes m^{(1)}$, we define
a map by
\[ 
\nu_{\ms{M}} : \cR(H_{\red})\otimes \ms{M}\to \ms{M}\otimes \cR(H_{\red}),\
\nu_{\ms{M}}(p\otimes m)=m^{(0)}\otimes p m^{(1)}. 
\]
In fact this is an isomorphism of $\cR(H_{\red})$-Hopf modules
\cite[p.83]{Sw}.  
The inverse is given by
\[ 
\nu_{\ms{M}}^{-1}(m\otimes p)=p\, S_{\cR(H_{\red})}(m^{(1)})\otimes m^{(0)}. 
\]
In the obvious manner, $\cR(H_{\red})\otimes \ms{M}$ 
and $\ms{M}\otimes \cR(H_{\red})$ both are
regarded as $\cR(H_{\red})$-modules. The former is regarded as the tensor product of
two right $\cR(H_{\red})$-comodules, while the latter is regarded as the right $\cR(H_{\red})$-comodule
$\cR(H_{\red})$ tensored with the vector space $\ms{M}$. Being $\cR(H_{\red})$-colinear,
$\nu_{\ms{M}}$ is $K$-linear. 
Choose arbitrarily a linear retraction 
\[ \ms{ret} : \sW \to \sZ \]
of the inclusion $\sZ \hookrightarrow \sW$, and define $\theta$ to be the unique map that
makes
\[
\begin{xy}
(0,0)   *++{\cR(H_{\red}) \otimes \sW}  ="1",
(36,0)  *++{\sW \otimes \cR(H_{\red})}    ="2",
(0,-18) *++{\cR(H_{\red}) \otimes \sZ}          ="3",
(36,-18)*++{\sZ \otimes \cR(H_{\red})}    ="4",
{"1" \SelectTips{cm}{} \ar @{->}^{\nu_{\sW}} "2"},
{"1" \SelectTips{cm}{} \ar @{->}_{\theta} "3"},
{"3" \SelectTips{cm}{} \ar @{->}^{\nu_{\sZ}} "4"},
{"2" \SelectTips{cm}{} \ar @{->}^{\ms{ret} \otimes \mr{id}} "4"}
\end{xy}
\]
into a commutative diagram of $\cR(H_{\red})$-Hopf modules; it is easy to express this $\theta$ explicitly, but we will not use the
expression. 
Let
\[
\tilde{\theta} : \pi_*\cF|_U \otimes \sW \to \pi_*\cF|_U\otimes \sZ
\]
be the base extension of $\theta$ along $\varrho_{H_{\red}}$. 
This $\tilde{\theta}$ is the second one of \eqref{eq:composite}, and is seen, indeed, to be a morphism between
two $(K, \pi_*\cF|_U)$-modules. 

For the $\nu_{\sW}$ above, $\sW$ is regarded as a right $\cR(H_{\red})$-comodule. 
When it is regarded as an original, right $\cR(G_{\red})$-comodule, an analogous
isomorphism
\[
\kappa_{\sW} : \cR(G_{\red}) \otimes \sW \overset{\simeq}{\longrightarrow} \sW\otimes\cR(G_{\red}) 
\]
of $\cR(G_{\red})$-Hopf modules is defined in the obvious manner. We let
\[
\tilde{\kappa}_{\sW} : \pi_*\cF|_U \otimes \sW \overset{\simeq}{\longrightarrow} \sW\otimes\pi_*\cF|_U
\]
be its base extension along $\sigma_{G_{\red}}$, which is seen to be a morphism between
two $(K, \pi_*\cF|_U)$-modules. 
Its inverse is the first one of \eqref{eq:composite}.

Let
\[ 
\Theta=\tilde{\theta} \circ \tilde{\kappa}_{\sW}^{-1} : \sW \otimes \pi_*\cF|_U\to \pi_*\cF|_U\otimes \sZ
\]
be the composite presented by \eqref{eq:composite}.

\begin{lemma}\label{lem:retraction}
This $\Theta$ is a retraction of the injection of $(K, \pi_*\cF|_U)$-modules
\[
\tilde{\kappa}_{\sW}|_{\sZ \otimes \pi_*\cF|_U} : \sZ \otimes \pi_*\cF|_U\to \pi_*\cF|_U\otimes \sW, 
\]
that is, the restriction of $\tilde{\kappa}_{\sW}$ to $\sZ \otimes \pi_*\cF|_U$. 
\end{lemma}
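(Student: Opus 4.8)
The plan is to reduce the assertion to a purely algebraic fact about Hopf modules over $\cR(H_{\red})$ — that the map $\theta$ splits the inclusion $\cR(H_{\red})\otimes\sZ\hookrightarrow\cR(H_{\red})\otimes\sW$ — and then to carry this over to sheaves by applying base extension along $\varrho_{H_{\red}}$ and conjugating by the isomorphism $\tilde\kappa_{\sW}$ that already enters the definitions of $\Theta$ and of the injection in question.

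\textbf{The algebraic core.} First I would check, by a direct computation, that $\theta\colon\cR(H_{\red})\otimes\sW\to\cR(H_{\red})\otimes\sZ$ restricts to the identity on $\cR(H_{\red})\otimes\sZ$. From the short exact sequence $0\to\sZ\to\sW\to\sW_H\to 0$ of analytic left $H_{\red}$-modules it follows that $\sZ$ is a right $\cR(H_{\red})$-subcomodule of $\sW$; hence for $p\otimes z\in\cR(H_{\red})\otimes\sZ$ one has $\nu_{\sW}(p\otimes z)=z^{(0)}\otimes p\,z^{(1)}=\nu_{\sZ}(p\otimes z)\in\sZ\otimes\cR(H_{\red})$. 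Since $\ms{ret}$ restricts to $\mr{id}_{\sZ}$, the map $\ms{ret}\otimes\mr{id}$ fixes this element, and the defining square of $\theta$ then returns $\nu_{\sZ}^{-1}\nu_{\sZ}(p\otimes z)=p\otimes z$. Thus $\theta$ is a retraction of the inclusion $j\colon\cR(H_{\red})\otimes\sZ\hookrightarrow\cR(H_{\red})\otimes\sW$ of $\cR(H_{\red})$-Hopf modules; note that $\ms{ret}\otimes\mr{id}$ — although $\ms{ret}$ is only a linear retraction, not an $H_{\red}$-equivariant one — is nonetheless a morphism of Hopf modules for the relevant Hopf-module structures, which is why $\theta$ is Hopf-linear. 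Applying the exact functor of base extension along the $K$-module algebra map $\varrho_{H_{\red}}\colon\cR(H_{\red})\to\pi_*\cF|_U$, and using its compatibility with composites and identities, I obtain that $\tilde\theta$ is a retraction of $\mr{id}_{\pi_*\cF|_U}\otimes(\sZ\hookrightarrow\sW)$, the evident inclusion of $(K,\pi_*\cF|_U)$-modules $\pi_*\cF|_U\otimes\sZ\hookrightarrow\pi_*\cF|_U\otimes\sW$.

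\textbf{Transport and the obstacle.} Finally I would transport this through $\tilde\kappa_{\sW}$. Because $\tilde\kappa_{\sW}$ is an isomorphism of $(K,\pi_*\cF|_U)$-modules, conjugating the splitting above by it replaces $\tilde\theta$ by $\Theta=\tilde\theta\circ\tilde\kappa_{\sW}^{-1}$ and replaces the inclusion $\pi_*\cF|_U\otimes\sZ\hookrightarrow\pi_*\cF|_U\otimes\sW$ by exactly the map written $\tilde\kappa_{\sW}|_{\sZ\otimes\pi_*\cF|_U}$ in the statement (up to the evident identifications of the two orders of tensor factors), whence $\Theta\circ\tilde\kappa_{\sW}|_{\sZ\otimes\pi_*\cF|_U}=\tilde\theta\circ(\tilde\kappa_{\sW}^{-1}\circ\tilde\kappa_{\sW})\circ j=\tilde\theta\circ j=\mr{id}$ follows at once from the previous step. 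The only point requiring genuine care — and the main, though essentially bookkeeping, obstacle — is keeping straight the two different module structures on $\pi_*\cF|_U$ (over $\cR(H_{\red})$ via $\varrho_{H_{\red}}$, and over $\cR(G_{\red})$ via $\sigma_{G_{\red}}$) together with the several harmless flips of tensor factors, so as to see that the subobject obtained after conjugation by $\tilde\kappa_{\sW}$ is precisely the one named in the lemma. For this one uses that $\varrho_{H_{\red}}$ and $\sigma_{G_{\red}}$ restrict compatibly over $H_{\red}$ — equivalently, that the trivialization $\ms{triv}_U$ is right $H_{\red}$-equivariant — so that the relevant restriction of $\tilde\kappa_{\sW}$ is $\cR(H_{\red})$-linear in the way needed to make the conjugation legitimate. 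Once these compatibilities are recorded, no further computation is involved.
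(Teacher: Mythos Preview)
Your proof is correct and follows essentially the same approach as the paper: verify that $\theta$ restricts to the identity on $\cR(H_{\red})\otimes\sZ$ (using that $\sZ$ is a subcomodule of $\sW$ and that $\ms{ret}|_{\sZ}=\mr{id}_{\sZ}$), base-extend along $\varrho_{H_{\red}}$ to get $\tilde\theta\circ j=\mr{id}$, and then cancel $\tilde\kappa_{\sW}^{-1}\circ\tilde\kappa_{\sW}$. The paper's proof is a single sentence pointing to the commutative square defining $\theta$; yours spells out the same computation.

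One remark: the compatibility concerns you raise in the final paragraph are not actually needed. Once you have $\tilde\theta\circ j=\mr{id}$ on $\pi_*\cF|_U\otimes\sZ$, the identity
\[
\Theta\circ\bigl(\tilde\kappa_{\sW}\!\restriction_{\pi_*\cF|_U\otimes\sZ}\bigr)
=\tilde\theta\circ\tilde\kappa_{\sW}^{-1}\circ\tilde\kappa_{\sW}\circ j
=\tilde\theta\circ j=\mr{id}
\]
is purely formal and does not depend on any interaction between the $\cR(H_{\red})$-structure (via $\varrho_{H_{\red}}$) and the $\cR(G_{\red})$-structure (via $\sigma_{G_{\red}}$); the two occurrences of $\tilde\kappa_{\sW}$ simply cancel as mutually inverse isomorphisms.
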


\begin{proof}
This follows since one sees from the last commutative diagram that $\theta$, restricted to
$\sZ \otimes \pi_*\cF|_U$, turns into the identity. 
\end{proof}

\begin{rem}\label{rem:theta}
(1)\
Let
\[
\tilde{\nu}_{\sW} : \pi_*\cF|_U \otimes \sW \to \sW \otimes \pi_*\cF|_U,\quad
\tilde{\nu}_{\sZ} : \pi_*\cF|_U \otimes \sZ \to \sZ \otimes \pi_*\cF|_U
\]
be the base extensions of $\nu_{\sW}$ and $\nu_{\sZ}$ along $\varrho_{H_{\red}}$.
For later use define
\begin{equation}\label{eq:nuTheta}
\Xi:=\tilde{\nu}_{\sZ}\circ \Theta : \sW \otimes \pi_*\cF|_U\to \sZ\otimes \pi_*\cF|_U. 
\end{equation}
We remark that this $\Xi$ does not necessarily equal $\ms{ret} \otimes \mathrm{id}_{\pi_*\cF|_U}$
since $\tilde{\nu}_{\sW}\circ \tilde{\kappa}_{\sW}^{-1}\ne \mathrm{id}_{\sW\otimes \pi_*\cF|_U}$, in general.

(2)\ Our $\tilde{\theta}$, $\Theta$ are analogues of the $\theta$, $\theta'$ of \cite[(4.14), (4.16)]{MT},
respectively; see Remark \ref{rem:algebra_analogue} (2). 
\end{rem}

Recall that the sheaf $\cO$ is $\hy(H)$-super-module super-algebras. 
Given an open set $V$ of $|G|$, the set
\[ 
\mr{Hom}_{\hy(H_{\red})}(\hy(H),~\cO(V))
\] 
of all left $\hy(H_{\red})$-module maps $\hy(H)\to \cO(V)$ form
a super-algebra 
with respect to the convolution-product, which is non-canonically isomorphic to $\cO(V)\otimes \wedge(\sW)$.
This is, moreover, a left $\hy(H)$-super-module with respect to the structure 
induced from the right multiplication by $\hy(H)$ on itself.
Thus there arises another sheaf of $\hy(H)$-super-module super-algebras
\[
\mr{Hom}_{\hy(H_{\red})}(\hy(H),~\cO)
\] 
on $|G|$, which associates $\mr{Hom}_{\hy(H_{\red})}(\hy(H),~\cO(V))$ to every $V$.  
 
We see that
\[
\Lambda_V : \cO(V) \to \mr{Hom}_{\hy(H_{\red})}(\hy(H),~\cO(V)),~~\Lambda_V(p)(a)=a\triangleright p
\]
defines a $\hy(H)$-super-linear sheaf-morphism
\[
\Lambda : \cO \to \mr{Hom}_{\hy(H_{\red})}(\hy(H),~\cO).
\]

Recall that the direct image $\pi_*\cO$ of $\cO$ is a sheaf of $J$-super-module
super-algebras. 
We wish to see that
\[ 
\pi_*\mr{Hom}_{\hy(H_{\red})}(\hy(H),~\cO)\, (=\mr{Hom}_{\hy(H_{\red})}(\hy(H),~\pi_*\cO))
\]
is as well. 

\begin{lemma}\label{lem:J-super-linear}
This last sheaf on $Q$ turns into a sheaf of $J$-super-module super-algebras by defining the $H_{\red}$-action
\[
{}^h\varphi: \hy(H)\to \cO(\pi^{-1}(U)),\ a\mapsto {}^h(\varphi({}^{h^{-1}}a)),
\]
where $U\subset Q$ is any open set,\ $h \in H_{\red}$ and $\varphi \in \mr{Hom}_{\hy(H_{\red})}(\hy(H),$ $\cO(\pi^{-1}(U)))$. 
Moreover, the sheaf-morphism 
\[
\pi_*\Lambda : \pi_*\cO \to \pi_*\mr{Hom}_{\hy(H_{\red})}(\hy(H),~\cO).
\]
is then $J$-super-linear. 
\end{lemma}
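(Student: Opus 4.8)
The plan is to deduce everything from two already-available facts: first, that the left adjoint $H_{\red}$-action on $\hy(H)$ is by Hopf-super-algebra automorphisms and carries $\underline{\hy(H)}=\hy(H_{\red})$ into itself; second, that on each $\cO(\pi^{-1}(U))$ the left $H_{\red}$-action and the left $\hy(H)$-action fit together into a left $J$-super-module super-algebra (Lemma~\ref{lem:J-super-algebra}), so that in particular
\[
{}^h\big(({}^{h^{-1}}a)\triangleright p\big)=a\triangleright({}^h p),\qquad h\in H_{\red},\ a\in\hy(H),\ p\in\cO(\pi^{-1}(U)),
\]
this being the opposite-sided form of the balancing relation \eqref{eq:F-balanced}. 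All remaining assertions of the lemma are then formal.

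First I would check that ${}^h\varphi$ is well defined, i.e.\ again $\hy(H_{\red})$-linear: for $b\in\hy(H_{\red})$ one has ${}^{h^{-1}}b\in\hy(H_{\red})$, and using multiplicativity of the adjoint action on $\hy(H)$, the $\hy(H_{\red})$-linearity of $\varphi$, and the displayed identity, one gets ${}^h\varphi(b\cdot a)=b\triangleright\big({}^h\varphi(a)\big)$. Since the adjoint action is a group action and the $H_{\red}$-action on $\cO(\pi^{-1}(U))$ is as well, ${}^{h_1}({}^{h_2}\varphi)={}^{h_1h_2}\varphi$ and ${}^e\varphi=\varphi$, so $\mr{Hom}_{\hy(H_{\red})}(\hy(H),\cO(\pi^{-1}(U)))$ is a left $H_{\red}$-module. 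To combine this with the left $\hy(H)$-action $a\triangleright\varphi$ (dual to right multiplication on $\hy(H)$, with the appropriate Koszul sign) into a left $J$-super-module, I would verify the semidirect-product relation ${}^h(a\triangleright\varphi)=({}^h a)\triangleright({}^h\varphi)$; unwinding both sides it reduces once more to multiplicativity of the adjoint action of $H_{\red}$ on $\hy(H)$. The super-module super-algebra axioms --- that each $h$ acts by an automorphism of the convolution product (its elements are group-like in $J$) and that $a\triangleright(\varphi\psi)=\sum(-1)^{\ast}(a_{(1)}\triangleright\varphi)(a_{(2)}\triangleright\psi)$ --- follow from the corresponding properties of $\cO(\pi^{-1}(U))$ as a $J$-super-module super-algebra, together with the super-cocommutativity of $\hy(H)$ and the convolution-product formula. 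Naturality in $U$ is immediate, since the restriction maps of $\pi_*\cO$ are $J$-equivariant and $\hy(H_{\red})$-linear by Lemma~\ref{lem:J-super-algebra}; as $\mr{Hom}_{\hy(H_{\red})}(\hy(H),-)$ preserves products and equalizers (and, via $\hy(H)\simeq\hy(H_{\red})\otimes\wedge(\Lie(H)_1)$ of \eqref{eq:isom_module_coalgebra} together with \eqref{eq:selfdual}, identifies $\pi_*\mr{Hom}_{\hy(H_{\red})}(\hy(H),\cO)$ non-canonically with $\pi_*\cO\otimes\wedge(\sW_H)$), the result is indeed a sheaf of $J$-super-module super-algebras. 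Finally, $\pi_*\Lambda$ is $\hy(H)$-super-linear by construction, and its $H_{\red}$-equivariance ${}^h\Lambda_{\pi^{-1}(U)}(p)=\Lambda_{\pi^{-1}(U)}({}^h p)$ is exactly the displayed identity read off at an arbitrary $a\in\hy(H)$; hence $\pi_*\Lambda$ is $J$-super-linear.

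The only genuinely substantive ingredient is the displayed compatibility between the two-sided actions on $\cO(\pi^{-1}(U))$, and that is precisely the content of Lemma~\ref{lem:J-super-algebra}; so the present lemma is essentially a bookkeeping consequence. The one point demanding care is the tracking of the Koszul signs in the convolution product on $\mr{Hom}_{\hy(H_{\red})}(\hy(H),\cO(\pi^{-1}(U)))$ and in the right $\hy(H)$-action on $\hy(H)$, so that the module-algebra axioms come out with the correct signs; this is routine but must be done explicitly.
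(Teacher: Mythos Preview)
Your proposal is correct and follows essentially the same approach as the paper: the paper's proof consists of the single line
\[
{}^h(a\triangleright p)=({}^ha)\triangleright({}^hp),\qquad {}^h(({}^{h^{-1}}a)\triangleright p)={}^h({}^{h^{-1}}a)\triangleright({}^hp)=a\triangleright({}^hp),
\]
which is precisely your displayed compatibility (the second equality being the form you isolate), and leaves all of the routine verifications you spell out --- well-definedness, the group action, the semidirect-product relation, the module-algebra axioms, and the $J$-linearity of $\pi_*\Lambda$ --- as implicit consequences. Your version is simply a fully unpacked form of the paper's one-line argument.
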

\begin{proof}
This follows since one sees that
\[ 
{}^h(a \triangleright p)=({}^ha) \triangleright ({}^hp),\quad {}^h(({}^{h^{-1}}a) \triangleright p) 
={}^{h}({}^{h^{-1}}a) \triangleright ({}^hp)=a \triangleright ({}^hp), 
\]
where $h \in H_{\red}$, $a \in \hy(H)$ and $p \in \cO(\pi^{-1}(U))$. 
\end{proof}

Let us return to the situation that $U$ is a fixed, admissible open set of $Q$. 
Choose an isomorphism such as in \eqref{eq:Gred-equiv} and let
\begin{equation}\label{eq:cOcF}
\cO \overset{\simeq}{\longrightarrow} \wedge(\sW)\otimes \cF
\end{equation} 
be the associated sheaf-isomorphism. 
We let
\[
\Sigma_U: \pi_*\cO|_U \overset{\simeq}{\longrightarrow} \wedge(\sW)\otimes \pi_*\cF|_U 
\xrightarrow{\wedge_{\pi_*\cF|_U}(\Xi)}\wedge(\sZ)\otimes \pi_*\cF|_U, 
\]
be its direct image under $\pi$, composed with the sheaf-morphism $\wedge_{\pi_*\cF|_U}(\Xi)$
over $\pi_*\cF|_U$
which naturally arises from the $\Xi$ in \eqref{eq:nuTheta}. 
The thus obtained morphism is seen to be $K$-linear. Apply $\mr{Hom}_{\hy(H_{\red})}(\hy(H),\ \, )$
and compose with the $\pi_*\Lambda$ restricted to $U$. 
We let
\begin{equation*}
\Omega_U : \pi_*\cO|_U \to \mr{Hom}_{\hy(H_{\red})}(\hy(H),\ \wedge(\sZ)\otimes \pi_*\cF|_U) 
\end{equation*}
be the resulting sheaf-morphism.

\begin{prop}\label{prop:Omega}
We have the following.
\begin{itemize}
\item[(1)]
This $\Omega_U$ is a $J$-super-linear isomorphism of sheaves on $U$, such that
\[ (\Omega_U)_{\red}= \mr{id}_{\pi_*\cF|_U}. \]
\item[(2)]
The restriction to the $J$-invariants gives an isomorphism
\[
(\Omega_U)^J : \cO_{G/H}|_U=\cO(\pi^{-1}(\ ))^J\overset{\simeq}{\longrightarrow}\wedge(\sZ)\otimes \cF_{Q}|_U
\]
of sheaves on $U$. 
Hence, $G/H$, being locally a s-split super-manifold, is a super-manifold. 
\end{itemize}
\end{prop}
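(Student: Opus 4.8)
The plan is to check $\Omega_U$ is a morphism of sheaves of $J$-super-module super-algebras, verify it is an isomorphism by a stalk-wise argument reducing to a linear-algebra fact about Hopf modules, identify its reduced morphism with $\mathrm{id}_{\pi_*\cF|_U}$, and then pass to invariants. First I would unwind the definition. By construction $\Omega_U$ factors as $\pi_*\Lambda|_U$ followed by $\mathrm{Hom}_{\hy(H_{\red})}(\hy(H),\ \Sigma_U)$; the first factor is $J$-super-linear by Lemma \ref{lem:J-super-linear}, so the issue is the behaviour of $\Sigma_U$. Now $\Sigma_U$ is built from the isomorphism \eqref{eq:cOcF} (which is only $G_{\red}$-equivariant, hence in particular $K$-equivariant on $U$) composed with $\wedge_{\pi_*\cF|_U}(\Xi)$, where $\Xi=\tilde\nu_{\sZ}\circ\Theta$. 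Since $\nu_{\sZ}$ is $\cR(H_{\red})$-colinear and $\Theta$ is a morphism of $(K,\pi_*\cF|_U)$-modules by Lemma \ref{lem:retraction} and the remarks preceding it, $\Xi$ and hence $\wedge_{\pi_*\cF|_U}(\Xi)$ and $\Sigma_U$ are $K$-super-linear morphisms of $\pi_*\cF|_U$-algebras. Applying $\mathrm{Hom}_{\hy(H_{\red})}(\hy(H),-)$ to a $K$-linear (equivalently $\hy(H_{\red})$- and $H_{\red}$-linear) algebra morphism and combining with the $J$-super-linear $\pi_*\Lambda$ yields, by the very construction of the $J$-action in Lemma \ref{lem:J-super-linear}, a $J$-super-linear morphism of sheaves of super-algebras on $U$; this is routine diagram-chasing with the semidirect-product structure $J=\hy(H)\rtimes H_{\red}$.

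Next, I would prove $\Omega_U$ is an isomorphism by checking it on completed stalks, i.e.\ show each $(\Omega_U)^{\wedge}_g$ is an isomorphism for $g\in\pi^{-1}(U)$. The map $\Lambda$ is an isomorphism onto $\mathrm{Hom}_{\hy(H_{\red})}(\hy(H),\cO)$ because, non-canonically, the target is $\cO\otimes\wedge(\sW_H)$ and $\Lambda$ realizes the analogue of the isomorphism \eqref{eq:isom_module_coalgebra} for the inclusion $\hy(H_{\red})\subset\hy(H)$; this is exactly \cite[Theorem 3.6]{M2} applied to the free left $\hy(H_{\red})$-module $\hy(H)$. The map $\Sigma_U$ is an isomorphism iff $\wedge_{\pi_*\cF|_U}(\Xi)$ is, and the latter holds iff $\Xi:\sW\otimes\pi_*\cF|_U\to\sZ\otimes\pi_*\cF|_U$, restricted appropriately, is split surjective with the right kernel—concretely, $\Theta$ is a retraction of the injection $\tilde\kappa_{\sW}|_{\sZ\otimes\pi_*\cF|_U}$ by Lemma \ref{lem:retraction}, and $\tilde\nu_{\sZ}$ is an isomorphism of Hopf modules, so $\Xi$ induces an isomorphism $\sW\otimes\pi_*\cF|_U \big/ \tilde\kappa_{\sW}(\sZ\otimes\pi_*\cF|_U)$-complement onto $\sZ\otimes\pi_*\cF|_U$; taking exterior algebras over $\pi_*\cF|_U$ preserves this. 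Thus $\Sigma_U$ is an isomorphism onto $\wedge(\sZ)\otimes\pi_*\cF|_U$, and $\Omega_U$ is an isomorphism. The main obstacle here is bookkeeping: $\Xi$ is genuinely not $\ms{ret}\otimes\mathrm{id}$ (Remark \ref{rem:theta}(1)), so one must argue abstractly via the Hopf-module isomorphisms $\nu_{(-)}$ rather than by an explicit formula, and one must be careful that the two base-change maps $\varrho_{H_{\red}}$ and $\sigma_{G_{\red}}$ are used consistently.

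For the reduced morphism, note $(\pi_*\cO|_U)_{\red}=\pi_*\cF|_U$, and applying $(-)_{\red}$ kills $\sW,\sZ$ and the augmentation ideal of $\hy(H)$, so $\Lambda_{\red}$ becomes the identity and $\Sigma_U$ reduces, via \eqref{eq:cOcF} reducing to $\mathrm{id}$ and $\wedge_{\pi_*\cF|_U}(\Xi)$ reducing to the degree-$0$ part, to $\mathrm{id}_{\pi_*\cF|_U}$; hence $(\Omega_U)_{\red}=\mathrm{id}_{\pi_*\cF|_U}$, proving (1). For (2), the $J$-invariants of $\pi_*\cO|_U$ are $\cO_{G/H}|_U$ by Definition \ref{def}, while the $J$-invariants of $\mathrm{Hom}_{\hy(H_{\red})}(\hy(H),\wedge(\sZ)\otimes\pi_*\cF|_U)$ compute as follows: invariance under $\hy(H)\subset J$ forces $\varphi$ to be determined by $\varphi(1_{\hy(H)})$ and to land, after the identification, in $\wedge(\sZ)\otimes\pi_*\cF|_U$ with the trivial $\hy(H)$-part, while the remaining $H_{\red}$-invariance and the $\hy(H_{\red})$-linearity together cut $\pi_*\cF|_U$ down to its $K$-invariants, which is $\cF_Q|_U$ by \eqref{eq:FQ} (here one uses that $\sZ$ carries a genuine $H_{\red}$-action, so $K$-invariants of $\wedge(\sZ)\otimes\pi_*\cF|_U$ need not be $\wedge(\sZ)\otimes\cF_Q|_U$ a priori, but the Hopf-module normal form via $\tilde\nu_{\sZ}$ trivializes the $H_{\red}$-coaction on the $\wedge(\sZ)$ factor, giving exactly $\wedge(\sZ)\otimes\cF_Q|_U$). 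Since $\Omega_U$ is a $J$-super-linear isomorphism it restricts to an isomorphism $(\Omega_U)^J$ on invariants. Finally, $\cF_Q|_U\cong\cF_U$ by admissibility of $U$, so $\cO_{G/H}|_U\cong\wedge(\sZ)\otimes\cF_U$ is s-split; as $U$ ranges over an admissible cover of $Q$ this shows $G/H$ is a super-manifold.
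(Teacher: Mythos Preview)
Your argument for Part (1) has a genuine gap at the isomorphism step. You claim that $\Lambda$ and $\Sigma_U$ are \emph{each} isomorphisms, and then conclude that their composite $\Omega_U$ is. But neither factor is an isomorphism when $\Lie(H)_1\ne 0$. Indeed, $\Lambda_V:\cO(V)\to \Hom_{\hy(H_{\red})}(\hy(H),\cO(V))$ has target non-canonically isomorphic to $\cO(V)\otimes\wedge(\sW_H)$, which is strictly larger than $\cO(V)$; the reference to \eqref{eq:isom_module_coalgebra} only tells you that $\hy(H)$ is free over $\hy(H_{\red})$ of rank $\dim\wedge(\sW_H)$, it says nothing about $\Lambda$ being bijective. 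Dually, $\Sigma_U$ is the composite of an isomorphism with $\wedge_{\pi_*\cF|_U}(\Xi)$, and $\Xi:\sW\otimes\pi_*\cF|_U\to\sZ\otimes\pi_*\cF|_U$ is a surjection between free modules of different ranks, so $\wedge_{\pi_*\cF|_U}(\Xi):\wedge(\sW)\otimes\pi_*\cF|_U\to\wedge(\sZ)\otimes\pi_*\cF|_U$ is likewise only a surjection. The point of $\Omega_U$ is precisely that the ``fattening'' by $\wedge(\sW_H)$ coming from $\Lambda$ exactly compensates the ``collapse'' from $\wedge(\sW)$ to $\wedge(\sZ)$ in $\Sigma_U$; showing this cancellation is the whole content of the proof, and it cannot be done by treating the two factors separately.

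The paper's route is quite different: it replaces $\Xi$ by $\Theta$ to get an auxiliary $\Omega_U'$, reduces via $\ms{gr}$ to the case where $G$ is $\mathbb{N}$-graded (so that \eqref{eq:cOcF} can be taken canonical), and then interprets $\Omega_U'$ as a morphism in the category of $(\wedge(\sW_H),\wedge(\sW))$-Hopf super-modules. The key input is the equivalence of that category with $\wedge(\sZ)$-super-modules via $(-)^{\mathrm{co}\,\wedge(\sW_H)}$; one then checks directly that the restriction of $\Omega_U'$ to the $\wedge(\sW_H)$-coinvariants is the identity on $\pi_*\cF|_U\otimes\wedge(\sZ)$, whence $\Omega_U'$ and so $\Omega_U$ is an isomorphism. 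Your treatment of the reduced morphism and of Part (2) is essentially correct and close to the paper's, but the isomorphism assertion in (1) needs this Hopf-module/coinvariant argument (or something equivalent) to go through.
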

\begin{proof}
(1)\
Non-trivial is to prove that $\Omega_U$ is an isomorphism. For this it suffices to prove that
the analogous sheaf-morphism
\[
\Omega_U' : \pi_*\cO|_U \to \mr{Hom}_{\hy(H_{\red})}(\hy(H),\ \pi_*\cF|_U \otimes \wedge(\sZ)),
\]
which is constructed as $\Omega_U$ but with $\wedge(\Xi)$ replaced by
$\wedge(\Theta)$, is isomorphic. 
We have to prove that $\ms{gr}(\Omega_U')$ is isomorphic. 
But we wish, equivalently, to prove 
that $\Omega_U'$ is isomorphic, assuming $G=\ms{gr}\, G$, or that $G$ is $\mathbb{N}$-graded; see Section \ref{subsec:super_Lie_sub}. 
We should then take the isomorphism \eqref{eq:cOcF} (or \eqref{eq:Gred-equiv}), which was used to construct $\Omega_U'$,
as the canonical one.
The desired result follows by essentially the same argument as proving \cite[Proposition 4.8]{MT}
(see the argument of proving that the $(\omega_{\theta})_P$ in \cite[(4.19)]{MT} is isomorphic); 
it depends on the equivalence from the category $\mathsf{SMod}_{\wedge(\sW)}^{\wedge(\sW_H)}$ 
of $(\wedge(\sW_H),\wedge(\sW))$-Hopf super-modules to the category $\mathsf{SMod}_{\wedge(\sZ)}$ of 
$\wedge(\sZ)$-super-modules (see \cite[Theorem 3.1 (1)]{MT}), which associates to each $\mathsf{M} \in \mathsf{SMod}_{\wedge(\sW)}^{\wedge(\sW_H)}$, 
the $\wedge(\sZ)$-super-module $\mathsf{M}^{co\wedge(\sW_H)}=\{m\in \mathsf{M}\mid m^{(0)}\otimes m^{(1)}=m\otimes 1 \}$ 
of right $\wedge(\sW_H)$-coinvariants in $\mathsf{M}$; the equivalence
is ensured by the fact that $\sW\to \sW_H$ is surjective. 
As in the proof for $(\omega_{\theta})_P$ cited above, one sees that under the assumption above, $\Omega_U'$ is a morphism in 
$\mathsf{SMod}_{\wedge(\sW)}^{\wedge(\sW_H)}$. It will follow by the category-equivalence above
that $\Omega'_U$ is isomorphic, as desired, 
if one sees that the restriction  
\[
(\Omega_U')^{co\wedge(\sW_H)} : (\pi_*\cO|_U)^{co\wedge(\sW_H)} \to 
\big(\mr{Hom}_{\hy(H_{\red})}(\hy(H), \pi_*\cF|_U \otimes \wedge(\sZ))\big)^{co\wedge(\sW_H)}
\]
to the right $\wedge(\sW_H)$-coinvariants is isomorphic.
The domain is naturally identified with $\pi_*\cF|_U\otimes \wedge(\sZ)$,
while the range is identified with the left $\wedge(\sW_H^*)$-invariants in 
the range of $\Omega'_U$. One will see that the latter is further identified with 
$\pi_*\cF|_U\otimes \wedge(\sZ)$, again. 
Note $\sW_H^*= \Lie(H)_1$, and that $\wedge(\sW_H^*)$ is now a Hopf super-subalgebra of $\hy(H)$; it
is supposed to act on the last mentioned range through the right multiplication on $\hy(H)$. 
Since $\hy(H)\otimes_{\wedge(\sW_H^*)}\Bbbk=\hy(H_{\red})$, it follows that the left $\wedge(\sW_H^*)$-invariants thus coincide with
\[ 
\mr{Hom}_{\hy(H_{\red})}(\hy(H_{\red}),\ \pi_*\cF|_U \otimes \wedge(\sZ))=\pi_*\cF|_U \otimes \wedge(\sZ). 
\]
Now, the restriction of $\Omega_U'$ in question is seen to be the identity map on 
$\pi_*\cF|_U \otimes \wedge(\sZ)$, whence it is isomorphic, as desired.

(2)\
Regard $\Bbbk$ as the trivial module over $\hy(H_{\red})$ and over $K$. 
Since the $\hy(H)$-invariants in the range of $\Omega_U$ equal 
$\mr{Hom}_{\hy(H_{\red})}(\Bbbk, \wedge(\sZ)\otimes \pi_*\cF|_U)$, 
one sees from \eqref{eq:FQ} that $(\Omega_U)^J$ coincides with
\[
\cO(\pi^{-1}(\ ))^J\overset{\simeq}{\longrightarrow}\mr{Hom}_K(\Bbbk,\ \wedge(\sZ)\otimes \pi_*\cF|_U)
=(\wedge(\sZ)\otimes \pi_*\cF|_U)^K=\wedge(\sZ)\otimes\cF_{Q}|_U. 
\]
\end{proof}

\begin{proof}[Proof of Theorem \ref{thm:quotient}]
Since the first half of Part 1 is proved by Proposition \ref{prop:Omega} (2), 
it suffices to prove Part~2, from which the rest follows easily.  

Suppose that the situation is as above the proof. 
Define a morphism of manifolds by
\[
\ms{sec}_U : U \to \pi^{-1}(U),\ \ms{sec}_U(u)=\ms{triv}_U(u, e). 
\]
By Proposition \ref{prop:Omega} (2) we can identify $\cO_{G/H}|_U$ with $\wedge(\sZ)\otimes \cF_{Q}|_U$ 
through $(\Omega_U)^J$. 
A close look at the construction of the isomorphism $\Omega_U$ above,
without taking care of the relevant $H_{\red}$-actions, gives an isomorphism
\[
\cO|_{\pi^{-1}(U)} \overset{\simeq}{\longrightarrow} \mr{Hom}_{\hy(H_{\red})}(\hy(H),\ \wedge(\sZ)\otimes \cF|_{\pi^{-1}(U)}) 
\]
of sheaves on $\pi^{-1}(U)$. The last $\cF|_{\pi^{-1}(U)}$ on the right-hand side may be replaced with the direct image
of $\cF_{U \times H_{\red}}$ under $\ms{triv}_U$. Let us consider the sheaf-isomorphism obtained after the replacement.
Choose an arbitrary element $(x,h)\in U \times H_{\red}$, and let $f=\ms{sec}_U(x)$,\ $g=\ms{triv}_U(x,h)$, and so $g=fh$. 
Then one sees that the sheaf-isomorphism gives rise to the isomorphism between the completed stalks
\[
\hO_g \overset{\simeq}{\longrightarrow}\wedge(\sZ)\otimes \hF_{Q,x}~\hat{\otimes}~\mr{Hom}_{\hy(H_{\red})}(\hy(H),\hF_{H_{\red},h}),
\]
which decomposes so as
\begin{align*}
\hO_g&\xrightarrow{\hat{\Delta}_{f,h}}\hO_{f}\, \hat{\otimes}\, \hO_h
\xrightarrow{\hat{\Sigma}_{U, f}\hat{\otimes}\, \mr{cano}}
\wedge(\sZ)\otimes \hF_{f}\, \hat{\otimes}\, \hO_{H,h}\\
&\xrightarrow{\mr{id}_{\wedge(\sZ)}\otimes (\widehat{\ms{sec}^*_U})_x \hat{\otimes}\, \hat{\eta}_{H,h}}\wedge(\sZ)\otimes 
\hF_{Q,x}~\hat{\otimes}~\mr{Hom}_{\hy(H_{\red})}(\hy(H),\hF_{H_{\red},h}),
\end{align*}
where $\mr{cano} : \hO_h\to \hO_{H,h}$ is the canonical map arising from the inclusion $G \supset H$, and
$\eta_H : \cO_H\overset{\simeq}{\longrightarrow} \mr{Hom}_{\hy(H_{\red})}(\hy(H),\cF_{H_{\red}})$ is the
canonical sheaf-isomorphism for $H$, as given by Proposition \ref{prop:eta} (1); the last two sheaves
are identified through $\eta_H$. 
It follows that $\pi^{-1}(U)$ is right $H$-stable in $G$, and the sheaf-isomorphism 
in question is associated with an isomorphism
\begin{equation*}
\ms{Triv}_U : (U,\cO_{G/H}|_U) \times H \overset{\simeq}{\longrightarrow} (\pi^{-1}(U), \cO|_{\pi^{-1}(U)})
\end{equation*}
of right $H$-equivariant super-manifolds, which is seen to have the two properties described in the theorem. 
\end{proof}

\begin{rem}[added in revision]
The referee kindly informed the authors of the article \cite{FLV} 
by R.~Fioresi et al., which proves 
an analogous result (see \cite[Section 3]{FLV}) of our Theorem \ref{thm:quotient}
in the (real) $C^{\infty}$-situation. Their proof, which uses 
the Frobenius Theorem \cite[Chapter 4]{CCF}, constructs the quotient $G/H$ and
proves that $G \to G/H$ is a principal super $H$-bundle (in the 
$C^{\infty}$-situation). 
But seemingly, it does not 
prove the property that any non-super trivialization is liftable to a super trivialization;
see Remark \ref{rem:algebra_analogue} (1). We remark that our proof slightly
modified proves
their result including that liftability; the modification uses the fact that
(real) Lie groups uniquely turn into real analytic groups, 
and their finite-dimensional representations
are necessarily analytic. 
\end{rem}

\section*{Acknowledgments}
The authors gratefully acknowledge that
the work was supported by JSPS Grant-in-Aid for Scientific Research (C), 26400035 and 17K05189.


\begin{thebibliography}{99}

\bibitem{CCF}
C.~Carmeli,\ L.~Caston,\ R.~Fioresi,\ 
\emph{Mathematical Foundations of Supersymmetry},\
EMS Series of Lectures in Mathematics, European Math. Soc., Z\"{u}rich, 2011.

\bibitem{CF}
C.~Carmeli,\ R.~Fioresi,\
\emph{Superdistributions, analytic and algebraic super Harish-Chandra pairs},\ 
Pacific.\ J.\ Math. {\bf 263} (2013),\ 29--51. 

\bibitem{DM}
P.~Deligne,\ J.-W.~Morgan, 
\emph{Notes on supersymmetry \textup{(}following Joseph Bernstein\textup{)}},
in: \emph{Quantum fields and strings: a course for mathematicians}, vols.~1, 2 
(Princeton, NJ, 1996/1997),\ Amer. Math. Soc.,\ Providence,\ 1999,\ pp.41--97.

\bibitem{FLV}
R.~Fioresi,\ M.~A.~Lled\'{o},\ V.~S.~Varadarajan,\
\emph{The Minkowski and
conformal superspaces},\
J.\ Math.\ Phys. {\bf48}, 113505 (2007).




\bibitem{G}
F.~Gavarini,\
\emph{Global splittings and super Harish-Chandra pairs for affine supergroups},\
Trans. Amer. Math. Soc. {\bf 368} (2016),\ 3973--4026.

\bibitem{HM}
G.~Hochschild,\ G.~D.~Mostow,\
\emph{Complex analytic groups and Hopf algebras},\
Amer.\ J.\ Math.\ \textbf{91} (1969), 1141--1151.



\bibitem{Kostant}
B.~Kostant,\
\emph{Graded manifolds, graded Lie theory, and prequantization},\
Lecture Notes in Mathematics vol.~570, Springer-Verlag, Berlin/Heidelberg/New York, 1977, pp.177--306.

\bibitem{Koszul}
J.-L.~Koszul,\
\emph{Graded manifolds and graded Lie algebras},\
Proceedings of the International Meeting on Geometry and Physics (Florence, 1982),
pp.71--84, Pitagora, Bologna, 1982.

\bibitem{M1} 
A.~Masuoka,\
\emph{Formal groups and unipotent affine groups in non-categorical symmetry},\
J.\ Algebra \textbf{317} (2007), 226--249. 

\bibitem{M2} 
A.~Masuoka,\
\emph{The fundamental correspondences in super affine groups and
super formal groups},\
J.\ Pure\ Appl.\ Algebra \textbf{202} (2005), 284--312.
 
\bibitem{M3} 
A.~Masuoka,\ 
{\em Harish-Chandra pairs for algebraic affine supergroup schemes over an arbitrary field},\ 
Transform. Groups \textbf{17} (2012), no. 4, 1085--1121.

\bibitem{M4} 
A.~Masuoka,\
\emph{Hopf algebraic techniques applied to super algebraic groups},\ Proceedings of
Algebra Symposium (Hiroshima, 2013), pp. 48--66, Math. Soc. Japan, 2013;
available at $\mathsf{arXiv}$:~1311.1261v2.

\bibitem{MS1}
A.~Masuoka,\ T.~Shibata,\ 
\emph{Algebraic supergroups and Harish-Chandra pairs over a commutative ring},\
Trans. Amer. Math. Soc. {\bf 369} (2017),\ 3443--3481.

\bibitem{MS2} 
A.~Masuoka,\ T.~Shibata,\ 
\emph{On functor points of affine supergroups},
J.\ Algebra \textbf{503} (2018), 534--572.

\bibitem{MT} 
A.~Masuoka,\ Y.~Takahashi,\ 
\emph{Geometric construction of quotients $G/H$ in supersymmentry},\ 
Transform. Groups, in press; 
preprint $\mathsf{arXiv}$:1808.05753v4.


\bibitem{MZ} A.~Masuoka,\ A.~N.~Zubkov,\ 
\emph{Solvability and nilpotency for algebraic supergroups},\ 
J. Pure Appl. Algebra {\bf 221} (2017),\ 339--365.


\bibitem{Mo} S.~Montgomery,\
\emph{Hopf algebras and their actions on rings},\
CBMS Conf.\ Series in Math., vol.~82, Amer.\ Math.\ Soc., Providence, RI, 1993.


\bibitem{S} J.~P.~Serre,\
\emph{Lie algebras and Lie groups},\ 
2nd. ed., 
Lec. Notes in Math., vol.~1500, Springer-Verlag, Berlin/Heidelberg, 1992. 

\bibitem{Schmitt} 
T.~Schmitt,\ 
\emph{Regular sequences in $\mathbb{Z}_2$-graded commutative algebra},\ 
J.\ Algebra\ \textbf{124} (1989), 60--118.

\bibitem{Sh}
I.~R.~Shafarevich,\
\emph{Basic Algebraic Geometry},\
Die Grundlehren der mathematischen Wissenschaften,\ Springer,\ 
Berlin/Heidelberg/New York, 1972.

\bibitem{Sw} 
M.~E.~Sweedler,\ 
\emph{Hopf Algebras},\ 
W.~A.~Benjamin,\ Inc.,\ New York,~1969.

\bibitem{T1} M.~Takeuchi,\
\emph{Tangent coalgebras and hyperalgeras I},\
Japanese J. of Math. \textbf{42} (1974), 1--143.

\bibitem{T2} M.~Takeuchi,\
\emph{Topological coalgebras},\
J.\ Algebra \textbf{97} (1985), 505--539.


\bibitem{Vara} V.~S.~Varadarajan,\ 
\emph{Introduction},\ 
in: S.~Ferarra,\ R.~Fioresi,\ V.~S.~Varadarajan (eds.), 
\emph{Supersymmetry in Mathematics and Physics}, Lec. Notes in Math., vol.~2027,
Springer-Verlag, Heidelberg/Dordrecht/London/New York, 2011, pp.1--15.  

\bibitem{V} E~.~G. Vishnyakova,\ 
\emph{On complex Lie supergroups and split homogeneous supermanifolds}, 
Transform. Groups \textbf{16} (2011), no. 1, 265--285. 

\bibitem{W} 
W.~C.~Waterhouse,\  
{\em Introduction to Affine Group Schemes},\ 
Graduate Texts in Mathematics \textbf{66},\ 
Springer,\ New York/Heidelberg/Berlin, 1979.

\bibitem{We} 
T.~Wedhorn,\  
{\em Manifolds, Sheaves and Cohomology},\ 
Springer Studium Mathematik,\
Springer Spektrum, 2016.



\end{thebibliography}
\end{document}